\newtheorem{theorem}{Theorem}[section]
\newtheorem{lemma}[theorem]{Lemma}
\newtheorem{corollary}[theorem]{Corollary}
\newtheorem{proposition}[theorem]{Proposition}
\theoremstyle{definition}
\theoremstyle{remark}
\newtheorem{remark}[theorem]{Remark}
\numberwithin{equation}{section}
\def\br{\mathbb R}
\def\bc{\mathbb C}
\def\bbf{\mathbb{F}}
\def\mcf{\mathcal{F}}
\def\mct{\mathcal{T}}
\def\mcm{\mathcal{M}}
\def\bq{\mathbb Q}
\def\mand{\quad\mbox{and}\quad}
\def\bp{\begin{pmatrix}}
\def\ep{\end{pmatrix}}
\def\sdi{\,\mbox{$\rhd\kern-.55em<$}\,}
\def\edots{\mathinner{\mkern1mu\raise1pt\hbox{.}\mkern2mu\raise4pt\hbox{.}\mkern2mu\raise7pt\vbox{\kern7pt\hbox{.}}\mkern1mu}}
\begin{document}

\title[Orthogonal multiple flag varieties]{Orthogonal multiple flag varieties of finite type I : odd degree case}

\author{Toshihiko MATSUKI}

\thanks{Supported by JSPS Grant-in-Aid for Scientific Research (C) \# 25400030.}

\address{Faculty of Letters\\
        Ryukoku University\\
        Kyoto 612-8577, Japan}
\email{matsuki@let.ryukoku.ac.jp}
\date{}

\begin{abstract}
Let $G$ be the split orthogonal group of degree $2n+1$ over an arbitrary field $\bbf$ of ${\rm char}\,\bbf\ne 2$. In this paper, we classify multiple flag varieties $G/P_1\times\cdots\times G/P_k$ of finite type. Here a multiple flag variety is called of finite type if it has a finite number of $G$-orbits with respect to the diagonal action of $G$ when $|\bbf|=\infty$.
\end{abstract}

\maketitle

\section{Introduction}

In \cite{MWZ1}, Magyar, Weyman and Zelevinsky classified multiple flag varieties for ${\rm GL}_n(\bbf)$ of finite type. In their subsequent paper \cite{MWZ2}, they classified multiple flag varieties for ${\rm Sp}_{2n}(\bbf)$ of finite type. (They assume $\bbf$ is algebraically closed.)

Recently the author gave explicit orbit decomposition for an example of orthogonal case in \cite{M2}. In this paper, we will classify multiple flag varieties of finite type for the split orthogonal group of degree $2n+1$.

Let $\bbf$ be an arbitrary commutative field of ${\rm char}\,\bbf\ne 2$. Let $(\ ,\ )$ denote the symmetric bilinear form on $\bbf^{2n+1}$ defined by
$$(e_i,e_j)=\delta_{i,2n+2-j}$$
for $i,j=1,\ldots,2n+1$ where $e_1,\ldots,e_{2n+1}$ is the canonical basis of $\bbf^{2n+1}$. Define the split orthogonal group
$$G=\{g\in {\rm GL}_{2n+1}(\bbf) \mid (gu,gv)=(u,v)\mbox{ for all }u,v\in\bbf^{2n+1}\}$$
with respect to this form. Let us write $G={\rm O}_{2n+1}(\bbf)$ in this paper.  

A subspace $V$ of $\bbf^{2n+1}$ is called isotropic if
$(V,V)=\{0\}$. 
For a sequence ${\bf a}=(\alpha_1,\ldots,\alpha_p)$ of positive integers such that $\alpha_1+\cdots+\alpha_p\le n$, there corresponds the flag variety
$$M_{\bf a}=\{V_1\subset\cdots\subset V_p\mid \dim V_i=\alpha_1+\cdots+\alpha_i\mbox{ for }i=1,\ldots,p, \ (V_p,V_p)=\{0\}\}.$$
For the canonical flag
$$\mcf_0: \bbf e_1\oplus\cdots\oplus \bbf e_{\alpha_1}\subset\cdots\subset \bbf e_1\oplus\cdots\oplus \bbf e_{\alpha_1+\cdots+\alpha_p}$$
in $M_{\bf a}$, the isotropic subgroup for $\mcf_0$ in $G$ is a standard parabolic subgroup $P_{\bf a}$ consisting of elements in $G$ of the form
$$\bp A_1 &&&&&& * \\ & \ddots &&&&& \\ && A_p &&&& \\ &&& A_0 &&& \\ &&&& A_p^* && \\ &&&&& \ddots & \\ 0 &&&&&& A_1^*
\ep$$
with $A_i\in {\rm GL}_{\alpha_i}(\bbf)$ for $i=1,\ldots,p$ and $A_0\in {\rm O}_{2\alpha_0+1}(\bbf)$ ($\alpha_0=n-(\alpha_1+\cdots+\alpha_p)$) where $A_i^*=J_{\alpha_i}{}^tA_i^{-1}J_{\alpha_i}$ for $i=1,\ldots,p$ and $J_m$ is the $m\times m$ matrix given by
$$J_m=\bp 0 && 1 \\ & \edots & \\ 1 && 0 \ep.$$
Since $M_{\bf a}$ is $G$-homogeneous, we can identify $M_{\bf a}$ with $G/P_{\bf a}$.

\begin{remark} Define the split special orthogonal group
$$G_0=\{g\in G\mid \det g=1\}\ (={\rm SO}_{2n+1}(\bbf)).$$
Then $G=G_0\sqcup (-I_{2n+1})G_0$. Since $-I_{2n+1}$ acts trivially on $M_{\bf a}$, $G_0$-orbits on $M_{\bf a}$ are the same as $G$-orbits.
\end{remark}

Consider a multiple flag variety
$$\mcm=M_{{\bf a}_1}\times\cdots\times M_{{\bf a}_k}\cong (G/P_{{\bf a}_1})\times\cdots\times (G/P_{{\bf a}_k})$$
with the diagonal $G$-action
$$g(\mcf_1,\ldots \mcf_k)=(g\mcf_1,\ldots,g\mcf_k)$$
for $g\in G$ and $(\mcf_1,\ldots,\mcf_k)\in M_{{\bf a}_1}\times\cdots\times M_{{\bf a}_k}$.
The multiple flag variety $\mcm$ is called of finite type if it has a finite number of $G$-orbits with respect to the diagonal $G$-action when $|\bbf|=\infty$.

If $k=2$, then we have $G\backslash ((G/P_{{\bf a}_1})\times (G/P_{{\bf a}_2})) \cong P_{{\bf a}_2}\backslash G/P_{{\bf a}_1}$ by the map
$$(g_1,g_2)\mapsto g_2^{-1}g_1.$$
So it is of finite type by the Bruhat decomposition. In this paper we first show the following.

\begin{proposition} If $k\ge 4$, then $\mcm$ is of infinite type.
\label{prop1.1}
\end{proposition}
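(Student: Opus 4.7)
The plan is to reduce the problem to a minimal model and then show that its generic $G$-orbit is not open, so that $\mathcal{M}$ cannot have a finite number of $G$-orbits.

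First, two reductions. Since a $G$-equivariant surjection sends orbits onto orbits, infinite type of the target forces infinite type of the source. The projection $\prod_{i=1}^{k} M_{\mathbf{a}_i}\twoheadrightarrow \prod_{i=1}^{4} M_{\mathbf{a}_i}$ onto the first four factors reduces the statement to $k=4$. Next, for each $i$, the projection $M_{(\alpha_{i,1},\ldots,\alpha_{i,p_i})}\twoheadrightarrow M_{(\alpha_{i,1})}$ sending a flag $V_1\subset\cdots\subset V_{p_i}$ to its first subspace $V_1$ reduces the problem to proving infinite type of
\[
\mathcal{M}_0 \;:=\; M_{(\beta_1)}\times M_{(\beta_2)}\times M_{(\beta_3)}\times M_{(\beta_4)},
\]
where $\beta_i=\alpha_{i,1}\ge 1$.

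Second, I bound the generic stabilizer from below. For a generic tuple $(V_1,V_2,V_3,V_4)\in\mathcal{M}_0$, set $W=V_1+V_2+V_3+V_4$. Every element of ${\rm O}(W^\perp)$, extended by the identity on $W$, lies in $G$ and preserves each $V_i$, so the $G$-stabilizer of $(V_1,\ldots,V_4)$ contains ${\rm O}(W^\perp)$, giving orbit dimension $\le\dim G-\dim{\rm O}(W^\perp)$. Writing $s=\dim W$ (generically $s=\beta_1+\beta_2+\beta_3+\beta_4$ when $s\le 2n+1$), a direct computation yields
\[
\dim\mathcal{M}_0\,-\,\bigl(\dim G-\dim{\rm O}(W^\perp)\bigr)\;=\;\frac{s^{2}-3\sum_{i}\beta_i^{2}}{2},
\]
which is positive whenever $\bigl(\sum\beta_i\bigr)^{2}>3\sum\beta_i^{2}$, in particular for all ``balanced'' configurations such as $\beta_1=\beta_2=\beta_3=\beta_4$. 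In this regime the generic orbit is strictly smaller than $\mathcal{M}_0$, so no orbit is open and $\mathcal{M}_0$ has infinitely many $G$-orbits.

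Third, the ``unbalanced'' cases (one $\beta_i$ dominant, e.g.\ $(1,1,1,m)$ with $m\ge 3$) need extra stabilizer, which comes from the Levi ${\rm GL}(V_i)\times{\rm O}(V_i^\perp/V_i)$ of the parabolic $P_{V_i}$ stabilizing the dominant $V_i$: for generic positions, the remaining $V_j$'s determine only a proper subvariety of the Levi, leaving extra continuous directions in the stabilizer and restoring the strict inequality. As a transparent special case, for $\beta_1=\beta_2=\beta_3=\beta_4=1$ the cross-ratio
\[
c(V_1,V_2,V_3,V_4)\;=\;\frac{(v_1,v_2)(v_3,v_4)}{(v_1,v_3)(v_2,v_4)}\qquad\bigl(v_i\in V_i\setminus\{0\}\bigr)
\]
is an explicit $G$-invariant rational function taking infinitely many values, giving infinite type at once. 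The main obstacle is the uniform control of the Levi contribution across all unbalanced $(\beta_1,\beta_2,\beta_3,\beta_4)$; this will require a case analysis by the largest $\beta_i$, tracking how the stabilizer in ${\rm GL}(V_i)$ of the remaining three subspaces (or their projections to $V_i^\perp/V_i$ and to a complementary isotropic $V_i^*$) grows with $\beta_i$.
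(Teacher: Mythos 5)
Your two reductions (to $k=4$ and to Grassmannians via the ``first subspace'' projection) are correct and exactly mirror the paper's first step. The trouble starts with the dimension count. Your formula $\dim\mathcal{M}_0-(\dim G-\dim{\rm O}(W^\perp))=\tfrac{1}{2}(s^2-3\sum\beta_i^2)$ is right under the genericity hypotheses you implicitly assume (namely that $W=V_1+\cdots+V_4$ is nondegenerate of dimension $\sum\beta_i$), but as you yourself compute, the quantity fails to be positive precisely in the unbalanced regime, e.g.\ $(\beta_1,\ldots,\beta_4)=(1,1,1,m)$ with $m\ge 3$. Your third paragraph does not close this gap: the assertion that the Levi ${\rm GL}(V_i)\times{\rm O}(V_i^\perp/V_i)$ of the parabolic stabilizing the dominant $V_i$ contributes ``extra continuous directions in the stabilizer'' is not quantified, and you explicitly flag the uniform control of this contribution as an unresolved obstacle. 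So the argument as written proves infinite type only for a proper subset of the tuples $(\beta_1,\ldots,\beta_4)$; Proposition~\ref{prop1.1} requires \emph{all} of them.

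There is a second, subtler issue: the dimension-counting route really establishes ``no open $G$-orbit over $\bar{\bbf}$,'' and you then need the standard density argument to convert this into ``infinitely many $G(\bbf)$-orbits'' for an arbitrary infinite $\bbf$. This step is salvageable (flag varieties are rational, so $\bbf$-points are Zariski-dense, and a finite union of lower-dimensional orbit closures cannot cover), but it must be said explicitly since the paper's standing convention is an arbitrary field with ${\rm char}\,\bbf\ne 2$.

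The paper avoids both difficulties by not reasoning generically at all. It fixes a $3$-dimensional nondegenerate block $W=\bbf e_n\oplus\bbf e_{n+1}\oplus\bbf e_{n+2}$, a fixed isotropic ``padding'' subspace $U$, and an explicit one-parameter family of configurations $m_\lambda=(V_1,V_2,V_3,V_{4,\lambda})$ where $V_1,V_2,V_3$ project to three fixed points of the projective line $P(W)$ and $V_{4,\lambda}$ projects to a varying fourth point $W_\lambda$. Lemma~\ref{lem2.1'} forces any $g\in G$ with $gm_\lambda=m_\mu$ to preserve $W\oplus U$ and $U$ and hence to induce an isometry of $W$, and Lemma~\ref{lem2.1} (which is, in essence, your cross-ratio observation in disguise) then forces that isometry to be $\pm{\rm id}$, so $\lambda=\mu$. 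This gives $|\bbf^\times|$-many distinct orbits for every choice of $(\ell_1,\ldots,\ell_4)$ simultaneously, with no case split and over any infinite field. Your cross-ratio remark for $\beta_i=1$ is therefore the right germ of the idea; the missing step is to ``pad'' it into the general configuration via something like Lemma~\ref{lem2.1'}, rather than trying to prove the result by bounding generic stabilizers.
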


So we have only to consider triple flag varieties
$$\mct=\mct_{{\bf a},{\bf b},{\bf c}}=M_{\bf a}\times M_{\bf b}\times M_{\bf c}$$ with
${\bf a}=(\alpha_1,\ldots,\alpha_p),\ {\bf b}=(\beta_1,\ldots,\beta_q)$ and ${\bf c}=(\gamma_1,\ldots,,\gamma_r)$.
We may assume
$$p\le q\le r.$$

\begin{proposition} If $\mct$ is of finite type, then $p=q=1$.
\label{prop1.2}
\end{proposition}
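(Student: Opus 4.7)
The plan is to prove the contrapositive: if $q\ge 2$ (the negation of $p=q=1$ under the assumption $p\le q$), then $\mct$ is of infinite type. Since $p\le q\le r$ forces $r\ge 2$ as well, at least two of the three flag factors have length $\ge 2$, and after permuting the factors of $\mct$ (which preserves the $G$-orbit set) I may assume those two are $M_{\bf a}$ and $M_{\bf b}$, so that $p,q\ge 2$.

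The first step is a reduction. The natural $G$-equivariant forgetful surjections
$$M_{\bf a}\twoheadrightarrow M_{(\alpha_1,\alpha_2)},\qquad M_{\bf b}\twoheadrightarrow M_{(\beta_1,\beta_2)},\qquad M_{\bf c}\twoheadrightarrow M_{(\gamma_1)}$$
(each discards the higher-dimensional subspaces of its flag) combine to a $G$-equivariant surjection
$$\mct\twoheadrightarrow\mct^\sharp:=M_{(\alpha_1,\alpha_2)}\times M_{(\beta_1,\beta_2)}\times M_{(\gamma_1)}.$$
Since a $G$-equivariant surjection cannot decrease the number of $G$-orbits, it is enough to show that $\mct^\sharp$ is of infinite type.

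The second step is to exhibit a continuous family of $G$-orbits on $\mct^\sharp$. Place $\mcf_a$ and $\mcf_b$ at opposite ends of the canonical basis,
$$V_i^a=\bbf e_1\oplus\cdots\oplus\bbf e_{\alpha_1+\cdots+\alpha_i},\qquad V_j^b=\bbf e_{2n+2-\beta_1-\cdots-\beta_j}\oplus\cdots\oplus\bbf e_{2n+1}.$$
Since $\alpha_1+\alpha_2+\beta_1+\beta_2\le 2n$, the subspaces $V_2^a$ and $V_2^b$ are disjoint, and $\bbf^{2n+1}=V_2^a\oplus U\oplus V_2^b$ with $U:=(V_2^a+V_2^b)^\perp$ carrying the non-degenerate restricted form. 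Computing $H:=P_{(\alpha_1,\alpha_2)}\cap P_{(\beta_1,\beta_2)}$ shows (after verifying that the unipotent-radical intersection is trivial) that $H$ is essentially $L_{(\alpha_1,\alpha_2)}\cap L_{(\beta_1,\beta_2)}$, a product of $\mathrm{GL}$-blocks acting in dual pairs on the graded pieces of $V_2^a\oplus V_2^b$ together with $\mathrm{O}(U)$. For a generic isotropic line $V^c=\bbf v$, decompose $v=v_a+v_U+v_b$; the isotropy condition $2(v_a,v_b)+(v_U,v_U)=0$ and the block/dual-pairing structure supply two independent $H$-invariant bilinear quantities, coming from the pairings between the graded pieces of $v_a$ and of $v_b$. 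Both rescale as $\kappa^2$ under the line equivalence $v\mapsto\kappa v$, so their ratio is a $G$-invariant on $\mct^\sharp$ taking infinitely many values, yielding infinitely many $G$-orbits.

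The main obstacle is the explicit construction of this continuous invariant and its verification on a non-empty $G$-stable open locus of $\mct^\sharp$ for every valid parameter tuple $(\alpha_1,\alpha_2,\beta_1,\beta_2,\gamma_1,n)$. A naive dimension count $\dim\mct^\sharp>\dim G$ works in very small cases (for instance $\alpha_i=\beta_i=\gamma_1=1$ and $n=2$) but already fails for $n\ge 4$, so the invariant-theoretic construction cannot be avoided. Asymmetric sub-cases such as $\alpha_1\ne\beta_1$ or $\alpha_2\ne\beta_2$ require extra care, since the dual-pairing structure between the two flags is less symmetric; nonetheless a suitable ratio-of-pairings argument should still go through after carefully identifying which graded pieces are dually paired.
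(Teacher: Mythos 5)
Your overall strategy is sound at the top level: you argue the contrapositive, pass to forgetful $G$-equivariant projections to drop to shorter flags, and then try to produce a continuous $G$-invariant. The reduction step is valid and is in fact the same kind of reduction the paper uses (the paper projects to $M_{(\alpha_1)}\times M_{(\beta_1,\beta_2)}\times M_{(\gamma_1,\gamma_2)}$, which is a permuted version of your $\mct^\sharp$). However, what follows is a plan for a proof rather than a proof, and you say as much yourself (``the invariant-theoretic construction cannot be avoided'', ``should still go through after carefully identifying \dots''). The paper's actual argument is quite different in nature: it writes down an explicit one-parameter family of triple flags (built from isotropic subspaces of a fixed copy of $\bbf^5$ embedded via $\iota:\bbf^5\to\bbf^{2n+1}$, padded with $U_{[\ell]}$'s) and then proves the members lie in distinct $G$-orbits by applying Lemma~\ref{lem2.1'} to reduce to the rank-two Lemma~\ref{lem2.5} about ${\rm O}_5(\bbf)$. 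That is a closed, verifiable argument; yours is not.

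Beyond the overall incompleteness, there are concrete gaps. First, the claim that $H=P_{(\alpha_1,\alpha_2)}\cap wP_{(\beta_1,\beta_2)}w^{-1}$ has trivial unipotent-radical intersection is not true in general; it holds only when the two parabolics are genuinely opposite, which requires compatibility between $(\alpha_1,\alpha_2)$ and $(\beta_1,\beta_2)$. In the typical case $H$ has a nontrivial unipotent part (cf.\ the explicit description of $R=P_{U_+}\cap P_{U_-}$ in Proposition~\ref{prop3.2}, where $N_W\cap R$ is far from trivial), and this unipotent part is precisely what can kill naive pairing invariants. Second, you treat the third factor $M_{(\gamma_1)}$ as the set of isotropic \emph{lines} ($V^c=\bbf v$), but after your relabeling $\gamma_1$ is the first part of the \emph{original} ${\bf a}$, which ranges over $1,\dots,n$; for $\gamma_1>1$ your decomposition $v=v_a+v_U+v_b$ of a single vector doesn't produce an invariant of the subspace $V^c$. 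Third, the two pairings you want to ratio are not identified, not shown to be $H$-invariant (including under the unipotent part), and not shown to be jointly nonzero on a nonempty locus for all admissible $(\alpha_1,\alpha_2,\beta_1,\beta_2,\gamma_1,n)$. Until these are supplied the proposal does not establish Proposition~\ref{prop1.2}.
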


So we may assume $p=q=1$ in the following. We may assume
$$\alpha_1\le \beta_1.$$
When $r=1$, we may moreover assume
$$\alpha_1\le \beta_1\le \gamma_1.$$

We can also prove:

\begin{proposition} Suppose $\mct_{{\bf a},{\bf b},{\bf c}}$ is of finite type. Then the condition
$${\rm (C)}\qquad \max(\alpha_1,\beta_1,\gamma_1)<n\Longrightarrow |\bbf^\times/(\bbf^\times)^2|<\infty$$
holds.
\label{prop1.4}
\end{proposition}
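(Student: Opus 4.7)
The plan is to prove the contrapositive: under the standing assumption $p=q=1$, assuming $\max(\alpha_1,\beta_1,\gamma_1)<n$ and that $\bbf^\times/(\bbf^\times)^2$ is infinite, I will exhibit infinitely many $G$-orbits on $\mct_{{\bf a},{\bf b},{\bf c}}$, contradicting the finite-type hypothesis. First I would reduce to ${\bf c}=(\gamma_1)$: the $G$-equivariant forgetful map $M_{\bf c}\to M_{(\gamma_1)}$, $(V_1\subset\cdots\subset V_r)\mapsto V_1$, is surjective, so finitely many $G$-orbits upstairs force finitely many downstairs. Thus it suffices to exhibit infinitely many $G$-orbits on the triple product of isotropic Grassmannians $M_{(\alpha_1)}\times M_{(\beta_1)}\times M_{(\gamma_1)}$.

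The prototype is $\alpha_1=\beta_1=\gamma_1=1$, i.e., three isotropic lines $(\bbf u_1,\bbf u_2,\bbf u_3)$. The product
$$\Lambda_0:=(u_1,u_2)(u_1,u_3)(u_2,u_3)$$
is a $G$-invariant of the triple with values in $\bbf/(\bbf^\times)^2$: rescaling $u_i\mapsto\mu_iu_i$ multiplies it by the square $(\mu_1\mu_2\mu_3)^2$, and $G$ acts by form-preserving transformations. Taking $u_1=e_1$, $u_2=e_{2n+1}$, and $u_3=e_1+\lambda e_{2n+1}+e_2-\lambda e_{2n}$ (isotropic for $n\ge 2$) yields $\Lambda_0\equiv\lambda$, so every square class in $\bbf^\times$ is realized; this already produces infinitely many $G$-orbits.

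For general $\alpha_1,\beta_1,\gamma_1<n$, I would embed the line family into the larger triple by setting $V_i=\bbf u_i\oplus U_i$, with $U_i$ a fixed isotropic subspace of dimension $\alpha_1-1$, $\beta_1-1$, or $\gamma_1-1$, chosen inside the orthogonal complement of $u_1,u_2,u_3$ and in generic position relative to one another. The assumption $\max(\alpha_1,\beta_1,\gamma_1)<n$ is used precisely here to guarantee enough ambient isotropic room in $\bbf^{2n+1}$ to fit the $U_i$ disjointly from the line substructure. To prove distinct square classes of $\lambda$ give distinct $G$-orbits, I would construct an invariant $\Lambda:M_{(\alpha_1)}\times M_{(\beta_1)}\times M_{(\gamma_1)}\to \bbf/(\bbf^\times)^2$ restricting to $\Lambda_0$ on the line family: when $\alpha_1=\beta_1=\gamma_1=k$, the product $\det P_{12}\det P_{13}\det P_{23}$ of the three pairing matrices $P_{ij}:V_i\times V_j\to\bbf$ transforms under $g_i\in\mathrm{GL}(V_i)$ by $(\det g_1\det g_2\det g_3)^2$, hence is well-defined modulo $(\bbf^\times)^2$; an analogous construction using the induced symmetric bilinear form on the smallest $V_i$ handles the unequal-dimension case.

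The main obstacle is the well-definedness of $\Lambda$ when $\alpha_1,\beta_1,\gamma_1$ are unequal: the natural composition $V_1\to V_2^*\to V_3\to V_1^*$ requires a section of the surjection $V_3\to V_2^*$ whose kernel $V_3\cap V_2^\perp$ may be nontrivial, so one must verify that $V_1\perp V_3\cap V_2^\perp$ (or otherwise handle the ambiguity by passing to a suitable quotient) for the explicit family at hand. This verification, together with the explicit coordinate computations for the padding subspaces $U_i$, constitutes the bulk of the bookkeeping, while the conceptual structure of the argument is controlled by the line invariant $\Lambda_0$.
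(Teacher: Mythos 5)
Your overall strategy — produce a one-parameter family whose orbits are separated by a square-class invariant built from the bilinear form — is the same one the paper uses, and your computation of $\Lambda_0$ for the prototype $\alpha_1=\beta_1=\gamma_1=1$ (with the explicit isotropic lines $u_1=e_1$, $u_2=e_{2n+1}$, $u_3=e_1+\lambda e_{2n+1}+e_2-\lambda e_{2n}$) is correct and does show that case is of infinite type. The reduction of ${\bf c}$ to $(\gamma_1)$ via the forgetful projection is also standard and valid.

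The gap is exactly where you flag it: for general unequal $\alpha_1,\beta_1,\gamma_1<n$ you do not actually construct the invariant $\Lambda$, and the sketch you give for doing so does not obviously close. The product $\det P_{12}\det P_{13}\det P_{23}$ only makes sense when the three Gram matrices are square and nondegenerate, which forces $\alpha_1=\beta_1=\gamma_1$ and a genericity hypothesis on the triple; the suggestion ``use the induced symmetric bilinear form on the smallest $V_i$'' is not well-defined as stated (each $V_i$ is isotropic, so the restricted form vanishes, and the composite $V_1\to V_2^*\to V_3\to V_1^*$ depends on choices of sections whenever the kernels $V_i\cap V_j^\perp$ are nonzero). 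Moreover, even with $\Lambda$ in hand, the padding $V_i=\bbf u_i\oplus U_i$ introduces another unaddressed issue: an element $g\in G$ stabilizing the padded triple need not preserve the line $\bbf u_i$ inside $V_i$, so you would still have to argue that the line invariant $\Lambda_0$ is recoverable from the subspace data — this is nontrivial bookkeeping, not a formality. The paper sidesteps all of this by working with a concretely chosen family
$U_+=U_{[\alpha_1-1]}\oplus\bbf e_{n-1}$, $U_-=U_{[\beta_1-1]}\oplus\bbf e_{n+3}$,
$V_\lambda=U_{[\gamma_1-1]}\oplus\bbf(e_{n-1}+e_n+\lambda e_{n+2}-\lambda e_{n+3})$,
and tracking a single vector: the conditions $gU_\pm=U_\pm$ force $ge_{n-1}\in ae_{n-1}+U_{[\alpha_1-1]}$ and $ge_{n+3}\in a^{-1}e_{n+3}+U_{[\beta_1-1]}$, and the vector $e_n+\lambda e_{n+2}\in U_++U_-+V_\lambda$ is pinned down up to a scalar $k$ and a summand from $U_{[\max(\alpha_1,\beta_1,\gamma_1)-1]}$ by its orthogonality to $e_{n-1},e_{n+3}$; its norm $2\lambda$ then gives $2\lambda=2k^2\mu$ directly. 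This handles arbitrary (in particular unequal) $\alpha_1,\beta_1,\gamma_1<n$ uniformly and avoids ever needing a globally defined invariant. If you want to salvage your approach, you should replace the global $\Lambda$ with a direct vector-tracking argument along these lines.
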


\begin{remark} (i) If $\bbf$ is algebraically closed, then $(\bbf^\times)^2=\bbf^\times$.

(ii) If $\bbf=\br$ or $\bbf$ is a finite field, then $|\bbf^\times/(\bbf^\times)^2|=2$.

(iii) There are many fields $\bbf$ such that $|\bbf^\times/(\bbf^\times)^2|=\infty$. For example, $\bbf=\bq,\ \bc(x)$ (the field of rational functions with one variable) etc.
\end{remark}

We can classify triple flag varieties for $G$ of finite type as follows.

\begin{theorem} Suppose $1=p=q\le r$ and $\alpha_1\le\beta_1$. When $r=1$, suppose
$\alpha_1\le \beta_1\le \gamma_1$. Furthermore suppose the above condition {\rm (C)}. Then $\mct=\mct_{{\bf a},{\bf b},{\bf c}}$ is of finite type if and only if $({\bf a},{\bf b},{\bf c})$ satisfies one of the following four conditions.

{\rm (I)} $\alpha_1=\beta_1=n$.

{\rm (II)} $\alpha_1=1$.

{\rm (III)} $r=1$ and $\gamma_1=n$.

{\rm (IV)} $r=2$ and $\beta_1=n$.
\label{th1.3}
\end{theorem}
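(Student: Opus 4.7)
The plan is to prove the two implications separately. For sufficiency, I would handle each of (I)--(IV) by reducing the three-factor problem to a tractable two-factor problem (which is automatically finite by the Bruhat decomposition of $P_{\bf a}\backslash G/P_{\bf b}$). In case (I), $V_1$ maximal isotropic of dimension $n$ has $P_{\bf a}$-Levi of type $A$ (plus a trivial $O_1$), and the residual action on isotropic configurations in $M_{\bf b}\times M_{\bf c}$ reduces to a type-$A$ double-flag problem, which is of finite type by the classification of Magyar--Weyman--Zelevinsky \cite{MWZ1}. Case (II), with $\alpha_1=1$, passes to the quotient $v^\perp/\bbf v$, an odd orthogonal space of rank $n-1$: for each discrete choice recording the positions of $W_1$ and of the components of $\mcf_c$ relative to $V_1$ and $v^\perp$, the induced pair of flags in the quotient yields a two-factor problem of finite type. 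Cases (III) and (IV) follow analogous reductions with the maximal isotropic ($U_1$ in (III), $W_1$ in (IV)) taking the structural role of $V_1$ in case (I), plus in (IV) one extra step to classify $U_2$ relative to $(V_1,W_1,U_1)$.

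For necessity, I would assume $\mct_{{\bf a},{\bf b},{\bf c}}$ has finite type with none of (I)--(IV) holding, and construct explicit continuous families of $G$-invariants. Under those hypotheses one has $\alpha_1\ge 2$, and the excluded configurations split into: $r\ge 3$; $r=2$ with $\beta_1<n$; or $r=1$ with $\max(\alpha_1,\beta_1,\gamma_1)<n$. For each subcase I would exhibit a one-parameter family of triples $(\mcf_a^\lambda,\mcf_b^\lambda,\mcf_c^\lambda)$ in which $\lambda$ ranges over $\bbf$ or $\bbf^\times/(\bbf^\times)^2$, with the invariant typically realized as a cross-ratio of four suitably constructed isotropic lines, or as the square-class of $(v,v)$ for a canonically reconstructed non-isotropic vector $v$. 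Condition (C) enters precisely when no factor is maximal isotropic: in that regime the square class is essentially the only continuous invariant available, and finite type is obstructed exactly when $|\bbf^\times/(\bbf^\times)^2|=\infty$.

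The principal obstacle will be the sufficiency proof in case (IV), where the combination of maximal isotropic $W_1$, smaller isotropic $V_1$, and two-step flag $U_1\subset U_2$ leads to a rich bookkeeping of intersection dimensions $\dim(V_1\cap U_i)$, $\dim(W_1\cap U_i)$ ($i=1,2$), and residual form data on the various quotients; showing that only finitely many orbits arise requires a delicate case analysis and may force splitting on the parameter $\gamma_1$. A secondary technical point is the necessity argument in the $r\ge 3$ subcase: one must construct the invariant using all three flags simultaneously, then verify its $G$-invariance by exhibiting explicit stabilizing elements, and rule out that different-looking parameter values can be related by a hidden element of $G$.
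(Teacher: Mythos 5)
Your necessity strategy matches the paper's: for each excluded configuration one constructs a one-parameter family indexed by $\lambda\in\bbf$ or by a square class, shows it embeds isometrically into $\bbf^{2n+1}$, and verifies that distinct parameters give distinct $G$-orbits by explicitly computing stabilizers. The paper does exactly this, building the families inside $\bbf^5$, $\bbf^6$, $\bbf^7$ (Lemmas \ref{lem2.5}, \ref{lem2.9}, \ref{lem2.12}, \ref{lem2.16}) and, for Proposition \ref{prop1.4}, using the square class of $(v,v)$ as you describe. You gesture at cross-ratios of isotropic lines; that is the mechanism in the $O_3$ lemma (used for $k\ge 4$), but the triple-flag exclusions actually hinge on the more intricate $O_6$ and $O_7$ configurations. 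Still, the shape of your argument is right.

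Your sufficiency approach, however, is not what the paper does, and as stated it has a genuine gap. Fixing the maximal isotropic flag $V$ and working with the stabilizer $P_V$ does not produce a ``type-$A$ double-flag problem'' in the sense of \cite{MWZ1}: the residual group is a parabolic (so the two-factor Bruhat finiteness you invoke does not apply to it), and the remaining flags $\mcf_b,\mcf_c$ live in $\bbf^{2n+1}$ with its quadratic form, not inside an $n$-dimensional $\mathrm{GL}_n$-module. The paper instead normalizes $(U_+,U_-)$ (Proposition \ref{prop3.1}), classifies $R$-orbits of maximal isotropics by explicit representatives $V(b_1,\ldots,b_{15},\varepsilon)$ (Theorem \ref{th3.10}), and then, for case (IV), pins down $R_V$-orbits on a Grassmannian of subspaces of $U_+$ through a long sequence of reductions; for case (II) it computes $R_V|_V$ directly as a concrete subgroup of $\mathrm{GL}(V)$ and appeals to results on $\mathrm{GL}_n/B$-orbit counts. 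Case (I) is not reproved at all but cited from \cite{M2}. More seriously, your sufficiency sketch for case (II) nowhere invokes condition (C), yet (C) (equivalently (C')) is \emph{needed for sufficiency}, not only for the separate necessity statement in Proposition \ref{prop1.4}: in the paper's Section 5 the subcase $b_{13}=1$ produces an orbit space with an $\bbf^\times/(\bbf^\times)^2$ worth of classes, and finiteness there is exactly the hypothesis $|\bbf^\times/(\bbf^\times)^2|<\infty$. An argument that appears to prove finiteness of $\mct_{(1),(\beta_1),{\bf c}}$ with $\beta_1,\gamma_1<n$ over an arbitrary field would be proving something false. Your quotient $v^\perp/\bbf v$ idea also discards the $\bbf^\times$-scaling of $v$ and the unipotent radical acting on the fibres, which is precisely where that square-class obstruction lives, so the ``discrete bookkeeping plus two-factor finiteness'' reduction cannot close without confronting this.
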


\begin{remark} (i) Let $G$ be a simple algebraic group with parabolic subgroups $P_1,P_2$ and $P_3$. Then $G$-orbit decomposition of the triple flag variety $(G/P_1)\times (G/P_2)\times (G/P_3)$ is identified with $P_3$-orbit decomposition of the double flag variety $(G/P_1)\times (G/P_2)$ by the map $(g_1,g_2,g_3)\mapsto (g_3^{-1}g_1,g_3^{-1}g_2)$.

When $P_3$ is a Borel subgroup $B$, Littelmann and Stembridge classified double flag varieties with open $B$-orbits in \cite{L} and  \cite{S}. (In \cite{L}, $P_1$ and $P_2$ are maximal parabolic subgroups.) The two cases (I) and (II) in Theorem \ref{th1.3} are written in Table 1 of \cite{L}. So the double flag variety has an open orbit for these cases.

(ii) Suppose $\bbf$ is an algebraically closed field of characteristic zero. Then a double flag variety $(G/P_1)\times (G/P_2)$ has an open $B$-orbit if and only if it has a finite number of $B$-orbits by Brion-Vinberg's theorem (\cite{B},\cite{V}).

(iii) In \cite{M2}, we explicitly described $G$-orbit decomposition of the triple flag variety $\mct=\mct_{(n),(n),(1^n)}$ (the case (I) in Theorem \ref{th1.3} with $P_{\bf c}=B$) over an arbitrary field $\bbf$ of ${\rm char}\,\bbf\ne 2$.

(iv) The dimension of the flag variety $M_{\bf a}=M_{(\alpha_1,\ldots,\alpha_p)}$ is given by
$$\dim M_{\bf a}=n^2-\frac{\alpha_1(\alpha_1-1)}{2}-\cdots- \frac{\alpha_p(\alpha_p-1)}{2}-(n-\alpha_1-\cdots-\alpha_p)^2.$$
If $\mct=\mct_{{\bf a},{\bf b},{\bf c}}$ is of finite type, then
$$\dim \mct=\dim M_{\bf a}+\dim M_{\bf b}+\dim M_{\bf c}\le \dim G=n(2n+1).$$
For each case in Theorem \ref{th1.3}, we can also get this inequality by direct computation. The equality $\dim \mct=\dim G$ holds for the following $({\bf a},{\bf b},{\bf c})$:

(I) $(n)(n)(1^n)$.

(II) $(1)(2)(11)$.

(III) $(1)(1)(1)\ (n=1)$.

(IV) $(1)(2)(11),\ (2)(2)(11),\ (2)(3)(11),\ (2)(3)(12),\ (2)(3)(21),\ (3)(4)(22)$,

\noindent
$(3)(4)(12),\ (3)(4)(21),\ (3)(5)(22),\ (4)(5)(22),\ (4)(6)(23),\ (4)(6)(32),\ (5)(7)(33)$.
\end{remark}

This paper is organized as follows.

In Section 2, we first prove Proposition \ref{prop1.1} by a similar argument as in \cite{MWZ1} and \cite{MWZ2}. So we have only to consider triple flag varieties in the rest of this paper. We prove in this section that one of the four conditions in Theorem \ref{th1.3} holds if the triple flag variety is of finite type. Our arguments are essentially the same as those in 
\cite{MWZ1} and \cite{MWZ2}. Proposition \ref{prop1.4} is also proved in the same way.

In Section 3, we describe representatives of $G$-orbits on the triple flag variety $\mct_{(\alpha),(\beta),(n)}$ (Theorem \ref{th3.10}). This in particular implies that the triple flag varieties in (III) of Theorem \ref{th1.3} are of finite type. We fix $\alpha$ and $\beta$-dimensional isotropic subspaces $U_+$ and $U_-$, respectively, of $\bbf^{2n+1}$. Then we have only to classify $R$-orbits of maximally isotropic subspaces in $\bbf^{2n+1}$ where $R=\{g\in G\mid gU_+=U_+\mbox{ and }gU_-=U_-\}$.

In Section 4, we prove that every triple flag variety $\mct_{(\alpha),(n),(\gamma_1,\gamma_2)}$ in (IV) of Theorem \ref{th1.3} is of finite type. Changing the order of flag varieties, we consider triple flag varieties of the form $\mct_{(\alpha_1,\alpha_2),(\beta),(n)}$. We solve this problem in the following way. Put $\alpha=\alpha_1+\alpha_2$. Then we may fix $\alpha,\beta$ and $n$-dimensional isotropic subspaces $U_+,U_-$ and $V$, respectively, by Theorem \ref{th3.10}. Write $R_V=\{g\in G\mid gU_+=U_+,\ gU_-=U_-,\ gV=V\}$. Then we have only to consider $R_V$-orbit decomposition of the Grassmann variety consisting of $\alpha_1$-dimensional subspaces of $U_+$. Our arguments are so complicated that we only show ``finiteness'' of the number of orbits for this case.

In Section 5, we prove that the triple flag varieties $\mct_{(1),(\beta_1),{\bf c}}$ in (II) are of finite type. By the condition (C), we may assume
$$\beta_1=n,\quad \gamma_1=n\quad\mbox{or}\quad |\bbf^\times/(\bbf^\times)^2|<\infty.$$
(Note that the condition (C) is automatically satisfied for the other cases (I), (III) and (IV).) If $\gamma_1=n$, then ${\bf c}=(n)$. So this case is included in (III) and hence we may assume
$$\beta_1=n\quad\mbox{or}\quad |\bbf^\times/(\bbf^\times)^2|<\infty.$$
We may moreover assume ${\bf c}=(1^n)$. Changing the order of flag varieties, we may consider triple flag varieties of the form $\mct_{(\alpha),(1),(1^n)}$ with the condition
$${\rm (C')}\qquad \alpha<n\Longrightarrow |\bbf^\times/(\bbf^\times)^2|<\infty.$$

\section{Exclusion of multiple flag varieties of infinite type}

\subsection{A technical lemma}

In this section, we use the following technical lemma.

\begin{lemma} Let $W$ be a nondegenerate subspace of $\bbf^{2n+1}$. Let $W_1,\ldots,W_k,W'_1,\ldots,W'_k$ be subspaces of $W$ such that
$$W=W_1+\cdots+ W_k=W'_1+\cdots+ W'_k.$$
Let $U_1$ be an isotropic subspace of $W^\perp$ and $U_2,\ldots,U_k$ be subspaces of $U_1$. Suppose
$$g(W_i\oplus U_i)=W'_i\oplus U_i\quad \mbox{for }i=1,\ldots,k$$
for a $g\in G$. Then we have$:$

{\rm (i)} $g(W\oplus U_1)=W\oplus U_1$.

{\rm (ii)} $gU_1=U_1$.

\noindent$($ By {\rm (i)} and {\rm (ii)}, $g$ induces an isometry on the factor space $(W\oplus U_1)/U_1\cong W$.$)$
\label{lem2.1'}
\end{lemma}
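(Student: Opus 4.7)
The plan is to deduce (i) from a direct decomposition of $W\oplus U_1$ into the pieces $W_i\oplus U_i$, and then to get (ii) essentially for free by recognizing $U_1$ as an intrinsically defined subspace (the radical) of $W\oplus U_1$.

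For (i), I would first observe the identity
$$W\oplus U_1=(W_1\oplus U_1)+(W_2\oplus U_2)+\cdots+(W_k\oplus U_k).$$
The inclusion $\supseteq$ holds because each $W_i\oplus U_i$ sits inside $W+U_1$ (using $W_i\subset W$ and $U_i\subset U_1$), while $\subseteq$ holds because the right-hand side contains every $W_i$ (hence their sum $W$) and contains $U_1$ (as the first summand). Applying $g$ and using the hypothesis $g(W_i\oplus U_i)=W_i'\oplus U_i$ gives
$$g(W\oplus U_1)=(W_1'\oplus U_1)+\cdots+(W_k'\oplus U_k)=W+U_1=W\oplus U_1,$$
since $W_1'+\cdots+W_k'=W$.

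For (ii), the key remark is that since $g\in G$ preserves the form $(\,,\,)$ and, by (i), preserves $W\oplus U_1$, it also preserves its orthogonal $(W\oplus U_1)^\perp$, and hence the intersection
$$\mathrm{rad}(W\oplus U_1)=(W\oplus U_1)\cap (W\oplus U_1)^\perp.$$
So it suffices to identify this radical with $U_1$. Using $U_1\subset W^\perp$ and the nondegeneracy of $W$ we have $\bbf^{2n+1}=W\oplus W^\perp$, so any element of $W\oplus U_1$ lying in $W^\perp$ must lie in $U_1$; this shows $(W\oplus U_1)\cap W^\perp=U_1$. Since $U_1$ is isotropic, $U_1\subset U_1^\perp$, and thus $(W\oplus U_1)\cap W^\perp\cap U_1^\perp=U_1$. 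But $(W\oplus U_1)^\perp=W^\perp\cap U_1^\perp$, so the radical is exactly $U_1$, and (ii) follows.

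The only thing that requires a little care is the description of the radical; everything else is formal manipulation. I do not expect a genuine obstacle here — the main content of the lemma is conceptual (the $U_i$'s all live inside the common ``radical direction'' $U_1$ of the enlarged space $W\oplus U_1$), and once that is phrased correctly both parts are short.
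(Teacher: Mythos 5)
Your proof is correct and is essentially the same as the paper's. For (i), the paper also writes $W\oplus U_1=(W_1\oplus U_1)+\cdots+(W_k\oplus U_k)=(W_1'\oplus U_1)+\cdots+(W_k'\oplus U_k)$ and applies $g$. For (ii), the paper simply says ``the orthogonal space of $W\oplus U_1$ is $U_1$,'' which is the same observation you make but stated more loosely; your version, identifying $U_1$ as the radical $(W\oplus U_1)\cap(W\oplus U_1)^\perp$ and noting that an isometry preserving a subspace preserves its radical, is the more precise way to justify that sentence.
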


\begin{proof} (i) The assertion is clear since
$$(W_1\oplus U_1)+\cdots+(W_k\oplus U_k)=(W'_1\oplus U_1)+\cdots+(W'_k\oplus U_k)=W\oplus U_1.$$

The assertion (ii) follows from (i) since the orthogonal space of $W\oplus U_1$ is $U_1$.
\end{proof}

\subsection{The case of $n=1$}

Suppose $n=1$. Define one-dimensional isotropic subspaces
$$W_\lambda=\bbf\left(\lambda e_1+e_2-{1\over 2\lambda}e_3\right)$$
of $\bbf^3$ for $\lambda\in\bbf^\times$. Then the flag variety $M=M_{(1)}\cong P^1(\bbf)$ consists of
$\bbf e_1,\ \bbf e_3$ and $W_\lambda$ with $\lambda\in\bbf^\times$.

\begin{lemma} Let $g$ be an element of $G={\rm O}_3(\bbf)$ such that
$g\bbf e_1=\bbf e_1,\ g\bbf e_3=\bbf e_3$ and that $gW_1=W_1$.
Then $g=\pm {\rm id}$.
\label{lem2.1}
\end{lemma}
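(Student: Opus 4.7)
The plan is to use the three given invariant lines to pin $g$ down coordinate by coordinate. First I would observe that in the basis $e_1,e_2,e_3$ the form satisfies $(e_1,e_3)=(e_3,e_1)=(e_2,e_2)=1$ with all other pairings zero, so $\bbf e_1\oplus \bbf e_3$ is nondegenerate with orthogonal complement $\bbf e_2$. Consequently any $g\in G$ that preserves both $\bbf e_1$ and $\bbf e_3$ automatically preserves $\bbf e_2$ as well, and is therefore diagonal in this basis: $ge_1=ae_1,\ ge_2=ce_2,\ ge_3=be_3$ for some $a,b,c\in\bbf^\times$.

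Next I would extract scalar constraints from orthogonality. The relation $(ge_1,ge_3)=(e_1,e_3)$ forces $ab=1$, while $(ge_2,ge_2)=(e_2,e_2)$ forces $c^2=1$. So at this stage the stabilizer of $(\bbf e_1,\bbf e_3)$ in $G$ has been cut down to a one-parameter torus (parametrized by $a$) times a sign.

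Finally, the invariance condition $gW_1=W_1$ translates to the requirement that $g(e_1+e_2-\tfrac{1}{2}e_3)=(a,c,-\tfrac{b}{2})$ be proportional to $(1,1,-\tfrac{1}{2})$. Comparing coordinates and using $ab=1$ forces $a=c=b$ and hence $a^2=1$, so $g=\pm\mathrm{id}$.

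There is no real obstacle; the statement is essentially a three-variable linear check. The only point worth noting is why the specific line $W_1$ is needed in addition to the two coordinate isotropic lines: rescaling $e_1$ by $a$ and $e_3$ by $a^{-1}$ (keeping $e_2$ fixed) sends $W_\lambda$ to $W_{a\lambda}$, so the stabilizer of $\bbf e_1$ and $\bbf e_3$ alone is a full rank-one torus, and it is precisely the extra condition $gW_1=W_1$ that collapses this torus to $\{\pm 1\}$. This rigidity is presumably what the lemma is meant to feed into the arguments of Section~2.
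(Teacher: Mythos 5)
Your proof is correct and follows essentially the same route as the paper: deduce that $g$ is diagonal from the preservation of $\bbf e_1$ and $\bbf e_3$ (hence of $\bbf e_2 = (\bbf e_1\oplus\bbf e_3)^\perp$), use the orthogonality relations to reduce to a diagonal matrix $\mathrm{diag}(\mu,\varepsilon,\mu^{-1})$ with $\varepsilon=\pm1$, and then use $gW_1=W_1$ to force $\mu=\varepsilon$. The closing observation about why $W_1$ is needed to kill the residual torus is a nice touch but not part of the proof.
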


\begin{proof} It follows from $g\bbf e_1=\bbf e_1$ and $g\bbf e_3=\bbf e_3$ that $g$ is of the form
$$\bp \mu & 0 & 0 \\ 0 & \varepsilon & 0 \\ 0 & 0 & \mu^{-1} \ep$$
with some $\mu\in\bbf^\times$ and $\varepsilon=\pm 1$. Since $gW_1=W_1$, we have $\mu=\varepsilon$.
\end{proof}

\begin{corollary} When $n=1$, the multiple flag variety $M\times M\times M\times M$ is of infinite type.
\end{corollary}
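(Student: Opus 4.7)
The plan is to exhibit an explicit one-parameter family of quadruples in $M\times M\times M\times M$ whose members lie in pairwise distinct $G$-orbits, and then invoke the hypothesis $|\bbf|=\infty$ to conclude that the family is infinite.

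Concretely, I would take the family
$$\mcf_\lambda=(\bbf e_1,\ \bbf e_3,\ W_1,\ W_\lambda),\qquad \lambda\in\bbf^\times,$$
and show that $\mcf_\lambda$ and $\mcf_{\lambda'}$ are in the same $G$-orbit only if $\lambda=\lambda'$. Indeed, if $g\in G$ sends $\mcf_\lambda$ to $\mcf_{\lambda'}$, then $g$ fixes each of $\bbf e_1$, $\bbf e_3$, and $W_1$. By Lemma \ref{lem2.1} this forces $g=\pm{\rm id}$, and since $-{\rm id}$ acts trivially on one-dimensional subspaces we get $gW_\lambda=W_\lambda$. Hence $W_\lambda=W_{\lambda'}$, i.e.\ $\lambda=\lambda'$.

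Since $|\bbf|=\infty$ implies $|\bbf^\times|=\infty$, the family $\{\mcf_\lambda\}_{\lambda\in\bbf^\times}$ provides infinitely many pairwise distinct $G$-orbits on $M\times M\times M\times M$, proving infinite type.

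The argument is really a rephrasing of the classical fact that $\mathrm{PGL}_2$ acts sharply $3$-transitively on $\mathbb{P}^1$: the only obstacle to double-check is that the fourth factor $W_\lambda$ is genuinely non-constant in $\lambda$ (which is immediate from its defining formula) and that the three chosen points $\bbf e_1, \bbf e_3, W_1$ are distinct elements of $M$ (clear, since $W_1$ contains neither $e_1$ nor $e_3$). There is no real obstacle, so this short direct construction will suffice.
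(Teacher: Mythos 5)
Your argument is correct and is precisely the one the paper intends: the corollary is stated immediately after Lemma \ref{lem2.1} without a written proof because it follows by exactly the reduction you give (four fixed points force $g=\pm\mathrm{id}$, which acts trivially on $M$, so the fourth coordinate is pinned down). No issues.
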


\begin{remark} If $\bbf$ is algebraically closed, then the corollary also follows from
$$\dim (M\times M\times M\times M)=4>3=\dim {\rm O}_3(\bbf).$$
\end{remark}

\subsection{Proof of Proposition \ref{prop1.1}}

Write $U_{[\ell]}=\bbf e_1\oplus\cdots\oplus \bbf e_\ell$ and $W=\bbf e_n\oplus \bbf e_{n+1}\oplus \bbf e_{n+2}$. We may assume $k=4$ and
${\bf a}_i=(\ell_i)$
for $i=1,\ldots,4$. 
Define one-dimensional isotropic subspaces
$$W_\lambda=\bbf\left(\lambda e_n+e_{n+1}-{1\over 2\lambda}e_{n+2}\right)$$
of $W$ for $\lambda\in\bbf^\times$. 
Take isotropic subspaces
\begin{align*}
V_1 & =\bbf e_n\oplus U_{[\ell_1-1]}, \quad
V_2 =\bbf e_{n+2}\oplus U_{[\ell_2-1]}, \\
V_3 & =W_1\oplus U_{[\ell_3-1]} 
\mand V_{4,\lambda} =W_\lambda \oplus U_{[\ell_4-1]}
\end{align*}
of $\bbf^{2n+1}$. Then $m_\lambda=(V_1,V_2,V_3,V_{4,\lambda})$ are elements of $\mcm$ for $\lambda\in\bbf^\times$. We have only to show that $Gm_\lambda\ne Gm_\mu$ if $\lambda\ne \mu$.

Suppose $gm_\lambda=m_\mu$ with some $g\in G$. Then
$gV_i=V_i\mbox{ for }i=1,2,3$ and $gV_{4,\lambda}=V_{4,\mu}$. Since
$$\bbf e_n+\bbf e_{n+2}+W_1+W_\lambda=\bbf e_n+\bbf e_{n+2}+W_1+W_\mu=W,$$
we have
$$g(W\oplus U)=W\oplus U\mand gU=U$$
where $U=U_{[\max(\ell_1,\ell_2,\ell_3,\ell_4)-1]}$ by Lemma \ref{lem2.1'}. Hence $g$ induces an isometry $\widetilde{g}$ on the factor space $\widetilde{W}=(W\oplus U)/U\cong W$. Let $\pi$ denote the projection $W\oplus U\to W$. Then
$$\pi(V_1)=\bbf e_n,\quad \pi(V_2)=\bbf e_{n+2},\quad \pi(V_3)=W_1,\quad \pi(V_\lambda)=W_\lambda\mand \pi(V_\mu)=W_\mu.$$
Since $gV_i=V_i\mbox{ for }i=1,2,3$ and $gV_{4,\lambda}=V_{4,\mu}$, it follows that
$$\widetilde{g}\bbf e_n=\bbf e_n,\quad \widetilde{g}\bbf e_{n+2}=\bbf e_{n+2},\quad \widetilde{g}W_1=W_1\mand \widetilde{g}W_\lambda=W_\mu.$$
Hence we have $\widetilde{g}=\pm{\rm id}$ by Lemma \ref{lem2.1} and therefore $\lambda=\mu$.
\hfill$\square$

\subsection{A lemma for ${\rm O}_5(\bbf)$}

Consider $\bbf^5$ with the canonical basis $f_1,\ldots,f_5$ and the symmetric bilinear form $(\ ,\ )$ such that
$(f_i,f_j)=\delta_{i,6-j}$.
Take a flag
$\mcf: \bbf(f_1+f_2)\subset \bbf f_1\oplus \bbf f_2$
and isotropic subspaces
$U=\bbf f_4\oplus \bbf f_5,\ U'=\bbf(f_1+f_3-{1\over 2}f_5)$
of $\bbf^5$.

\begin{lemma} If $g\mcf=\mcf, gU=U$ and $gU'=U'$ for a $g\in G={\rm O}_5(\bbf)$, then $g=\pm{\rm id}$.
\label{lem2.5}
\end{lemma}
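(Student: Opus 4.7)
The plan is to show that the three stabilization conditions force $g$ to be block diagonal with respect to a natural decomposition of $\bbf^5$, and then to use the condition $gU'=U'$ to pin down the blocks to $\pm 1$.

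First I would extract the block structure. From $g\mcf=\mcf$ and $gU=U$, $g$ preserves $U_1:=\bbf f_1\oplus\bbf f_2$ and $U=\bbf f_4\oplus\bbf f_5$, and hence also their orthogonal complements $U_1^\perp=\bbf f_1\oplus\bbf f_2\oplus\bbf f_3$ and $U^\perp=\bbf f_3\oplus\bbf f_4\oplus\bbf f_5$. Their intersection is $\bbf f_3$, so $gf_3=\epsilon f_3$ for some $\epsilon\in\bbf^\times$; since $(f_3,f_3)=1$, $\epsilon^2=1$. Thus $g$ is block-diagonal with respect to the direct sum $\bbf^5=U_1\oplus\bbf f_3\oplus U$.

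Next, writing $A=\bp p & q \\ r & s\ep$ for the matrix of $g|_{U_1}$ in the basis $f_1,f_2$, the isometry condition combined with the pairings $(f_1,f_5)=(f_2,f_4)=1$ forces the matrix of $g|_U$ in the basis $f_4,f_5$ to equal $J_2({}^tA)^{-1}J_2 = \frac{1}{\det A}\bp p & -q \\ -r & s\ep$. The flag condition $g\,\bbf(f_1+f_2)=\bbf(f_1+f_2)$ reads $p+q=r+s=\lambda$ for some $\lambda\in\bbf^\times$.

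The main computational step is the condition $gU'=U'$. I would compute $g(f_1+f_3-\frac{1}{2}f_5)$ using the block data above and demand that the result be proportional to $f_1+f_3-\frac{1}{2}f_5$. The vanishing of the $f_2$- and $f_4$-coefficients immediately gives $r=0$ and $q=0$; the flag relation then gives $p=s$, hence $\det A=p^2$; and matching the $f_3$- and $f_5$-coefficients gives $p=\epsilon$. Therefore $A=\epsilon I$ and $g|_U=\epsilon I$ as well, so $g=\epsilon I=\pm\mathrm{id}$.

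This is essentially a short direct computation; I foresee no real obstacle. The only subtle point is verifying that $g|_U$ is determined by $A$ via $J_2({}^tA)^{-1}J_2$, which follows from the duality between $U_1$ and $U$ induced by the form. The choice of $U'$ --- coupling $f_1,f_3,f_5$ and thus mixing the two outer blocks with the fixed line $\bbf f_3$ --- is precisely what kills the remaining two-dimensional freedom in $A$ left after imposing the flag condition.
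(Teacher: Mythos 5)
Your proof is correct and follows essentially the same route as the paper: derive the block-diagonal form of $g$ with respect to $U_1\oplus\bbf f_3\oplus U$, then use $gU'=U'$ together with the flag condition to force every block to be $\varepsilon I$. The paper organizes the final step vector-by-vector (using $(gf_2,gf_4)=1$ rather than explicitly writing $\det A=ps$), but the content is identical.
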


\begin{proof} Since $g(\bbf f_1\oplus \bbf f_2)=\bbf f_1\oplus \bbf f_2$ and since $g(\bbf f_4\oplus \bbf f_5)=\bbf f_4\oplus \bbf f_5$, $g$ is of the form
$$\bp A && 0 \\ & \varepsilon & \\ 0 && J\,{}^tA^{-1}J \ep$$
with some $A\in{\rm GL}_2(\bbf)$ and $\varepsilon=\pm 1$ where $\displaystyle{J=\bp 0 & 1 \\ 1 & 0 \ep}$. Hence $gf_3=\varepsilon f_3$. It follows from $gU'=U'$ that
$$gf_1=\varepsilon f_1\mand gf_5=\varepsilon f_5.$$
It follows from the orthogonality of $g$ that
$$gf_2\in\bbf f_2\mand gf_4\in\bbf f_4.$$
Finally it follows from $g(\bbf(f_1+f_2))=\bbf(f_1+f_2)$ and $(gf_2,gf_4)=(f_2,f_4)=1$ that $gf_2=\varepsilon f_2$ and $gf_4=\varepsilon f_4$. Hence $g=\varepsilon{\rm id}$.
\end{proof}

\begin{corollary} The triple flag varieties $\mct_{(2),(1,1),(1,1)}$ and $\mct_{(1),(1,1),(1,1)}$ for $G={\rm O}_5(\bbf)$ are of infinite type.
\end{corollary}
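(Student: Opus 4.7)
The plan is to exhibit, for each of the two triple flag varieties, a one-parameter family $\{p_\lambda\}_{\lambda\in\bbf}$ of points lying in pairwise distinct $G$-orbits, using Lemma \ref{lem2.5} as the rigidity tool. In each case I will arrange that $gp_\lambda=p_\mu$ forces $g$ to fix $\mcf$, $U$, and $U'$ simultaneously; then Lemma \ref{lem2.5} gives $g=\pm{\rm id}$, and the parameter $\lambda$ is recovered from an extra piece of data in the third factor that is not touched by $\pm{\rm id}$.

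For $\mct_{(1),(1,1),(1,1)}$, take
$$p_\lambda=(U',\ \mcf,\ U^{(1)}_\lambda\subset U)\in M_{(1)}\times M_{(1,1)}\times M_{(1,1)},\qquad \lambda\in\bbf,$$
where $U^{(1)}_\lambda:=\bbf(\lambda f_4+f_5)$ is a line in $U=\bbf f_4\oplus\bbf f_5$. If $gp_\lambda=p_\mu$, then $g$ preserves $U'$, $\mcf$, and (as the 2-dimensional part of the third flag) $U$, so Lemma \ref{lem2.5} forces $g=\pm{\rm id}$; then $U^{(1)}_\mu=gU^{(1)}_\lambda=U^{(1)}_\lambda$, which forces $\lambda=\mu$.

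For $\mct_{(2),(1,1),(1,1)}$, the roles of $U$ and $U'$ swap: $U$ is the first coordinate, so $U'$ must be placed as the 1-dimensional part of a varying $(1,1)$-flag whose 2-dimensional extension is the moving piece. A direct computation in the orthogonal complement $(U')^\perp=\bbf u'\oplus\bbf f_2\oplus\bbf f_4\oplus\bbf(2f_1+f_5)$ (where $u'=f_1+f_3-\tfrac12 f_5$) shows that any 2-dimensional isotropic extension of $U'$ has the form $\bbf u'+\bbf w$ with $w=\beta f_2+\gamma f_4+\delta(2f_1+f_5)$ and $2\beta\gamma+4\delta^2=0$. Choosing $\beta=1$, $\delta=\lambda$, $\gamma=-2\lambda^2$ yields a family
$$V'_\lambda:=\bbf u'+\bbf(f_2+2\lambda f_1+\lambda f_5-2\lambda^2 f_4),\qquad \lambda\in\bbf,$$
of pairwise distinct isotropic planes through $U'$ (verified by reducing modulo $\bbf u'$ and comparing the resulting coefficients of $f_1,f_4,f_5$). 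Setting $p_\lambda:=(U,\mcf,\,U'\subset V'_\lambda)$ and repeating the argument gives $g=\pm{\rm id}$ whenever $gp_\lambda=p_\mu$, whence $V'_\mu=gV'_\lambda=V'_\lambda$ and $\lambda=\mu$. The only genuinely computational step is the construction of the family $V'_\lambda$; after that the argument is a direct application of Lemma \ref{lem2.5}.
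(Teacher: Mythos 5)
Your proposal is correct and follows the same strategy as the paper: fix two of the three data so that together with the moving part's invariant piece they satisfy the hypotheses of Lemma~\ref{lem2.5}, then let the remaining piece (a line in $U$, resp. an isotropic plane through $U'$) vary to produce infinitely many orbits. The only difference is that you spell out an explicit one-parameter family of isotropic planes through $U'$ and verify its consistency with the form, whereas the paper just observes that infinitely many such planes exist; this is a worthwhile concretization but not a different argument.
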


\begin{proof} Suppose $\bbf$ is infinite. Then there are infinitely many isotropic two-dimensional subspaces containing $U'$. So it follows from Lemma \ref{lem2.5} that the triple flag variety $\mct_{(2),(1,1),(1,1)}$ is of infinite type. Similarly, there are infinitely many one-dimensional subspaces of $U$. So the triple flag variety $\mct_{(1),(1,1),(1,1)}$ is of infinite type.
\end{proof}

\begin{remark} If $\bbf$ is algebraically closed, then the corollary also follows from
$$\dim (M_{\bf a}\times M_{(1,1)}\times M_{(1,1)})=3+4+4=11>10=\dim {\rm O}_5(\bbf)$$
for ${\bf a}=(2)$ and $(1)$.
\end{remark}

\subsection{Proof of Proposition \ref{prop1.2}}

Proposition \ref{prop1.2} is equivalent to the following proposition:

\begin{proposition} If $q\ge 2$, then the triple flag variety $\mct=\mct_{{\bf a},{\bf b},{\bf c}}$ is of infinite type.
\label{prop2.8}
\end{proposition}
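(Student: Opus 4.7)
The plan is to exhibit an infinite family of $G$-orbits by transplanting the $\mathrm{O}_5(\bbf)$ example from the corollary to Lemma \ref{lem2.5} into $\mathrm{O}_{2n+1}(\bbf)$, using Lemma \ref{lem2.1'} to pull back an arbitrary $g\in G$ to an isometry on a suitable $5$-dimensional nondegenerate subspace $W\subset\bbf^{2n+1}$. Concretely, I would fix $W=\bbf e_{n-1}\oplus\cdots\oplus\bbf e_{n+3}$ and identify $W\cong\bbf^5$ via $f_i=e_{n-2+i}$, so that the $\mathrm{O}_5$-subspaces $\mcf$, $U=\bbf f_4\oplus\bbf f_5$, $U'=\bbf(f_1+f_3-\frac{1}{2}f_5)$ of Lemma \ref{lem2.5}, together with the infinite families $\{V_\lambda\}$ of isotropic $2$-dimensional subspaces of $W$ containing $U'$ and $\{X_\lambda\}$ of lines inside $U$ produced in the corollary, all live in $W$. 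I would also fix the isotropic subspace $Y_0=\bbf e_1\oplus\cdots\oplus\bbf e_{n-2}\subset W^\perp$ and its totally ordered chain $D_d=\bbf e_1\oplus\cdots\oplus\bbf e_d$ for $0\le d\le n-2$.

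For each $\lambda\in\bbf^\times$ I would construct a triple $m_\lambda=(\mcf_1,\mcf_2,\mcf_3^\lambda)\in\mct_{{\bf a},{\bf b},{\bf c}}$ whose flag subspaces are direct sums (isotropic core in $W$)$\oplus$(padding $D_d$), as follows. The first two parts of $\mcf_2$ carry cores $\bbf(f_1+f_2)$ and $\bbf f_1\oplus\bbf f_2$, encoding the flag $\mcf$ (with the second core repeated in any later parts); the parts of $\mcf_3^\lambda$ introduce the $\lambda$-dependent family, with cores $(U',V_\lambda)$ in the first two parts when $\alpha_1\ge 2$ or $p\ge 2$, or cores $(X_\lambda,U)$ when $\alpha_1=p=1$, with the second core repeated in any later parts; and $\mcf_1$ contributes the remaining $\mathrm{O}_5$-subspace required by Lemma \ref{lem2.5}, namely core $U$ placed in the first $X_i$ whose dimension is $\ge 2$ (any earlier $1$-dimensional $X_i$ being taken inside $U$ to preserve nesting), or core $U'$ as $X_1$ in the degenerate case $\alpha_1=p=1$. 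All paddings come from the chain $\{D_d\}$ with sizes chosen so that the flag subspace dimensions match $(\alpha_1,\ldots,\alpha_p),(\beta_1,\ldots,\beta_q),(\gamma_1,\ldots,\gamma_r)$; isotropy follows from $W\perp W^\perp$ together with isotropy of the cores and of $Y_0$, and nestedness within each flag follows from nestedness of $\{D_d\}$ and of the chosen $W$-cores.

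To show $Gm_\lambda\ne Gm_\mu$ for $\lambda\ne\mu$, suppose $gm_\lambda=m_\mu$ for some $g\in G$ and apply Lemma \ref{lem2.1'} with $W_i$ and $U_i$ equal to the $W$-core and padding of the $i$-th flag subspace, indexing so that one attaining the largest padding $D_{d^*}$ carries $i=1$; then $U_1=D_{d^*}$ contains every other $U_i$ by total ordering of $\{D_d\}$. The $W$-cores span $W$ because their collection contains $U$, $\bbf f_1\oplus\bbf f_2$, and $U'$, whose sum is already $W$. Lemma \ref{lem2.1'} then gives $g(W\oplus U_1)=W\oplus U_1$ and $gU_1=U_1$, so $g$ induces an isometry $\widetilde g$ on $(W\oplus U_1)/U_1\cong W$; tracking the images under the natural projection, $\widetilde g$ preserves $\mcf$, $U$, and $U'$ and sends $V_\lambda$ to $V_\mu$ (or $X_\lambda$ to $X_\mu$). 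Lemma \ref{lem2.5} then forces $\widetilde g=\pm{\rm id}$, whence $V_\lambda=V_\mu$ (resp.\ $X_\lambda=X_\mu$), i.e.\ $\lambda=\mu$. The main obstacle is the bookkeeping of fitting $\mathrm{O}_5$-cores of dimension $1$ or $2$ into flag parts of prescribed dimension while maintaining isotropy and nestedness; the only genuine subtlety is the edge case $\alpha_1=p=1$, in which $\mcf_1$ cannot accommodate the $2$-dimensional $U$ and one must switch to the alternative $\mathrm{O}_5$-template $\mct_{(1),(1,1),(1,1)}$ provided by the corollary to Lemma \ref{lem2.5}.
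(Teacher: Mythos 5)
Your proposal is correct and takes essentially the same approach as the paper: transplanting the $\mathrm{O}_5(\bbf)$ example through an isometric embedding into $\bbf^{2n+1}$, padding by an isotropic chain in $W^\perp$, and combining Lemma~\ref{lem2.1'} with Lemma~\ref{lem2.5} to separate orbits, with the same case split governed by whether a flag can accommodate the $2$-dimensional core $U$. The only difference is presentational: the paper first reduces to $p=1$, $q=r=2$ via the $G$-equivariant projections $M_{\bf a}\to M_{(\alpha_1)}$, etc., which trims the bookkeeping that you handle directly by allowing the cores to repeat through longer flags.
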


\begin{proof} We may assume $p=1$ and $q=r=2$. So we can write
$${\bf a}=(\alpha_1),\quad {\bf b}=(\beta_1,\beta_2)\mand {\bf c}=(\gamma_1,\gamma_2).$$
Define an isometric inclusion $\iota: \bbf^5\to\bbf^{2n+1}$ by
$$\iota(f_i)=e_{i+n-2}.$$

First suppose $\alpha_1\ge 2$. Let $M_{U'}\cong P^1(\bbf)$ denote the variety consisting of two-dimensional isotropic subspaces of $\bbf^5$ containing $U'=\bbf(f_1+f_3-{1\over 2}f_5)$. For an element $W$ of $M_{U'}$, define
\begin{align*}
V_1 & =\bbf e_{n+2}\oplus \bbf e_{n+3} \oplus U_{[\alpha_1-2]}, \quad
V_2 =\bbf (e_{n-1}+e_n)\oplus U_{[\beta_1-1]}, \\
V_3 & =\bbf e_{n-1}\oplus \bbf e_n\oplus U_{[\beta_1+\beta_2-2]}, \quad
V_4 =\iota(U')\oplus U_{[\gamma_1-1]} 
\mand V_{5,W} =\iota(W)\oplus U_{[\gamma_1+\gamma_2-2]}.
\end{align*}
Then $t_W=(V_1,(V_2\subset V_3),(V_4\subset V_{5,W}))$ is a triple flag contained in $\mct_{{\bf a},{\bf b},{\bf c}}$. We can prove $Gt_W\ne Gt_{W'}$ for two distinct $W,W'\in M_{U'}$ as in the proof of Proposition \ref{prop1.1} using Lemma \ref{lem2.5}.

Next suppose $\alpha_1=1$. Let $M_U\cong P^1(\bbf)$ denote the variety consisting of one-dimensional subspaces of $U=\bbf f_4\oplus\bbf f_5$. For an element $W$ of $M_U$, define
\begin{align*}
V_1 & =\iota(U'), \quad
V_2 =\bbf (e_{n-1}+e_n)\oplus U_{[\beta_1-1]}, \quad
V_3 =\bbf e_{n-1}\oplus \bbf e_n\oplus U_{[\beta_1+\beta_2-2]}, \\
V_{4,W} & =\iota(W)\oplus U_{[\gamma_1-1]} \quad
\mand V_5 =\bbf e_{n+2}\oplus\bbf e_{n+3}\oplus U_{[\gamma_1+\gamma_2-2]}.
\end{align*}
Then $t_W=(V_1,(V_2\subset V_3),(V_{4,W}\subset V_5))$ is a triple flag contained in $\mct_{{\bf a},{\bf b},{\bf c}}$. We can prove $Gt_W\ne Gt_{W'}$ for two distinct $W,W'\in M_U$ as in the proof of Proposition \ref{prop1.1} using Lemma \ref{lem2.5}.
\end{proof}

\subsection{First lemma for ${\rm O}_6(\bbf)$}

Consider $\bbf^6$ with the canonical basis $f_1,\ldots,f_6$ and the symmetric bilinear form $(\ ,\ )$ such that
$(f_i,f_j)=\delta_{i,7-j}$
and let $G$ denote the orthogonal group for this bilinear form. Define two-dimensional isotropic subspaces
\begin{align*}
U_1 & =\bbf f_1\oplus\bbf f_2, \quad
U_2 =\bbf f_5\oplus\bbf f_6, \\
U_{3,\lambda} & =\bbf(f_1+f_3+f_5)\oplus \bbf(\lambda f_2-f_4+(1-\lambda) f_6)
\end{align*}
of $\bbf^6$ for $\lambda\in\bbf$.

\begin{lemma} If $gU_1=U_1,\ gU_2=U_2$ and $gU_{3,\lambda}=U_{3,\mu}$ for some $g\in G={\rm O}_6(\bbf)$ and $\lambda,\mu\in\bbf$. Then $\lambda=\mu$ or $1-\mu$.
\label{lem2.9}
\end{lemma}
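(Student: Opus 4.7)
The key observation is that $U_1$ and $U_2$ are complementary maximal isotropic subspaces of $\bbf^6$, so their orthogonal complement is $W:=\bbf f_3\oplus\bbf f_4$, a nondegenerate hyperbolic plane with $(f_3,f_4)=1$. Since $g$ preserves $U_1$ and $U_2$, it must preserve $W=U_1^\perp\cap U_2^\perp$ as well, and with respect to the decomposition $\bbf^6=U_1\oplus W\oplus U_2$ the element $g$ becomes block-diagonal of the form $\mathrm{diag}(A,B,A^*)$, where $A\in {\rm GL}_2(\bbf)$ acts on $U_1$, $B\in {\rm O}(W)$ acts on $W$, and $A^*=J_2\,{}^tA^{-1}J_2$ is forced on $U_2$ by the orthogonality of $g$ together with the duality between $U_1$ and $U_2$ given by the form.

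Next I split on the two connected components of ${\rm O}(W)\cong {\rm O}_2(\bbf)$: either (Case~1) $Bf_3=tf_3$ and $Bf_4=t^{-1}f_4$ for some $t\in\bbf^\times$, or (Case~2) $Bf_3=tf_4$ and $Bf_4=t^{-1}f_3$. Writing $v=f_1+f_3+f_5$ and $w_\lambda=\lambda f_2-f_4+(1-\lambda)f_6$ for the two generators of $U_{3,\lambda}$, I apply $g$ to each of $v,w_\lambda$ and equate the result to an arbitrary linear combination of the generators of $U_{3,\mu}$. In Case~1, matching the $f_3$- and $f_4$-coefficients of $gv$ forces $A$ to be upper triangular with $a_{11}=t$ and $\det A=1$; then matching the $f_6$-coefficients of $gw_\lambda$ yields $1-\mu=1-\lambda$, i.e. $\mu=\lambda$. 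In Case~2, the analogous matching shows $A$ is antidiagonal with $\det A=-\mu/(1-\mu)$, and the remaining equations reduce to $\lambda\mu=(1-\lambda)(1-\mu)$, which simplifies to $\mu=1-\lambda$.

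The main difficulty is the two-case split itself: the $\lambda\leftrightarrow 1-\lambda$ ambiguity in the conclusion reflects precisely the disconnectedness of the Levi stabilizer of the pair $(U_1,U_2)$, which comes from the nontrivial component of ${\rm O}(W)$. One also has to check that the degenerate values $\lambda,\mu\in\{0,1\}$ (where $A$ in Case~2 would be forced to be singular) all land in Case~1, giving $\mu=\lambda$ with no loss. A more invariant-theoretic proof might proceed by exhibiting $\lambda(1-\lambda)$ as a scalar invariant of the triple $(U_1,U_2,U_{3,\lambda})$ under the Levi action, but isolating this invariant requires essentially the same computation as the direct coefficient matching, so the latter is shortest.
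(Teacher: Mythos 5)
Your proof is correct and takes essentially the same approach as the paper: block-diagonalize $g$ via the stabilization of $U_1,U_2$, split on the two connected components of ${\rm O}(W)\cong{\rm O}_2(\bbf)$ (which is the same as splitting on $\det g$, as the paper does), and compute. The paper extracts the final scalar relation a bit more slickly by noting each generator line of $U_{3,\lambda}$ must map to the corresponding generator line of $U_{3,\mu}$ and then evaluating $\lambda=(f_5,\lambda f_2)=(gf_5,g\lambda f_2)$, but this is the same coefficient bookkeeping in substance.
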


\begin{proof} Suppose $gU_1=U_1$ and $gU_2=U_2$ for a $g\in G$. Then $g$ is of the form
\begin{equation}
\bp A && 0 \\ & B(b,\det g) & \\ 0 && J\,{}^tA^{-1}J \ep \label{eq2.1}
\end{equation}
with some $A\in {\rm GL}_2(\bbf)$ and $b\in\bbf^\times$. Here
$$B(b,1)= \bp b & 0 \\ 0 & b^{-1} \ep ,\quad B(b,-1)= \bp 0 & b^{-1} \\ b & 0 \ep \mand J=\bp 0 & 1 \\ 1 & 0 \ep.$$
First suppose $\det g=1$ and $gU_{3,\lambda}=U_{3,\mu}$. Then we have
$$g\bbf(f_1+f_3+f_5)=\bbf(f_1+f_3+f_5)$$
and
$$g\bbf(\lambda f_2-f_4+(1-\lambda) f_6)=\bbf(\mu f_2-f_4+(1-\mu) f_6).$$
Hence
\begin{align*}
gf_1 & =bf_1,\quad gf_3=bf_3,\quad gf_5=bf_5,\\
g\lambda f_2 & =b^{-1}\mu f_2,\quad gf_4=b^{-1}f_4\mand g(1-\lambda)f_6=b^{-1}(1-\mu)f_6
\end{align*}
with some $b\in\bbf^\times$. Thus we have
$$\lambda=(f_5,\lambda f_2)=(gf_5,g\lambda f_2)=(bf_5,b^{-1}\mu f_2)=\mu.$$

On the other hand, suppose $\det g=-1$ and $gU_{3,\lambda}=U_{3,\mu}$. Then we have
$$g\bbf(f_1+f_3+f_5)=\bbf(\mu f_2-f_4+(1-\mu) f_6)$$
and
$$g\bbf(\lambda f_2-f_4+(1-\lambda) f_6)=\bbf(f_1+f_3+f_5).$$
Hence
\begin{align*}
gf_1 & =-b\mu f_2,\quad gf_3=bf_4,\quad gf_5=-b(1-\mu)f_6,\\
g\lambda f_2 & =-b^{-1} f_1,\quad gf_4=b^{-1}f_3\mand g(1-\lambda)f_6=-b^{-1}f_5
\end{align*}
with some $b\in\bbf^\times$. Thus we have
$$\lambda=(f_5,\lambda f_2)=(gf_5,g\lambda f_2)=(-b(1-\mu)f_6,-b^{-1}f_1)=1-\mu.$$
\end{proof}

\begin{corollary} The triple flag variety $\mct_{(2),(2),(2)}$ for $G={\rm O}_6(\bbf)$ is of infinite type.
\end{corollary}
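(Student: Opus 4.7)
The plan is to produce an infinite family of triples in $\mct_{(2),(2),(2)}$ no two of which lie in the same $G$-orbit, using Lemma \ref{lem2.9} as essentially the only tool. The obvious candidate is the family $t_\lambda=(U_1,U_2,U_{3,\lambda})$ parametrized by $\lambda\in\bbf$, where $U_1,U_2,U_{3,\lambda}$ are exactly the subspaces set up just before Lemma \ref{lem2.9}.

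First I would confirm that each $U_{3,\lambda}$ is a bona fide $2$-dimensional isotropic subspace, so that $t_\lambda$ genuinely belongs to $\mct_{(2),(2),(2)}$. This is a routine check with $(f_i,f_j)=\delta_{i,7-j}$: the vector $f_1+f_3+f_5$ is self-orthogonal since its supporting index pairs sum to $4,6,8$, never $7$; the vector $\lambda f_2-f_4+(1-\lambda)f_6$ is self-orthogonal for the same reason (index sums $6,8,10$); and their mutual pairing collapses to $(1-\lambda)+(-1)+\lambda=0$. Linear independence of the two generators is automatic since they live in disjoint coordinate strips.

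Now Lemma \ref{lem2.9} says precisely that if $g\in G$ satisfies $gU_1=U_1$, $gU_2=U_2$, and $gU_{3,\lambda}=U_{3,\mu}$, then $\mu\in\{\lambda,1-\lambda\}$. Applied to the diagonal action, $gt_\lambda=t_\mu$ forces $\mu\in\{\lambda,1-\lambda\}$, so each $G$-orbit meets the family $\{t_\nu:\nu\in\bbf\}$ in at most two points. When $|\bbf|=\infty$, the involution $\lambda\mapsto 1-\lambda$ has finite fibers on an infinite set, so the set of orbits $\{Gt_\lambda:\lambda\in\bbf\}$ is infinite. Hence $\mct_{(2),(2),(2)}$ is of infinite type.

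There is no real obstacle here; Lemma \ref{lem2.9} does all the heavy lifting, and what remains is the isotropy verification plus a two-to-one counting argument. The only thing worth flagging is the need to check that the triples $t_\lambda$ actually lie in the three copies of $M_{(2)}$ (i.e.\ the isotropy of $U_{3,\lambda}$), which is why I would carry out the $(1-\lambda)+(-1)+\lambda=0$ computation explicitly rather than sweep it under the rug.
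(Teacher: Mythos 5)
Your proof is correct and is precisely the argument the paper leaves implicit: the corollary is stated without proof immediately after Lemma \ref{lem2.9} because that lemma, applied to the triples $t_\lambda=(U_1,U_2,U_{3,\lambda})$, gives that each $G$-orbit meets the family in at most two points, which is all that is needed when $|\bbf|=\infty$. Your explicit verification that $U_{3,\lambda}$ is isotropic and that the generators are independent is a reasonable amount of care, and the two-to-one counting via $\lambda\mapsto 1-\lambda$ is exactly right.
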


\begin{remark} Suppose that $\bbf$ is algebraically closed. Then we have
$$\dim \mct_{(2),(2),(2)}=5+5+5=15=\dim G.$$
But $\mct_{(2),(2),(2)}$ has no open $G$-orbit.
\end{remark}

\subsection{Second lemma for ${\rm O}_6(\bbf)$}

Define isotropic subspaces
\begin{align*}
U_1 & =\bbf f_1\oplus\bbf f_2, \quad
U_2 =\bbf f_5\oplus\bbf f_6, \quad
U_{4,\lambda} =\bbf(\lambda f_1-f_3+(1-\lambda) f_5) \\
\mand U_5 & =\bbf(f_1-f_5)\oplus \bbf(f_1-f_3)\oplus \bbf(f_2+f_4+f_6)
\end{align*}
of $\bbf^6$ for $\lambda\in\bbf$. Then $t_\lambda=(U_1,U_2,(U_{4,\lambda}\subset U_5))$ are triple flags in $\mct=\mct_{(2),(2),(1,2)}$.

\begin{lemma} If $gt_\lambda=t_\mu$ for a $g\in G$ and $\lambda,\mu\in\bbf$, then $\lambda=\mu$.
\label{lem2.12}
\end{lemma}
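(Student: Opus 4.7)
The plan is to apply Lemma 2.9 to normalize $g$, then impose the extra constraints $gU_5 = U_5$ and $gU_{4,\lambda} = U_{4,\mu}$ with a case split on $\det g$. A useful first observation: expanding an arbitrary element $\alpha(f_1 - f_5) + \beta(f_1 - f_3) + \gamma(f_2 + f_4 + f_6)$ of $U_5$ shows that a vector $x = \sum_{i=1}^{6} x_i f_i$ lies in $U_5$ if and only if $x_1 + x_3 + x_5 = 0$ and $x_2 = x_4 = x_6$. Since $gU_1 = U_1$ and $gU_2 = U_2$, Lemma 2.9 puts $g$ in the block form (2.1), parameterized by $A = (a_{ij}) \in {\rm GL}_2(\bbf)$, $b \in \bbf^\times$, and $\varepsilon = \det g \in \{\pm 1\}$.

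Suppose first $\varepsilon = +1$, so that $gf_3 = bf_3$ and $gf_4 = b^{-1} f_4$. Applying $g$ to $f_1 - f_3$ yields a vector whose $f_4$- and $f_6$-coefficients both vanish and whose $f_2$-coefficient is $a_{21}$; the conditions $x_2 = x_4 = x_6$ force $a_{21} = 0$, and $x_1 + x_3 + x_5 = 0$ forces $a_{11} = b$. Similarly, requiring $g(f_2 + f_4 + f_6) \in U_5$ yields $a_{22} = b^{-1}$ and $\det A = 1$. Under these constraints $g$ acts on $\bbf f_1 \oplus \bbf f_3 \oplus \bbf f_5$ as multiplication by $b$; since $U_{4,\lambda}$ lies in this three-dimensional subspace, $gU_{4,\lambda} = U_{4,\lambda}$, and comparing with $gU_{4,\lambda} = U_{4,\mu}$ gives $\lambda = \mu$.

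Suppose now $\varepsilon = -1$, so that $gf_3 = bf_4$. Then $g(f_1 - f_3) = a_{11} f_1 + a_{21} f_2 - bf_4$ has $f_4$-coefficient $-b \ne 0$ but $f_6$-coefficient $0$, violating the equation $x_4 = x_6$ characterizing $U_5$. This contradicts $gU_5 = U_5$, so the case $\varepsilon = -1$ cannot occur. The main computational burden lies in the $\varepsilon = +1$ case, where one must chase several matrix entries; conceptually, the role of $U_5$ is to break the sign ambiguity in Lemma 2.9, ruling out the alternative $\lambda = 1 - \mu$ and forcing $g$ to act diagonally on $\bbf f_1$, $\bbf f_3$, $\bbf f_5$ with a common scalar.
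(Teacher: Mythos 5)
Your proof is correct and follows essentially the same route as the paper's: normalize $g$ to the block form (2.1) via $gU_1=U_1$, $gU_2=U_2$, then extract the remaining constraints from $gU_5=U_5$ and $gU_{4,\lambda}=U_{4,\mu}$. The only organizational differences are that the paper's proof pins down $gf_1=af_1$, $gf_5=af_5$ via the slick observation $(U_1\oplus U_2)\cap U_5=\bbf(f_1-f_5)$ and then rules out $\det g=-1$ using $gU_{4,\lambda}=U_{4,\mu}$, whereas you work directly with the linear equations cutting out $U_5$ and already kill the $\det g=-1$ case from $gU_5=U_5$ alone.
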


\begin{proof} Since $gU_1=U_1$ and $gU_2=U_2$, $g$ is of the form (\ref{eq2.1}). Since $(U_1\oplus U_2)\cap U_5=\bbf(f_1-f_5)$, it follows from $gU_5=U_5$ that
$$gf_1=af_1\mand gf_5=af_5$$
with some $a\in\bbf^\times$. Finally it follows from $gU_{4,\lambda}=U_{4,\mu}$ that $\det g=1$ and that $\lambda=\mu$.
\end{proof}

\begin{corollary} The triple flag variety $\mct=\mct_{(2),(2),(1,2)}$ for $G={\rm O}_6(\bbf)$ is of infinite type.
\end{corollary}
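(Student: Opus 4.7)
The plan is that this corollary is essentially an immediate consequence of Lemma \ref{lem2.12}, so my work reduces to packaging the infinite family $\{t_\lambda\}_{\lambda\in\bbf}$ as an infinite family of $G$-orbit representatives.

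First I would verify that each $t_\lambda=(U_1,U_2,(U_{4,\lambda}\subset U_5))$ really is an element of $\mct_{(2),(2),(1,2)}$. The dimensions are clear from the definitions ($U_1,U_2$ are two-dimensional, $U_{4,\lambda}$ is one-dimensional and $U_5$ is three-dimensional), so one only needs to check isotropy and the inclusion $U_{4,\lambda}\subset U_5$. For $U_{4,\lambda}$ the pairing $(f_i,f_j)=\delta_{i,7-j}$ never pairs two of $f_1,f_3,f_5$, so $U_{4,\lambda}$ is isotropic; for $U_5$ the same observation shows isotropy of each spanning vector and of the cross-terms. The inclusion $U_{4,\lambda}\subset U_5$ follows by writing
$$\lambda f_1-f_3+(1-\lambda)f_5=(\lambda-1)(f_1-f_5)+(f_1-f_3)\in U_5.$$

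Next, assuming $|\bbf|=\infty$, the set $\{t_\lambda\mid\lambda\in\bbf\}$ is an infinite family of points of $\mct$. By Lemma \ref{lem2.12}, if $gt_\lambda=t_\mu$ for some $g\in G$ then $\lambda=\mu$; equivalently, the map $\lambda\mapsto Gt_\lambda$ from $\bbf$ to $G\backslash\mct$ is injective. Hence $G\backslash\mct$ is infinite, which is precisely the statement that $\mct_{(2),(2),(1,2)}$ is of infinite type.

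There is no real obstacle remaining: the entire hard content — namely that the constraints $gU_1=U_1$, $gU_2=U_2$ together with $gU_5=U_5$ force any such $g$ to fix each of $f_1,f_5$ up to a common scalar and to have $\det g=1$, thereby rigidifying the parameter $\lambda$ — has already been carried out in the proof of Lemma \ref{lem2.12}. So the corollary reduces to the two-line argument above.
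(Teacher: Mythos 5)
Your proof is correct and matches the paper's intended argument: the corollary is stated without proof precisely because it follows directly from Lemma \ref{lem2.12}, and your packaging — verifying $t_\lambda\in\mct_{(2),(2),(1,2)}$ and invoking the lemma to conclude $\lambda\mapsto Gt_\lambda$ is injective — is exactly the implied route.
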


\begin{remark} Suppose that $\bbf$ is algebraically closed. Let $U'_4=\bbf(f_1+f_2-f_3+f_4+f_6)$ and $m'=(U_1,U_2,(U'_4\subset U_5))$. Then $Gm'$ is the open $G$-orbit in the triple flag variety $\mct=\mct_{(2),(2),(1,2)}$ for $G={\rm O}_6(\bbf)$.
\end{remark}

\subsection{Some conditions on ${\bf a},{\bf b}$ and ${\bf c}$}

By Proposition \ref{prop1.2}, we may assume $\mct=\mct_{{\bf a},{\bf b},{\bf c}}$ with
$${\bf a}=(\alpha_1),\quad {\bf b}=(\beta_1)\mand {\bf c}=(\gamma_1,\ldots, \gamma_r).$$

\begin{proposition} Suppose that $\mct=\mct_{{\bf a},{\bf b},{\bf c}}$ is of finite type.

{\rm (i)} If $r=1$ and $\alpha_1\le \beta_1\le \gamma_1$, then $\alpha_1=1$ or $\gamma_1=n$.

{\rm (ii)} If $r\ge 2$ and $\alpha_1\le \beta_1$, then $\alpha_1=1$ or $\beta_1=n$.
\label{prop2.15}
\end{proposition}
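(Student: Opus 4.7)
The strategy is to prove the contrapositive in each part, lifting inequivalent configurations from ${\rm O}_6(\bbf)$ (produced by Lemmas~\ref{lem2.9} and~\ref{lem2.12}) to $G$ via Lemma~\ref{lem2.1'}. I fix throughout an isometric embedding $\iota:\bbf^6\hookrightarrow\bbf^{2n+1}$ with image $W=\bbf e_{n-2}\oplus\bbf e_{n-1}\oplus\bbf e_n\oplus\bbf e_{n+2}\oplus\bbf e_{n+3}\oplus\bbf e_{n+4}$; the subspace $U_{[n-3]}=\bbf e_1\oplus\cdots\oplus\bbf e_{n-3}$ is then a maximal isotropic subspace of $W^\perp$.

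For part (i) assume $\alpha_1\ge 2$ and $\gamma_1\le n-1$. For each $\lambda\in\bbf^\times$ set
\[
V_1=\iota(U_1)\oplus U_{[\alpha_1-2]},\quad V_2=\iota(U_2)\oplus U_{[\beta_1-2]},\quad V_{3,\lambda}=\iota(U_{3,\lambda})\oplus U_{[\gamma_1-2]},
\]
with $U_1,U_2,U_{3,\lambda}$ the $2$-dimensional isotropic subspaces from Lemma~\ref{lem2.9}. The paddings are isotropic subsets of $U_{[n-3]}$ because $\gamma_1\le n-1$, and $U_1+U_2+U_{3,\lambda}=\bbf^6$ for every $\lambda$. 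If $g\in G$ satisfies $g(V_1,V_2,V_{3,\lambda})=(V_1,V_2,V_{3,\mu})$, Lemma~\ref{lem2.1'} produces an isometry $\tilde g$ of $\bbf^6$ preserving $\iota(U_1),\iota(U_2)$ and sending $\iota(U_{3,\lambda})$ to $\iota(U_{3,\mu})$, and Lemma~\ref{lem2.9} then forces $\mu\in\{\lambda,1-\lambda\}$. Hence the family $\{V_{3,\lambda}\}$ meets infinitely many $G$-orbits.

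For part (ii), assume $\alpha_1\ge 2$ and $\beta_1\le n-1$. Finite type is inherited by every coarsening of ${\bf c}$, since the corresponding forgetful map is $G$-equivariant and surjective. Coarsen first to ${\bf c}'=(\gamma')$ with $\gamma'=\gamma_1+\cdots+\gamma_r\ge r\ge 2$; after permuting the three factors, part (i) applied to the coarsened variety rules out $\min=1$ (all entries are $\ge 2$) and forces $\max(\alpha_1,\beta_1,\gamma')=n$, which together with $\beta_1\le n-1$ yields $\gamma'=n$. Next coarsen ${\bf c}$ to a pair $(\delta_1,\delta_2)=(\gamma_1+\cdots+\gamma_i,\,n-\gamma_1-\cdots-\gamma_i)$ with $\delta_1\le n-2$; such $i\in\{1,\ldots,r-1\}$ exists in every case except $r=2$, ${\bf c}=(n-1,1)$ (take $i=r-2$ when $r\ge 3$). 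In every other case, embed Lemma~\ref{lem2.12}'s configuration via
\[
V_1=\iota(U_1)\oplus U_{[\alpha_1-2]},\quad V_2=\iota(U_2)\oplus U_{[\beta_1-2]},
\]
\[
V_{3,\lambda}=\iota(U_{4,\lambda})\oplus U_{[\delta_1-1]},\quad V_4=\iota(U_5)\oplus U_{[n-3]},
\]
where $U_{[\delta_1-1]}\subseteq U_{[n-3]}$ gives $V_{3,\lambda}\subset V_4$. Lemma~\ref{lem2.1'} and Lemma~\ref{lem2.12} together force $\mu=\lambda$, producing infinitely many $G$-orbits.

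The edge case $r=2$, ${\bf c}=(n-1,1)$ is the main obstacle: the padding $U_{[n-2]}$ needed for a direct Lemma~\ref{lem2.12} embedding no longer fits inside the isotropic part of $W^\perp$. To handle it, I would keep $V_1,V_2,V_4$ as above and use $V_{3,\lambda}=\iota(X_\lambda)\oplus U_{[n-3]}$, where $X_\lambda$ runs over a one-parameter family of $2$-dimensional subspaces of the $3$-dimensional isotropic $\iota(U_5)$. Extending the calculation in the proof of Lemma~\ref{lem2.12}, an element of the stabilizer of $(U_1,U_2,U_5)$ in ${\rm O}_6(\bbf)$ with $\det g=1$ acts on $U_5$, in the basis $(f_1-f_5,\,f_1-f_3,\,f_2+f_4+f_6)$, as
\[
\bp 1/s & 0 & q \\ 0 & 1/s & 0 \\ 0 & 0 & s \ep
\]
for parameters $q\in\bbf$, $s\in\bbf^\times$; the induced action on the projective plane of $2$-planes in $U_5$ sends $[a:b:c]\mapsto[as^2:bs^2:\,c-aqs^2]$, so the ratio $b/a$ is an orbit invariant taking infinitely many values in $\bbf$, and the $\det=-1$ case only identifies orbits in pairs. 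Combined with Lemma~\ref{lem2.1'} this yields infinitely many $G$-orbits, completing the proof.
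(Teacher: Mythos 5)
Your part (i) is identical to the paper's argument. For part (ii) you arrive at the right conclusion, but your organization is more cumbersome than necessary and leaves a gap that the paper avoids. The paper handles (ii) by coarsening ${\bf c}$ to a \emph{single} composition: for $r\ge 3$ it coarsens to $(\gamma_1+\gamma_2)$ and applies (i); for $r=2$ it coarsens to $(\gamma_1)$ when $\gamma_1>1$ or to $(\gamma_1+\gamma_2)$ when $\gamma_1+\gamma_2<n$, again applying (i); the only case not reachable this way is ${\bf c}=(1,n-1)$, which is then handled by embedding Lemma~\ref{lem2.12}. You instead coarsen only to the full sum $(\gamma')$ and to two-step flags $(\delta_1,\delta_2)$, and the two-step coarsening (needed for Lemma~\ref{lem2.12}) forces $\delta_1\le n-2$; this leaves ${\bf c}=(n-1,1)$ stranded, and you then sketch a fresh computation for it. That computation is unnecessary: for ${\bf c}=(n-1,1)$ one simply coarsens to the single composition $(\gamma_1)=(n-1)$ and applies (i), since $2\le\alpha_1\le\beta_1\le n-1$ and $n-1<n$. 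As for the sketch itself, the key claim (that the stabilizer of $(U_1,U_2,U_5)$ in ${\rm O}_6(\bbf)$ acts on the $\mathbb{P}^2$ of $2$-planes in $U_5$ with $\xi_2:\xi_1$ as a nontrivial invariant) is correct — one can check the stabilizer is $\{g(a,c)\mid a\in\bbf^\times,c\in\bbf\}$ with $g|_{U_5}=\bigl(\begin{smallmatrix}a&0&c\\0&a&0\\0&0&1/a\end{smallmatrix}\bigr)$ and no $\det=-1$ elements survive — but your displayed formula for the projective action has a power-of-$s$ slip (the coefficient of the $\xi_1$-correction should carry $s$, not $s^2$), and you also hedge about a $\det=-1$ sector that in fact is empty. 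None of these affect the bottom line, but you would do well to replace the entire edge-case sketch by the one-line coarsening to $(n-1)$, which is exactly what the paper does.
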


\begin{proof} (i) Suppose $r=1$ and $1< \alpha_1\le \beta_1\le\gamma_1<n$.
Define an isometric inclusion $\iota: \bbf^6\to\bbf^{2n+1}$ by
\begin{align*}
\iota(f_1) & =e_{n-2},\quad \iota(f_2)=e_{n-1},\quad \iota(f_3)=e_n, \\
\iota(f_4) & =e_{n+2},\quad \iota(f_5)=e_{n+3}\mand \iota(f_6)=e_{n+4}.
\end{align*}
Define isotropic subspaces
\begin{align*}
V_1 & =\bbf e_{n-2}\oplus \bbf e_{n-1}\oplus U_{[\alpha_1-2]}, \quad
V_2 =\bbf e_{n+3}\oplus \bbf e_{n+4}\oplus U_{[\beta_1-2]} \\
\mand V_{3,\lambda} & =\iota(U_{3,\lambda})\oplus U_{[\gamma_1-2]}
\end{align*}
of $\bbf^{2n+1}$ for $\lambda\in\bbf$. Then $t_\lambda=(V_1,V_2,V_{3,\lambda})$ is a triple flag contained in $\mct_{{\bf a},{\bf b},{\bf c}}$. We can prove $Gt_\lambda \ne Gt_\mu$ if $\lambda\ne \mu,1-\mu$ as in the proof of Proposition \ref{prop1.1} using Lemma \ref{lem2.9}. Hence $\mct=\mct_{{\bf a},{\bf b},{\bf c}}$ is of infinite type.

(ii) First suppose $r\ge 3$ and $1< \alpha_1\le \beta_1<n$.
Write
$\gamma=\gamma_1+\gamma_2$.
Then we can consider the natural projection
$$M_{(\gamma_1,\ldots, \gamma_r)}\to M_{(\gamma)}.$$
Since $1<\gamma<n$, it follows from (i) that $\mct_{{\bf a},{\bf b},(\gamma)}$ is of infinite type. Hence $\mct_{{\bf a},{\bf b},{\bf c}}$ is of infinite type.

So we have only to consider the case of $r=2$. Suppose
$1< \alpha_1\le \beta_1<n$.
Then we can show that $\mct_{{\bf a},{\bf b},{\bf c}}$ is of infinite type as follows.

Suppose first $\gamma_1>1$. Then $\mct_{{\bf a},{\bf b},(\gamma_1)}$ is of infinite type by (i). Hence $\mct_{{\bf a},{\bf b},{\bf c}}$ is of infinite type.

Next suppose $\gamma_1+\gamma_2<n$. Then $\mct_{{\bf a},{\bf b},(\gamma_1+\gamma_2)}$ is of infinite type by (i). Hence $\mct_{{\bf a},{\bf b},{\bf c}}$ is of infinite type.

So we have only to consider the case of
${\bf c}=(1,n-1)$.
Let $\iota: \bbf^6\to\bbf^{2n+1}$ be as in (i) and define isotropic subspaces
\begin{align*}
V_1 & =\bbf e_{n-2}\oplus \bbf e_{n-1}\oplus U_{[\alpha_1-2]}, \quad
V_2 =\bbf e_{n+3}\oplus \bbf e_{n+4}\oplus U_{[\beta_1-2]}, \\
V_{4,\lambda} & =\iota(U_{4,\lambda}), \quad
V_5 =\iota(U_5)\oplus U_{[n-3]}
\end{align*}
of $\bbf^{2n+1}$ for $\lambda\in\bbf$. Then $(V_1,V_2,(V_{4,\lambda}\subset V_5))$ is a triple flag contained in $\mct_{{\bf a},{\bf b},{\bf c}}$. We can prove $Gt_\lambda \ne Gt_\mu$ if $\lambda\ne \mu$ as in the proof of Proposition \ref{prop1.1} using Lemma \ref{lem2.12}. Hence $\mct=\mct_{{\bf a},{\bf b},{\bf c}}$ is of infinite type.
\end{proof}

\subsection{A lemma for ${\rm O}_7(\bbf)$}

Consider $\bbf^7$ with the canonical basis $f_1,\ldots,f_7$ and the symmetric bilinear form $(\ ,\ )$ such that
$(f_i,f_j)=\delta_{i,8-j}$.
Write $U_+=\bbf f_1\oplus \bbf f_2$ and $U_-=\bbf f_6\oplus \bbf f_7$. Take a maximal isotropic subspace
$$U=\bbf(f_2+f_3)\oplus \bbf\left(f_1+f_4-{1\over 2}f_7\right)\oplus \bbf(f_5-f_6)$$
in $\bbf^7$.

\begin{lemma} If $g\bbf(f_1+f_2)=\bbf(f_1+f_2),\ gU_+=U_+,\ gU_-=U_-$ and $gU=U$ for $g\in G={\rm O}_7(\bbf)$, then $g=\pm {\rm id}$.
\label{lem2.16}
\end{lemma}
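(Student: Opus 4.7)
The strategy is to use the $g$-invariance of $U_+$ and $U_-$ to split $g$ into diagonal blocks, and then to use $gU=U$ together with the line condition on $\bbf(f_1+f_2)$ to pin each block down to $\pm\mathrm{id}$. Since $g$ preserves the isotropic subspaces $U_\pm$, it also preserves their orthogonal complements, hence the nondegenerate subspace $M:=U_+^\perp\cap U_-^\perp=\bbf f_3\oplus\bbf f_4\oplus\bbf f_5$. As $\bbf^7=U_+\oplus M\oplus U_-$ is a direct sum of $g$-invariant subspaces, $g$ acts block-diagonally as $A\oplus B\oplus C$ with $A\in{\rm GL}_2(\bbf)$ on $U_+$, $B$ an isometry of the quadratic space $M$, and $C\in{\rm GL}_2(\bbf)$ on $U_-$ (where $C$ is determined by $A$ via orthogonality). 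The associated projections $\pi_+,\pi_0,\pi_-$ of $\bbf^7$ onto the three blocks are then $g$-equivariant.

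Writing $v_1=f_2+f_3$, $v_2=f_1+f_4-\frac{1}{2}f_7$, $v_3=f_5-f_6$ for the given basis of $U$, we use $gU=U$ to express $gv_i=\sum_j m_{ji}v_j$, and then compare projections on both sides. One computes $\pi_+(v_1,v_2,v_3)=(f_2,f_1,0)$, $\pi_-(v_1,v_2,v_3)=(0,-\frac{1}{2}f_7,-f_6)$, and $\pi_0(v_1,v_2,v_3)=(f_3,f_4,f_5)$, so each projected equation is linear in the $m_{ji}$. The vanishing projections give $m_{13}=m_{23}=0$ (from $\pi_+$) and $m_{21}=m_{31}=0$ (from $\pi_-$), while the nonvanishing ones yield $Af_2=m_{11}f_2$, $Af_1=m_{22}f_1+m_{12}f_2$, $Bf_3=m_{11}f_3$, $Bf_4=m_{12}f_3+m_{22}f_4+m_{32}f_5$, $Bf_5=m_{33}f_5$ (with analogous formulas for $C$).

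Finally, orthogonality of $B$ on $M$ (using $(f_4,f_4)=(f_3,f_5)=1$) produces the relations $m_{11}m_{33}=1$, $m_{11}m_{32}=0$, $m_{12}m_{33}=0$, and $m_{22}^2+2m_{12}m_{32}=1$, which force $m_{12}=m_{32}=0$ and $m_{22}=\pm 1$. The remaining hypothesis $g\bbf(f_1+f_2)=\bbf(f_1+f_2)$, applied to $A(f_1+f_2)=m_{22}f_1+(m_{11}+m_{12})f_2=m_{22}f_1+m_{11}f_2$, then forces $m_{11}=m_{22}=\pm 1$, and $m_{33}=m_{11}^{-1}$ coincides with the same sign. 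Hence all three blocks equal $\pm\mathrm{id}$ with the same sign, and $g=\pm\mathrm{id}$. The main obstacle is simply the bookkeeping among the three projections and the orthogonality relations; the subtle point is that the line condition on $\bbf(f_1+f_2)$ provides exactly the last equation $m_{11}=m_{22}$ needed to tie the three blocks' signs together.
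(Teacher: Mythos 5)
Your proof is correct and follows essentially the same route as the paper: first use the invariance of $U_\pm$ (hence of $M = U_+^\perp\cap U_-^\perp$) to get the block-diagonal form of $g$, then extract the action on each $f_i$ from $gU=U$ and the orthogonality relations, and finally invoke the line condition on $\bbf(f_1+f_2)$ to force a single sign. You simply spell out the intermediate computation (projections of $U$ onto the three blocks and the orthogonality constraints on the middle block) that the paper states as a single line.
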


\begin{proof} It follows from $gU_+=U_+$ and $gU_-=U_-$ that $g$ is of the form
$$\bp A && 0 \\ & B & \\ 0 && J\,{}^tA^{-1}J \ep$$
with some $A\in{\rm GL}_2(\bbf)$ and $B\in {\rm O}_3(\bbf)$. It follows from $gU=U$ that
\begin{align*}
gf_2 & =af_2,\quad gf_3=af_3,\quad gf_5=a^{-1}f_5,\quad gf_6=a^{-1}f_6,\\
gf_1 & =\varepsilon f_1,\quad gf_4=\varepsilon f_4\mand gf_7=\varepsilon f_7
\end{align*}
with some $a\in\bbf^\times$ and $\varepsilon=\pm 1$. Finally it follows from $g\bbf(f_1+f_2)=\bbf(f_1+f_2)$ that $a=\varepsilon$. Hence $g=\pm {\rm id}$.
\end{proof}

Let $M_{U_+}\cong P^1(\bbf)$ denote the variety consisting of three-dimensional (maximal) isotropic subspaces in $\bbf^7$ containing $U_+$.

\begin{corollary} The triple flag variety $\mct=\mct_{(2),(3),(1,1,1)}$ for $G={\rm O}_7(\bbf)$ is of infinite type.
\end{corollary}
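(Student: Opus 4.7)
The plan is to follow the template of the earlier corollaries in this section: construct an explicit one-parameter family of triple flags in $\mct_{(2),(3),(1,1,1)}$ indexed by $\lambda\in M_{U_+}\cong P^1(\bbf)$, and then, for any would-be $G$-equivalence between two members of the family, extract from it an element of $G$ satisfying the four hypotheses of Lemma~\ref{lem2.16}, forcing $g=\pm{\rm id}$ and hence equality of parameters.

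The data already fixed in Lemma~\ref{lem2.16} supply most of a triple flag: the partial flag $\bbf(f_1+f_2)\subset U_+$ of dimensions $1,2$, and two further isotropic subspaces $U_-$ and $U$ of dimensions $2$ and $3$. All I need to do is assemble these into a point of $\mct_{(2),(3),(1,1,1)}$ with one remaining degree of freedom, and the natural way to do so is to set
$$
t_\lambda=\bigl(\,U_-,\ U,\ \bigl(\bbf(f_1+f_2)\subset U_+\subset U_\lambda\bigr)\,\bigr),
$$
where $U_\lambda$ varies over $M_{U_+}$. This is a valid triple flag of shape $(2),(3),(1,1,1)$: the first factor is a $2$-dimensional isotropic subspace, the second is a (maximal, hence isotropic) $3$-dimensional subspace, and the third is a chain of isotropic subspaces of dimensions $1,2,3$.

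If $g\in G$ satisfies $g\,t_\lambda=t_\mu$, matching the three factors gives in particular $gU_-=U_-$, $gU=U$, $g\bbf(f_1+f_2)=\bbf(f_1+f_2)$, $gU_+=U_+$, and $gU_\lambda=U_\mu$. The first four equalities are exactly the hypotheses of Lemma~\ref{lem2.16}, so $g=\pm{\rm id}$; the fifth then yields $U_\mu=gU_\lambda=U_\lambda$, i.e.\ $\lambda=\mu$. Since $|\bbf|=\infty$ forces $|M_{U_+}|=\infty$, the orbits $G\,t_\lambda$ are pairwise distinct, which is the conclusion.

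I expect no real obstacle, as Lemma~\ref{lem2.16} was engineered precisely for this application. The only delicate point is getting the placement right: one might first try putting the moving $U_\lambda$ in the $(3)$-slot and the fixed $U$ at the top of the $(1,1,1)$-flag, but that would demand $U_+\subset U$, which fails for the specific $U$ chosen in the lemma. Swapping these roles, as above, puts the varying subspace in the only flag position compatible with the containment $U_+\subset U_\lambda$ and at the same time hands Lemma~\ref{lem2.16} all four of its hypotheses.
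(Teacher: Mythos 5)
Your proof is correct and is essentially identical to the paper's: you construct the same one-parameter family $t_W=(U_-,U,(\bbf(f_1+f_2)\subset U_+\subset W))$ with $W\in M_{U_+}$ and invoke Lemma~\ref{lem2.16} in exactly the same way to separate the orbits.
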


\begin{proof} For an element $W\in M_{U_+}$, we can take a triple flag
$$t_W=(U_-,U,(\bbf(f_1+f_2)\subset U_+\subset W))$$
in $\mct$. By lemma \ref{lem2.16}, we have $Gt_W\ne Gt_{W'}$ for two distinct element $W$ and $W'$ in $M_{U_+}$.
\end{proof}

\begin{remark} If $\bbf$ is algebraically closed, then the corollary also follows from
$$\dim \mct=7+6+9=22>21=\dim {\rm O}_7(\bbf).$$
\end{remark}

\subsection{Exclusion by Lemma \ref{lem2.16}}

As in Proposition \ref{prop2.15}, we may assume that $\mct=\mct_{{\bf a},{\bf b},{\bf c}}$ with
$${\bf a}=(\alpha_1),\quad {\bf b}=(\beta_1)\mand {\bf c}=(\gamma_1,\ldots, \gamma_r).$$
We may moreover assume $\alpha_1\le \beta_1$.

\begin{proposition} Suppose $r\ge 3$ and $\mct$ is of finite type. Then $\alpha_1=1$ or $\alpha_1=\beta_1=n$.
\label{prop2.19}
\end{proposition}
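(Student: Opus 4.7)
The plan is to combine Proposition~\ref{prop2.15}(ii) with the $\mathrm{O}_7$ configuration of Lemma~\ref{lem2.16}, via the $\iota$-embedding technique already used for Propositions~\ref{prop1.1} and~\ref{prop2.8}. By Proposition~\ref{prop2.15}(ii), finite type with $r \ge 2$ and $\alpha_1 \le \beta_1$ forces $\alpha_1 = 1$ or $\beta_1 = n$. Hence it suffices to show: if $1 < \alpha_1 < n$, $\beta_1 = n$, and $r \ge 3$, then $\mct$ is of infinite type.

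Under these hypotheses $n \ge 3$, so one may define an isometric inclusion $\iota : \bbf^7 \to \bbf^{2n+1}$ by $\iota(f_i) = e_{n-3+i}$. Using the notation of Lemma~\ref{lem2.16}, set
$$V_1 = \iota(U_-) \oplus U_{[\alpha_1-2]}, \quad V_2 = \iota(U) \oplus U_{[n-3]},$$
and for each $W \in M_{U_+}$ define
$$F_1 = \iota(\bbf(f_1+f_2)) \oplus U_{[\gamma_1-1]}, \quad F_2 = \iota(U_+) \oplus U_{[\gamma_1+\gamma_2-2]},$$
$$F_{j,W} = \iota(W) \oplus U_{[\gamma_1+\cdots+\gamma_j-3]} \quad (j \ge 3).$$
The inequalities needed to make the pieces isotropic, nested, and with all paddings contained in $U_{[n-3]}$ are $\alpha_1 \le n-1$, $\gamma_1 \le n-2$, $\gamma_1+\gamma_2 \le n-1$, and $\gamma_1 + \cdots + \gamma_j \le n$; all four are immediate from $\alpha_1 < n$, $\sum \gamma_i \le n$, $\gamma_i \ge 1$ and $r \ge 3$. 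Thus $t_W := (V_1, V_2, (F_1 \subset F_2 \subset F_{3,W} \subset \cdots \subset F_{r,W}))$ belongs to $\mct$.

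Suppose $g t_W = t_{W'}$ with $g \in G$. Apply Lemma~\ref{lem2.1'} to the nondegenerate subspace $\widetilde W := \iota(\bbf^7)$, the isotropic subspace $U_1 := U_{[n-3]} \subset \widetilde W^\perp$, and the pieces $W_i$ taken to be $\iota(U_-), \iota(U), \iota(\bbf(f_1+f_2)), \iota(U_+), \iota(W), \ldots, \iota(W)$, with primed versions identical except the $\iota(W)$'s are replaced by $\iota(W')$. Both sums equal $\widetilde W$ because $\iota(U_-)+\iota(U)+\iota(U_+) = \iota(\bbf^7)$. Lemma~\ref{lem2.1'} then forces $g$ to preserve $\widetilde W \oplus U_{[n-3]}$ and $U_{[n-3]}$, and so $g$ induces an isometry $\widetilde g$ on the quotient $\widetilde W \cong \bbf^7$ which fixes $U_-$, $U$, $U_+$, and $\bbf(f_1+f_2)$, and which sends $W$ to $W'$. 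Lemma~\ref{lem2.16} gives $\widetilde g = \pm{\rm id}$, hence $W' = W$. Since $M_{U_+} \cong P^1(\bbf)$ is infinite when $|\bbf| = \infty$, the $G$-orbits $Gt_W$ are infinite in number.

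The main point requiring care is the dimension bookkeeping in the construction of $t_W$: the hypothesis $r \ge 3$ is essential because it is precisely what forces $\gamma_1 \le n-2$ and $\gamma_1+\gamma_2 \le n-1$, the two inequalities that allow every padding to sit inside $U_{[n-3]}$ so that Lemma~\ref{lem2.1'} applies.
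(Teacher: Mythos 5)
Your proof is correct and takes essentially the same approach as the paper: the paper also defines $\iota:\bbf^7\to\bbf^{2n+1}$ by $\iota(f_i)=e_{i+n-3}$, builds the pieces $V_1=\iota(U_-)\oplus U_{[\alpha_1-2]}$, $V_2=\iota(U)\oplus U_{[n-3]}$, and the flag $\iota(\bbf(f_1+f_2))\oplus U_{[\gamma_1-1]}\subset\iota(U_+)\oplus U_{[\gamma_1+\gamma_2-2]}\subset\iota(W)\oplus U_{[\gamma_1+\gamma_2+\gamma_3-3]}$, then applies Lemmas \ref{lem2.1'} and \ref{lem2.16}. The only cosmetic difference is that the paper first reduces to $r=3$ while you carry the full length-$r$ flag; this is harmless since your dimension checks use only $\gamma_1+\gamma_2+\gamma_3\le n$ (hence $r\ge 3$), exactly as the paper's reduction would.
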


\begin{proof} We may assume $r=3$. Suppose $\alpha_1>1$. Then by Proposition \ref{prop2.15} (ii), we have $\beta_1=n$. Suppose $\alpha_1<n$. Define an isometric inclusion $\iota: \bbf^7\to \bbf^{2n+1}$ by $\iota(f_i)=e_{i+n-3}$. Then we can define isotropic subspaces
\begin{align*}
V_1 & =\bbf e_{n+3}\oplus \bbf e_{n+4}\oplus U_{[\alpha_1-2]}, \quad
V_2 =\iota(U)\oplus U_{[\beta_1-3]}, \\
V_3 & =\bbf(e_{n-2}+e_{n-1})\oplus U_{[\gamma_1-1]}, \quad
V_4 =\bbf e_{n-2}\oplus \bbf e_{n-1}\oplus U_{[\gamma_1+\gamma_2-2]} \\
\mand V_{5,W} & =\iota(W)\oplus U_{[\gamma_1+\gamma_2+\gamma_3-3]}
\end{align*}
of $\bbf^{2n+1}$ for $W\in M_{U_+}$. Then the triple flags $t_W=(V_1,V_2,(V_3\subset V_4\subset V_{5,W}))$ are contained in $\mct=\mct_{{\bf a},{\bf b},{\bf c}}$ for $W\in M_{U_+}$. We can prove $Gt_W \ne Gt_{W'}$ if $W$ and $W'$ are two distinct elements in $M_{U_+}$ as in the proof of Proposition \ref{prop1.1} using Lemma \ref{lem2.16}. Hence $\mct=\mct_{{\bf a},{\bf b},{\bf c}}$ is of infinite type.
\end{proof}

\subsection{Conclusion}

Combining Proposition \ref{prop2.15} and Proposition \ref{prop2.19}, we have the following theorem.

\begin{theorem} Suppose $1=p=q\le r$ and $\alpha_1\le\beta_1$. When $r=1$, suppose $\alpha_1\le \beta_1\le \gamma_1$. If $\mct=\mct_{{\bf a},{\bf b},{\bf c}}$ is of finite type, then $({\bf a},{\bf b},{\bf c})$ satisfies one of the following four conditions.

{\rm (I)} $\alpha_1=\beta_1=n$.

{\rm (II)} $\alpha_1=1$.

{\rm (III)} $r=1$ and $\gamma_1=n$.

{\rm (IV)} $r=2$ and $\beta_1=n$.
\label{th2.20}
\end{theorem}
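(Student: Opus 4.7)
The plan is to obtain Theorem \ref{th2.20} as a direct case analysis on the length $r$ of the flag sequence $\mathbf{c}$, invoking the propositions already proved in this section. Since the hypotheses give $p=q=1$ and $\alpha_1\le\beta_1$, we write $\mathbf{a}=(\alpha_1)$, $\mathbf{b}=(\beta_1)$, $\mathbf{c}=(\gamma_1,\ldots,\gamma_r)$, and assume $\mct$ is of finite type. The strategy is simply to split on $r\in\{1,2,\ge 3\}$ and read off which of (I)--(IV) is forced in each case.

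First I would handle $r=1$: here the hypothesis gives $\alpha_1\le\beta_1\le\gamma_1$, and Proposition \ref{prop2.15}(i) yields $\alpha_1=1$ (which is condition (II)) or $\gamma_1=n$ (which, together with $r=1$, is condition (III)). Next, for $r=2$, Proposition \ref{prop2.15}(ii) yields $\alpha_1=1$ (condition (II)) or $\beta_1=n$ (which, with $r=2$, is condition (IV)). Finally, for $r\ge 3$, Proposition \ref{prop2.19} yields $\alpha_1=1$ (condition (II)) or $\alpha_1=\beta_1=n$ (condition (I)). Since these three cases exhaust all possibilities for $r$, we obtain in every case that $(\mathbf{a},\mathbf{b},\mathbf{c})$ satisfies at least one of (I)--(IV).

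There is no real obstacle at this stage, since all the substantive work has already been packaged into Propositions \ref{prop2.15} and \ref{prop2.19} via the explicit infinite families constructed in Lemmas \ref{lem2.5}, \ref{lem2.9}, \ref{lem2.12}, and \ref{lem2.16}. The only verification needed is the trivial check that the alternatives generated in each case (namely ``$\alpha_1=1$ or $\gamma_1=n$'' for $r=1$, ``$\alpha_1=1$ or $\beta_1=n$'' for $r=2$, and ``$\alpha_1=1$ or $\alpha_1=\beta_1=n$'' for $r\ge 3$) translate precisely into (II), (III), (IV), or (I) under the constraints on $r$ built into the statement of the theorem. The present theorem is thus essentially a consolidation of the previous exclusion results into a single classification statement.
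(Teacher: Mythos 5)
Your proposal is correct and follows exactly the same route as the paper: the theorem is stated in Section~2 as an immediate consequence of Propositions~\ref{prop2.15} and~\ref{prop2.19}, and your case split on $r\in\{1,2,\ge 3\}$ is precisely the straightforward bookkeeping that turns those two propositions into conditions (I)--(IV).
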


\subsection{Proof of Proposition \ref{prop1.4}}

Proposition \ref{prop1.4} is equivalent to the following proposition.

\begin{proposition} If $\max(\alpha_1,\beta_1,\gamma_1)<n$ and $|\bbf^\times/(\bbf^\times)^2|=\infty$, then $\mct=\mct_{{\bf a},{\bf b},{\bf c}}$ is of infinite type.
\end{proposition}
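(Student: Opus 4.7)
The plan is to produce, for each $\lambda\in\bbf^\times$, an explicit triple flag $t_\lambda\in\mct_{{\bf a},{\bf b},{\bf c}}$ whose $G$-orbit records the class of $\lambda$ in $\bbf^\times/(\bbf^\times)^2$; since this group is infinite, $\mct$ then has infinitely many orbits. The invariant will come from the discriminant of a $3$-dimensional nondegenerate subspace spanned by the three ``first components'' of the flags, modulo an orthogonal isotropic buffer.

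Using the hypothesis $\alpha_1,\beta_1,\gamma_1\le n-1$, fix the isometric embedding $\iota:\bbf^5\to\bbf^{2n+1}$, $\iota(f_i)=e_{i+n-2}$, with image $F$, and set $U_{[\ell]}:=\bbf e_1\oplus\cdots\oplus\bbf e_\ell$ for $0\le\ell\le n-2$, an isotropic subspace of $F^\perp$. Inside $F$ take three isotropic lines
$$L_1=\bbf e_{n-1},\quad L_2=\bbf e_{n+3},\quad L_{3,\lambda}=\bbf\bigl(e_{n-1}+e_n-\lambda e_{n+2}+\lambda e_{n+3}\bigr).$$
A direct Gram-matrix computation gives determinant
$$\det\bp 0 & 1 & \lambda \\ 1 & 0 & 1 \\ \lambda & 1 & 0 \ep =2\lambda,$$
so $W_\lambda:=L_1+L_2+L_{3,\lambda}$ is a $3$-dimensional nondegenerate subspace of discriminant $2\lambda$ and Witt index $1$; hence $W_\lambda\cong\mathrm{Hyp}\oplus\langle-2\lambda\rangle$, and $W_\lambda\cong W_\mu$ holds iff $\lambda\equiv\mu\pmod{(\bbf^\times)^2}$.

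Build $t_\lambda$ by adjoining buffer:
$$V_1=L_1\oplus U_{[\alpha_1-1]},\ \ V_2=L_2\oplus U_{[\beta_1-1]},\ \ V_{i+2,\lambda}=L_{3,\lambda}\oplus U_{[\gamma_1+\cdots+\gamma_i-1]}\ \ (1\le i\le r).$$
These are isotropic and properly nested, so $t_\lambda\in\mct$ provided $\gamma_1+\cdots+\gamma_r\le n-1$. In the complete-isotropic subcase $\gamma_1+\cdots+\gamma_r=n$, replace the top component with $L_{3,\lambda}\oplus\bbf(e_{n-1}+e_n)\oplus U_{[n-2]}$; the extra line $\bbf(e_{n-1}+e_n)\subset F$ is isotropic and, by a direct check, orthogonal to $L_{3,\lambda}$ for every $\lambda$, so the modification preserves all the essential $\lambda$-uniformity.

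Finally, suppose $g\in G$ satisfies $gt_\lambda=t_\mu$. Then $gV_1=V_1$, $gV_2=V_2$, and $gV_{3,\lambda}=V_{3,\mu}$, whence
$$g\bigl(V_1+V_2+V_{3,\lambda}\bigr)=V_1+V_2+V_{3,\mu};$$
writing $m=\max(\alpha_1,\beta_1,\gamma_1)-1\le n-2$, each side equals $W_\bullet\oplus U_{[m]}$ with $U_{[m]}$ as its radical (by the nondegeneracy of $W_\bullet$ together with the orthogonality $W_\bullet\perp U_{[m]}$). Therefore $gU_{[m]}=U_{[m]}$ and $g$ descends to an isometry $W_\lambda\to W_\mu$ on the nondegenerate quotient, forcing $\lambda\equiv\mu\pmod{(\bbf^\times)^2}$. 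The main subtlety is the radical identification at this last step, essentially an adaptation of Lemma~\ref{lem2.1'} to the present situation where the spans $W_\lambda$ and $W_\mu$ differ but share the same orthogonal buffer; once verified, the infinitude of square classes produces infinitely many $G$-orbits and hence infinite type.
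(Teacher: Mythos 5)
Your proof is correct and uses essentially the same construction as the paper: fix isotropic lines $\bbf e_{n-1}$, $\bbf e_{n+3}$ and a $\lambda$-parametrized isotropic line in a rank-$5$ subspace, attach isotropic buffers $U_{[\bullet]}$, and show that the square class of $\lambda$ is an orbit invariant. The paper extracts this invariant by a direct norm computation (showing $2\lambda=2k^2\mu$) instead of passing through the discriminant of $W_\lambda$ and radical preservation, and it reduces at the outset to $({\bf a},{\bf b},{\bf c})=((\alpha_1),(\beta_1),(\gamma_1))$ by projecting to first flag components—which would also spare you the boundary case $\gamma_1+\cdots+\gamma_r=n$—but these are presentational variants of the same argument.
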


\begin{proof} We may assume $({\bf a},{\bf b},{\bf c})=((\alpha_1),(\beta_1),(\gamma_1))$. Put
\begin{align*}
U_+ & =U_{[\alpha_1-1]}\oplus \bbf e_{n-1}, \quad
U_- =U_{[\beta_1-1]}\oplus \bbf e_{n+3}, \\
V_\lambda & =U_{[\gamma_1-1]}\oplus \bbf(e_{n-1}+e_n+\lambda e_{n+2}-\lambda e_{n+3}) \quad
\mand m_\lambda =(U_+,U_-,V_\lambda)\in \mct
\end{align*}
with $\lambda\in \bbf^\times$. Suppose $gm_\lambda=m_\mu$ for some $g\in G$. Then we have only to show that $\lambda\in \mu(\bbf^\times)^2$.

Since $gU_+=U_+$ and $gU_-=U_-$, we have
$$ge_{n-1}\in ae_{n-1}+U_{[\alpha_1-1]}\mand ge_{n+3}\in be_{n+3}+U_{[\beta_1-1]}$$
with some $a,b\in \bbf$. Since $(ge_{n-1},ge_{n+3})=(e_{n-1},e_{n+3})=1$, we have $a\in \bbf^\times$ and $b=a^{-1}$. On the other hand, since $e_n+\lambda e_{n+2}\in U_++U_-+V_\lambda$, we have
$$g(e_n+\lambda e_{n+2})\in U_++U_-+V_\mu.$$
Since $e_n+\lambda e_{n+2}$ is orthogonal to $e_{n-1}$ and $e_{n+3}$, we have
$$g(e_n+\lambda e_{n+2})=k(e_n+\mu e_{n+2})+U_{[\max(\alpha_1,\beta_1,\gamma_1)-1]}$$
with some $k\in \bbf^\times$. Hence we have
\begin{align*}
2\lambda & =(e_n+\lambda e_{n+2},e_n+\lambda e_{n+2})= (g(e_n+\lambda e_{n+2}),g(e_n+\lambda e_{n+2})) \\
& =(k(e_n+\mu e_{n+2}),k(e_n+\mu e_{n+2}))=2k^2\mu
\end{align*}
and therefore $\lambda\in \mu(\bbf^\times)^2$.
\end{proof}

\section{Orbits on $\mct_{(\alpha),(\beta),(n)}$}

\subsection{Preliminaries}

First we prepare some general notations and results. Write $\overline{i}=2n+2-i$ for $i\in I=\{1,\ldots,2n+1\}$. For $d\le n$, let
\begin{align*}
W & =\bbf e_1\oplus\cdots\oplus \bbf e_d,\quad \overline{W}=\bbf e_{\overline{d}}\oplus\cdots\oplus \bbf e_{\overline{1}}, \quad
U =\bbf e_{d+1}\oplus\cdots\oplus \bbf e_{\overline{d+1}}
\end{align*}
and $m=2n+1-2d=\dim U$. For $A\in {\rm GL}_d(\bbf)$ and $B\in {\rm O}_m(\bbf)$, define elements
\begin{align*}
\ell(A) & =\bp A && 0 \\ & I_m & \\ 0 && J_d\,{}^tA^{-1}J_d \ep
\mand \ell_0(B) = \bp I_d && 0 \\ & B & \\ 0 && I_d \ep
\end{align*}
of $G$. As in \cite{M2}, the maximal parabolic subgroup $P_W=\{g\in G\mid gW=W\}$ is decomposed as
$P_W=L_WL_UN_W=N_WL_WL_U$
where
$$L_W=\{\ell(A)\mid A\in {\rm GL}_d(\bbf)\},\quad L_U=\{\ell_0(B)\mid B\in {\rm O}_m(\bbf)\}$$
$$\mand N_W=\left\{\bp I_d & * & * \\ 0 & I_m & * \\ 0 & 0 & I_d \ep\in G\right\}.$$
The subgroup $L_WL_U\cong L_W\times L_U$ is a Levi subgroup of $P_W$ and $N_W$ is the unipotent radical of $P_W$ consisting of elements of the form
\begin{equation}
g(X,Z)=\bp I_d & X & Z \\ 0 & I_m & -J_m{}^tXJ_d \\ 0 & 0 & I_d \ep
\label{eq3.1''}
\end{equation}
with matrices $X=\{x_{i,j}\}^{i=1,\ldots,d}_{j=1,\ldots,m}$ and $Z=\{z_{i,j}\}^{i=1,\ldots,d}_{j=1,\ldots,d}$ satisfying
\begin{equation}
Z+J_d{}^tZJ_d=-XJ_m{}^tXJ_d.
\label{eq3.2''}
\end{equation}
So we have a bijection between $\bbf^{dm+d(d-1)/2}$ and $N_W$ given by
$$(X,\{z_{i,j}\}_{i+j\le d})\mapsto g(X,Z).$$
Put $Y=-J_m{}^tXJ_d$. Then $g(X,Z)$ is rewritten as
\begin{equation}
g(X,Z)=g'(Y,Z)=\bp I_d & -J_d{}^tYJ_m & Z \\ 0 & I_m & Y \\ 0 & 0 & I_d \ep.
\label{eq3.3''}
\end{equation}
The conditon (\ref{eq3.2''}) is rewritten as
\begin{equation}
Z+J_d{}^tZJ_d=-J_d{}^tYJ_mY.
\label{eq3.4'''}
\end{equation}

A pair of isotropic subspaces $(V,V')$ is called nondegenerate if
$$V\cap {V'}^\perp=V'\cap V^\perp=\{0\}.$$
Let $W'$ be an isotropic subspace of $\bbf^{2n+1}$ such that $(W,W')$ is nondegenerate. Then $W'$ is written as
$$W'=\bbf v_1\oplus\cdots\oplus \bbf v_d$$
with vectors
$$v_j=e_{2n+1-d+j}+\sum_{i=1}^d z_{i,j}e_i+\sum_{i=1}^m y_{i,j}e_{d+i}.$$
Define matrices $Z=\{z_{i,j}\}$ and $Y=\{y_{i,j}\}$. Since the condition (\ref{eq3.4'''}) is equivalent to $(v_i,v_j)=0$ for $i,j=1,\ldots,d$, we have:

\begin{lemma} $g'(Y,Z)\in G={\rm O}_{2n+1}(\bbf)$.
\label{lem6.2'}
\end{lemma}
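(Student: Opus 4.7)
\smallskip

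\noindent\textbf{Plan.} The statement is essentially immediate from the machinery already set up in the subsection. The key observation is that the matrix $g'(Y,Z)$ displayed in (\ref{eq3.3''}) is nothing but the unipotent element $g(X,Z) \in N_W$ of (\ref{eq3.1''}) re-expressed via the substitution $X = -J_d\,{}^tY J_m$, equivalently $Y = -J_m\,{}^tX J_d$. The preceding paragraph already records that this substitution carries condition (\ref{eq3.2''}) to condition (\ref{eq3.4'''}), and that $N_W$ consists precisely of those matrices $g(X,Z)$ for which (\ref{eq3.2''}) holds. Consequently, the lemma reduces to establishing (\ref{eq3.4'''}) from the hypothesis that $W'$ is isotropic.

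For this I would compute $(v_i, v_j)$ directly using $(e_k, e_l) = \delta_{k, 2n+2-l}$. Splitting $v_j$ into its three pieces, namely the vector $e_{2n+1-d+j} \in \overline{W}$, the $W$-piece $\sum_i z_{i,j} e_i$, and the $U$-piece $\sum_i y_{i,j} e_{d+i}$, one checks that the only pairings that can contribute are the $\overline{W}$-$W$ cross terms and the $U$-$U$ terms, since $W$ and $\overline{W}$ are individually isotropic and $U$ is orthogonal to both. A short index computation then yields
\[
(v_i, v_j) = (J_d Z + {}^tZ J_d + {}^tY J_m Y)_{i,j}.
\]
The isotropy of $W'$ forces the left side to vanish, and left-multiplying the resulting matrix identity by $J_d$ (using $J_d^2 = I_d$) produces precisely (\ref{eq3.4'''}).

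Combining the two observations, $g'(Y,Z)$ coincides with an element of $N_W$, and therefore lies in $G$. The only real obstacle is the bookkeeping with the anti-diagonal matrices $J_d$ and $J_m$ and the associated transposes: verifying that the right-hand side of (\ref{eq3.2''}) indeed transforms to that of (\ref{eq3.4'''}) under $X = -J_d\,{}^tY J_m$ amounts to the identity $-X J_m\,{}^tX J_d = -J_d\,{}^tY J_m Y$, which follows from $J_d^2 = I_d$, $J_m^2 = I_m$ and ${}^tJ_d = J_d$, ${}^tJ_m = J_m$. Everything else is a direct appeal to the characterization of $N_W$ already recalled at the start of the subsection.
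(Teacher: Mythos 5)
Your proof is correct and follows the same route the paper takes: the paper's ``proof'' is simply the sentence preceding the lemma, which asserts that condition (\ref{eq3.4'''}) is equivalent to $(v_i,v_j)=0$ for all $i,j$, so that the isotropy of $W'$ forces $g'(Y,Z)$ to satisfy the defining condition of $N_W\subset G$. You have merely written out the short index computation that the paper leaves implicit, and your bookkeeping with $J_d$, $J_m$ and the substitution $X=-J_d\,{}^tYJ_m$ checks out.
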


\begin{corollary} Let $W=\bbf e_1\oplus\cdots\oplus \bbf e_d$ with $d\le n$ and let $W'$ be an isotropic subspace of $\bbf^{2n+1}$ such that $(W,W')$ is nondegenerate. Then there exists an $h\in G$ such that
$$hW'=\overline{W}=\bbf e_{\overline{d}}\oplus\cdots\oplus \bbf e_{\overline{1}}$$
and that $h$ acts trivially on $W$.
\end{corollary}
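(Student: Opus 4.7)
The plan is to construct $h$ as the inverse of an explicit element of the unipotent radical $N_W$ of $P_W$, using the parametrization $g'(Y,Z)$ recorded in (3.3). The first task is to choose a basis of $W'$ adapted to the decomposition $\bbf^{2n+1}=W\oplus U\oplus\overline{W}$. From the definitions of $W$ and $U$ one checks directly that $W^\perp=W\oplus U$; combined with the nondegeneracy of $(W,W')$ this gives $W'\cap(W\oplus U)=W'\cap W^\perp=\{0\}$. Hence the projection $\pi:W'\to\overline{W}$ along $W\oplus U$ is injective, and since nondegeneracy also forces $\dim W'=d=\dim\overline{W}$, it is an isomorphism. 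Setting $v_j:=\pi^{-1}(e_{2n+1-d+j})$ for $j=1,\ldots,d$ then produces a basis of $W'$ of the form
$$v_j=e_{2n+1-d+j}+\sum_{i=1}^d z_{i,j}\,e_i+\sum_{i=1}^m y_{i,j}\,e_{d+i}$$
with uniquely determined scalars $z_{i,j}$ and $y_{i,j}$.

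Next I would assemble $Z=(z_{i,j})$ and $Y=(y_{i,j})$ into the matrix $g'(Y,Z)$ of (3.3). Because $W'$ is isotropic, $(v_i,v_j)=0$ for all $i,j$, and the computation in the paragraph preceding the previous lemma shows that this is exactly the quadratic constraint (3.4) on the pair $(Y,Z)$. That lemma therefore yields $g'(Y,Z)\in G$.

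Finally, a direct inspection of the last $d$ columns of the matrix (3.3) shows $g'(Y,Z)\,e_{2n+1-d+j}=v_j$ for $j=1,\ldots,d$, so $g'(Y,Z)\,\overline{W}=W'$; and the $I_d$ block in the top-left corner shows that $g'(Y,Z)$ fixes $W$ pointwise. Taking $h=g'(Y,Z)^{-1}$ produces an element of $G$ that still fixes $W$ pointwise and now maps $W'$ onto $\overline{W}$, as required. The only substantive step is the identification $W^\perp=W\oplus U$ combined with the nondegeneracy hypothesis to make $\pi$ an isomorphism; once that is in hand, the rest is a mechanical unwinding of the parametrization of $N_W$ and an appeal to the previous lemma.
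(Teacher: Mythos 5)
Your proposal is correct and follows essentially the same route as the paper: build the basis $v_j$ of $W'$ adapted to $W\oplus U\oplus\overline{W}$, read off $Y$ and $Z$, invoke Lemma \ref{lem6.2'} to get $g'(Y,Z)\in G$, and take $h=g'(Y,Z)^{-1}$. You merely spell out the intermediate observations (that $W^\perp=W\oplus U$, that nondegeneracy forces $\dim W'=d$ and injectivity of the projection onto $\overline{W}$) which the paper leaves implicit in the sentence preceding that lemma.
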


\begin{proof} $h=g'(Y,Z)^{-1}$ is a desired element.
\end{proof}

\begin{corollary} Let $(V,V')$ be a nondegenerate pair of isotropic subspaces in $\bbf^{2n+1}$ with $\dim V=\dim V'=\ell$. Then there exists a $g\in G$ such that
$$gV=\bbf e_1\oplus\cdots\oplus \bbf e_\ell\quad\mbox{and that}\quad gV'=\bbf e_{\overline{\ell}}\oplus\cdots\oplus \bbf e_{\overline{1}}.$$
\label{cor6.4}
\end{corollary}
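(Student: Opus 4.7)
The plan is to reduce Corollary \ref{cor6.4} to the previous Corollary 3.2 in two steps, exploiting the $G$-homogeneity of the single isotropic Grassmannian $M_{(\ell)}$ stated earlier in the introduction.

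First I would choose $g_1\in G$ with $g_1V=W:=\bbf e_1\oplus\cdots\oplus\bbf e_\ell$. Such a $g_1$ exists because $M_{(\ell)}$ is $G$-homogeneous (this is the parabolic identification $M_{(\ell)}\cong G/P_{(\ell)}$ discussed in Section 1); equivalently, $V$ and the standard isotropic flag of the same dimension are in the same $G$-orbit by Witt's theorem. Note that $\ell\le n$ is automatic because $V$ is isotropic in $\bbf^{2n+1}$, so the standard target subspace is well-defined.

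Since $g_1$ is an isometry of $(\ ,\ )$, the image $g_1V'$ is still an isotropic $\ell$-dimensional subspace, and nondegeneracy of the pair is preserved: $(W,g_1V')=(g_1V,g_1V')$ is nondegenerate. I would then apply Corollary 3.2 to this new pair $(W,g_1V')$ to produce an $h\in G$ with $h(g_1V')=\overline{W}=\bbf e_{\overline\ell}\oplus\cdots\oplus\bbf e_{\overline 1}$ and $h$ acting trivially on $W$. Setting $g:=hg_1$, the triviality of $h|_W$ gives $gV=h(g_1V)=hW=W$, while by construction $gV'=h(g_1V')=\overline W$, as required.

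There is essentially no obstacle beyond invoking the two ingredients; the only point to verify carefully is that the pair $(W,g_1V')$ satisfies the nondegeneracy hypothesis of Corollary 3.2, which is immediate from the fact that isometries preserve the orthogonal complement operation, i.e.\ $g_1(V^\perp)=(g_1V)^\perp=W^\perp$ and similarly for $V'$. Thus the proof is essentially a two-line assembly of the $G$-homogeneity of $M_{(\ell)}$ together with Corollary 3.2.
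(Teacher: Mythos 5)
Your proof is correct and is exactly the argument the paper leaves implicit: move $V$ to the standard coordinate space using $G$-homogeneity of the isotropic Grassmannian, observe that isometries preserve nondegeneracy of the pair, and then invoke the preceding corollary to complete the normalization of $V'$ while fixing $W$ pointwise. The paper offers no written proof for this corollary, so your two-step composition $g=hg_1$ is the intended reasoning.
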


Let $p_{\overline{W}}$ denote the projection
$$p_{\overline{W}}: \bbf^{2n+1}=W^\perp\oplus \overline{W}\to \overline{W}.$$

\begin{lemma} Let $V$ be a maximally isotropic subspace in $\bbf^{2n+1}$. Then

{\rm (i)} $p_{\overline{W}} (V)$ is the orthogonal space of $W\cap V$ in $\overline{W}$.

{\rm (ii)} $(W^\perp\cap V)/(W\cap V)$ is maximally isotropic in $W^\perp/W$.
\label{lem3.4}
\end{lemma}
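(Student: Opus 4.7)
The plan is to establish parts (i) and (ii) simultaneously by squeezing between two opposing dimension estimates. The ingredients I will use are: $\dim V = n$ because $V$ is maximally isotropic in $\bbf^{2n+1}$; the pairing between $W$ and $\overline{W}$ is nondegenerate; $W \subseteq W^\perp$, so $W$ lies in the radical of the form restricted to $W^\perp$; and the induced form on $W^\perp/W \cong U$ is a nondegenerate orthogonal form on a space of dimension $m = 2n+1-2d$, whose Witt index equals $n-d$.

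First I would prove the easy containment $p_{\overline{W}}(V) \subseteq (W \cap V)^\perp \cap \overline{W}$ of (i). For any $w \in W \cap V$ and $v \in V$, isotropy of $V$ gives $(w,v)=0$. Writing $v = v_0 + p_{\overline{W}}(v)$ with $v_0 \in W^\perp$ and using that $(w,v_0)=0$ because $w \in W$ and $v_0 \in W^\perp$, I conclude $(w, p_{\overline{W}}(v)) = 0$, which yields the inclusion.

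Next I would compare dimensions. By nondegeneracy of the pairing between $W$ and $\overline{W}$, the annihilator $(W \cap V)^\perp \cap \overline{W}$ has dimension $d - \dim(W \cap V)$. Applying rank-nullity to $p_{\overline{W}}|_V$, whose kernel is $V \cap W^\perp$, gives $\dim p_{\overline{W}}(V) = n - \dim(V \cap W^\perp)$. Combining with the containment above yields
\[
\dim(V \cap W^\perp) - \dim(V \cap W) \ge n - d.
\]
For the opposite direction, the subspace $(V \cap W^\perp)/(V \cap W)$ embeds into $W^\perp/W$ and is isotropic there because $V$ is isotropic and $W$ is contained in the radical of the form on $W^\perp$. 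Since the Witt index of $W^\perp/W$ equals $n-d$, this forces
\[
\dim(V \cap W^\perp) - \dim(V \cap W) \le n - d.
\]

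The two inequalities together force equality, which is precisely (ii), and in turn promotes the inclusion in the second step to an equality, yielding (i). The only point requiring care is bookkeeping: one must distinguish the ambient orthogonal complement $W^\perp \subseteq \bbf^{2n+1}$ from the annihilator $(W \cap V)^\perp \cap \overline{W}$ computed inside $\overline{W}$ with respect to the pairing between $W$ and $\overline{W}$, and correctly identify the Witt index of the odd-dimensional quadratic space $W^\perp/W$. I do not foresee any genuinely hard step.
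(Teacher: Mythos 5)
Your proof is correct and follows essentially the same squeeze argument as the paper: both establish the inclusion of $p_{\overline{W}}(V)$ in the annihilator of $W\cap V$ inside $\overline W$, observe that $(W^\perp\cap V)/(W\cap V)$ is isotropic in $W^\perp/W$, and combine the two resulting dimension bounds with $\dim V=n$ to force equality in each. The phrasing via rank-nullity and Witt index is cosmetically different from the paper's direct dimension count $\dim V=\dim(W^\perp\cap V)+\dim p_{\overline W}(V)$, but the content is identical.
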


\begin{proof} The spaces $W\cap V$ and $p_{\overline{W}} (V)$ are orthogonal since $W\perp W^\perp$ and $V$ is isotropic. Hence we have
\begin{equation}
\dim(W\cap V)+\dim p_{\overline{W}} (V)\le \dim W
\label{eq3.5}
\end{equation}
since $(W,\overline{W})$ is nondegenerate. On the other hand, we have
\begin{equation}
\dim((W^\perp\cap V)/(W\cap V))\le n-\dim W
\label{eq3.6}
\end{equation}
since $(W^\perp\cap V)/(W\cap V)$ is isotropic in $W^\perp/W$. Since
\begin{align*}
n & =\dim V=\dim (W^\perp\cap V)+\dim p_{\overline{W}} (V) \\
& =\dim ((W^\perp\cap V)/(W\cap V))+ \dim (W\cap V)+\dim p_{\overline{W}} (V),
\end{align*}
we have the equalities in (\ref{eq3.5}) and (\ref{eq3.6}).
\end{proof}

We can easily rewrite Lemma \ref{lem6.2'} in the following form.

\begin{lemma} Let $K$ be an index subset of $I$ such that $K\cap \overline{K}=\phi$. Let $v_k$ be vectors in $\bbf^{2n+1}$ of the form
$$v_k=e_k+\sum_{i\in I-K} c_{i,k}e_i$$
with some $c_{i,k}\in\bbf$ for $k\in K$ such that $(v_k,v_\ell)=0$ for $k,\ell\in K$. Then there exists a $g\in G$ such that
$$ge_k=\begin{cases} e_k & \text{if $k\in \overline{K}$}, \\
v_k & \text{if $k\in K$}, \\
e_k-\sum_{i\in \overline{K}} c_{\overline{k},\overline{i}}e_i & \text{if $k\in I-K-\overline{K}$}.
\end{cases}$$
\label{lem3.5}
\end{lemma}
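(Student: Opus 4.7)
The plan is to define $g\colon\bbf^{2n+1}\to\bbf^{2n+1}$ by the three displayed formulas and verify directly that $g\in G$. Invertibility is immediate: if we order the basis as $\{e_k\}_{k\in K}$, then $\{e_k\}_{k\in I-K-\overline{K}}$, then $\{e_k\}_{k\in\overline{K}}$, the matrix of $g$ is lower unitriangular, since a $K$-column has leading $1$ with nontrivial entries only in $I-K$ rows, a middle column has leading $1$ with corrections only in $\overline{K}$ rows, and an $\overline{K}$-column is the corresponding standard basis vector. In particular $\det g=1$.

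To show $g\in G$ I would verify $(ge_i,ge_j)=(e_i,e_j)=\delta_{i,\overline{j}}$ by case analysis on which of $K$, $\overline{K}$, $I-K-\overline{K}$ contains $i$ and $j$. Several cases are immediate: $(v_i,v_j)=0$ for $i,j\in K$ is given; for $i,j\in\overline{K}$ both sides vanish since $K\cap\overline{K}=\emptyset$; and for $i\in K$, $j\in\overline{K}$ one computes
$$(v_i,e_j)=\delta_{i,\overline{j}}+\sum_{k\in I-K}c_{k,i}\,\delta_{k,\overline{j}}=\delta_{i,\overline{j}},$$
the sum vanishing because $\overline{j}\in K$. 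When one index lies in $\overline{K}$ and the other is in the middle piece, or when both indices are middle, every cross term drops out from $(e_a,e_b)=\delta_{a,\overline{b}}=0$ whenever $a\in\overline{K}$ and $b\in\overline{K}\cup(I-K-\overline{K})$ (since $\overline{a}\in K$ is disjoint from $b$), so the pairing reduces to the desired $(e_i,e_j)$.

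The only case requiring the explicit correction term in the third formula is $i\in K$, $j\in I-K-\overline{K}$. Using that $\overline{j}\in I-K-\overline{K}\subset I-K$ (the middle piece is stable under $k\mapsto\overline{k}$) and $i\ne\overline{j}$, one gets
$$(v_i,e_j)=\sum_{k\in I-K}c_{k,i}\,\delta_{k,\overline{j}}=c_{\overline{j},i},$$
while
$$\Bigl(v_i,\sum_{a\in\overline{K}}c_{\overline{j},\overline{a}}\,e_a\Bigr)=\sum_{a\in\overline{K}}c_{\overline{j},\overline{a}}\,\delta_{i,\overline{a}}=c_{\overline{j},i}$$
(the surviving term being $a=\overline{i}\in\overline{K}$, valid since $i\in K$). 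These cancel, so $(ge_i,ge_j)=0=(e_i,e_j)$. I do not expect any serious obstacle; the work is just bookkeeping, and the correction coefficients $c_{\overline{k},\overline{i}}$ are chosen precisely so that these cross pairings telescope, which is exactly the phenomenon encoded by the off-diagonal block $-J_d{}^tYJ_m$ in the explicit matrix $g'(Y,Z)$ of Lemma~\ref{lem6.2'}.
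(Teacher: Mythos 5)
Your proof is correct. The paper itself gives no independent argument: it simply says Lemma~\ref{lem3.5} is ``easily rewritten'' from Lemma~\ref{lem6.2'}, which in turn rests on the parameterization of $N_W$ by pairs $(Y,Z)$ subject to the compatibility condition (3.4$'''$). In effect, the paper's route is: the block-matrix $g'(Y,Z)$ is an element of $N_W\subset G$ precisely when the $v_j$ are mutually orthogonal, and the general-$K$ form follows by conjugating by a signed permutation in $G$ that carries $\{1,\dots,d\}$ to $\overline{K}$ and $\{\overline{d},\dots,\overline{1}\}$ to $K$ (which is where the $J$-conjugated block $-J_d{}^tYJ_m$ turns into your correction coefficients $c_{\overline{k},\overline{i}}$). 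Your proposal skips the matrix-block packaging and the permutation step and instead verifies $(ge_i,ge_j)=\delta_{i,\overline{j}}$ directly in all cases; the only nontrivial case $i\in K$, $j\in I-K-\overline{K}$ is handled correctly, using that the middle set $I-K-\overline{K}$ is stable under $k\mapsto\overline{k}$ and the two surviving cross-terms $c_{\overline{j},i}$ cancel. This buys you a self-contained, coordinate-free verification at the cost of redoing the bookkeeping that the paper has already encoded once and for all in the $N_W$ parameterization; the underlying computation is the same.
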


\subsection{Normalization of $U_+$ and $U_-$}

Let $U_+$ and $U_-$ be $\alpha$ and $\beta$-dimensional isotropic subspaces of $\bbf^{2n+1}$, respectively. Define subspaces
$$W_0=U_+\cap U_-,\quad W_+=U_+\cap U_-^\perp\mand W_-=U_-\cap U_+^\perp$$
of $\bbf^{2n+1}$. Write
$a_0 =\dim W_0, \ 
a_+  =\dim W_+ -a_0$ and $a_- =\dim W_- -a_0$.
Since the bilinear form $(\ ,\ )$ is nondegenerate on the pair $(U_+/W_+,U_-/W_-)$, we have
$$\alpha-a_0-a_+=\beta-a_0-a_-.$$
Put $a_1=\alpha-a_0-a_+=\beta-a_0-a_-$ and $d=a_0+a_++a_-$. Define subspaces
\begin{align*}
W_{(0)} & =\bbf e_1\oplus\cdots\oplus \bbf e_{a_0}, \quad 
W_{(+)} =\bbf e_{a_0+1}\oplus\cdots\oplus\bbf e_{a_0+a_+}, \\ 
W_{(-)} & =\bbf e_{a_0+a_++1}\oplus\cdots\oplus\bbf e_d, \quad
U_{(+)} =\bbf e_{d+1}\oplus\cdots\oplus\bbf e_{d+a_1} \\
\quad\mbox{and}\quad U_{(-)} & =\bbf e_{\overline{d+a_1}}\oplus\cdots\oplus\bbf e_{\overline{d+1}}
\end{align*}
of $\bbf^{2n+1}$. Put $W=W_{(0)}\oplus W_{(+)}\oplus W_{(-)}=\bbf e_1\oplus\cdots\oplus \bbf e_d$. Let $\overline{W}$ and $U$ be as in Section 3.1.

\begin{proposition} There exists an element $g\in G$ such that
$$gU_+=W_{(0)}\oplus W_{(+)}\oplus U_{(+)}\quad\mbox{and that}\quad gU_-=W_{(0)}\oplus W_{(-)}\oplus U_{(-)}.$$
\label{prop3.1}
\end{proposition}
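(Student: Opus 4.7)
The key idea is to choose adapted bases of $U_+$ and $U_-$ so that sending them to the obvious standard basis vectors becomes an isometry on the span $U_++U_-$, and then to extend this isometry to an element of $G$ using Witt's theorem (or the constructive substitutes Corollary 3.3 and Lemma 3.5 already established).

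First I would collect the algebraic facts about $W_+$ and $W_-$ that are needed. The sum $W':=W_++W_-$ is isotropic of dimension $d$: each summand is isotropic since $W_\pm\subset U_\pm$, and for $w\in W_+=U_+\cap U_-^\perp$ and $w'\in W_-=U_-\cap U_+^\perp$ one has $(w,w')=0$ because $w'\in U_-$ while $w\in U_-^\perp$; moreover $W_+\cap W_-\subseteq U_+\cap U_-=W_0$, giving $\dim W'=(a_0+a_+)+(a_0+a_-)-a_0=d$. A similar check shows that the radical of the pairing restricted to $U_++U_-$ is exactly $W_++W_-$, so the quotient $(U_++U_-)/(W_++W_-)$ is a nondegenerate space of dimension $2a_1$ carrying the induced (dual) pairing between the images of $U_+/W_+$ and $U_-/W_-$.

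Next I would pick compatible bases. Choose $u_1,\dots,u_{a_0}$ spanning $W_0$, extend to a basis $u_1,\dots,u_{a_0+a_+}$ of $W_+$, and choose $u_{a_0+a_++1},\dots,u_d$ completing to a basis of $W_-$ modulo $W_0$; together these give a basis of $W'$. Pick any complement $V_+$ of $W_+$ in $U_+$ with basis $v_1,\dots,v_{a_1}$, and any complement $V_-$ of $W_-$ in $U_-$ with basis $v'_1,\dots,v'_{a_1}$. The induced pairing $V_+\times V_-\to\bbf$ (which equals the restricted pairing from $\bbf^{2n+1}$) is nondegenerate by the preceding paragraph, so after a linear change of the $v'_j$'s I may assume $(v_i,v'_j)=\delta_{ij}$.

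Now define the linear map $\phi$ from $U_++U_-$ to $W\oplus U_{(+)}\oplus U_{(-)}$ by $u_i\mapsto e_i$ for $1\le i\le d$, $v_i\mapsto e_{d+i}$ and $v'_j\mapsto e_{\overline{d+j}}$. I would verify this is an isometry by checking each type of pair: $W'$ being isotropic matches $W$ being isotropic; $V_\pm$ are isotropic since they sit inside $U_\pm$; $(v_i,v'_j)=\delta_{ij}$ matches $(e_{d+i},e_{\overline{d+j}})=\delta_{ij}$; $W_+\perp V_+$ and $W_-\perp V_-$ because $V_\pm\subset U_\pm$ and $W_\pm\subset U_\pm$; and finally $W_+\perp V_-$ since $W_+\subset U_-^\perp\supset V_-$, while symmetrically $W_-\perp V_+$. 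All target pairings between the corresponding $e$-vectors vanish as required.

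Finally I would extend $\phi$ to an element $g\in G$ via Witt's extension theorem applied to the nondegenerate ambient form; this is also obtainable concretely by first applying Corollary 3.3 to map the nondegenerate pair $(W',\overline{W'})$ (with $\overline{W'}$ any nondegenerate partner of $W'$) to $(W,\overline{W})$, and then using Lemma 3.5 inside the quotient $W^\perp/W$ to match $V_\pm$ with $U_{(\pm)}$. Tracing through the images shows $gU_+=W_{(0)}\oplus W_{(+)}\oplus U_{(+)}$ and $gU_-=W_{(0)}\oplus W_{(-)}\oplus U_{(-)}$.

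The main obstacle is the middle step: justifying that the bases of $V_+$ and $V_-$ can be normalized so that their pairing matrix is the identity, which requires a clean identification of the radical of the form on $U_++U_-$. Once that is set up, all the other pairings vanish automatically from the defining properties $W_+=U_+\cap U_-^\perp$ and $W_-=U_-\cap U_+^\perp$, and the extension to $g\in G$ is a standard application of Witt's theorem.
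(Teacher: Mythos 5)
Your proof is correct, and it is organized differently from the paper's. The paper builds $g$ in four explicit stages through the Levi decomposition of $P_W$: first $g_1$ moving $W_++W_-$ to $W$, then $g_2=\ell(A)\in L_W$ (via Lemma \ref{lem6.1'}) aligning $W_\pm$ with $W_{(0)}\oplus W_{(\pm)}$, then $g_3\in L_U$ (via Corollary \ref{cor6.4}) normalizing the images of $U_\pm$ in the quotient $W^\perp/W$, and finally $g_4\in N_W$ cleaning up the residual off-diagonal coordinates. You instead identify the radical of the form on $U_++U_-$ as exactly $W_++W_-$, choose a dual pair of bases for complements $V_\pm$ of $W_\pm$ in $U_\pm$, build an explicit isometry $\phi$ from $U_++U_-$ onto $W\oplus U_{(+)}\oplus U_{(-)}$, and invoke Witt's extension theorem to produce $g\in G$. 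Both routes are sound: Witt's theorem over a field of characteristic $\ne 2$ does allow extension from arbitrary (including degenerate) subspaces, and your isometry check is complete, so the extension lands $U_\pm$ on the required standard spaces. What your approach buys is brevity and conceptual clarity; what the paper's approach buys is an explicit factorization $g=g_4^{-1}g_3g_2g_1$ adapted to $P_W=N_WL_WL_U$, which is reused in the structure of $R$ and the later normalizations in Section 3, so the paper's extra work is not wasted. Your remark that the constructive route via Corollary \ref{cor6.4} and Lemma \ref{lem3.5} can replace Witt is a little compressed: you would still need the intermediate $\ell(A)$ step (and Lemma \ref{lem6.1'}) to place $W_\pm$ correctly inside $W$ before passing to the quotient, just as the paper does.
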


\begin{proof} Since $W_++W_-$ is an isotropic subspace of $\bbf^{2n+1}$ with $\dim(W_++W_-)=a_0+a_++a_-=d$, we can take a $g_1\in G$ such that $g_1(W_++W_-)=W$. Since $g_1W_+$ and $g_1W_-$ are $a_0+a_+$-dimensional and $a_0+a_-$-dimensional subspaces, respectively, of $W$, there exists a $g_2=\ell(A)$ with some $A\in {\rm GL}_d(\bbf)$ such that
\begin{equation}
g_2g_1W_+=W_{(0)}\oplus W_{(+)}\quad\mbox{and that}\quad g_2g_1W_-=W_{(0)}\oplus W_{(-)}
\label{eq3.1'}
\end{equation}
by Lemma \ref{lem6.1'} in the appendix.

Since the pair $((g_2g_1U_++W)/W,(g_2g_1U_-+W)/W)$ is a nondegenerate pair of isotropic subspaces in the factor space $W^\perp/W\cong U$, we can take a $g_3\in {\rm O}(U)$ such that
$$g_3g_2g_1U_++W=U_{(+)}\oplus W\quad\mbox{and that}\quad g_3g_2g_1U_-+W=U_{(-)}\oplus W$$
by Corollary \ref{cor6.4}. By (\ref{eq3.1'}) we can write
\begin{align*}
g_3g_2g_1U_+ & =W_{(0)}\oplus W_{(+)}\oplus \bbf v_1\oplus\cdots\oplus \bbf v_{a_1} \\
\mand g_3g_2g_1U_- & =W_{(0)}\oplus W_{(-)}\oplus \bbf v'_1\oplus\cdots\oplus \bbf v'_{a_1}
\end{align*}
with some vectors $v_1,\ldots,v_{a_1},v'_1,\ldots,v'_{a_1}$ of the form
$$v_j=e_{d+j}+\sum_{i=1}^{a_-} y_{i,j}e_{a_0+a_++i} \mand v'_j=e_{d'+j}+\sum_{i=1}^{a_+} y'_{i,j}e_{a_0+i}$$
for $j=1,\ldots,a_1$. Define a $d\times m$ matrix
$$X=\bp 0 & 0 & 0 \\ 0 & 0 & \{y'_{i,j}\} \\ \{y_{i,j}\} & 0 & 0 \ep$$
and $g_4=g(X,Z)\in N_W$ with a suitable $Z$. Then we have
$$g_4^{-1}v_j=e_{d+j}\mand g_4^{-1}v'_j=e_{d'+j}$$
for $j=1,\ldots,a_1$. Thus the element $g=g_4^{-1}g_3g_2g_1\in G$ satisfies the desired property.
\end{proof}

\subsection{Structure of $R=P_{U_+}\cap P_{U_-}$}

By Proposition \ref{prop3.1}, we may assume
$$U_+=W_{(0)}\oplus W_{(+)}\oplus U_{(+)}\quad\mbox{and that}\quad U_-=W_{(0)}\oplus W_{(-)}\oplus U_{(-)}.$$
Write $R=P_{U_+}\cap P_{U_-}=\{g\in G\mid gU_+=U_+,\ gU_-=U_-\}$.

\begin{proposition} {\rm (i)} $R=(N_W\cap R)(L_U\cap R)(L_W\cap R)$.

{\rm (ii)} $L_W\cap R$ consists of elements
$$\ell\left(\bp A & Y_1 & Y_2 \\ 0 & B & 0 \\ 0 & 0 & C \ep \right)$$
with $A\in {\rm GL}_{a_0}(\bbf),\ B\in {\rm GL}_{a_+}(\bbf),\ C\in {\rm GL}_{a_-}(\bbf),\ Y_1\in \mcm(a_0,a_+;\bbf)\ (=\{a_0\times a_+\mbox{-matrices with entries in }\bbf\})$ and $Y_2\in \mcm(a_0,a_-;\bbf)$.

{\rm (iii)} $L_U\cap R$ consists of elements $\ell_{00}(D,D')$ with $D\in {\rm GL}_{a_1}(\bbf)$ and $D'\in {\rm O}_{m-2a_1}(\bbf)$ where
$$\ell_{00}(D,D')=\ell_0\left(\bp D && 0 \\ & D' & \\ 0 && J_{a_1}{}^tD^{-1}J_{a_1} \ep \right).$$

{\rm (iv)} $N_W\cap R$ consists of elements $g(X,Z)\in N_W$ satisfying
\begin{align*}
x_{i,j}=0\quad\mbox{for }(i,j) & \in\{a_0+a_++1,\ldots,d\}\times\{1,\ldots,a_1\} \\
& \quad\sqcup \{a_0+1,\ldots,a_0+a_+\}\times\{m-a_1+1,\ldots,m\}.
\end{align*}
\label{prop3.2}
\end{proposition}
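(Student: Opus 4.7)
The plan is to deduce all four parts from a single structural observation: since $W_+=U_+\cap U_-^\perp$ and $W_-=U_-\cap U_+^\perp$ are intrinsically defined from $U_\pm$, any $g\in R$ must preserve them, hence must preserve $W=W_++W_-=W_{(0)}\oplus W_{(+)}\oplus W_{(-)}$. Thus $R\subset P_W=N_WL_WL_U$, and the Levi decomposition gives a unique writing $g=nlu$ with $n\in N_W$, $l\in L_W$, $u\in L_U$. The four parts of the proposition then follow from (a) showing each factor lies in $R$ and (b) computing the subgroup of each Levi/unipotent piece that actually preserves $U_\pm$.

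For (i), the key is that $L_U$ and $N_W$ act trivially on $W$, so the restriction $g|_W$ coincides with $l|_W$. Hence $l|_W$ preserves $W_{(0)}\oplus W_{(+)}$ and $W_{(0)}\oplus W_{(-)}$; since $l$ acts trivially on $U$ it automatically preserves $U_{(+)}$ and $U_{(-)}$, giving $l\in L_W\cap R$. Dually, since $U_\pm$ is isotropic and $W_\mp\subset U_\pm^\perp$, we have $U_\pm\subset W^\perp$, so we may look at the induced action on $W^\perp/W\cong U$, on which $L_W$ and $N_W$ both act trivially. Thus this induced action is $u$, and since $g$ preserves the images $U_{(\pm)}$ of $U_\pm$ in $W^\perp/W$, we obtain $u\in L_U\cap R$. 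Finally $n=g(lu)^{-1}\in R$ because $R$ is a group.

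For (ii), $l=\ell(A)$ lies in $R$ precisely when $A\in{\rm GL}_d(\bbf)$ preserves both $W_{(0)}\oplus W_{(+)}$ (spanned by $e_1,\dots,e_{a_0+a_+}$) and $W_{(0)}\oplus W_{(-)}$ (spanned by $e_1,\dots,e_{a_0}$ and $e_{a_0+a_++1},\dots,e_d$), which in block form with sizes $(a_0,a_+,a_-)$ immediately yields the claimed upper-triangular pattern. For (iii), the point to notice is that $U_{(+)}\oplus U_{(-)}$ is a \emph{nondegenerate} subspace of $U$ (the $a_1\times a_1$ pairing block is invertible), so an orthogonal map $B$ preserving both $U_{(+)}$ and $U_{(-)}$ automatically preserves their orthogonal complement $U_0$ in $U$; writing $B$ in block form with sizes $(a_1,m-2a_1,a_1)$ then forces the block-diagonal shape $\ell_{00}(D,D')$, with $D\in{\rm GL}_{a_1}$ arbitrary and $D'\in{\rm O}_{m-2a_1}$ arbitrary.

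For (iv), substitute the explicit form~(\ref{eq3.1''}) for $g(X,Z)$ and compute $g(X,Z)e_{d+j}=e_{d+j}+\sum_{i=1}^dx_{i,j}e_i$: for $j=1,\dots,a_1$ (i.e.\ $e_{d+j}\in U_{(+)}$), the image lies in $U_+=W_{(0)}\oplus W_{(+)}\oplus U_{(+)}$ iff $x_{i,j}=0$ for $i>a_0+a_+$; for $j=m-a_1+1,\dots,m$ (i.e.\ $e_{d+j}\in U_{(-)}$), the image lies in $U_-$ iff $x_{i,j}=0$ for $i\in\{a_0+1,\dots,a_0+a_+\}$. Since the remaining basis vectors of $U_\pm$ lie in $W$ and are fixed by $g(X,Z)$, these are the only conditions, and $Z$ is unconstrained beyond the defining relation~(\ref{eq3.2''}). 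The one step needing care is the $L_W$/$L_U$ separation in (i), where one must use the two complementary facts that $L_U$ and $N_W$ act trivially on $W$ while $L_W$ and $N_W$ act trivially on $W^\perp/W$; everything else is direct verification.
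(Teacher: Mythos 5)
Your proof is correct and follows essentially the same approach as the paper: decompose $g\in R\subset P_W$ as $N_W L_W L_U$, extract the $L_W$-factor from the restriction $g|_W$ (using that $N_W$ and $L_U$ act trivially on $W$), extract the $L_U$-factor from the induced action on $W^\perp/W\cong U$ (using that $N_W$ and $L_W$ act trivially there), and then the $N_W$-factor lies in $R$ by the group property. The paper treats (ii)--(iv) as clear and gives only the argument for (i); your phrasing of the $L_U$-step via the quotient $W^\perp/W$ is a mild repackaging of the paper's direct vector computation, but the underlying idea is identical.
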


\begin{proof} (ii), (iii) and (iv) are clear. So we have only to prove (i). Let $g$ be an element of $R$. Since $gW=W$, $g$ is contained in $P_W$. So we can write
$g=g_Ng_Ug_W$
with $g_W\in L_W,\ g_U\in L_U$ and $g_N\in N_W$.

Since $g_Ng_U$ acts trivially on $W$, we have
$g_W(W_+)=W_+$ and $g_W(W_-)=W_-$.
Hence $g_W$ is of the form in (ii) and $g_W\in R$. This implies also $g_Ng_U=gg_W^{-1}\in R$.

Let $u$ be an element of $U_{(+)}$. Then $g_U(u)\in U$. We can write
$$g_Ng_U(u)=g_U(u)+w$$
with some $w\in W$. On the other hand, it follows from $g_Ng_U\in R$ that
$$g_Ng_U(u)\in U_+\subset U_{(+)}\oplus W.$$
Hence we have $g_U(u)\in U_{(+)}$. Thus we have proved
$g_U(U_{(+)})=U_{(+)}$.
We also have $g_U(U_{(-)})=U_{(-)}$ in the same way. So the element $g_U$ is of the form in (iii) and therefore $g_U\in R$. This implies also $g_N\in R$.

Thus we have proved
$$R\subset (N_W\cap R)(L_U\cap R)(L_W\cap R).$$
The converse inclusion is trivial.
\end{proof}

\subsection{Invariants of the $R$-orbit of $V\in M=M_{(n)}$} \label{sec3.3}

Let $V$ be a maximal isotropic subspace in $\bbf^{2n+1}$. Then the following $b_1,\ldots,b_{11}$ are clearly invariants of the $R$-orbit of $V$.
\begin{align*}
b_1 & =\dim(W_0\cap V),\quad b_2=a_0-b_1,\\
b_3 & =\dim(W_+\cap V)-b_1,\quad b_4=\dim(W_-\cap V)-b_1, \\
b_5 & =\dim(U_+\cap V)-b_1-b_3,\quad b_6=\dim(U_-\cap V)-b_1-b_4, \\
b_7 & =\dim(W\cap V)-b_1-b_3-b_4, \\
b_8 & =\dim((W_++U_-)\cap V)-\dim(W\cap V)-b_6, \\
b_9 & =\dim((U_++W_-)\cap V)-\dim(W\cap V)-b_5, \\
b_{10} & =a_+-b_3-b_7-b_8,\quad b_{11}=a_--b_4-b_7-b_9.
\end{align*}

We will deduce the remaining essential invariants from
$$X=X(V)=(U_++U_-)\cap V\mand X'=X'(V)=(U_++U_-)\cap V^\perp.$$
Consider the projection $\pi: W^\perp\to \pi(W^\perp)=W^\perp/W$. Then $\pi(W^\perp)$ is decomposed as
$$\pi(W^\perp)=\pi(U_+)\oplus\pi(U_-)\oplus\pi(Z)$$
where $Z=(U_++U_-)^\perp\subset W^\perp$. Put $\displaystyle{a_2=\frac{m-1}{2} -a_1=n-d-a_1}$. Then
$$\dim \pi(Z)=2a_2+1\mand \dim Z=2a_2+1+d.$$

Let $\pi_+,\pi_-$ and $\pi_Z$ denote the projections of $W^\perp$ onto
$\pi(U_+),\ \pi(U_-)$ and $\pi(Z)$,
respectively.

\begin{lemma} $a_1-\dim\pi(X')=a_2-\dim\pi(Z\cap V)$.
\label{lem3.3}
\end{lemma}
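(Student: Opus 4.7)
The plan is to pass to the quotient $\pi(W^\perp)=W^\perp/W$, which carries a nondegenerate symmetric form of dimension $m=2(a_1+a_2)+1$. By Lemma~\ref{lem3.4}(ii) the image $V':=\pi(W^\perp\cap V)=(W^\perp\cap V)/(W\cap V)$ is maximally isotropic of dimension $a_1+a_2$. The decomposition $\pi(W^\perp)=\pi(U_+)\oplus\pi(U_-)\oplus\pi(Z)$ reads $\pi(W^\perp)=A\oplus U_0$ with $A=U_{(+)}\oplus U_{(-)}$ hyperbolic of dimension $2a_1$ and $U_0$ anisotropic of dimension $2a_2+1$; crucially, $A$ and $U_0$ are mutual orthogonal complements.

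My first step is to identify $\pi(X')$ and $\pi(Z\cap V)$ intrinsically inside $\pi(W^\perp)$. The easy half is $\pi(Z\cap V)=V'\cap U_0$: since $Z=W\oplus U_0$ and $\pi|_{U_0}$ is injective, any class in $V'\cap U_0$ lifts uniquely to $U_0$, and can be adjusted by an element of $W\subseteq Z$ so as to land in $V\cap Z$. The substantive identification is $\pi(X')=(V')^\perp\cap A$. The inclusion $\subseteq$ is automatic, because any vector in $(U_++U_-)\cap V^\perp$ is orthogonal in particular to $V\cap W^\perp$, and this relation descends to $(V')^\perp$ modulo $W$.

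The hard part is the reverse inclusion $\supseteq$. Given $v_A\in A$ with $\pi(v_A)\in(V')^\perp$, I need a correction $v_W\in W$ such that $v:=v_A+v_W$ lies in $(U_++U_-)\cap V^\perp$. Decomposing each $u\in V$ as $u=u_1+u_2$ along $\bbf^{2n+1}=W^\perp\oplus\overline W$, the condition $v\in V^\perp$ becomes
$$(v_W,u_2)=-(v_A,u_1)\qquad\text{for all }u\in V.$$
The right-hand side is a well-defined function of $u_2=p_{\overline W}(u)$ precisely because $\pi(v_A)\perp V'$ forces it to vanish on the kernel $V\cap W^\perp$ of $u\mapsto u_2$. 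Existence of $v_W$ then follows from the surjectivity of the restriction map $W\to p_{\overline W}(V)^*$ afforded by the nondegenerate pairing $W\times\overline W\to\bbf$ together with Lemma~\ref{lem3.4}(i), which identifies $p_{\overline W}(V)$ as the annihilator of $W\cap V$ inside $\overline W$.

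Once both identifications are in place, the lemma becomes a short dimension count in the nondegenerate quotient. Using $A=U_0^\perp$ we rewrite $(V')^\perp\cap A=(V'+U_0)^\perp$, so that
$$\dim\pi(X')=m-\dim V'-\dim U_0+\dim(V'\cap U_0)=a_1-a_2+\dim\pi(Z\cap V),$$
which rearranges to the claimed identity. The only real obstacle is the construction of $v_W$; everything else is bookkeeping using Lemma~\ref{lem3.4}.
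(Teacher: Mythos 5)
Your proof is correct, but it takes a genuinely different route from the paper's. The paper never identifies $\pi(X')$ intrinsically: it uses $(X')^\perp=Z+V$ in $\bbf^{2n+1}$ (from $(S\cap T)^\perp=S^\perp+T^\perp$), computes $\dim X'=a_1-a_2+\dim(Z\cap V)$ by inclusion--exclusion, and only at the end passes to the quotient via $\pi(X')\cong X'/(W\cap V^\perp)$, $\pi(Z\cap V)\cong (Z\cap V)/(W\cap V)$, and $W\cap V^\perp=W\cap V$. You instead work entirely in $W^\perp/W$, and the substance of your argument is the identification $\pi(X')=(V')^\perp\cap\pi(U_++U_-)$, whose nontrivial inclusion $\supseteq$ requires exactly the lifting argument you supply for $v_W$; this is the step the paper's computation sidesteps. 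Once that and $\pi(Z\cap V)=V'\cap\pi(Z)$ are in place, the orthogonal splitting $\pi(W^\perp)=\pi(U_++U_-)\oplus\pi(Z)$ with $\pi(Z)=\pi(U_++U_-)^\perp$ reduces the lemma to a one-line dimension count. Your version is longer, but in exchange it pins down $\pi(X')$ as a subspace rather than merely its dimension. Two small points: $\pi(Z)$ is not anisotropic --- it is a split nondegenerate space of dimension $2a_2+1$ --- though only its dimension and the fact that it is $\pi(U_++U_-)^\perp$ enter the argument; and your simplification of $(v,u)$ to $(v_W,u_2)+(v_A,u_1)$ silently uses $(U_{(+)}\oplus U_{(-)})\perp\overline{W}$, which holds in the standard basis but should be stated.
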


\begin{proof} Since $Z+V$ is the orthogonal space of $X'=(U_++U_-)\cap V^\perp$, we have
\begin{align*}
\dim X' & = 2n+1-\dim(Z+V) \\
& = 2n+1-\dim Z-\dim V+\dim(Z\cap V) \\
& =2n+1-(2a_2+1+d)-n+\dim(Z\cap V) \\
& =n-2a_2-d+\dim(Z\cap V) \\
& =a_1-a_2+\dim(Z\cap V).
\end{align*}
Since $\pi(X')\cong X'/(W\cap V^\perp),\ \pi(Z\cap V)\cong (Z\cap V)/(W\cap V)$ and $W\cap V^\perp=W\cap V$, we have the desired equality.
\end{proof}

The bilinear form $(\ ,\ )$ naturally induces a nondegenerate bilinear form on $W^\perp/W$. It is nondegenerate on the pair $(\pi(U_+),\pi(U_-))\cong (U_+/W_+,U_-/W_-)\cong (U_{(+)},U_{(-)})$. Put
$$X_0=((U_++W_-)\cap V)+((W_++U_-)\cap V).$$
For $v\in X'$, write $v=v_++v_-$ with $v_+\in U_++W_-$ and $v_-\in W_++U_-$. Then $\pi(v_+)$ is uniquely defined from $v$. Define a subspace
$$X_1=X_1(V)=\{v\in X'\mid (v_+,X')=\{0\}\}$$
of $X'$.

\begin{lemma} $X_0\subset X_1\subset X$.
\end{lemma}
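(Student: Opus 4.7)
The plan is to handle the two inclusions separately, both reducing to short orthogonality computations that exploit the isotropy of $U_+ + W_-$ and $W_+ + U_-$.

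For $X_0 \subset X_1$, I would take $v\in X_0$ and write it as $v=v_++v_-$ with $v_+\in (U_++W_-)\cap V$ and $v_-\in (W_++U_-)\cap V$; this is an admissible representative for the decomposition appearing in the definition of $X_1$. Clearly $v\in V\subset V^\perp$ and $v\in U_++U_-$, so $v\in X'$. Moreover, since $v_+\in V$ and $X'\subset V^\perp$, one has $(v_+,X')\subset (V,V^\perp)=\{0\}$, whence $v\in X_1$.

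For $X_1\subset X$, the key observation is that $V$ is a maximal isotropic subspace of $\bbf^{2n+1}$, so $V^\perp/V$ is one-dimensional with a non-degenerate induced form; consequently any $v\in V^\perp$ with $(v,v)=0$ automatically lies in $V$. Given $v\in X_1$ with decomposition $v=v_++v_-$, I would expand
$$(v,v)=(v_+,v_+)+2(v_+,v_-)+(v_-,v_-).$$
Both outer terms vanish: the subspace $U_++W_-$ is isotropic (because $U_+$ and $W_-$ are isotropic and $W_-\subset U_+^\perp$ by definition), so $(v_+,v_+)=0$, and symmetrically $(v_-,v_-)=0$. The cross term equals $(v_+,v-v_+)=(v_+,v)$, which vanishes since $v\in X'$ and $(v_+,X')=0$ by hypothesis. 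Hence $(v,v)=0$, and $v\in X'\subset V^\perp$ forces $v\in V$, so $v\in (U_++U_-)\cap V=X$.

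The only subtlety — not really an obstacle — is that the representative $v_+$ is defined only modulo $(U_++W_-)\cap(W_++U_-)=W_++W_-\subset W$; but since $X'\subset U_++U_-\subset W^\perp$, both the condition $(v_+,X')=0$ and the identity $(v_+,v)=0$ are independent of that choice, so the argument is unambiguous.
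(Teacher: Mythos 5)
Your proof is correct and follows essentially the same argument as the paper; the only cosmetic difference is that the paper proves $X_1\subset X$ by contraposition (assuming $v\notin X$ and deducing $(v_+,v_-)\ne 0$), while you argue directly, and you make explicit the isotropy of $U_++W_-$ and $W_++U_-$ and the ambiguity of $v_+$ modulo $W$, both of which the paper leaves implicit.
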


\begin{proof} Let $v$ be an element of $X_0$. Then $v=v_++v_-$ with $v_+\in (U_++W_-)\cap V$ and $v_-\in (W_++U_-)\cap V$. So we have
$$(v_+,X')\subset (V,V^\perp)=\{0\}.$$
Thus we have $X_0\subset X_1$.

Let $v=v_++v_-$ be an element of $X'$ which is not contained in $X$. Then $(v,v)\ne 0$. This implies
$$(v_+,v)=(v_+,v_-)\ne 0.$$
Hence $v\notin X_1$. Thus we have proved $X_1\subset X$.
\end{proof}

Put
$b_{12}=\dim X_1-\dim X_0,\ b_{15}=\dim X'-\dim X_1$ and $\varepsilon=\dim X'-\dim X\in\{0,1\}$.

\begin{lemma} The projections $\pi_+$ and $\pi_-$ induce bijections
$$X'/X_0\stackrel{\sim}{\to} \pi_+(X')/\pi_+(X_0)\mand X'/X_0\stackrel{\sim}{\to} \pi_-(X')/\pi_-(X_0),$$
respectively.
\label{lem3.6}
\end{lemma}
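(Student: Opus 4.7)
The plan is to verify that the map $X'/X_0\to \pi_+(X')/\pi_+(X_0)$ induced by $\pi_+$ is both surjective and injective; the bijection for $\pi_-$ will then follow by a completely symmetric argument interchanging the roles of $U_+$ and $U_-$ (and of $W_+$ and $W_-$). Surjectivity is built into the definition of the target, so the real content is injectivity.

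First I would make $\pi_+(X_0)$ explicit. Since $W_-\subset W$, one has $\pi(U_++W_-)\subset \pi(U_+)$, and dually $\pi(W_++U_-)\subset \pi(U_-)$. Therefore $\pi_+$ annihilates $(W_++U_-)\cap V$ and agrees with $\pi$ on $(U_++W_-)\cap V$, giving
$$\pi_+(X_0)=\pi\bigl((U_++W_-)\cap V\bigr).$$
Now let $v\in X'$ satisfy $\pi_+(v)\in\pi_+(X_0)$, and pick $w\in(U_++W_-)\cap V$ with $\pi(w)=\pi_+(v)$. Setting $v'=v-w\in X'$ and writing $v'=v'_++v'_-$ with $v'_+\in U_++W_-$ and $v'_-\in W_++U_-$, we have $\pi_+(v')=\pi(v'_+)=0$, so $v'_+\in W\cap(U_++W_-)$. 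Because $U_+\cap W=W_+$, the modular law gives $W\cap(U_++W_-)=W_++W_-$, which sits inside $W_++U_-$; absorbing $v'_+$ into the second summand therefore yields $v'\in W_++U_-$.

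The key observation is that $W_++U_-$ is isotropic: $W_+$ and $U_-$ are each isotropic, and $W_+=U_+\cap U_-^\perp$ is orthogonal to $U_-$. Hence $(v',v')=0$. On the other hand $v'\in X'\subset V^\perp$, and since $V$ is maximal isotropic in the odd-dimensional space $\bbf^{2n+1}$, the quotient $V^\perp/V$ is one-dimensional and the induced form is nondegenerate (otherwise some representative would enlarge $V$ to a strictly larger isotropic subspace, contradicting maximality). Every element of $V^\perp$ with vanishing self-pairing must therefore already lie in $V$, which forces $v'\in V$ and so $v'\in(W_++U_-)\cap V\subset X_0$. Combined with $w\in X_0$ this gives $v=v'+w\in X_0$, which is injectivity.

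The main obstacle, as I see it, is precisely this last step: converting the algebraic condition $\pi_+(v)\in\pi_+(X_0)$ into the geometric conclusion $v'\in V$, which needs both the isotropy of $W_++U_-$ and the odd-degree maximality property of $V$. Everything else is routine bookkeeping with the intersections $U_\pm,W_\pm,W$ and the direct-sum decomposition $\pi(W^\perp)=\pi(U_+)\oplus\pi(U_-)\oplus\pi(Z)$ set up in Section~\ref{sec3.3}.
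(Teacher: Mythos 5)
Your proof is correct and follows essentially the same route as the paper: both identify $\ker(\pi_+|_{X'})$ with $(W_++U_-)\cap V\subset X_0$ and conclude injectivity, while surjectivity is tautological. The one point worth noting is that you make explicit the step the paper leaves implicit — that isotropy of $W_++U_-$ together with maximality of $V$ in the odd-dimensional space forces $(W_++U_-)\cap V^\perp=(W_++U_-)\cap V$ — which is a genuine clarification of the paper's terse assertion.
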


\begin{proof} Note that the kernel of the projection $\pi_+|_{U_++U_-}:U_++U_-\to\pi(U_+)$ is $W_++U_-$. So the kernel of $\pi_+|_{X'}$ is $(W_++U_-)\cap X'=(W_++U_-)\cap V$. Since $\pi_+(X_0)=\pi((U_++W_-)\cap V^\perp)\cong ((U_++W_-)\cap V^\perp)/(W\cap V)$, we get the first bijection. In the same way, we also get the second one.
\end{proof}

\begin{corollary} $b_{12}\le a_1-\dim \pi(X')$.
\end{corollary}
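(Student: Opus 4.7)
The plan is to convert the definition of $X_1$ into a linear-algebra condition in $\pi(U_+) \oplus \pi(U_-)$ via the nondegenerate pairing, then combine this with the identification from Lemma \ref{lem3.6} and some dimension counting. The starting observation is that for $v,w \in X'$ with decompositions $v = v_+ + v_-$, $w = w_+ + w_-$ as in the definition of $X_1$, one can expand $v_+ = a + b$ with $a \in U_+$, $b \in W_-$ and $w_- = c + d$ with $c \in W_+$, $d \in U_-$, and check that every cross term in $(v_+, w)$ vanishes except $(a,d)$. This uses only the containments $U_+, W_-$ isotropic, $W_- \subset U_+^\perp$, $W_+ \subset U_+ \cap U_-^\perp$. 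Consequently $(v_+, w) = (\pi_+(v), \pi_-(w))$ in the induced nondegenerate pairing between $\pi(U_+)$ and $\pi(U_-)$, so the defining condition $v_+ \perp X'$ is equivalent to $\pi_+(v) \perp \pi_-(X')$. Hence
$$\pi_+(X_1) = \pi_+(X') \cap \pi_-(X')^{\perp},$$
the annihilator being taken inside $\pi(U_+)$, which has dimension $a_1$. In particular $\dim \pi_+(X_1) \le a_1 - \dim \pi_-(X')$.

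Next I would carry out the dimension bookkeeping. The crucial point is that $U_+ + W_-$ is isotropic. Since $V$ is maximal isotropic of dimension $n$ in $\bbf^{2n+1}$, the form on $V^\perp/V$ is anisotropic one-dimensional, so any isotropic vector in $V^\perp$ already lies in $V$. This forces
$$(U_+ + W_-) \cap V^\perp = (U_+ + W_-) \cap V,$$
and writing $\tau := \dim((U_+ + W_-) \cap V)$, the computation of $\ker\pi_-|_{X'} = (U_++W_-) \cap V^\perp$ gives $\dim \pi_-(X') = \dim X' - \tau$. Moreover, the kernel of $\pi_+$ restricted to $(U_++W_-) \cap V$ is $(U_++W_-) \cap (W_++U_-) \cap V = W \cap V$, so setting $w_0 := \dim(W\cap V)$ one gets $\dim \pi_+(X_0) = \tau - w_0$. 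Finally, the same isotropy argument yields $X' \cap W = V \cap W$, hence $\dim \pi(X') = \dim X' - w_0$.

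Assembling the pieces, Lemma \ref{lem3.6} gives $X_1/X_0 \cong \pi_+(X_1)/\pi_+(X_0)$, so $b_{12} = \dim \pi_+(X_1) - \dim \pi_+(X_0)$. Substituting the bound from the first step and the identities from the second,
$$b_{12} \le \bigl(a_1 - \dim\pi_-(X')\bigr) - \dim\pi_+(X_0) = \bigl(a_1 - \dim X' + \tau\bigr) - (\tau - w_0) = a_1 - \dim\pi(X'),$$
which is the desired inequality. The main technical hurdle is the first step—the formula $(v_+, w) = (\pi_+(v), \pi_-(w))$—which requires careful tracking of the many orthogonality relations among $U_\pm$ and $W_\pm$; once in hand, everything else reduces to kernel-image dimension counts leveraging the isotropy of $U_+ + W_-$.
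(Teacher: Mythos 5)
Your proof is correct and follows essentially the same route as the paper's: both hinge on the orthogonality coming from the definition of $X_1$ under the nondegenerate pairing on $U_{(+)}\times U_{(-)}$ (the paper phrases it as the isotropy of $\pi_+(X')\oplus\pi_-(X_1)$, you as the annihilator identity $\pi_+(X_1)=\pi_+(X')\cap\pi_-(X')^\perp$ with the roles of $+$ and $-$ swapped), combined with Lemma \ref{lem3.6}. You simply spell out the dimension bookkeeping (via $\tau$ and $w_0$) that the paper leaves implicit in the equality $\dim(\pi_+(X')\oplus\pi_-(X_1))=\dim\pi(X')+b_{12}$.
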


\begin{proof} By Lemma \ref{lem3.6}, we have
$$\dim (\pi_+(X')\oplus \pi_-(X_1))=\dim \pi(X')+b_{12}.$$
On the other hand, $\pi_+(X')\oplus \pi_-(X_1)$ is an isotropic subspace of $\pi(U_++U_-)=\pi_+(U_+)\oplus \pi_-(U_-)\cong U_{(+)}\oplus U_{(-)}$ by the definition of $X_1$. So we have
$$\dim \pi(X')+b_{12}\le a_1.$$
\end{proof}

Put $b_{13}=a_1-\dim \pi(X')-b_{12}$ and $b_{14}=a_2-b_{12}-b_{13}$. Then $b_{14}=\dim \pi(Z\cap V)$ by Lemma \ref{lem3.3}.

By the definition of $X_1$, the bilinear form $(\ ,\ )$ is nondegenerate on the pair $(\pi_+(X')/\pi_+(X_1),\pi_-(X')/\pi_-(X_1))$. By Lemma \ref{lem3.6}, there exists a bijection
$$f: \pi_+(X')/\pi_+(X_1)\stackrel{\sim}{\to} \pi_-(X')/\pi_-(X_1)$$
induced by $\pi_-|_{X'}\circ \pi_+|_{X'}^{-1}$. Define a bilinear form $\langle\ ,\ \rangle$ on $\pi_+(X')/\pi_+(X_1)$ by
$$\langle u,v\rangle =(u,f(v))\quad \mbox{for }u,v\in \pi_+(X').$$
(This is well-defined since $(u,f(v))=0$ if $u$ or $v$ is contained in $\pi_+(X_1)$.)

\begin{lemma} If $u\in X'$ and $v\in X$, then $\langle \pi_+(u),\pi_+(v)\rangle =-\langle  \pi_+(v), \pi_+(u)\rangle$.
\end{lemma}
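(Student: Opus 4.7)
The plan is to unwind the definition of the pairing $\langle\,,\,\rangle$ and reduce the required skew-symmetry identity to the single observation $(u,v)=0$, which holds because $u\in X'\subset V^\perp$ and $v\in X\subset V$.

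First I will unpack the pairing. For each $w\in X'$, write $w=w_++w_-$ with $w_+\in U_++W_-$ and $w_-\in W_++U_-$. Since the map $f$ was induced by $\pi_-|_{X'}\circ \pi_+|_{X'}^{-1}$, the class $f(\pi_+(w))$ in $\pi_-(X')/\pi_-(X_1)$ is represented by $\pi(w_-)$. Because $W$ is isotropic and $W\perp W^\perp$, the form descends from $W^\perp$ to $W^\perp/W=\pi(W^\perp)$ and can be evaluated on any lift, so
$$\langle \pi_+(u),\pi_+(v)\rangle=(\pi(u_+),\pi(v_-))=(u_+,v_-),$$
and symmetrically $\langle \pi_+(v),\pi_+(u)\rangle=(v_+,u_-)$.

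Next I will verify the cross-term vanishings $(u_+,v_+)=0$ and $(u_-,v_-)=0$. For the first, $W_-=U_-\cap U_+^\perp$ lies in $U_+^\perp$, $U_+$ is isotropic, and $W_-\subset U_-$ is itself isotropic, so $U_++W_-$ is an isotropic subspace; hence $(u_+,v_+)=0$. A symmetric argument using $W_+=U_+\cap U_-^\perp$ gives $(u_-,v_-)=0$. Expanding bilinearly,
$$(u,v)=(u_++u_-,v_++v_-)=(u_+,v_-)+(u_-,v_+).$$
Combining with $(u,v)=0$ yields $(u_+,v_-)=-(v_+,u_-)$, which is exactly the claimed $\langle \pi_+(u),\pi_+(v)\rangle=-\langle \pi_+(v),\pi_+(u)\rangle$.

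The main technical hurdle is checking that every quantity in the argument is well defined. The decomposition $w=w_++w_-$ is not unique, but any two choices differ by an element of $(U_++W_-)\cap(W_++U_-)$, which equals $W_{(0)}\oplus W_{(+)}\oplus W_{(-)}=W=\ker\pi$; hence $\pi(w_\pm)$ is unambiguous. Likewise, one must check that the right-hand side $(u_+,v_-)$ does not depend on the $\pi_\pm(X_1)$-representatives used, but this is guaranteed by the very definition of $X_1$ together with Lemma~\ref{lem3.6}. These are routine bookkeeping verifications, so the substantive content is the short four-line calculation above.
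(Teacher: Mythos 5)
Your proof is correct and follows essentially the same route as the paper: decompose $u$ and $v$, use $(u,v)=0$ (from $u\in V^\perp$, $v\in V$), kill the cross terms by isotropy, and identify $(u_+,v_-)$ with $\langle\pi_+(u),\pi_+(v)\rangle$ via the definition of $f$. The only difference is the choice of decomposition: you take $u_\pm$ in $U_++W_-$ and $W_++U_-$ (matching the paper's definition of $X_1$), while the paper's proof of this particular lemma decomposes $u=u_++u_-$ more simply with $u_+\in U_+$, $u_-\in U_-$ and uses that $U_\pm$ themselves are isotropic. Since $U_+\subset U_++W_-$, $U_-\subset W_++U_-$, and the two choices differ only by elements of $W=\ker\pi$, the computations are interchangeable; your version has the small advantage of being literally compatible with how $v_\pm$ was introduced when $X_1$ was defined, while the paper's is slightly shorter. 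Your well-definedness remarks (the ambiguity in $w_\pm$ lies in $(U_++W_-)\cap(W_++U_-)=W$, and independence of representatives modulo $\pi_\pm(X_1)$ follows from the definition of $X_1$) are accurate and are indeed the bookkeeping the paper leaves implicit.
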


\begin{proof} Write $u=u_++u_-$ and $v=v_++v_-$ with $u_+,v_+\in U_+$ and $u_-,v_-\in U_-$. Then
$$0=(u,v)=(u_++u_-,v_++v_-)=(u_+,v_-)+(u_-,v_+)=\langle \pi_+(u),\pi_+(v)\rangle +\langle \pi_+(v),\pi_+(u)\rangle.$$
\end{proof}

\begin{corollary} $\varepsilon=0 \Longrightarrow b_{15}$ is even.
\label{cor3.9}
\end{corollary}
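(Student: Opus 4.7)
The plan is to exploit the bilinear form $\langle\cdot,\cdot\rangle$ on $\pi_+(X')/\pi_+(X_1)$ that was introduced just before the preceding lemma, combined with the nondegeneracy statement that immediately precedes it. The strategy is to show that, when $\varepsilon=0$, this form is a nondegenerate alternating form on $\pi_+(X')/\pi_+(X_1)$, forcing the dimension of that quotient to be even; and then to identify that dimension with $b_{15}$.

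First I would record that the kernel of $\pi_+$ restricted to $U_++U_-$ is $W_++U_-$, so the kernel of $\pi_+|_{X'}$ is $(W_++U_-)\cap V$. Since $(W_++U_-)\cap V\subset X_0\subset X_1$, the projection $\pi_+$ induces an isomorphism $X'/X_1\cong \pi_+(X')/\pi_+(X_1)$. Hence $\dim\pi_+(X')/\pi_+(X_1)=b_{15}$. This is the ``book-keeping'' step and is essentially a direct consequence of Lemma \ref{lem3.6}.

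Next I would use the hypothesis $\varepsilon=0$, which gives $X=X'$. The preceding lemma states $\langle\pi_+(u),\pi_+(v)\rangle=-\langle\pi_+(v),\pi_+(u)\rangle$ whenever $u\in X'$ and $v\in X$. When $\varepsilon=0$ we may take both $u,v\in X'=X$, so $\langle\cdot,\cdot\rangle$ is skew-symmetric on all of $\pi_+(X')/\pi_+(X_1)$. Because ${\rm char}\,\bbf\ne 2$, skew-symmetry implies that the form is alternating ($\langle w,w\rangle=0$ for every $w$).

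Finally I would invoke the nondegeneracy of $\langle\cdot,\cdot\rangle$ on the pair $(\pi_+(X')/\pi_+(X_1),\pi_-(X')/\pi_-(X_1))$, which was stated just before the preceding lemma and is a direct consequence of the definition of $X_1$; in the situation at hand this becomes a nondegenerate alternating form on the single space $\pi_+(X')/\pi_+(X_1)$. A nondegenerate alternating form can exist only on an even-dimensional space, so $b_{15}=\dim\pi_+(X')/\pi_+(X_1)$ is even. I do not anticipate a serious obstacle: the only thing to check carefully is that the identification of kernels does indeed yield $\dim X'/X_1=b_{15}$, but this is immediate from the definitions.
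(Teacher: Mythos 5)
Your proposal is correct and is precisely the argument the paper intends: Corollary \ref{cor3.9} is stated as an immediate consequence of the preceding lemma together with the nondegeneracy of the pairing on $(\pi_+(X')/\pi_+(X_1),\pi_-(X')/\pi_-(X_1))$, and you have filled in the necessary identifications (kernel of $\pi_+|_{X'}$ lies in $X_0\subset X_1$, hence $\dim\pi_+(X')/\pi_+(X_1)=b_{15}$; $\varepsilon=0$ makes the form skew-symmetric, hence alternating since ${\rm char}\,\bbf\ne 2$, hence even-dimensional) exactly as needed.
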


Summarizing the arguments in this subsection, we have:

\begin{proposition} The invariants $b_1,\ldots,b_{15}$ and $\varepsilon$ for $V$ satisfy the following equalities.
\begin{align}
a_0 & =b_1+b_2, \label{eq3.8} \\
a_+ & =b_3+b_7+b_8+b_{10}, \label{eq3.9} \\
a_- & =b_4+b_7+b_9+b_{11}, \label{eq3.10} \\
a_1 & =b_5+b_6+b_8+b_9+2b_{12}+b_{13}+b_{15}, \label{eq3.11'} \\
a_2 & =b_{12}+b_{13}+b_{14} \label{eq3.12} \\
\mand \varepsilon & =\begin{cases} 0 & \text{if $b_{15}=0$}, \\
1 & \text{if $b_{15}$ is odd}, \\
0\mbox{ or }1 & \text{if $b_{15}$ is even and positive}.
\end{cases} \label{eq3.13}
\end{align}
\label{prop3.15}
\end{proposition}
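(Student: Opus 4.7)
The plan is to establish the five equalities (\ref{eq3.8})--(\ref{eq3.13}) separately. Four of them are essentially bookkeeping from the definitions; the real content is equality (\ref{eq3.11'}) for $a_1$.

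Equations (\ref{eq3.8}), (\ref{eq3.9}), (\ref{eq3.10}), and (\ref{eq3.12}) are immediate rearrangements of the defining formulas for $b_2$, $b_{10}$, $b_{11}$, and $b_{14}$, respectively, so the proof in these cases amounts to citing the definitions.

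For (\ref{eq3.11'}), my plan is to compute $\dim X'$ in two independent ways and equate them. On the one hand, the definitions of $b_{12}$ and $b_{15}$ give
\[ \dim X' = \dim X_0 + b_{12} + b_{15}, \]
so I would compute $\dim X_0$ by inclusion--exclusion applied to the two summands $(U_++W_-)\cap V$ and $(W_++U_-)\cap V$ that define $X_0$. The dimension of each summand is obtained by adding the appropriate invariants (for instance $\dim((W_++U_-)\cap V)=(b_1+b_3+b_4+b_7)+b_6+b_8$) directly from the defining formulas. For the intersection I need the auxiliary identity
\[ (U_++W_-) \cap (W_++U_-) = W, \]
which follows because, under the normalization of Proposition \ref{prop3.1}, any expression $u_++w_-=w_++u_-$ forces $u_+-w_+=u_--w_-\in U_+\cap U_-=W_0\subset W_+$, so $u_+\in W_+$ and the element lies in $W_++W_-=W$. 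On the other hand, the projection $\pi: W^\perp \to W^\perp/W$ yields
\[ \dim X' = \dim \pi(X') + \dim(X' \cap W); \]
here $\dim \pi(X') = a_1 - b_{12} - b_{13}$ by the definition of $b_{13}$, and $X' \cap W = W \cap V^\perp = W \cap V$ because $V \subset V^\perp$ provides one inclusion while the nondegenerate pairing of $W$ with $\overline{W}$ combined with Lemma \ref{lem3.4}(i) yields equality of dimensions. Equating the two expressions and cancelling $b_1+b_3+b_4+b_7$ from both sides will produce (\ref{eq3.11'}).

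For the trichotomy (\ref{eq3.13}) on $\varepsilon$, I would first observe $\varepsilon\in\{0,1\}$ unconditionally: since $V$ is maximally isotropic in $\bbf^{2n+1}$, we have $V^\perp=V\oplus \bbf v_0$ for some anisotropic $v_0$, so for $x=v+cv_0\in X'\subset V^\perp$ the quadratic form evaluates as $(x,x)=c^2(v_0,v_0)$ and $x\in X$ iff $c=0$; thus $X$ is the kernel of a linear form on $X'$. The case $b_{15}=0$ gives $X_1=X'$, and combined with the inclusion $X_1\subset X$ established just before Corollary \ref{cor3.9}, we obtain $X=X'$ and hence $\varepsilon=0$. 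The case $b_{15}$ odd follows by contrapositive from Corollary \ref{cor3.9}, and the remaining case is already covered by $\varepsilon\in\{0,1\}$. The main obstacle I anticipate is the bookkeeping in the first route: verifying the intersection identity $(U_++W_-)\cap(W_++U_-)=W$ and threading the resulting combination of $b_i$'s through the inclusion--exclusion so that, after cancellation against the $\pi$-decomposition, exactly (\ref{eq3.11'}) survives.
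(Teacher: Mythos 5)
Your proof is correct and follows essentially the same route as the paper, whose terse proof simply cites $\dim\pi(X_0)=b_5+b_6+b_8+b_9$ as the key ingredient for (\ref{eq3.11'}) and leaves everything else to the definitions and Corollary~\ref{cor3.9}. Your inclusion--exclusion computation of $\dim X_0$ via the identity $(U_++W_-)\cap(W_++U_-)=W$ is a clean way to verify that asserted equality, and your two-way count of $\dim X'$ together with the $\varepsilon\in\{0,1\}$ argument is a faithful expansion of what the paper leaves implicit.
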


\begin{proof} The equality (\ref{eq3.11'}) follows from the definitions of $b_{12},b_{13},b_{15}$ and
$$\dim \pi(X_0)=b_5+b_6+b_8+b_9.$$
The other equalities follow from the definitions of $b_1,\ldots,b_{15}$ and Corollary \ref{cor3.9}.
\end{proof}

\subsection{Representative of the $R$-orbit of $V$}

Conversely suppose that nonnegative numbers $b_1,\ldots,b_{15}$ and $\varepsilon$ satisfy the equalities in Proposition \ref{prop3.15}. Then we define a maximally isotropic subspace
$$V(b_1,\ldots,b_{15},\varepsilon)=(\bigoplus_{j=1}^{14} V_{(j)})\oplus V_{(15)}^\varepsilon$$
as follows.

Define subsets $I_{(j)}$ of $I$ for $j\in J_1=\{1,2,3,4,5,6,10,11,14\}$ by
\begin{align*}
I_{(1)} & =\{1,\ldots,b_1\}, \quad
I_{(2)} =\{b_1+1,\ldots,a_0\}, \quad
I_{(3)} =\{a_0+1,\ldots,a_0+b_3\}, \\
I_{(4)} & =\{a_0+a_++1,\ldots,a_0+a_++b_4\}, \quad
I_{(5)} =\{d+1,\ldots,d+b_5\}, \\
I_{(6)} & =\{d'+1,\ldots,d'+b_6\}, \quad
I_{(10)} =\{a_0+a_+-b_{10}+1,\ldots,a_0+a_+\}, \\
I_{(11)} & =\{d-b_{11}+1,\ldots,d\}
\mand I_{(14)} =\{d+a_1+1,\ldots,d+a_1+b_{14}\}
\end{align*}
where $d'=\overline{d+a_1}-1=2n+1-d-a_1$.
For $j\in J_1$, put $U_{(j)}=\bigoplus_{i\in \widetilde{I_{(j)}}} \bbf e_i$ where $\widetilde{I_{(j)}}=I_{(j)}\sqcup \overline{I_{(j)}}$ and define maximally isotropic subspaces $V_{(j)}$ of $U_{(j)}$ by
$$V_{(j)}=\begin{cases} \bigoplus_{i\in I_{(j)}} \bbf e_i & \text{if $j=1,3,4,5,6,14$}, \\
\bigoplus_{i\in I_{(j)}} \bbf e_{\overline{i}} & \text{if $j=2,10,11$}.
\end{cases}$$

Define subsets $I_{(j)}$ of $I$ and maps $\eta_j: I_{(j)}\to I$ for $j\in J_2=\{7,8,9,13\}$ by
\begin{align*}
I_{(7)} & =\{a_0+b_3+1,\ldots,a_0+b_3+b_7\}, \\
& \eta_7(a_0+b_3+k) =a_0+a_++b_4+k\quad \mbox{for }k=1,\ldots,b_7 \\
I_{(8)} & =\{a_0+b_3+b_7+1,\ldots,a_0+b_3+b_7+b_8\}, \\
& \eta_8(a_0+b_3+b_7+k) =d'+b_6+k\quad \mbox{for }k=1,\ldots,b_8 \\
I_{(9)} & =\{a_0+a_++b_4+b_7+1,\ldots,a_0+a_++b_4+b_7+b_9\}, \\
& \eta_9(a_0+a_++b_4+b_7+k) =d+b_5+k\quad \mbox{for }k=1,\ldots,b_9, \\
I_{(13)} & =\{d+b_5+b_9+b_{12}+b_{15}+1,\ldots,d+b_5+b_9+b_{12}+b_{15}+b_{13}\} \\
\mand & \eta_{13}(d+b_5+b_9+b_{12}+b_{15}+k) =d+a_1+b_{14}+b_{12}+k\quad \mbox{for }k=1,\ldots,b_{13}.
\end{align*}
For $j\in J_2$, put $U_{(j)}=\bigoplus_{i\in \widetilde{I_{(j)}}} \bbf e_i$ where $\widetilde{I_{(j)}}=I_{(j)}\sqcup \eta_j(I_{(j)})\sqcup \overline{I_{(j)}}\sqcup \overline{\eta_j(I_{(j)})}$.
Define maximally isotropic subspaces $V_{(j)}=V_{(j)}^1\oplus V_{(j)}^2$ of $U_{(j)}$ for $j\in J_2$ by
$$V_{(j)}^1=\bigoplus_{i\in I_{(j)}} \bbf (e_i+e_{\eta_j(i)}) \mand  V_{(j)}^2=\bigoplus_{i\in I_{(j)}} \bbf (e_{\overline{i}}-e_{\overline{\eta_j(i)}}).$$

Define a subset $I_{(12)}=\{d+b_5+b_9+1,\ldots,d+b_5+b_9+b_{12}\}$ of $I$ and maps $\kappa,\lambda:I_{(12)}\to I$ given by
$$\kappa(d+b_5+b_9+k)=d'+b_6+b_8+k,\quad
\lambda(d+b_5+b_9+k)=d+a_1+b_{14}+k$$
for $k=1,\ldots,b_{12}$. Put $U_{(12)}=\bigoplus_{i\in \widetilde{I_{(12)}}} \bbf e_i$ where $\widetilde{I_{(12)}}=I_{(12)}\sqcup \kappa(I_{(12)})\sqcup \lambda(I_{(12)})\sqcup \overline{I_{(12)}}\sqcup \overline{\kappa(I_{(12)})}\sqcup \overline{\lambda(I_{(12)})}$.
Define a maximally isotropic subspace $V_{(12)}=V_{(12)}^1\oplus V_{(12)}^2\oplus V_{(12)}^3$ of $U_{(12)}$ by
\begin{align*}
V_{(12)}^1 & =\bigoplus_{i\in I_{(12)}} \bbf (e_i+e_{\kappa(i)}),\quad V_{(12)}^2=\bigoplus_{i\in I_{(12)}} \bbf (e_i+e_{\lambda(i)}) \\
\mand V_{(12)}^3 & =\bigoplus_{i\in I_{(12)}} \bbf (e_{\overline{i}}-e_{\overline{\kappa(i)}}-e_{\overline{\lambda(i)}}).
\end{align*}

Put $U_{(15)}=(\bigoplus_{i\in I_{15}\sqcup \overline{I_{(15)}}} \bbf e_i)\oplus \bbf e_{n+1}$ where
$$I_{(15)}=\{d+b_5+b_9+b_{12}+1,\ldots,d+b_5+b_9+b_{12}+b_{15}\}.$$
Define a map
$$\eta_{15}(d+b_5+b_9+b_{12}+k)=d'+b_6+b_8+b_{12}+b_{13}+k\quad\mbox{for }k=1,\ldots,b_{15}.$$
If $b_{15}=0$, then we define $V_{(15)}^\varepsilon=\{0\}$. Suppose $b_{15}>0$. Then we put
\begin{align*}
c & =d+b_5+b_9+b_{12}+\left[\frac{b_{15}+1}{2}\right] \mand I_{(15)}^+ =\{d+b_5+b_9+b_{12}+1,\ldots,c\}.
\end{align*}
For $i\in I_{(15)}^+-\{c\}$, define
$$V_{<i>}=\bbf (e_i+e_{\eta_{15}(i)})\oplus \bbf (e_{\overline{i}}-e_{\overline{\eta_{15}(i)}}).$$
Define
$$V_{<c>}^\varepsilon=\begin{cases} \bbf (e_c+e_{\eta_{15}(c)})\oplus \bbf (e_{\overline{c}}-e_{\overline{\eta_{15}(c)}}) & \text{if $b_{15}$ is even and $\varepsilon=0$}, \\
\bbf (e_c+e_{\eta_{15}(c)})\oplus \bbf (e_{\overline{c}}-e_{\overline{\eta_{15}(c)}}-\frac{1}{2} e_c+e_{n+1}) & \text{if $b_{15}$ is even and $\varepsilon=1$}, \\
\bbf (e_{\overline{c}}-\frac{1}{2} e_c+e_{n+1}) & \text{if $b_{15}$ is odd ($\varepsilon=1$)}.
\end{cases}$$
Then we define maximally isotropic subspaces $V_{(15)}^\varepsilon$ of $U_{(15)}$ by
$$V_{(15)}^\varepsilon=(\bigoplus_{i\in I_{(15)}^+-\{c\}} V_{<i>})\oplus V_{<c>}^\varepsilon.$$

\begin{theorem} Let $V$ be a maximally isotropic subspace of $\bbf^{2n+1}$. Define the numbers $b_1,\ldots,b_{15}$ and $\varepsilon$ as in Section \ref{sec3.3}. Then the $R$-orbit of $V$ contains the representative
$$V(b_1,\ldots,b_{15},\varepsilon).$$
\label{th3.10}
\end{theorem}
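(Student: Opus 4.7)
The plan is to produce an element $g \in R$ sending $V$ to the standard model $V(b_1,\ldots,b_{15},\varepsilon)$, working through the factorization $R=(N_W\cap R)(L_U\cap R)(L_W\cap R)$ of Proposition \ref{prop3.2} in three stages. The idea is to first use the Levi factors to normalize the pieces of $V$ lying in $W$ and in $\pi(W^\perp)$ (which the Levi factors preserve) and then use the unipotent radical $N_W\cap R$ to clean up the cross-terms.

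First I would choose a basis of $V$ adapted to the whole chain of invariant subspaces
$$W_0\cap V \subset W_+\cap V,\ W_-\cap V \subset W\cap V \subset (W+U_+)\cap V,\ (W+U_-)\cap V \subset \cdots \subset X \subset X' \subset V,$$
picking at each step a complement that refines the previous one. The sizes of the successive quotients are exactly $b_1,b_3,b_4,b_7,b_8,b_9,\ldots,b_{15}$, with the last ones living in the $W$-free direction. This reduces the problem to moving each chosen basis vector into the correct standard slot.

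Next I would apply elements of $L_W\cap R$ and $L_U\cap R$. By Proposition \ref{prop3.2}(ii), $L_W\cap R$ acts with full block-upper-triangular freedom on the flag $W_0\subset W_0\oplus W_{(+)},\ W_0\oplus W_{(-)}\subset W$, and therefore acts transitively on triples of subspaces of $W$ of the prescribed dimensions compatible with that flag. This normalizes the portions of the basis lying in $W$, placing them onto the $e_i$, $e_{\bar{\imath}}$ prescribed by $I_{(1)},I_{(2)},I_{(3)},I_{(4)},I_{(10)},I_{(11)}$. Simultaneously, by Proposition \ref{prop3.2}(iii), $L_U\cap R$ contains $\mathrm{GL}_{a_1}(\bbf)$ acting on $U_{(+)}$ (and dually on $U_{(-)}$) together with the full orthogonal group $\mathrm{O}_{m-2a_1}(\bbf)$ on the central factor $\pi(Z)$. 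This lets me normalize $\pi(X_0)$, the complement $\pi(X_1)/\pi(X_0)$, and the transversal $\pi(X')/\pi(X_1)$ to the slots labeled $I_{(5)},I_{(6)},I_{(12)},I_{(13)}$, and $I_{(15)}$; the central orthogonal factor handles $b_{14},b_{15}$ and $\varepsilon$ via Witt's extension, which assigns the standard hyperbolic pairs for $I_{(15)}^+\setminus\{c\}$ and the single anisotropic vector at $c$ precisely in the cases dictated by (\ref{eq3.13}).

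Finally I would use $N_W\cap R$ to subtract off $W$-components from the remaining basis vectors, producing the specific vectors $e_i+e_{\eta_j(i)}$ with $j\in\{7,8,9,13\}$ and $e_i+e_{\kappa(i)}$, $e_i+e_{\lambda(i)}$ with the compensating vector for $V_{(12)}^3$ in $\overline{I_{(12)}}$. The constraint on the position of the cross-term entries in Proposition \ref{prop3.2}(iv) matches the positions of $\eta_j(I_{(j)})$ and $\kappa(I_{(12)}),\lambda(I_{(12)})$, so $N_W\cap R$ carries enough freedom to perform these translations, and the isotropy condition $(\ref{eq3.4'''})$ is automatically fulfilled because we are moving isotropic subspaces to isotropic subspaces.

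The main obstacle is step ordering and compatibility. Each Levi action must preserve the normalizations achieved in an earlier stage, which is why the factorization is applied in the order $L_W$, then $L_U$, then $N_W$: $L_U\cap R$ acts trivially on $W$, so it does not disturb the $L_W$-normalization; and $N_W$ acts trivially both on $W$ and on $W^\perp/W$, so it does not disturb any of the projected data. The genuinely delicate point is the central orthogonal factor, where for odd $b_{15}$ or $(b_{15}\text{ even},\ \varepsilon=1)$ one must verify that the restriction of $(\ ,\ )$ to the relevant complement has the correct Witt type, so that Witt's extension produces an isometry realizing the explicit anisotropic vector $e_{\overline c}-\tfrac12 e_c+e_{n+1}$. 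This is where the normalization (\ref{eq3.13}) of $\varepsilon$ relative to the parity of $b_{15}$, together with the fact that $e_{n+1}$ is the unique (up to scale) anisotropic direction in $U$, makes the construction unambiguous.
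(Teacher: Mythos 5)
Your high-level plan (normalize via the Levi, then clean up with $N_W$) has the right flavor, and your observation that $L_U\cap R$ and $N_W\cap R$ preserve the $L_W$-normalization is correct as far as it goes. However, the assignment of which factor of $R$ is responsible for which piece of $V$ is wrong in several places, and in particular the final $N_W$-cleanup you describe is blocked by the very constraint in Proposition~\ref{prop3.2}(iv) that you cite.

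Concretely, you claim that $N_W\cap R$ produces the vectors $e_i+e_{\eta_j(i)}$ for $j\in\{7,8,9,13\}$ and $e_i+e_{\kappa(i)}$, $e_i+e_{\lambda(i)}$. None of these can come from $N_W\cap R$: the pieces $V_{(7)}^1,V_{(7)}^2$ lie entirely in $W\oplus\overline{W}$, on which $N_W$ acts trivially, so they must be normalized by $L_W\cap R$ (the paper uses $g_5\in L_W$ and $g_6\in N_W$ for $V_{(7)}$, but $g_6$ only adjusts the $\overline{W}$-lift, not the $W$-position); the pieces $V_{(12)}^1,V_{(12)}^2,V_{(13)}^1$ lie entirely inside $U$, and $N_W$ only adds $W$-components, so it can never create the $U$-to-$U$ diagonals $e_i+e_{\kappa(i)}$, $e_i+e_{\lambda(i)}$, $e_i+e_{\eta_{13}(i)}$ — those must come from $L_U\cap R$, as in the paper's $\ell_1,\ldots,\ell_5$ in step (ix). Most seriously, for $V_{(8)}^1=\bigoplus\bbf(e_i+e_{\eta_8(i)})$ the index $i$ lies in $W_{(+)}$ while $\eta_8(i)$ lies in the $U_{(-)}$-direction, and Proposition~\ref{prop3.2}(iv) states exactly that elements of $N_W\cap R$ have $x_{i,j}=0$ for $(i,j)\in\{a_0+1,\ldots,a_0+a_+\}\times\{m-a_1+1,\ldots,m\}$, i.e.\ $N_W\cap R$ \emph{cannot} add a $W_{(+)}$-component to a $U_{(-)}$-vector (this is forced by $gU_-=U_-$). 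The same obstruction, with signs swapped, blocks $V_{(9)}^1$. So the constraint you read as ``matching'' the cross-term positions in fact forbids the additions your plan requires; the $W$-component of $(W_+ + U_-)\cap V$ modulo $W\cap V$ is an $N_W\cap R$-invariant and has to be normalized in the Levi step (this is the role of $g_7\in L_W\cap R$ in the paper's step (vi)), which your $L_W$-list $I_{(1)},I_{(2)},I_{(3)},I_{(4)},I_{(10)},I_{(11)}$ omits.

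Beyond the misassignment, the paper's proof is not a single Levi pass followed by an $N_W$ pass; it is a nine-stage reduction where each stage interleaves $L_W$, $L_U$ and $N_W$ elements, precisely because some Levi normalizations (e.g.\ $\ell_1,\ldots,\ell_5$ for $V_{(12)},\ldots,V_{(15)}$) are only available after an earlier $N_W$ pass (step (viii)) has cleaned the data, and because after normalizing one piece one must restrict to the smaller orthogonal complement $U_j$ and repeat. Showing that a single ``Levi then $N_W$'' pass suffices would require proving that $N_W\cap R$ acts transitively on the fiber of Levi data compatible with the $b$-invariants; this is essentially the content of the theorem, and it is not established by your proposal.
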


\begin{remark} If $V=V(b_1,\ldots,b_{15},\varepsilon)$, then we can write for $S=V,V^\perp,U_+,U_+^\perp,U_-$ and $U_-^\perp$,
$$S=\left(\bigoplus_{i\in I_{(1)}\sqcup\cdots\sqcup I_{(14)}} ((W_{<i>}\cap S)\oplus (\overline{W}_{<i>}\cap S))\right) \oplus (U_{(15)}\cap S)$$
where
$W_{<i>}=\bigoplus_{k\in I_{<i>}} \bbf e_k$ and $\overline{W}_{<i>}=\bigoplus_{k\in I_{<i>}} \bbf e_{\overline{k}}$
with
$$I_{<i>}=\begin{cases} \{i\} & \text{if $i\in I_{(j)}$ with $j\in J_1$}, \\
\{i,\eta_j(i)\} & \text{if $i\in I_{(j)}$ with $j\in J_2$}, \\
\{i,\kappa(i),\lambda(i)\} & \text{if $i\in I_{(12)}$}.
\end{cases}$$
So the subspaces $W_{<i>}\oplus \overline{W}_{<i>}$ are ``indecomposable summands'' in the sense of \cite{MWZ2}. They are characterized by
$${\bf d}(W)=((d_1^V(W),d_2^V(W),d_3^V(W)), (d_1^+(W),d_2^+(W),d_3^+(W)), (d_1^-(W),d_2^-(W),d_3^-(W)))$$
for $W=W_{<i>}$ and $W=\overline{W}_{<i>}$ where
\begin{align*}
d_1^V(W) & =\dim(W\cap V), \\
d_2^V(W) & =\dim(W\cap V^\perp)-\dim(W\cap V), \\
d_3^V(W) & =\dim W-\dim(W\cap V^\perp), \\
d_1^+(W) & =\dim(W\cap U_+), \\
d_2^+(W) & =\dim(W\cap U_+^\perp)-\dim(W\cap U_+), \\
d_3^+(W) & =\dim W-\dim(W\cap U_+^\perp), \\
d_1^-(W) & =\dim(W\cap U_-), \\
d_2^-(W) & =\dim(W\cap U_-^\perp)-\dim(W\cap U_-), \\
\mand d_3^-(W) & =\dim W-\dim(W\cap U_-^\perp).
\end{align*}
For $i\in I_{(j)}$ with $j=1,\ldots,14$, ${\bf d}(W_{<i>})$ and ${\bf d}(\overline{W}_{<i>})$ are as in Table \ref{table3.1}. Note that
$$d_k^*(W_{<i>})=d_{4-k}^*(\overline{W}_{<i>})$$
for $*=V,+,-$ and $k=1,2,3$. We write them in the ``compressed form'' as in \cite{MWZ2}: We omit $d_k^*(W)$ and $d_{4-k}^*(W)$ if both are $0$. We also omit commas.

\begin{table}
\centering
\setlength{\extrarowheight}{4pt}
\begin{center}
\begin{tabular}{ccc} \hline
$i\in I_{(j)}$ & ${\bf d}(W_{<i>})$ & ${\bf d}(\overline{W}_{<i>})$ \\ \hline
$I_{(1)}$ & ((10)(10)(10)) & ((01)(01)(01)) \\
$I_{(2)}$ & ((01)(10)(10)) & ((10)(01)(01)) \\
$I_{(3)}$ & (10)(10)(1)) & ((01)(01)(1)) \\
$I_{(4)}$ & ((10)(1)(10)) & ((01)(1)(01)) \\
$I_{(5)}$ & ((10)(10)(01)) & ((01)(01)(10)) \\
$I_{(6)}$ & ((10)(01)(10)) & ((01)(10)(01)) \\
$I_{(7)}$ & ((11)(110)(110)) & ((11)(011)(011)) \\
$I_{(8)}$ & ((11)(11)(110)) & ((11)(11)(011)) \\
$I_{(9)}$ & ((11)(110)(11)) & ((11)(011)(11)) \\
$I_{(10)}$ & (01)(10)(1)) & ((10)(01)(1)) \\
$I_{(11)}$ & ((01)(1)(10)) & ((10)(1)(01)) \\
$I_{(12)}$ & ((21)(111)(111)) & ((12)(111)(111)) \\
$I_{(13)}$ & ((11)(110)(011)) & ((01)(011)(110)) \\
$I_{(14)}$ & ((10)(1)(1)) & ((01)(1)(1)) \\
\hline
\end{tabular}
\end{center}
\setlength{\extrarowheight}{0pt}
\caption{${\bf d}(W_{<i>})$ and ${\bf d}(\overline{W}_{<i>})$}
\label{table3.1}
\end{table}

\bigskip
We can also decompose $U_{(15)}$ into indecomposable summands.

First suppose that $\varepsilon=0$. (Then $b_{15}$ is even.) Put
$W_{<i>}^0=\bbf e_i\oplus \bbf e_{\eta_{15}(i)},\ \overline{W}_{<i>}^0=\bbf e_{\overline{i}}\oplus \bbf e_{\overline{\eta_{15}(i)}}$
for $i\in I_{(15)}^+$ and $W_{<n+1>}=\bbf e_{n+1}$. Then
$$U_{(15)}=(\bigoplus_{i\in I_{(15)}^+} (W_{<i>}^0\oplus \overline{W}_{<i>}^0)) \oplus W_{<n+1>},$$
${\bf d}(W_{<i>}^0)={\bf d}(\overline{W}_{<i>}^0)=((11)(11)(11))$ and ${\bf d}(W_{<n+1>})=((1)(1)(1))$.

Next suppose that $b_{15}$ is even and $\varepsilon=1$. Put
$W_{<c>}^{\rm even}=\bbf e_c\oplus \bbf e_{\eta_{15}(c)}\oplus \bbf e_{\overline{c}}\oplus \bbf e_{\overline{\eta_{15}(c)}}\oplus \bbf e_{n+1}$.
Then
$$U_{(15)}=(\bigoplus_{i\in I_{(15)}^+-\{c\}} (W_{<i>}^0\oplus \overline{W}_{<i>}^0)) \oplus W_{<c>}^{\rm even}$$
and ${\bf d}(W_{<c>}^{\rm even})=((212)(212)(212))$.

Finally suppose that $b_{15}$ is odd. (Then $\varepsilon=1$.) Put
$W_{<c>}^{\rm odd}=\bbf e_c\oplus \bbf e_{\overline{c}}\oplus \bbf e_{n+1}$.
Then
$$U_{(15)}=(\bigoplus_{i\in I_{(15)}^+-\{c\}} (W_{<i>}^0\oplus \overline{W}_{<i>}^0)) \oplus W_{<c>}^{\rm odd}$$
and
${\bf d}(W_{<c>}^{\rm odd})=((111)(111)(111))$.
\label{rem3.11}
\end{remark}

\subsection{Proof of Theorem \ref{th3.10}}

(i) $V_{(1)}$-part:
There exists an element $g\in R$ of the form
$$g=\ell\left(\bp A & 0 \\ 0 & I_{a_++a_-} \ep\right)$$
with $A\in {\rm GL}_{a_0}(\bbf)$ such that
$$g(W_0\cap V)=V_{(1)}=\bbf e_1\oplus\cdots\oplus \bbf e_{b_1}.$$
Put $V'_1=gV$ and let $U_1$ be the orthogonal complement of $U_{(1)}=V_{(1)}\oplus \overline{V}_{(1)}$ in $\bbf^{2n+1}$ where $\overline{V}_{(1)}=\bigoplus_{i\in I_{(1)}} \bbf e_{\overline{i}}$. Then
$$V'_1=gV=V_{(1)}\oplus V_1.$$
with $V_1=U_1\cap V'_1$. So we have only to consider the $({\rm O}(U_1)\cap R)$-orbit of $V_1$ in the following. Since $gV\cap W_0=g(V\cap W_0)=V_{(1)}$, we have
$$V_1\cap W_0=\{0\}.$$

\bigskip
(ii) $V_{(2)}$-part:
Let $U_2$ denote the orthogonal complement of $U_{(2)}=\overline{V}_{(2)}\oplus V_{(2)}$ in $U_1$ where $\overline{V}_{(2)}=\bigoplus_{i\in I_{(2)}} \bbf e_i$. Then $U_2=\bbf e_{a_0+1}\oplus\cdots\oplus \bbf e_{\overline{a_0+1}}$ and
$$U_1=\overline{V}_{(2)}\oplus U_2\oplus V_{(2)}.$$
Let $p$ denote the projection of $U_1$ onto $V_{(2)}$ with respect to this direct sum decomposition. Since $V_1\cap \overline{V}_{(2)}=\{0\}$, we have $p(V_1)=V_{(2)}$ by Lemma \ref{lem3.4} (i). So we can write
$$V_1=(V_1\cap (\overline{V}_{(2)}\oplus U_2))\oplus \bigoplus_{j\in \overline{I_{(2)}}} \bbf v_j$$
with vectors
$v_j\in e_j+(\overline{V}_{(2)}\oplus U_2)$
for $j\in \overline{I_{(2)}}$. By Lemma \ref{lem3.5}, we can take a $g_1\in {\rm O}(U_1)$ such that
$$g_1e_j\begin{cases} =e_j & \text{if $j\in I_{(2)}$}, \\
=v_j & \text{if $j\in \overline{I_{(2)}}$}, \\
\in e_j+\overline{V}_{(2)} & \text{if $j\in I-I_{(2)}-\overline{I_{(2)}}$}.
\end{cases}$$
Since $\overline{V}_{(2)}=\bigoplus_{i\in I_{(2)}} \bbf e_i \subset W_0$, we have $g_1U_\pm=U_\pm$ and hence $g_1\in R$.
Since $g_1^{-1}V_1\supset V_{(2)}$, we have
$g_1^{-1}V_1\subset U_2\oplus V_{(2)}$
and hence
$$g_1^{-1}V_1=(g_1^{-1}V_1\cap U_2)\oplus V_{(2)}=V_2\oplus V_{(2)}$$
where $V_2=g_1^{-1}V_1\cap U_2$. So we have only to consider $({\rm O}(U_2)\cap R)$-orbit of $V_2$ in the following since $V'_2=g_1^{-1}V'_1$ is written as $V'_2=V_{(1)}\oplus V_{(2)}\oplus V_2$.

\bigskip
(iii) $V_{(3)}$ and $V_{(4)}$-parts:
We can take an element
$$g_2=\ell\left(\bp I_{a_0} && 0 \\ & A & \\ 0 && B \ep\right)\in {\rm O}(U_2)\cap R\cap L_W$$
with some $A\in {\rm GL}_{a_+}(\bbf)$ and $B\in {\rm GL}_{a_-}(\bbf)$ such that
$$g_2(W_+\cap V_2)=V_{(3)}\mand g_2(W_-\cap V_2)=V_{(4)}.$$
Let $U_4$ denote the orthogonal complement of $U_{(3)}\oplus  U_{(4)}$ in $U_2$. Then we have
$$g_2V_2=V_{(3)}\oplus V_{(4)}\oplus V_4$$
with $V_4=g_2V_2\cap U_4$. So we have only to consider the $({\rm O}(U_4)\cap R)$-orbit of $V_4$ in the following since $V'_4=g_2V'_2$ is written as $V'_4=V_{(1)}\oplus V_{(2)}\oplus V_{(3)}\oplus V_{(4)}\oplus V_4$. Note that
\begin{equation}
V_4\cap W_+=V_4\cap W_-=\{0\}.
\label{eq3.2'}
\end{equation}

\bigskip
(iv) $V_{(5)}$ and $V_{(6)}$-parts:
Let $p: W\oplus U\to U$ denote the canonical projection map. (Note that $p$ is the composition of $\pi:W^\perp=W\oplus U\to W^\perp/W$ and the identification $W^\perp/W\cong U$.)
It follows from (\ref{eq3.2'}) that $\dim p(V_4\cap U_+)=b_5$ and that $\dim p(V_4\cap U_-)=b_6$. Since $(p(V_4\cap U_+),p(V_4\cap U_-))=\{0\}$, we have
$$p(V_4\cap U_+)\subset U_{(+)}^{\perp p(V_4\cap U_-)}.$$
Hence we can take an element
$$g_3=\ell_{00}(A,I_{m-2a_1})\in R\cap L_U$$
with some $A\in {\rm GL}_{a_1}(\bbf)$ such that
\begin{align*}
g_3 p(V_4\cap U_+) & =V_{(5)} \\
\mbox{and that}\quad g_3U_{(+)}^{\perp p(V_4\cap U_-)} & =U_{(+)}^{\perp V_{(6)}}\quad(\Longleftrightarrow g_3 p(V_4\cap U_-)=V_{(6)}).
\end{align*}

We can write $V_4\cap U_+=\bbf v_1\oplus\cdots\oplus \bbf v_{b_5}$ and $V_4\cap U_-=\bbf v'_1\oplus\cdots\oplus \bbf v'_{b_6}$ with vectors
$$v_j=e_{d+j}+\sum_{i=a_0+b_3+1}^{a_0+a_+} y_{i,j}e_i$$
for $j=1,\ldots,b_5$ and
$$v'_j=e_{d'+j}+\sum_{i=a_0+a_++b_4+1}^{d} y'_{i,j}e_i$$
for $j=1,\ldots,b_6$ with some matrices $\{y_{i,j}\}$ and $\{y'_{i,j}\}$. Define a matrix $X=\{x_{i,j}\}^{i=1,\ldots,d}_{j=1,\ldots,m}$ by
$$x_{i,j}=\begin{cases} y_{i,j} & \text{if $(i,j)\in\{a_0+b_3+1,\ldots,a_0+a_+\}\times \{1,\ldots,b_5\}$}, \\
y'_{i,d'-d+j} & \text{if $(i,j)\in\{a_0+a_++b_4+1,\ldots,d\}\times \{d'-d+1,\ldots,d'-d+b_6\}$}, \\
0 & \text{otherwise}
\end{cases}$$
and put $g_4=g(X,0)\in {\rm O}(U_4)\cap R\cap N_W$ (Proposition \ref{prop3.2} (iv)). Then we have
\begin{align*}
g_4^{-1}v_j & =e_{d+j}\quad\mbox{for }j=1,\ldots,b_5 
\mand g_4^{-1}v'_j =e_{d'+j}\quad\mbox{for }j=1,\ldots,b_6.
\end{align*}
Thus we have
$$g_4^{-1}g_3(V_4\cap U_+)=V_{(5)}\mand g_4^{-1}g_3(V_4\cap U_-)=V_{(6)}.$$

Take the orthogonal complement $U_6$ of $U_{(5)}\oplus U_{(6)}$ in $U_4$. Then we have
$$g_4^{-1}g_3V_4=V_{(5)}\oplus V_{(6)}\oplus V_6$$
with $V_6=g_4^{-1}g_3V_4\cap U_6$. So we have only to consider the $({\rm O}(U_6)\cap R)$-orbit of $V_6$ in the following since $V'_6=g_4^{-1}g_3V'_4$ is written as $V'_6=(\bigoplus_{j=1}^6 V_{(j)})\oplus V_6$. Note that
\begin{equation}
V_6\cap U_+=V_6\cap U_-=\{0\}.
\label{eq3.3'}
\end{equation}
Let
$p_\pm: U_6\cap(U_++U_-)=(U_6\cap U_+)\oplus (U_6\cap U_-)\to U_6\cap U_\pm$
denote the projections with respect to the direct sum decomposition. By (\ref{eq3.3'}), we have:

\begin{proposition} The projections give injections
$$p_\pm:V_6\cap(U_++U_-)\to U_6\cap U_\pm.$$
\label{prop3.3}
\end{proposition}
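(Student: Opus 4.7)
The proof here should be almost immediate from the property~(\ref{eq3.3'}) that $V_6\cap U_+=V_6\cap U_-=\{0\}$, so my plan is to reduce the injectivity claim to a direct application of this property and to note the one small preliminary that makes the projections well-defined.

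First I would verify (or at least note) that the decomposition $U_6\cap (U_++U_-)=(U_6\cap U_+)\oplus (U_6\cap U_-)$ used implicitly in the statement really is a direct sum. The containment $\supset$ is clear, and directness amounts to $U_6\cap U_+\cap U_-=U_6\cap W_0=\{0\}$, which follows from the chain of reductions: after the $V_{(1)}$ and $V_{(2)}$ reductions, the ambient space $U_2$ is the orthogonal complement of $U_{(1)}\oplus U_{(2)}\supset W_0$ inside $U_1$, and $U_6\subset U_4\subset U_2$, so $U_6\cap W_0=\{0\}$. Hence $p_\pm$ are indeed well-defined linear maps on $U_6\cap(U_++U_-)$.

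Next I would compute the kernels directly. Since $U_6\cap(U_++U_-)=(U_6\cap U_+)\oplus (U_6\cap U_-)$, the projection $p_+$ has kernel exactly $U_6\cap U_-$, and $p_-$ has kernel exactly $U_6\cap U_+$. Restricting these projections to the subspace $V_6\cap(U_++U_-)$ (which lies in $U_6$ because $V_6\subset U_6$), the kernels become
\[
\ker(p_+|_{V_6\cap(U_++U_-)})=V_6\cap(U_++U_-)\cap U_-=V_6\cap U_-
\]
and
\[
\ker(p_-|_{V_6\cap(U_++U_-)})=V_6\cap(U_++U_-)\cap U_+=V_6\cap U_+.
\]
By~(\ref{eq3.3'}), both of these are zero, so both restricted maps are injective.

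There is essentially no obstacle here; the only thing to be careful about is invoking the direct sum decomposition of $U_6\cap(U_++U_-)$, which rests on $W_0$ having been ``consumed'' by the earlier $V_{(1)}$ and $V_{(2)}$ reductions. Everything else is a two-line verification using~(\ref{eq3.3'}).
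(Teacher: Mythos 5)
Your proof is correct and follows exactly the paper's (implicit) argument: the paper simply states the proposition after defining $p_\pm$ as projections for the direct sum decomposition $U_6\cap(U_++U_-)=(U_6\cap U_+)\oplus(U_6\cap U_-)$ and invoking (\ref{eq3.3'}). You have merely spelled out what the paper leaves unsaid — the kernel computation and the verification that the decomposition really is direct (which the paper asserts implicitly, and which indeed holds since $U_6\subset U_2$ and $U_2\cap W_0=\{0\}$).
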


\bigskip
(v) $V_{(7)}$-part:
Let $U_7$ denote the orthogonal complement of $U_{(7)}$ in $U_6$. Then we can write $U_7=\bigoplus_{i\in I_7} \bbf e_i$ with $I_7=I_6-\widetilde{I_{(7)}}=I-(\bigsqcup_{j=1}^7 \widetilde{I_{(j)}})$. 

By Proposition \ref{prop3.3}, we can take an element
$$g_5=\ell\left(\bp I_{a_0+b_3} &&& 0 \\ & A && \\ && I_{b_4} & \\ 0 &&& B \ep\right)\in {\rm O}(U_6)\cap R\cap L_W$$
with some $A\in {\rm GL}_{a_+-b_3}(\bbf)$ and $B\in {\rm GL}_{a_--b_4}(\bbf)$ such that
$$g_5V_6\cap W=V_{(7)}^1.$$
Since $\overline{W}\cap U_6=\bigoplus_{i\in I_{(7)}\sqcup I_{(8)}\sqcup I_{(9)}\sqcup I_{(10)}\sqcup I_{(11)}} \bbf e_{\overline{i}}$, it follows that
\begin{equation}
p_{\overline{W}}(g_5V_6)=V_{(7)}^2\oplus \bigoplus_{i\in I_{(8)}\sqcup I_{(9)}\sqcup I_{(10)}\sqcup I_{(11)}} \bbf e_{\overline{i}}
\label{eq3.11}
\end{equation}
by Lemma \ref{lem3.4} (i).

By (\ref{eq3.11}) we can write
$$g_5V_6=((V_{(7)}^1\oplus U_7)\cap g_5V_6)\oplus \bigoplus_{j\in I_{(7)}} \bbf v_j$$
with vectors
$$v_j=e_{\overline{j}}-e_{\overline{\eta_7(j)}}+\sum_{i\in I_6\cap I_{W^\perp}} c_{i,j}e_i$$
for $j\in I_{(7)}$ with $c_{i,j}\in\bbf$ where $I_{W^\perp}=\{i\in I\mid e_i\in W^\perp\}$.

Put
$$u_j=-e_{\overline{\eta_7(j)}}+\sum_{i\in I_{U_{(+)}}\cap I_6} c_{i,j}e_i
\mand
v_j^+=u_j -\sum_{i\in I_{(7)}} (u_j,v_i)e_i$$
for $j\in I_{(7)}$ where $I_{U_{(+)}}=\{i\in I\mid e_i\in U_{(+)}\}=\{d+1,\ldots,d+a_1\}$. Then $(v_j^+,v_k^+)=0$ and
$$(v_j^+,v_k)=(u_j,v_k)-\sum_{i\in I_{(7)}} (u_j,v_i)(e_i,v_k) =(u_j,v_k)-(u_j,v_k)=0$$
for $j,k\in I_{(7)}$. Write $v_j^-=v_j-v_j^+$. Then we can take an element $g_6\in {\rm O}(U_6)$ such that
$$g_6e_j=e_j,\quad g_6e_{\eta_7(j)}=e_{\eta_7(j)},\quad g_6e_{\overline{j}}=v_j^-\quad\mbox{and that}\quad -g_6e_{\overline{\eta_7(j)}}=v_j^+$$
for $j\in I_{(7)}$ since $(v_j^+,v_k^+)=(v_j^-,v_k^-)=(v_j^+,v_k^-)=0$ for $j,k\in I_{(7)}$ by Lemma \ref{lem3.5}. The element $g_6$ is also contained in $R\cap N_W$ by its construction. Hence we have
$$g_6^{-1}g_5V_6=V_{(7)}^1\oplus (U_7\cap g_6^{-1}g_5V_6)\oplus V_{(7)}^2.$$
So we have only to consider the $({\rm O}(U_7)\cap R)$-orbit of $V_7=U_7\cap g_6^{-1}g_5V_6$ in the following since $V'_7=g_6^{-1}g_5V'_6$ is written as $V'_7=(\bigoplus_{j=1}^7 V_{(j)}) \oplus V_7$.

\bigskip
(vi) $V_{(8)}$-part:
Let $U_8$ denote the orthogonal complement of $U_{(8)}$ in $U_7$. Then we can write $U_8=\bigoplus_{i\in I_8} \bbf e_i$ with $I_8=I_7-\widetilde{I_{(8)}}$. 

By Proposition \ref{prop3.3}, $\dim p_+((W_++U_-)\cap V_7)=\dim p_-((W_++U_-)\cap V_7)=b_8$. Furthermore $p_-((W_++U_-)\cap V_7)\cap W_-=\{0\}$ by the definition of $V_7$. So we can take elements
\begin{align*}
g_7 & =\ell\left(\bp I_{a_0+b_3+b_7} && 0 \\ & A & \\ 0 && I_{a_-} \ep\right)\in {\rm O}(U_7)\cap R\cap L_W \\
\mand g_8 & =\ell_{00}\left(\bp I_{b_5} && 0 \\ & B & \\ 0 && I_{b_6} \ep,I_{m-2a_1}\right) \in {\rm O}(U_7)\cap R\cap L_U
\end{align*}
with $A\in {\rm GL}_{a_+-b_3-b_7}(\bbf)$ and $B\in {\rm GL}_{a_1-b_5-b_6}(\bbf)$ such that
$$g_8g_7((W_++U_-)\cap V_7)=\bigoplus_{j\in I_{(8)}} \bbf(e_j+w_j)$$
where $w_j\in e_{\eta_8(j)}+(W_-\cap V_7)$. By Proposition \ref{prop3.2} (iv), we can take an element $g_9\in {\rm SO}(U_7)\cap R\cap N_W$ such that
$g_9e_{\eta_8(j)}=w_j$
for $j=1,\ldots,b_8$. Thus we have
$$g_9^{-1}g_8g_7((W_++U_-)\cap V_7)=V_{(8)}^1.$$

Since $g_9^{-1}g_8g_7V_7\cap W=\{0\}$, it follows from Lemma \ref{lem3.4} that $p_{\overline{W}}(g_9^{-1}g_8g_7V_7)=U_7\cap \overline{W}= \bigoplus_{i\in I_{(8)}\sqcup I_{(9)}\sqcup I_{(10)}\sqcup I_{(11)}} \bbf e_{\overline{i}}$. So we can take vectors $v_j$ in $g_9^{-1}g_8g_7V_7$ for $j\in I_{(8)}$ of the form
$$v_j=e_{\overline{j}}+\sum_{i\in I_7\cap I_{W^\perp}} c_{i,j} e_i.$$
Since $g_9^{-1}g_8g_7V_7$ contains $V_{(8)}^1$, they are of the form
$$v_j=e_{\overline{j}}-e_{\overline{\eta_8(j)}}+\sum_{i\in (I_7\cap I_{W^\perp})-\overline{\eta_8(I_{(8)})}} c_{i,j} e_i.$$

Define vectors
$$u_j=-e_{\overline{\eta_8(j)}}+\sum_{i\in (I_7\cap I_{U_{(+)}})-\overline{\eta_8(I_{(8)})}} c_{i,j}e_i\mand v_j^+=u_j-\sum_{i\in I_{(8)}} (u_j,v_i)e_i$$
for $j\in I_{(8)}$. Then we have $(v_j^+,v_k^+)=0$ and
$$(v_j^+,v_k)=(u_j,v_k)-\sum_{i\in I_{(8)}} (u_j,v_i)(e_i,v_k) =(u_j,v_k)-(u_j,v_k)=0$$
for $j,k\in I_{(8)}$. Put $v_j^-=v_j-v_j^+$.  Since $(v_j^+,v_k^+)=(v_j^-,v_k^-)=(v_j^+,v_k^-)=0$ for $j,k\in I_{(8)}$, we can take an element $g_{10}\in {\rm O}(U_7)$ such that
$$g_{10}e_j=e_j,\quad g_{10}e_{\eta_8(j)}=e_{\eta_8(j)},\quad g_{10}e_{\overline{\eta_8(j)}}=-v_j^+\quad\mbox{and that}\quad g_{10}e_{\overline{j}}=v_j^-$$
by Lemma \ref{lem3.5}. The element $g_{10}$ is also contained in $R\cap N_W$ by its construction. Hence we have
$$g_{10}^{-1}g_9^{-1}g_8g_7V_7=V_{(8)}^1\oplus (U_8\cap g_{10}^{-1}g_9^{-1}g_8g_7V_7) \oplus V_{(8)}^2.$$
So we have only to consider the $({\rm O}(U_8)\cap R)$-orbit of $V_8=U_8\cap g_{10}^{-1}g_9^{-1}g_8g_7V_7$ in the following since $V'_8=g_{10}^{-1}g_9^{-1}g_8g_7V'_7$ is written as $V'_8=(\bigoplus_{j=1}^8 V_{(j)})\oplus V_8$.

\bigskip
(vii) $V_{(9)}$-part:
Let $U_9$ denote the orthogonal complement of $U_{(9)}$ in $U_8$. Then we can write $U_9=\bigoplus_{i\in I_9} \bbf e_i$ with $I_9=I_8-\widetilde{I_{(9)}}$. 
We can take an element $g'\in{\rm O}(U_8)\cap R$ such that
$$g'V_8=V_{(9)}^1\oplus (g'V_8\cap U_9)\oplus V_{(9)}^2$$
in the same way as in (vi).
So we have only to consider the $({\rm O}(U_9)\cap R)$-orbit of $V_9=U_9\cap g'V_8$ since $V'_9=g'V'_8$ is written as $V'_9=(\bigoplus_{j=1}^9 V_{(j)})\oplus V_9$.

\bigskip
(viii) $V_{(10)}$ and $V_{(11)}$-parts:
Considering $V'_9$ instead of $V$, we may define
$$X=X(V'_9)=(U_++U_-)\cap V'_9,\quad X'=X'(V'_9)=(U_++U_-)\cap (V'_9)^\perp$$
$$\mand X_0=X_0(V'_9)=((W_++U_-)\cap V'_9)+((U_++W_-)\cap V'_9).$$
Since $V'_9=V_{(1)}\oplus\cdots\oplus V_{(9)}\oplus V_9$, we have
$$(V'_9)^\perp=V_{(1)}\oplus\cdots\oplus V_{(9)}\oplus (U_9\cap V_9^\perp).$$
We see that
$$X_0=V_{(1)}\oplus V_{(3)}\oplus V_{(4)}\oplus V_{(5)}\oplus V_{(6)}\oplus V_{(7)}^1\oplus V_{(8)}^1\oplus V_{(9)}^1.$$
So it follows from Lemma \ref{lem3.6} that the maps
\begin{align*}
\pi_+|_{U_9\cap X'} & :U_9\cap X'\to \pi_+(U_9\cap X')\subset U_{(+)} \\
\mand 
\pi_-|_{U_9\cap X'} & :U_9\cap X'\to \pi_-(U_9\cap X')\subset U_{(-)}
\end{align*}
are bijective. Here we identify $U_\pm/W_\pm$ with $U_{(\pm)}$ and consider the image of $\pi_\pm |_{U_9\cap X'}$ as subspaces of $U_{(\pm)}$. There exist linear maps
$$\varphi_+: U_9\cap X'\to W_+\cap U_9=\overline{V}_{(10)} \mand \varphi_-: U_9\cap X'\to W_-\cap U_9=\overline{V}_{(11)}$$
such that
$$v=\varphi_+(v)+\varphi_-(v)+\pi_+(v)+\pi_-(v)\quad\mbox{for }v\in U_9\cap X'.$$
Consider the maps
$$\widetilde{\varphi_\pm}=\varphi_\pm\circ (\pi_\pm|_{U_9\cap X'})^{-1}: \pi_\pm(U_9\cap X')\to U_9\cap W_{(\pm)}$$
and extend them linearly to $U_9\cap U_{(\pm)}$. Then we can take an element $h\in {\rm O}(U_9)\cap R\cap N_W$ such that
$$h(u)=u+\widetilde{\varphi_\pm}(u)$$
for $u\in U_9\cap U_{(\pm)}$ by Proposition \ref{prop3.2} (iv). For $v\in U_9\cap X'$, we have
\begin{align*}
h^{-1}v & =h^{-1}(\varphi_+(v)+\pi_+(v)+\varphi_-(v)+\pi_-(v)) \\
& =h^{-1}(\widetilde{\varphi_+}\pi_+(v)+\pi_+(v)+\widetilde{\varphi_-}\pi_-(v)+\pi_-(v)) \\
& =h^{-1}(h\pi_+(v)+h\pi_-(v))=\pi_+(v)+\pi_-(v) \in U_9\cap U.
\end{align*}
Hence $h^{-1}(U_9\cap X')\subset U$. This implies
$$(e_i,h^{-1}(U_9\cap X'))=\{0\}\quad\mbox{for }i\in \overline{I_{(10)}}\sqcup \overline{I_{(11)}}.$$

Since the orthogonal space of $h^{-1}(U_9\cap X')=U_9\cap h^{-1}X'=U_9\cap (U_++U_-)\cap h^{-1}V_9^\perp$ in $U_9$ is $(U_9\cap Z)+(U_9\cap h^{-1}V_9)$, we can write
$$e_i=v_i-z_i$$
with $v_i\in U_9\cap h^{-1}V_9$ and $z_i\in U_9\cap Z$ for $i\in \overline{I_{(10)}}\sqcup \overline{I_{(11)}}$. By Lemma \ref{lem3.5}, we can take an element $h_2\in {\rm SO}(U_9)\cap R\cap N_W$ such that
$$h_2e_i=e_i+z_i=v_i$$
for $i\in \overline{I_{(10)}}\sqcup \overline{I_{(11)}}$. So we have
$$h_2^{-1}h^{-1}V_9\supset V_{(10)}\oplus V_{(11)}.$$
Let $U_{11}$ be the orthogonal complement of $\overline{V}_{(10)}\oplus \overline{V}_{(11)}\oplus V_{(10)}\oplus V_{(11)}=U_9\cap (W\oplus \overline{W})$ in $U_9$. Then
$$h_2^{-1}h^{-1}V_9=(h_2^{-1}h^{-1}V_9\cap U_{11})\oplus V_{(10)}\oplus V_{(11)}.$$
Put $V_{11}=h_2^{-1}h^{-1}V_9\cap U_{11}$ and $V'_{11}=h_2^{-1}h^{-1}V'_9$. Then $V'_{11}=(\bigoplus_{j=1}^{11} V_{(j)})\oplus V_{11}$.

\bigskip
(ix) $V_{(12)},V_{(13)},V_{(14)}$ and $V_{(15)}^\varepsilon$-parts:
Put $I_{11}=I_9-(\widetilde{I_{(10)}}\sqcup \widetilde{I_{(11)}})=I-\bigsqcup_{j=1}^{11} \widetilde{I_{(j)}}$. Then
\begin{align*}
\begin{split}
I_{11} & =\{d+b_5+b_9+1,\ldots,d+a_1-b_6-b_8,d+a_1+1,\ldots,\overline{d+a_1+1}, \\
& \qquad\qquad \overline{d+a_1-b_6-b_8},\ldots,\overline{d+b_5+b_9+1}\}.
\end{split}
\end{align*}
With respect to the basis $\{e_i\mid i\in I_{11}\}$ of $U_{11}$, every element of ${\rm O}(U_{11})\cap R$ is represented by a matrix of the form
$$\ell(A,B)=\bp A && 0 \\ & B & \\ 0 && J_{a'_1}{}^tA^{-1}J_{a'_1} \ep$$
with $A\in {\rm GL}_{a'_1}(\bbf)$ and $B\in {\rm O}_{2a_2+1}(\bbf)$ where $a'_1=a_1-b_5-b_6-b_8-b_9$.

Write $Y=U_{11}\cap X(V'_{11}),\ Y'=U_{11}\cap X'(V'_{11})$ and $Y_1=U_{11}\cap X^1(V'_{11})$. Then
$$\dim Y'=b_{12}+b_{15},\quad \dim Y_1=b_{12}\mand \dim Y'-\dim Y=\varepsilon.$$
As is shown in (viii), the maps $\pi_\pm|_{Y'}: Y'\to \pi_{\pm}(Y')\subset U_{11}\cap U_\pm$ are bijective.

Put
\begin{align*}
U_{(12)}^{+1} & =\bigoplus_{i\in I_{(12)}} \bbf e_i,\quad U_{(12)}^{-2}=\bigoplus_{i\in I_{(12)}} \bbf e_{\kappa(i)},\quad U_{(13)}^+=\bigoplus_{i\in I_{(13)}} \bbf e_i, \\
U_{(15)}^+ & =\bigoplus_{i\in I_{(15)}} \bbf e_i \mand U_{(15)}^-=\bigoplus_{i\in I_{(15)}} \bbf e_{\overline{i}}.
\end{align*}
Note that the space $\pi_-(Y')$ is determined by its orthogonal space
$(U_{11}\cap U_+)^{\perp \pi_-(Y')}$
in $U_{11}\cap U_+$.
Since $\dim \pi_+(Y')=b_{12}+b_{15},\ \dim(U_{11}\cap U_+)^{\perp \pi_-(Y')}=b_{12}+b_{13}$ and since the dimension of $\pi_+(Y')\cap (U_{11}\cap U_+)^{\perp \pi_-(Y')}=\pi_+(Y_1)$ is $b_{12}$, we can take an element $\ell_1=\ell(A,I_{2a_2+1})$ with some $A\in {\rm GL}_{a'_1}(\bbf)$ such that
\begin{align*}
\pi_+(\ell_1 Y') & =\ell_1\pi_+(Y')=U_{(12)}^{+1}\oplus U_{(15)}^+, \\
(U_{11}\cap U_+)^{\perp \pi_-(\ell_1 Y')} & =\ell_1 (U_{11}\cap U_+)^{\perp \pi_-(Y')} =U_{(12)}^{+1}\oplus U_{(13)}^+ \\
\mbox{and that}\quad \pi_+(\ell_1 Y_1) & =\ell_1\pi_+(Y_1) =U_{(12)}^{+1}
\end{align*}
by Lemma \ref{lem6.1'} in the appendix. The second formula implies
$\pi_-(\ell_1 Y')=U_{(12)}^{-2}\oplus U_{(15)}^-$.

Consider the bijective linear map
\begin{align*}
f & =\pi_+|_{\ell_1 Y'}\circ (\pi_-|_{\ell_1 Y'})^{-1}:  U_{(12)}^{-2}\oplus U_{(15)}^-\to U_{(12)}^{+1}\oplus U_{(15)}^+.
\end{align*}
Since $\pi_-(\ell_1 Y_1)=U_{(12)}^{-2}$, we have
$f(U_{(12)}^{-2})=U_{(12)}^{+1}$.
Write $f(e_{\kappa(i)})=\sum_{j\in I_{(12)}} a_{i,j}e_j$ for $i\in I_{(12)}$. Define $\ell_2=\ell(A,I_{2a_2+1})\in {\rm O}(U_{11})\cap R$ with the matrix
$$A=\bp \{a_{i,j}\} & 0 \\ 0 & I_{a'_1-b_{12}} \ep.$$
Then
$\ell_2^{-1}\ell_1Y_1=V_{(12)}^1=\bigoplus_{i\in I_{(12)}} \bbf(e_i+e_{\kappa(i)})$.
We can write
$f(U_{(15)}^-)=\bigoplus_{i\in I_{(15)}} \bbf v_i$
with vectors
$v_i=e_i+\sum_{j\in I_{(12)}} c_{i,j}e_j$
for $i\in I_{(15)}$. Take an element $\ell_3=\ell(C,I_{2a_2+1})\in {\rm O}(U_{11})\cap R$ with the matrix
$$C=\bp I_{b_{12}} & \{c_{i,j}\} & 0 \\ 0 & I_{b_{15}} & 0 \\ 0 & 0 & I_{b_{13}+b_{12}} \ep.$$
Then we have
$$\ell_3^{-1}\ell_2^{-1}\ell_1Y' \subset V_{(12)}^1\oplus U_{(15)}^0$$
where $U_{(15)}^0=U_{(15)}^+\oplus U_{(15)}^-=U_{(15)}\cap (U_++U_-)$.
Since $\dim(Z\cap \ell_3^{-1}\ell_2^{-1}\ell_1V_{11})=b_{14}$ by definition, we can take an element $\ell_4=\ell(I_{a'_1},B)$ with some $B\in {\rm O}_{2a_2+1}(\bbf)$ such that
$$Z\cap \ell_4\ell_3^{-1}\ell_2^{-1}\ell_1V_{11}=V_{(14)}=\bbf e_{d+a_1+1}\oplus\cdots\oplus \bbf e_{d+a_1+b_{14}}.$$
We have only to consider $V_\#=\ell_4\ell_3^{-1}\ell_2^{-1}\ell_1V_{11}$ and $V'_\#=\ell_4\ell_3^{-1}\ell_2^{-1}\ell_1V'_{11}=(\bigoplus_{j=1}^{11} V_{(j)})\oplus V_\#$ in the following.

By the above arguments we have
$$V_\#\supset V_{(12)}^1\oplus (U_{(15)}^0\cap V_\#)\oplus V_{(14)}\mand V_\#^\perp \supset V_{(12)}^1\oplus (U_{(15)}^0\cap V_\#^\perp)\oplus V_{(14)}$$
with $\dim(U_{(15)}^0\cap V_\#^\perp)=b_{15}$. Let $V_\#^{(15)}$ denote the orthogonal space of $U_{(15)}^0\cap V_\#^\perp$ in $U_{(15)}^0$. Then
$$V_\#^{(15)}=\begin{cases} U_{(15)}^0\cap V_\#^\perp=U_{(15)}^0\cap V_\# & \text{if $\varepsilon=0$}, \\
(U_{(15)}^0\cap V_\#) \oplus \bbf u_0\ & \text{if $\varepsilon=1$} \end{cases}$$
where $u_0$ is an element of $V_\#^{(15)}$ such that $(u_0,u_0)=-1$.

Consider the projections $\pi_{+-}: U_{11}\to U_{11}\cap (U_++U_-)=U_{11}\cap (U_{(+)}\oplus U_{(-)})$ and $\pi_Z: U_{11}\to Z_0$ with respect to the direct sum decomposition
$$U_{11}=(U_{11}\cap (U_++U_-))\oplus Z_0$$
where $Z_0=Z\cap U_{11}=\bbf e_{d+a_1+1}\oplus\cdots\oplus \bbf e_{\overline{d+a_1+1}}$.
Since $\pi_{+-}(V_\#)$ is the orthogonal space of $Y'(V_\#)=U_{11}\cap X'(V'_\#)$ in $U_{11}\cap (U_++U_-)$, we have
$$\pi_{+-}(V_\#) =(\bigoplus_{i\in I_{(12)}\sqcup \kappa(I_{(12)}) \sqcup I_{(13)}\sqcup \overline{I_{(13)}}} \bbf e_i)\oplus (\bigoplus_{i\in I_{(12)}} \bbf (e_{\overline{i}}-e_{\overline{\kappa(i)}}))\oplus V_\#^{(15)}.$$

Put $u_i=e_i$ for $i\in I_{(12)}\sqcup I_{(13)}\sqcup \overline{I_{(13)}}$ and $u_{\overline{i}}=e_{\overline{i}}-e_{\overline{\kappa(i)}}$ for $i\in I_{(12)}$.

First suppose $\varepsilon=0$. Then $\{u_i \mid i\in I_{(12)}\sqcup I_{(13)}\sqcup \overline{I_{(13)}}\sqcup \overline{I_{(12)}}\}$ is a basis
of a complementary subspace of $Y(V_\#)=X(V'_\#)=V_{(12)}^1\oplus V_\#^{(15)}$ in $\pi_{+-}(V_\#)$. The space $V_\#$ uniquely defines vectors $v_i\ (i\in I_{(12)}\sqcup I_{(13)}\sqcup \overline{I_{(13)}}\sqcup \overline{I_{(12)}})$ contained in $\bbf e_{d+a_1+b_{14}+1}\oplus\cdots\oplus \bbf e_{\overline{d+a_1+b_{14}+1}}$ (the orthogonal complement of $U_{(14)}=V_{(14)}\oplus \overline{V}_{(14)}$ in $Z_0$) such that
$u_i+v_i\in V_\#$.
Since
$$(u_i,u_j)=\begin{cases} 1 & \text{if $j=\overline{i}$}, \\
0 & \text{otherwise}, \end{cases}$$
we have
$$(v_i,v_j)=\begin{cases} -1 & \text{if $j=\overline{i}$}, \\
0 & \text{otherwise}. \end{cases}$$
So we can take an element $\ell_5=\ell(I_{a'_1},B)$ with some $B\in {\rm O}_{2a_2+1}(\bbf)$ such that
\begin{align}
\begin{split}
\ell_5 v_i & =e_{\lambda(i)},\ \ell_5 v_{\overline{i}} =-e_{\overline{\lambda(i)}}\quad\mbox{for }i\in I_{(12)} \\
\mbox{and that}\quad \ell_5 v_i & =e_{\eta_{13}(i)},\ \ell_5 v_{\overline{i}} =-e_{\overline{\eta_{13}(i)}}\quad\mbox{for }i\in I_{(13)}.
\label{eq3.4''}
\end{split}
\end{align}
Thus we have
\begin{align*}
\ell_5V_\# & =V_{(12)}^1\oplus V_{(12)}^2\oplus V_{(12)}^3\oplus V_{(13)}\oplus V_{(14)}\oplus V_\#^{(15)} 
 =V_{(12)}\oplus V_{(13)}\oplus V_{(14)}\oplus V_\#^{(15)}.
\end{align*}

Next suppose $\varepsilon=1$. Then we can take a basis $u_i\ (i\in \{0\}\sqcup I_{(12)}\sqcup I_{(13)}\sqcup \overline{I_{(13)}}\sqcup \overline{I_{(12)}})$ of a complementary subspace of $Y(V_\#)=V_{(12)}^1\oplus (U_{(15)}^0\cap V_\#)$ in $\pi_{+-}(V_\#)$. The space $U_{11}\cap V_\#$ uniquely defines vectors $v_i\ (i\in \{0\}\sqcup I_{(12)}\sqcup I_{(13)}\sqcup \overline{I_{(13)}}\sqcup \overline{I_{(12)}})$ contained in $\bbf e_{d+a_1+b_{14}+1}\oplus\cdots\oplus \bbf e_{\overline{d+a_1+b_{14}+1}}$ such that
$u_i+v_i\in V_\#$.
Since
$$(u_i,u_j)=\begin{cases} -1 & \text{if $i=j=0$}, \\
1 & \text{if $j=\overline{i}$}, \\
0 & \text{otherwise}, \end{cases}$$
we have
$$(v_i,v_j)=\begin{cases} 1 & \text{if $i=j=0$}, \\
-1 & \text{if $j=\overline{i}$}, \\
0 & \text{otherwise}. \end{cases}$$
So we can take an element $\ell_5=\ell(I_{a'_1},B)$ with some $B\in {\rm O}_{2a_2+1}(\bbf)$ satisfying (\ref{eq3.4''}) and
$\ell_5 v_0= e_{n+1}$.
Thus we have
$$\ell_5V_\# =V_{(12)}\oplus V_{(13)}\oplus V_{(14)}\oplus (U_{(15)}^0\cap V_\#)\oplus \bbf(u_0+ e_{n+1}).$$

Finally we have only to consider $({\rm O}(U_{(15)})\cap R)$-orbit of
$$U_{(15)}\cap \ell_5V_\# =\begin{cases} U_{(15)}^0\cap V_\# & \text{if $\varepsilon=0$}, \\
(U_{(15)}^0\cap V_\#)\oplus \bbf(u_0+ e_{n+1}) & \text{if $\varepsilon=1$}. \end{cases}$$
But this problem is already solved in \cite{M2}. There exists an $\ell_6\in {\rm O}(U_{(15)})\cap R\cong {\rm GL}_{b_{15}}(\bbf)$ such that
$\ell_6 (U_{(15)}\cap \ell_5V_\#)=V_{(15)}^\varepsilon$.
Thus we have
$$\ell_6\ell_5V_\#=V_{(12)}\oplus V_{(13)}\oplus V_{(14)}\oplus V_{(15)}^\varepsilon$$
and hence $\ell_6\ell_5V'_\#=(\bigoplus_{j=1}^{14} V_{(j)})\oplus V_{(15)}^\varepsilon$,
proving Theorem \ref{th3.10}. \hfill$\square$

\subsection{Construction of elements in $R_V|_{U_+}$}

Assume $V=V(b_1,\ldots,b_{15},\varepsilon)$. We will construct elements in $R_V|_{U_+}$ where $R_V=\{g\in R \mid gV=V\}$.

\begin{lemma} For $j=1,\ldots,14$, let $A=\{a_{i,k}\}|_{i,k\in I_{(j)}}$ be an invertible matrix with the inverse matrix $A^{-1}=\{b_{i,k}\}$. Then we can construct an element $h=h_{(j)}(A)$ of $R_V$ such that $:$

{\rm (i)} If $j\in J_1=\{1,2,3,4,5,6,10,11,14\}$, then
$$he_k=\sum_{i\in I_{(j)}} a_{i,k}e_i,\quad he_{\overline{k}}=\sum_{i\in I_{(j)}} b_{k,i}e_{\overline{i}}$$
for $k\in I_{(j)}$ and $he_\ell=e_\ell$ for $\ell\in I-\widetilde{I_{(j)}}$.

{\rm (ii)} If $j\in J_2=\{7,8,9,13\}$, then
$$he_k=\sum_{i\in I_{(j)}} a_{i,k}e_i,\ he_{\eta_j(k)}=\sum_{i\in I_{(j)}} a_{i,k}e_{\eta_j(i)},\ he_{\overline{k}}=\sum_{i\in I_{(j)}} b_{k,i}e_{\overline{i}},\ he_{\overline{\eta_j(k)}}=\sum_{i\in I_{(j)}} b_{k,i}e_{\overline{\eta_j(i)}}$$
for $k\in I_{(j)}$ and $he_\ell=e_\ell$ for $\ell\in I-\widetilde{I_{(j)}}$.

{\rm (iii)} If $j=12$, then
\begin{align*}
he_k & =\sum_{i\in I_{(12)}} a_{i,k}e_i,\quad he_{\kappa(k)}=\sum_{i\in I_{(12)}} a_{i,k}e_{\kappa(i)},\quad he_{\lambda(k)}=\sum_{i\in I_{(12)}} a_{i,k}e_{\lambda(i)}, \\
he_{\overline{k}} & =\sum_{i\in I_{(12)}} b_{k,i}e_{\overline{i}},\quad he_{\overline{\kappa(k)}}=\sum_{i\in I_{(12)}} b_{k,i}e_{\overline{\kappa(i)}},\quad he_{\overline{\lambda(k)}}=\sum_{i\in I_{(12)}} b_{k,i}e_{\overline{\lambda(i)}}
\end{align*}
for $k\in I_{(12)}$ and $he_\ell=e_\ell$ for $\ell\in I-\widetilde{I_{(12)}}$.
\label{lem3.12}
\end{lemma}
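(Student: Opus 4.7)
The plan is to define $h = h_{(j)}(A)$ directly by the prescribed action on the canonical basis, then verify in turn that (a)~$h$ is orthogonal, (b)~$hU_+ = U_+$ and $hU_- = U_-$, and (c)~$hV = V$. The formulas in (i), (ii), (iii) do specify $h$ on a basis of $\bbf^{2n+1}$: the index set $I$ is the disjoint union of $\widetilde{I_{(j)}}$ and $I-\widetilde{I_{(j)}}$, and the tracks $I_{(j)}$, $\eta_j(I_{(j)})$, $\kappa(I_{(j)})$, $\lambda(I_{(j)})$ together with their $\overline{(\cdot)}$-images are pairwise disjoint by the construction in Section~3.4. Invertibility of $h$ is immediate from invertibility of $A$.

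For orthogonality, on each paired block $(I_{(j)},\overline{I_{(j)}})$ (and the extensions via $\eta_j$, $\kappa$, $\lambda$) the matrix $A$ acts on the first half while $A^{-1}$ acts ``transposed'' on the second half. Concretely, in case~(i)
$$(he_k, he_{\overline{\ell}}) = \sum_{i,m} a_{i,k} b_{\ell,m}\,\delta_{i,m} = (A^{-1}A)_{\ell,k} = \delta_{k,\ell},$$
matching $(e_k, e_{\overline{\ell}})$, while $(he_k, he_\ell) = 0$ and $(he_{\overline{k}}, he_{\overline{\ell}}) = 0$ by isotropy. Cross-pairings with vectors fixed by $h$ vanish because $I - \widetilde{I_{(j)}}$ is disjoint from $\overline{I_{(j)}}$. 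Cases~(ii) and~(iii) are parallel: disjointness of the tracks and their $\overline{(\cdot)}$-images makes all cross-track pairings vanish automatically, while each within-track pairing produces $\delta_{k,\ell}$ by the same computation.

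For $h\in R$ one checks, case by case, that the indices on which $h$ acts nontrivially sit inside the $U_+$-indices, inside the $U_-$-indices, or outside both. From the definitions: for $j=1,2$ the set $I_{(j)}$ lies in $W_{(0)}\subseteq U_+\cap U_-$; for $j=3,5$ in $U_+$ only; for $j=4,6$ in $U_-$ only; for $j=10,11,14$ outside both $U_\pm$; for $j=7,8,9$ the pair $(I_{(j)},\eta_j(I_{(j)}))$ splits between $U_+$ and $U_-$; for $j=12$ the triple $(I_{(12)},\kappa(I_{(12)}),\lambda(I_{(12)}))$ puts one track in $U_+$, one in $U_-$, and one in the middle region beyond $U_{(\pm)}$; and similarly for $j=13$. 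In every case $he_m$ lies in the same subspace as $e_m$, so $hU_+\subseteq U_+$ and $hU_-\subseteq U_-$, with equality by invertibility. Finally, $hV=V$ because $V = \bigoplus_{i=1}^{14} V_{(i)} \oplus V_{(15)}^\varepsilon$: for $i\neq j$ the vectors spanning $V_{(i)}$ (and $V_{(15)}^\varepsilon$) have labels in $I - \widetilde{I_{(j)}}$ and are fixed by $h$; for $i = j$, the defining combinations of $V_{(j)}$, whether single vectors $e_i$ or $e_{\overline{i}}$, pairs $e_i + e_{\eta_j(i)}$ and $e_{\overline{i}} - e_{\overline{\eta_j(i)}}$, or the triples appearing in $V_{(12)}^1, V_{(12)}^2, V_{(12)}^3$, are stable under $h$ because the coefficient $a_{i,k}$ (or $b_{k,i}$) appears with identical multiplicity in each parallel track.

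The main obstacle is the bookkeeping needed to confirm, in each of the fourteen cases, the precise embedding of $I_{(j)}$ and its companion tracks into the $U_+$-indices, $U_-$-indices, and the complementary middle region. For $j=13$ and $j=12$ in particular, one must verify that $\eta_{13}(I_{(13)})$ and $\lambda(I_{(12)})$ land strictly beyond $d+a_1$ but not inside the $U_-$-range $\{d'+1,\ldots,d'+a_1\}$; this is where the identity $a_2 = b_{12}+b_{13}+b_{14}$ from Proposition~\ref{prop3.15} becomes essential. Once these placements are pinned down, every case reduces to the same orthogonality calculation above.
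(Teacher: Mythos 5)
Your proposal is correct and takes essentially the same approach as the paper: define $h$ on the basis by the given formulas, check that it preserves the bilinear form by a block-diagonal matrix computation reducing to $A^{-1}A = I$, and observe that $U_\pm$ and $V$ are stabilized because each "track" $I_{(j)}, \eta_j(I_{(j)}), \ldots$ (and its bar) lies wholly inside or wholly outside each of $U_+$, $U_-$, $V$ and the defining generators of $V_{(j)}$ transform consistently. The paper compresses the $U_\pm$, $V$ stability to "clearly" and verifies orthogonality via the adjoint condition $(he_k,e_\ell)=(e_k,h^{-1}e_\ell)$, whereas you verify $(he_k,he_{\overline{\ell}})=\delta_{k,\ell}$ directly; these are equivalent.
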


\begin{proof} Clearly $hU_+=U_+,\ hU_-=U_-$ and $hV=V$. So we have only to prove that $h$ preserves the bilinear form $(\ ,\ )$. Namely
$$(he_k,e_\ell)=(e_k,h^{-1}e_\ell)$$
for $k,\ell\in I$.

Suppose $j\in J_1$. Then the equality is nontrivial only when $(k,\ell)\in (\overline{I_{(j)}}\times I_{(j)}) \sqcup (I_{(j)}\times \overline{I_{(j)}})$. If $k,\ell\in I_{(j)}$, then
$$(he_{\overline{k}},e_\ell)=(\sum_{i\in I_{(j)}} b_{k,i}e_{\overline{i}},\ e_\ell)=b_{k,\ell}=(e_{\overline{k}},\sum_{i\in I_{(j)}} b_{i,\ell}e_i)=(e_{\overline{k}},h^{-1}e_\ell)$$
and
$$(he_k,e_{\overline{\ell}})=(\sum_{i\in I_{(j)}} a_{i,k}e_i,e_{\overline{\ell}}) =a_{\ell,k}=(e_k,\sum_{i\in I_{(j)}} a_{\ell,i}e_{\overline{i}})=(e_k,h^{-1}e_{\overline{\ell}}).$$
So the assertion is proved. We can also prove the assertion for  $j\in J_2\sqcup \{12\}$ in the same way. 
\end{proof}

The index set $I_+=\{i\in I\mid e_i\in U_+\}$ is decomposed as
$$I_+=\bigsqcup_{j\in \mathcal{I}_+} I_{(j)}$$
where
$\mathcal{I}_+=\{1,2,3,7,8,10,5,9,12,15,13,\overline{12},\overline{8},\overline{6}\}$
and $I_{(\overline{6})}=\overline{I_{(6)}},\ I_{(\overline{8})}=\overline{\eta_8(I_{(8)})}$ and $I_{(\overline{12})}=\overline{\kappa(I_{(12)})}$. Correspondingly $U_+$ is decomposed as
$$U_+=\bigoplus_{U\in\mathcal{U}} U$$
where 
\begin{align*}
\mathcal{U} &=\{U_{(1)}^+, U_{(2)}^+, U_{(3)}^+, U_{(7)}^+, U_{(8)}^{+1},  U_{(10)}^+, U_{(5)}^+, U_{(9)}^+, 
 U_{(12)}^{+1},  U_{(15)}^+, U_{(13)}^+, U_{(12)}^{+2}, U_{(8)}^{+2}, U_{(6)}^+\} \\
U_{(j)}^+ & =U_{(j)}\cap U_+\ (j=1,\ldots,15),\quad U_{(8)}^{+1}=\bigoplus_{i\in I_{(8)}} \bbf e_i,\quad 
U_{(8)}^{+2}=\bigoplus_{i\in I_{(\overline{8})}} \bbf e_i, \\
U_{(12)}^{+1} & =\bigoplus_{i\in I_{(12)}} \bbf e_i\mand U_{(12)}^{+2}=\bigoplus_{i\in I_{(\overline{12})}} \bbf e_i.
\end{align*}
Consider the following diagram of $\{I_{(j)}\mid j\in\mathcal{I}_+\}$.

\setlength{\unitlength}{1mm}
\begin{picture}(140,80)(10,20)
\put(30,90){\makebox(0,0){$I_{(1)}$}}
\put(50,90){\makebox(0,0){$I_{(2)}$}}
\put(30,70){\makebox(0,0){$I_{(3)}$}}
\put(50,70){\makebox(0,0){$I_{(7)}$}}
\put(70,70){\makebox(0,0){$I_{(8)}$}}
\put(90,70){\makebox(0,0){$I_{(10)}$}}
\put(30,50){\makebox(0,0){$I_{(5)}$}}
\put(50,50){\makebox(0,0){$I_{(9)}$}}
\put(70,50){\makebox(0,0){$I_{(12)}$}}
\put(90,50){\makebox(0,0){$I_{(13)}$}}
\put(110,30){\makebox(0,0){$I_{(\overline{8})}$}}
\put(70,30){\makebox(0,0){$I_{(15)}$}}
\put(90,30){\makebox(0,0){$I_{(\overline{12})}$}}
\put(130,30){\makebox(0,0){$I_{(\overline{6})}$}}
\put(45,90){\vector(-1,0){10}}
\put(45,70){\vector(-1,0){10}}
\put(65,70){\vector(-1,0){10}}
\put(85,70){\vector(-1,0){10}}
\put(45,50){\vector(-1,0){10}}
\put(65,50){\vector(-1,0){10}}
\put(85,50){\vector(-1,0){10}}
\put(30,75){\vector(0,1){10}}
\put(50,75){\vector(0,1){10}}
\put(30,55){\vector(0,1){10}}
\put(50,55){\vector(0,1){10}}
\put(70,55){\vector(0,1){10}}
\put(90,55){\vector(0,1){10}}
\put(90,35){\vector(0,1){10}}
\put(90,35){\vector(0,1){10}}
\put(70,35){\vector(0,1){10}}
\put(85,30){\vector(-1,0){10}}
\put(105,30){\vector(-1,0){10}}
\put(125,30){\vector(-1,0){10}}
\end{picture}

We define a partial order $I_{(j)}<I_{(j')}$ for $j,j'\in\mathcal{I}_+$ if there exists a sequence $j_0,j_1,\ldots,j_k$ in $\mathcal{I}_+$ such that
$$I_{(j)}=I_{(j_0)}\longleftarrow I_{(j_1)}\longleftarrow \cdots\longleftarrow I_{(j_k)}=I_{(j')}.$$
For example, $I_{(1)}<I_{(j)}$ for all $j\in\mathcal{I}_+-\{1\}$ and $I_{(j)}<I_{(\overline{6})}$ for all $j\in\mathcal{I}_+-\{\overline{6}\}$.

For $i\in I_{(1)}\sqcup\cdots\sqcup I_{(14)}$, define $I_{<i>},W_{<i>}$ and $\overline{W}_{<i>}$ as in Remark \ref{rem3.11}. For $i\in I_{(6)}$, define
$$I_{<\overline{i}>}=\{\overline{i}\},\quad W_{<\overline{i}>}=\bbf e_{\overline{i}}\mand \overline{W}_{<\overline{i}>}=\bbf e_i.$$
For $i\in I_{(8)}$, define
$$I_{<\overline{\eta_8(i)}>}=\{\overline{i},\overline{\eta_8(i)}\},\quad
W_{<\overline{\eta_8(i)}>}=\overline{W}_{<i>} \mand \overline{W}_{<\overline{\eta_8(i)}>}=W_{<i>}.$$
For $i\in I_{(12)}$, define
$$I_{<\overline{\kappa(i)}>}=\{\overline{i},\overline{\kappa(i)},\overline{\lambda(i)}\},\quad
W_{<\overline{\kappa(i)}>}=\overline{W}_{<i>} \mand \overline{W}_{<\overline{\kappa(i)}>}=W_{<i>}.$$

\begin{lemma} Suppose that $I_{(j)}<I_{(j')}$ for $j,j'\in\mathcal{I}_+-\{15\}$.

{\rm (i)} For $i\in I_{(j)}$ and $k\in I_{(j')}$, there exists a linear map $\varphi=\varphi_{i,k}: W_{<k>}\to W_{<i>}$ satisfying the following three properties.

{\rm (P1)} $\varphi (e_k) =e_i,\ \varphi(W_{<k>}\cap U_+^\perp) \subset U_+^\perp,\ \varphi(W_{<k>}\cap U_-)\subset U_-, \ 
 \varphi(W_{<k>}\cap U_-^\perp) \subset U_-^\perp$ and \ $\varphi(W_{<k>}\cap V)\subset V$.

{\rm (P2)} If $i\in I_{(8)}$ and $k\in I_{(12)}$, then $\varphi^*(e_{\overline{\eta_8(i)}})=e_{\overline{\kappa(k)}}$.

{\rm (P3)} If $i\in I_{(8)}\sqcup I_{(12)}$ and $k\in I_{(\overline{8})}\sqcup I_{(\overline{12})}$, then $\varphi^*(\overline{W}_{<i>}\cap U_+)=\{0\}$.

\noindent Here $\varphi^*: \overline{W}_{<i>}\to \overline{W}_{<k>}$ is the adjoint map of $\varphi=\varphi_{i,k}$ with respect to the bilinear form $(\ ,\ )$.

{\rm (ii)} If $W_{<i>}\ne \overline{W}_{<k>}$, then the map $g=g_{i,k}(\mu):\bbf^{2n+1}\to \bbf^{2n+1}\ (\mu\in\bbf)$ defined by
$$ge_\ell=\begin{cases} e_\ell+\mu\varphi(e_\ell) & \text{if $\ell\in I_{<k>}$,} \\
e_\ell-\mu\varphi^*(e_\ell) & \text{if $\ell\in \overline{I_{<i>}}$,} \\
e_\ell & \text{if $\ell\in I-I_{<k>}-\overline{I_{<i>}}$}
\end{cases}$$ 
is an element of $R_V$.

{\rm (iii)} For $i\in I_{(12)}$ and $\mu\in\bbf$, the map $g=g_{i,\overline{\kappa(i)}}(\mu)$ defined by
$$ge_{\overline{\kappa(i)}} =e_{\overline{\kappa(i)}}+\mu e_i,\quad ge_{\overline{i}}=e_{\overline{i}}-\mu e_{\kappa(i)}\mand ge_\ell=e_\ell\ \mbox{for }\ell\in I-\{\overline{i},\overline{\kappa(i)}\}$$
is an element of $R_V$.

{\rm (iv)} For $i\in I_{(8)}$ and $\mu\in\bbf$, the map $g=g_{i,\overline{\eta_8(i)}}(\mu)$ defined by
$$ge_{\overline{\eta_8(i)}} =e_{\overline{\eta_8(i)}}+\mu e_i,\quad ge_{\overline{i}}=e_{\overline{i}}-\mu e_{\eta_8(i)}\mand ge_\ell=e_\ell\ \mbox{for }\ell\in I-\{\overline{i},\overline{\eta_8(i)}\}$$
is an element of $R_V$.
\label{lem3.13'}
\end{lemma}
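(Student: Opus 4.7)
The plan is to treat parts (i) and (ii) jointly, since the conditions P1--P3 on $\varphi_{i,k}$ are tailored so that the perturbation $g_{i,k}(\mu)$ built from $\varphi$ in (ii) automatically lies in $R_V$; parts (iii) and (iv) are the degenerate cases $W_{<i>} = \overline{W}_{<k>}$ in which the construction of (ii) breaks and must be replaced by direct two-variable formulas.

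For part (i), I would proceed case by case over the pairs $(j,j')$ with $I_{(j)}<I_{(j')}$ in the partial order depicted in the diagram. In each case Table \ref{table3.1} records the triples ${\bf d}(W_{<i>})$ and ${\bf d}(W_{<k>})$, and setting $\varphi(e_k)=e_i$ the images of the remaining basis vectors of $W_{<k>}$---namely $e_{\eta_{j'}(k)}$ if $j'\in J_2$, or $e_{\kappa(k)},\,e_{\lambda(k)}$ if $j'=12$---are determined by the four inclusions of P1 as scalar combinations of a compatible basis of $W_{<i>}$. The scalars are pinned down by matching $\varphi(W_{<k>}\cap V)$ with the generator of $W_{<i>}\cap V$ and similarly for the $U_-$, $U_+^\perp$ and $U_-^\perp$ intersections; in most cases the scalar is $0$ or $\pm 1$, chosen according to whether $e_i$ itself lies in $V$. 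Conditions P2 and P3 are not extra constraints but rather identifications of the resulting adjoint values on $\overline{W}_{<i>}\cap U_+$ when $i\in I_{(8)}\sqcup I_{(12)}$; in P2 a direct adjoint computation produces $\varphi^*(e_{\overline{\eta_8(i)}})=e_{\overline{\kappa(k)}}$, while in P3 the same computation produces zero.

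For part (ii), the orthogonality of $g=g_{i,k}(\mu)$ reduces to the uniform identity
\[
(gu, gv) = (u, v) + \mu(\varphi u, v) - \mu(u, \varphi^* v) - \mu^2(\varphi u, \varphi^* v) = (u, v) - \mu^2(\varphi u, \varphi^* v)
\]
for $u\in W_{<k>}$, $v\in\overline{W}_{<i>}$, the two linear terms cancelling by the definition of the adjoint. Since $\varphi u\in W_{<i>}$ and $\varphi^*v\in \overline{W}_{<k>}$, the hypothesis $W_{<i>}\ne\overline{W}_{<k>}$ places them in distinct indecomposable summands of $\bbf^{2n+1}$, killing the $\mu^2$-term; all other basis pairs reduce trivially to $(u,v)$ because $g$ is the identity away from $W_{<k>}\oplus\overline{W}_{<i>}$. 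Preservation of $V$ uses P1 for the $\varphi$-part, and for the $\varphi^*$-part it follows from the automatic duality $\varphi^*(V\cap\overline{W}_{<i>})\subset V\cap\overline{W}_{<k>}\subset V$, which in turn rests on the fact that $V\cap(W_{<k>}\oplus\overline{W}_{<k>})$ is maximally isotropic in that indecomposable pair (by Remark \ref{rem3.11} and Table \ref{table3.1}, so that $(V\cap W_{<k>})^{\perp\overline{W}_{<k>}}=V\cap\overline{W}_{<k>}$). Preservation of $U_\pm$ is checked analogously, using the values of $\varphi^*$ on $\overline{W}_{<i>}\cap U_\pm$ as computed by the adjoint rule; P2 and P3 furnish precisely those values in the finitely many pairs where the generic duality argument fails.

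For (iii) and (iv), the map $g$ acts nontrivially only on $e_{\overline{\kappa(i)}},\,e_{\overline{i}}$ (resp.\ $e_{\overline{\eta_8(i)}},\,e_{\overline{i}}$); orthogonality is a direct two-variable check in which the two $\mu$-cross-terms cancel pairwise and no $\mu^2$-term arises because the perturbation directions ($e_i$ and $e_{\kappa(i)}$, resp.\ $e_i$ and $e_{\eta_8(i)}$) are mutually orthogonal. Preservation of $V$ comes from $g(e_{\overline{i}}-e_{\overline{\kappa(i)}}-e_{\overline{\lambda(i)}})=(e_{\overline{i}}-e_{\overline{\kappa(i)}}-e_{\overline{\lambda(i)}})-\mu(e_i+e_{\kappa(i)})$, with correction $e_i+e_{\kappa(i)}\in V_{(12)}^1\subset V$; the calculation for (iv) is formally identical with $V_{(12)}^1$ replaced by $V_{(8)}^1$. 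The main obstacle is the case enumeration in (i), together with identifying the pairs where P2 or P3 is required in addition to P1; once that combinatorial bookkeeping is laid out each individual verification is a short compatibility check with Table \ref{table3.1}.
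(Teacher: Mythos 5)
Your overall architecture matches the paper's: case-by-case construction of $\varphi$ for part (i), duality plus an orthogonality check for part (ii), and direct verification for (iii) and (iv). However, there is a real gap in how you treat (P2) and (P3), and a subtler misattribution in (ii).

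The genuine gap is your claim that ``Conditions P2 and P3 are not extra constraints but rather identifications of the resulting adjoint values\ldots\ in P3 the same computation produces zero.'' This treats $\varphi_{i,k}$ as if (P1) pinned it down, so that (P2) and (P3) drop out for free. In fact (P1) does \emph{not} determine $\varphi$ uniquely: the paper's own Remark inside the proof shows that the composite $\varphi_{i,\ell}\circ\varphi_{\ell,k}$ depends on the choice of intermediate vertex $\ell$, and different choices give different maps satisfying (P1). Property (P2) holds only because the construction makes the specific assignment $\varphi(e_{\kappa(k)})=e_{\eta_8(i)}$, $\varphi(e_{\lambda(k)})=-e_i$ for $(i,k)\in I_{(8)}\times I_{(12)}$. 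Property (P3) is \emph{not} a computation at all in the paper; it is a structural argument: for $i\in I_{(8)}\sqcup I_{(12)}$ and $k\in I_{(\overline{8})}\sqcup I_{(\overline{12})}$ the composition is forced (in $\mathcal{I}_+-\{15\}$) to pass through some $\ell\in I_{(13)}$, and since $\overline{W}_{<\ell>}\cap U_+=\{0\}$, the chain $\varphi^*=\varphi_{\ell,k}^*\circ\varphi_{i,\ell}^*$ automatically kills $\overline{W}_{<i>}\cap U_+$. Without this observation, (P3) is unproven; a ``direct adjoint computation'' on an unspecified $\varphi$ does not substitute.

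Two smaller inaccuracies in part (ii): first, the vanishing of the $\mu^2$-term $(\varphi u,\varphi^* v)$ has nothing to do with the hypothesis $W_{<i>}\ne\overline{W}_{<k>}$; one always has $\varphi u\in W_{<i>}$ and $\varphi^* v\in\overline{W}_{<k>}$, and $(W_{<i>},\overline{W}_{<k>})=\{0\}$ whenever $i\ne k$ because these are coordinate subspaces in distinct blocks (and each $W_{<i>}$ is isotropic). The hypothesis $W_{<i>}\ne\overline{W}_{<k>}$ is instead needed so that the three cases in the definition of $g_{i,k}(\mu)$ do not overlap, i.e.\ so that the formula is well-defined; when they do overlap one falls into parts (iii)--(iv). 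Second, you say ``P2 and P3 furnish precisely those values in the finitely many pairs where the generic duality argument fails'' for the $U_\pm$-preservation in (ii). That is not how the paper uses them: the proof of (ii) relies only on (P1) and its dualization $\varphi^*(\overline{W}_{<i>}\cap U_\pm)\subset U_\pm$, $\varphi^*(\overline{W}_{<i>}\cap V)\subset V$; properties (P2) and (P3) enter later (Lemma~\ref{lem3.14} and the subsequent reduction steps), not in (ii) itself.
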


\begin{proof} (i) (A) First consider the case of $I_{(j)}\longleftarrow I_{(j')}$.

When $\dim W_{<k>}=1$, the pair $(i,k)$ is contained in one of
$$I_{(1)}\times I_{(2)},\quad I_{(1)}\times I_{(3)},\quad I_{(3)}\times I_{(5)},\quad I_{(8)}\times I_{(10)} \quad\mbox{or}\quad I_{(\overline{8})}\times I_{(\overline{6})}.$$
Then Table \ref{table2} shows that the maps
$$\varphi_{i,k}: W_{<k>}\ni e_k\mapsto e_i\in W_{<i>}$$
satisfy the property (P1).

\begin{table}
\setlength{\extrarowheight}{4pt}
\begin{center}
\begin{tabular}{cccc} \hline
$i\in I_{(j)}$ & $e_i\in U_-$ & $e_i\in U_-^\perp$ & $e_i\in V$ \\ \hline
$I_{(1)}$ & Yes & Yes & Yes  \\
$I_{(2)}$ & Yes & Yes & No  \\
$I_{(3)}$ & No & Yes & Yes  \\
$I_{(5)}$ & No & No & Yes  \\
$I_{(8)}$ & No & Yes & No  \\
$I_{(10)}$ & No & Yes & No  \\
$I_{(\overline{8})}$ & No & No & No  \\
$I_{(\overline{6})}$ & No & No & Yes  \\
\hline
\end{tabular}
\end{center}
\setlength{\extrarowheight}{0pt}
\caption{Properties on $e_i$ for $i\in I(j)$}
\label{table2}
\end{table}

Next consider the case where $\dim W_{<k>}=2$ and $\dim W_{<i>}=1$. Then $(i,k)$ is contained in $I_{(j)}\times I_{(j')}$ with
$$(j,j')=(2,7),(3,7),(5,9)\mbox{ or } (10,13).$$
Define $\varphi(e_{\eta_{j'}(k)})$ as follows. Then the maps $\varphi=\varphi_{i,k}$ satisfy (P1).

\setlength{\extrarowheight}{4pt}
\begin{center}
\begin{tabular}{cc} \hline
$I_{(j)}\longleftarrow I_{(j')}$ & $\varphi(e_{\eta_{j'}(k)})$ \\ \hline
$I_{(2)}\longleftarrow I_{(7)}$ & $-e_i$  \\
$I_{(3)}\longleftarrow I_{(7)}$ & $0$  \\
$I_{(5)}\longleftarrow I_{(9)}$ & $0$  \\
$I_{(10)}\longleftarrow I_{(13)}$ & $-e_i$ \\
\hline
\end{tabular}
\end{center}
\setlength{\extrarowheight}{0pt}

\bigskip
Consider the case of $\dim W_{<k>}=\dim W_{<i>}=2$. There are two cases
$$(i,k)\in I_{(7)}\times I_{(8)}\mand (i,k)\in I_{(7)}\times I_{(9)}.$$
In these cases, we put $\varphi(e_{\eta_j(k)})=e_{\eta_7(i)}$ for $j=8$ and $9$. Then the maps $\varphi=\varphi_{i,k}$ satisfy (P1).

The remaining cases are when $i$ or $k$ is in $I_{(12)}\sqcup I_{(\overline{12})}$. The pair $(i,k)$ is contained in one of 
$$I_{(8)}\times I_{(12)},\quad I_{(9)}\times I_{(12)},\quad I_{(12)}\times I_{(13)},\quad I_{(13)}\times I_{(\overline{12})} \quad\mbox{or}\quad I_{(\overline{12})}\times I_{(\overline{8})}.$$

If $(i,k)\in I_{(8)}\times I_{(12)}$, then we put
$$\varphi(e_{\kappa(k)})=e_{\eta_8(i)}\mand \varphi(e_{\lambda(k)})=-e_i.$$
Hence the property (P2) $: \varphi^*(e_{\overline{\eta_8(i)}})=e_{\overline{\kappa(k)}}$ is satisfied.

If $(i,k)\in I_{(9)}\times I_{(12)}$, then we put
$$\varphi(e_{\kappa(k)})=e_{\eta_9(i)}\mand \varphi(e_{\lambda(k)})=e_{\eta_9(i)}.$$

If $(i,k)\in I_{(12)}\times I_{(13)}$, then we put
$$\varphi(e_{\eta_{13}(k)})=e_{\kappa(i)}.$$

For $\overline{\kappa(k)}\in I_{(\overline{12})}$ and $i\in I_{(13)}$, the map $\varphi=\varphi_{i,\overline{\kappa(k)}}: W_{<\overline{\kappa(k)}>}=\overline{W}_{<k>}\to W_{<i>}$ is defined by
$$\varphi(e_{\overline{\lambda(k)}})=e_{\eta_{13}(i)}\mand \varphi(e_{\overline{k}})=0\quad (\varphi(e_{\overline{\kappa(k)}})=e_i\ \mbox{by definition}).$$

For $\overline{\eta_8(k)}\in I_{(\overline{8})}$ and $\overline{\kappa(i)}\in I_{(\overline{12})}$, the map $\varphi=\varphi_{\overline{\kappa(i)},\overline{\eta_{8}(k)}}: W_{<\overline{\eta_{8}(k)}>}=\overline{W}_{<k>}\to W_{<\overline{\kappa(i)}>}=\overline{W}_{<i>}$ is defined by
$$\varphi(e_{\overline{k}})=e_{\overline{i}}-e_{\overline{\lambda(i)}}\quad(\varphi(e_{\overline{\eta_{8}(k)}})=e_{\overline{\kappa(i)}}\ \mbox{by definition}).$$
(Remark: The map $\varphi_{\overline{\kappa(i)},\overline{\eta_{8}(k)}}: W_{<\overline{\eta_8(k)}>}=\overline{W}_{<k>}\to W_{<\overline{\kappa(i)}>}=\overline{W}_{<i>}$ is the adjoint map of $\varphi_{k,i}: W_{<i>}\to W_{<k>}$ for $I_{(12)}\to I_{(8)}$ with respect to the bilinear form $(\ ,\ )$.)

(B) Suppose we have constructed $\varphi_{i,\ell}: W_{<\ell>}\to W_{<i>}$ and $\varphi_{\ell,k}: W_{<k>}\to W_{<\ell>}$ for $(i,\ell,k)\in I_{(j)}\times I_{(j'')}\times I_{(j')}$ with $I_{(j)}< I_{(j'')}< I_{(j')}$. Then the composition
$$\varphi=\varphi_{i,\ell}\circ\varphi_{\ell,k}: W_{<k>}\to W_{<i>}$$
also satisfies (P1). If the numbers $b_1,\ldots, b_{13}$ are all nonzero, then we can construct all $\varphi$ in this way.  If some of $b_j$'s are $0$, then we have only to embed $\bbf^{2n+1}$ into a larger space $\bbf^{2n'+1}$ with nonzero $b_j$'s. 

\bigskip
Remark: This construction of $\varphi=\varphi_{i,\ell}\circ\varphi_{\ell,k}$ depends on the choice of intermediate $j''\in\mathcal{I}_+$. For example, consider $i\in I_{(1)}$ and $k\in I_{(7)}$. If $\ell\in I_{(2)}$, then
$$\varphi_{i,\ell}\circ\varphi_{\ell,k}(e_{\eta_7(k)})=\varphi_{i,\ell}(-e_\ell)=-e_i.$$
But if $\ell\in I_{(3)}$, then
$$\varphi_{i,\ell}\circ\varphi_{\ell,k}(e_{\eta_7(k)})=\varphi_{i,\ell}(0)=0.$$

\bigskip
Suppose $i\in I_{(8)}\sqcup I_{(12)}$ and $k\in I_{(\overline{8})}\sqcup I_{(\overline{12})}$. Then the map $\varphi=\varphi_{i,k}$ is constructed as the composition of $\varphi_{i,\ell}: W_{<\ell>}\to W_{<i>}$ and $\varphi_{\ell,k}: W_{<k>}\to W_{<\ell>}$ with $\ell\in I_{(13)}$. Since $\overline{W}_{<\ell>}\cap U_+=\{0\}$ and since $\varphi_{i,\ell}^*(\overline{W}_{<i>}\cap U_+)\subset \overline{W}_{<\ell>}\cap U_+$ by (P1), we have
$$\varphi^*(\overline{W}_{<i>}\cap U_+)=\varphi_{\ell,k}^*\varphi_{i,\ell}^*(\overline{W}_{<i>}\cap U_+)\subset \varphi_{\ell,k}^*(\overline{W}_{<\ell>}\cap U_+)=\{0\}.$$
So the map $\varphi$ satisfies the property (P3).

(ii) It follows from $\varphi(W_{<k>}\cap U_+^\perp)\subset U_+^\perp,\ \varphi(W_{<k>}\cap U_-^\perp)\subset U_-^\perp$ and $\varphi(W_{<k>}\cap V)\subset V$ that
\begin{equation}
\varphi^*(\overline{W}_{<i>}\cap U_+)\subset U_+,\quad \varphi^*(\overline{W}_{<i>}\cap U_-)\subset U_-\mand \varphi^*(\overline{W}_{<i>}\cap V)\subset V.
\label{eq3.6'}
\end{equation}
Put $W_{<0>}=\bigoplus_{\ell\in I-I_{<k>}-\overline{I}_{<i>}} \bbf e_\ell$. Then we have
\begin{align*}
U_+ & =(U_+\cap W_{<k>})\oplus (U_+\cap \overline{W}_{<i>})\oplus (U_+\cap W_{<0>}), \\
U_- & =(U_-\cap W_{<k>})\oplus (U_-\cap \overline{W}_{<i>})\oplus (U_-\cap W_{<0>}) \\
\mand V & =(V\cap W_{<k>})\oplus (V\cap \overline{W}_{<i>})\oplus (V\cap W_{<0>}).
\end{align*}
Hence we have
$$gU_+=U_+,\quad gU_-=U_-\mand gV=V$$
by (P1) and (\ref{eq3.6'}). So we have only to show that
$$(ge_\ell,e_{\ell'})=(e_\ell,g^{-1}e_{\ell'})$$
for all $\ell,\ell'\in I$. This equality is trivial unless $(\ell,\ell')\in (I_{<k>}\times \overline{I}_{<i>})\sqcup (\overline{I}_{<i>}\times I_{<k>})$. If $(\ell,\ell')\in I_{<k>}\times \overline{I}_{<i>}$, then
\begin{align*}
(ge_\ell,e_{\ell'}) & =(e_\ell+\mu\varphi (e_\ell),e_{\ell'})=\mu(\varphi (e_\ell),e_{\ell'}) \\
& =\mu(e_\ell,\varphi^*(e_{\ell'}))=(e_\ell,e_{\ell'}+\mu\varphi^*(e_{\ell'}))=(e_\ell,g^{-1}e_{\ell'}).
\end{align*}
For $(\ell,\ell')\in \overline{I}_{<i>}\times I_{<k>}$, we also have the equality in the same way.

(iii) and (iv) are clear.
\end{proof}

For $k\in I_{(15)}$, define a vector $f_k\in U_{(15)}^+$ by
$$f_k=\begin{cases} \delta(k)e_{\overline{\eta_{15}(k)}} & \text{if $k\ne c,\overline{\eta_{15}(c)}$}, \\
e_{\overline{\eta_{15}(c)}}+\frac{1}{2}\varepsilon e_c & \text{if $b_{15}$ is even and $k=c$}, \\
-e_{\overline{\eta_{15}(k)}} & \text{if $b_{15}$ is even and $k=\overline{\eta_{15}(c)}$}, \\
\frac{1}{2} e_c & \text{if $b_{15}$ is odd and $k=c$}
\end{cases}$$
where
$$\delta(k)=\begin{cases} 1 & \text{if $k\in I_{(15)}^+$}, \\ -1 & \text{otherwise}. \end{cases}$$
For $k\in I_{(15)}$, define $I_{<k>}$ by
$$I_{<k>}=\begin{cases} \{k,\eta_{15}(k),\overline{k},\overline{\eta_{15}(k)}\} & \text{if $k\ne c,\overline{\eta_{15}(c)}$}, \\
\{c,\eta_{15}(c),\overline{c},\overline{\eta_{15}(c)},n+1\} & \text{if $b_{15}$ is even and $k=c,\overline{\eta_{15}(c)}$,} \\
\{c,\overline{c},n+1\} & \text{if $b_{15}$ is odd and $k=c\ (=\overline{\eta_{15}(c)})$}
\end{cases}$$
and put $W_{<k>}=\bigoplus_{\ell\in I_{<k>}} \bbf e_\ell$ (c.f. Remark \ref{rem3.11}).

\begin{lemma} Let $i\in I_{(j)}$ with $j=1,2,3,5,7,8,9,12$ and $k\in I_{(15)}$.

{\rm (i)} There exists a linear map $\varphi=\varphi_{i,k}: W_{<k>}\to W_{<i>}$ such that
\begin{align}
\begin{split}
\varphi (e_k) & =e_i,\quad \varphi(W_{<k>}\cap U_+^\perp) \subset U_+^\perp, \quad
\varphi(W_{<k>}\cap U_-) \subset U_-, \\
& \varphi(W_{<k>}\cap U_-^\perp)\subset U_-^\perp \quad
\mbox{and that}\quad \varphi(W_{<k>}\cap V^\perp)\subset V^\perp. \label{eq3.7'}
\end{split}
\end{align}
$($Note that $\varphi(W_{<k>}\cap V^\perp)\subset V^\perp$ implies $\varphi(W_{<k>}\cap V^\perp)\subset W_{<i>}\cap V^\perp=W_{<i>}\cap V$ and hence $\varphi(W_{<k>}\cap V)\subset V$.$)$

{\rm (ii)} The map $g=g'_{i,k}(\mu):\bbf^{2n+1}\to \bbf^{2n+1}\ (\mu\in\bbf)$ defined by
$$ge_\ell=\begin{cases} e_\ell+\mu\varphi(e_\ell) & \text{if $\ell\in I_{<k>}$,} \\
e_\ell-\mu\varphi^*(e_\ell)-\frac{\mu^2}{2} \varphi\varphi^*(e_\ell) & \text{if $\ell\in \overline{I_{<i>}}$,} \\
e_\ell & \text{if $\ell\in I-I_{<k>}-\overline{I_{<i>}}$}
\end{cases}$$ 
is an element of $R_V$.

{\rm (iii)} If $i\in I_{(12)}$, then $\displaystyle{g'_{i,k}(\mu)e_{\overline{\kappa(i)}}=e_{\overline{\kappa(i)}}-\mu f_k-\frac{\mu^2}{4} \varepsilon\delta_{k,c}e_i}$.

{\rm (iv)} If $i\in I_{(8)}$, then $\displaystyle{g'_{i,k}(\mu)e_{\overline{\eta_8(i)}}=e_{\overline{\eta_8(i)}}-\mu f_k-\frac{\mu^2}{4} \varepsilon\delta_{k,c}e_i}$.
\label{lem3.14'}
\end{lemma}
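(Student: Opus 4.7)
The plan is to define $\varphi_{i,k}$ first for the adjacent pairs $i\in I_{(12)}$, $k\in I_{(15)}$, and then to obtain $\varphi_{i,k}$ for $i\in I_{(j)}$ with $j\in\{1,2,3,5,7,8,9\}$ as a composition $\varphi_{i,h}\circ\varphi_{h,k}$ for an intermediate $h\in I_{(12)}$, using the maps furnished by Lemma \ref{lem3.13'}. Since for $h\in I_{(12)}$ one computes $V^\perp\cap W_{<h>}=V\cap W_{<h>}=\bbf(e_h+e_{\kappa(h)})\oplus\bbf(e_h+e_{\lambda(h)})$, the image $\varphi_{h,k}(W_{<k>}\cap V^\perp)$ lands inside $V\cap W_{<h>}$ and is then mapped by $\varphi_{i,h}$ into $V\subset V^\perp$; the other four inclusions in \eqref{eq3.7'} are inherited directly from Lemma \ref{lem3.13'}.

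For the base case $(i,k)\in I_{(12)}\times I_{(15)}$ I define $\varphi$ on the basis of $W_{<k>}$ in two subcases. For non-middle $k\ne c,\overline{\eta_{15}(c)}$, set $\varphi(e_k)=e_i$, $\varphi(e_{\eta_{15}(k)})=\delta(k)e_{\kappa(i)}$, $\varphi(e_{\overline{k}})=\varphi(e_{\overline{\eta_{15}(k)}})=0$; the five inclusions in \eqref{eq3.7'} are then verified by direct comparison of $W_{<k>}\cap U_+^\perp=\bbf e_k\oplus\bbf e_{\overline{\eta_{15}(k)}}$, $W_{<k>}\cap U_-=\bbf e_{\eta_{15}(k)}\oplus\bbf e_{\overline{k}}$, $W_{<k>}\cap V^\perp=\bbf(e_k+e_{\eta_{15}(k)})\oplus\bbf(e_{\overline{k}}-e_{\overline{\eta_{15}(k)}})$, against $W_{<i>}\cap U_+^\perp=\bbf e_i\oplus\bbf e_{\lambda(i)}$, $W_{<i>}\cap U_-=\bbf e_{\kappa(i)}$, $V\cap W_{<i>}=\bbf(e_i+e_{\kappa(i)})\oplus\bbf(e_i+e_{\lambda(i)})$. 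For the middle case $k=c$ with $\varepsilon=1$ and $b_{15}$ even, the presence of $e_{n+1}$ in $W_{<c>}$ and the twisted generator $e_{\overline{c}}-e_{\overline{\eta_{15}(c)}}-\tfrac12 e_c+e_{n+1}$ of $V_{<c>}^1$ force $\varphi(e_{n+1})=-e_{\lambda(i)}$ and $\varphi(e_{\overline{c}})=\tfrac12 e_{\kappa(i)}$, producing the nonzero value $\varphi(f_c)=\tfrac12 e_i$. An analogous definition works when $b_{15}$ is odd and $W_{<c>}$ is three-dimensional.

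The orthogonality $g'_{i,k}(\mu)\in G$ is established by recognizing $g'_{i,k}(\mu)=\exp(\mu T)$, where $T\colon\bbf^{2n+1}\to\bbf^{2n+1}$ acts as $\varphi$ on $W_{<k>}$, as $-\varphi^*$ on $\overline{W}_{<i>}$, and as $0$ elsewhere. The adjoint identity $(\varphi(v),w)=(v,\varphi^*(w))$ gives $T+T^*=0$, so $T$ is skew with respect to $(\ ,\ )$; since $T$ vanishes on $W_{<i>}$ and the image of $\varphi\varphi^*$ is contained in $W_{<i>}$, we have $T^3=0$, and the exponential truncates to $I+\mu T+\tfrac{\mu^2}{2}T^2$, matching the defining formula of $g'_{i,k}(\mu)$. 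The invariances $gU_\pm=U_\pm$ and $gV=V$ follow from \eqref{eq3.7'} together with the adjoint counterparts $\varphi^*(\overline{W}_{<i>}\cap U_\pm)\subset\overline{W}_{<k>}\cap U_\pm$ and $\varphi^*(\overline{W}_{<i>}\cap V)\subset\overline{W}_{<k>}\cap V$, which are equivalent to \eqref{eq3.7'} by duality.

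The formulas (iii) and (iv) then follow by reading off $\varphi^*(e_{\overline{\kappa(i)}})=f_k$ directly from the explicit form of $\varphi$, and $\varphi\varphi^*(e_{\overline{\kappa(i)}})=\varphi(f_k)=\tfrac12\varepsilon\delta_{k,c}e_i$ which is nonzero only in the middle case above. Assertion (iv), for $i\in I_{(8)}$, follows from (iii) after passing through the composition $\varphi_{i,h}\circ\varphi_{h,k}$ with $h\in I_{(12)}$ by invoking property (P2) of Lemma \ref{lem3.13'}, namely $\varphi_{i,h}^*(e_{\overline{\eta_8(i)}})=e_{\overline{\kappa(h)}}$. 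The main obstacle will be the bookkeeping in the middle case $k=c$: one must simultaneously coordinate the coefficients of $e_c$, $e_{\overline{\eta_{15}(c)}}$ and $e_{n+1}$ so that all five inclusions in \eqref{eq3.7'} hold and the exact coefficient $\tfrac12$ emerges in $\varphi(f_c)=\tfrac12 e_i$; the odd $b_{15}$ subcase, where $\eta_{15}(c)=\overline{c}$ and $W_{<c>}$ collapses to three dimensions, requires a parallel but separate definition and verification.
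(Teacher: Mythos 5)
Your proposal is correct and follows essentially the same strategy as the paper: define $\varphi_{i,k}$ explicitly for $i\in I_{(12)}$ by a case split on the "middle" index, then obtain the other cases by composing through $I_{(12)}$; verify the five inclusions; deduce the dual inclusions for $\varphi^*$; and compute $\varphi^*(e_{\overline{\kappa(i)}})=f_k$ and $\varphi\varphi^*(e_{\overline{\kappa(i)}})$ from the explicit formulas. Your explicit values for $\varphi$ in the non-middle case and in the even/odd middle cases match the paper's.

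Where you genuinely diverge is in part (ii). The paper checks orthogonality of $g'_{i,k}(\mu)$ by directly verifying $(ge_\ell,e_{\ell'})=(e_\ell,g^{-1}e_{\ell'})$, with the only nontrivial case being $\ell,\ell'\in\overline{I_{<i>}}$ where the quadratic correction $\varphi\varphi^*$ matters. You instead recognize $g'_{i,k}(\mu)=\exp(\mu T)$ with $T|_{W_{<k>}}=\varphi$, $T|_{\overline{W}_{<i>}}=-\varphi^*$, $T=0$ elsewhere, observe $T+T^*=0$ and $T^3=0$ (so the exponential truncates exactly to the defining formula, and $\exp(\mu T)^{-1}=\exp(\mu T^*)$ is the adjoint), hence $g\in G$. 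This is a cleaner and more structural packaging; note it still uses $\mathrm{char}\,\bbf\ne 2$ for the $\tfrac{\mu^2}{2}$ term, which is fine here. The stabilizer conditions $gU_\pm=U_\pm$, $gV=V$ are handled identically in both arguments, via \eqref{eq3.7'} and its adjoint consequences.

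One small point you should make explicit: when $b_{12}=0$ the set $I_{(12)}$ is empty and no intermediate $h\in I_{(12)}$ exists, so the composition $\varphi_{i,h}\circ\varphi_{h,k}$ is not directly available. The paper handles this by embedding $\bbf^{2n+1}$ into a larger split space where $b_{12}\ne 0$, constructing $\varphi$ there, and restricting; the same device applies to your argument and should be mentioned. Finally, when you write $\varphi^*(\overline{W}_{<i>}\cap U_\pm)\subset\overline{W}_{<k>}\cap U_\pm$, the target should simply be $W_{<k>}\cap U_\pm$: for $k\in I_{(15)}$ the space $W_{<k>}$ is self-dual under $(\,,\,)$ and there is no separate $\overline{W}_{<k>}$.
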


\begin{proof} (i) (A) First suppose that $i\in I_{(12)}$. If $k\ne c,\overline{\eta_{15}(c)}$, then $\varphi=\varphi_{i,k}: W_{<k>}\to W_{<i>}$ is defined by
$$\varphi(e_k)=e_i, \quad \varphi (e_{\eta_{15}(k)})=\delta(k)e_{\kappa(i)}\mand \varphi(e_{\overline{k}})=\varphi(e_{\overline{\eta_{15}(k)}})=0.$$

If $b_{15}$ is even and $\varepsilon=0$, then $\varphi=\varphi_{i,k}: W_{<c>}\to W_{<i>}$ for $k=c,\overline{\eta_{15}(c)}$ are defined by
$$\varphi(e_k)=e_i, \quad \varphi (e_{\eta_{15}(k)})=\delta(k)e_{\kappa(i)}\mand \varphi(e_{\overline{k}})=\varphi(e_{\overline{\eta_{15}(k)}})=\varphi(e_{n+1})=0.$$

Suppose $b_{15}$ is even and $\varepsilon=1$. Then $\varphi=\varphi_{i,c}: W_{<c>}\to W_{<i>}$ is defined by
\begin{align*}
\varphi(e_c) & =e_i,\quad \varphi (e_{\eta_{15}(c)})=e_{\kappa(i)}, \quad
\varphi(e_{\overline{c}})=\frac{1}{2} e_{\kappa(i)}, \\
\varphi (e_{\overline{\eta_{15}(c)}}) & =0\mand \varphi(e_{n+1})=-e_{\lambda(i)}.
\end{align*}
(Since $W_{<c>}\cap V^\perp=\bbf (e_{\overline{c}}-e_{\overline{\eta_{15}(c)}}+\frac{1}{2} e_c)\oplus \bbf (-e_c+e_{n+1})$, we can verify the condition $\varphi(W_{<c>}\cap V^\perp)\subset V^\perp$.)
$\varphi=\varphi_{i,\overline{\eta_{15}(c)}}: W_{<c>}\to W_{<i>}$ is defined by
$$\varphi (e_{\overline{\eta_{15}(c)}})=e_i,\quad \varphi(e_{\overline{c}})=-e_{\kappa(i)} \mand \varphi(e_c)=\varphi (e_{\eta_{15}(c)})=\varphi(e_{n+1})=0.$$

If $b_{15}$ is odd, then $\varphi=\varphi_{i,c}: W_{<c>}\to W_{<i>}$ is defined by
$$\varphi(e_c)=e_i,\quad \varphi(e_{\overline{c}})=\frac{1}{2} e_{\kappa(i)} \mand \varphi(e_{n+1})=-e_{\lambda(i)}.$$
(Since $W_{<c>}\cap V^\perp=\bbf (e_{\overline{c}}+\frac{1}{2} e_c)\oplus \bbf (-e_c+e_{n+1})$, we can verify the condition $\varphi(W_{<c>}\cap V^\perp)\subset V^\perp$.)

(B) When $i\in I_{(j)}$ with $j=1,2,3,5,7,8,9$ and $b_{12}\ne 0$, we take the composition $\varphi=\varphi_{i,\ell}\circ \varphi_{\ell,k}$ where $\varphi_{i,\ell}$ is given in Lemma \ref{lem3.13'} and $\varphi_{\ell,k}$ is given in (A) for $\ell\in I_{(12)}$. This construction is also valid for the case of $b_{12}=0$ as is explained in the proof of Lemma \ref{lem3.13'}.

(ii) It follows from (\ref{eq3.7'}) that
$$\varphi^*(\overline{W}_{<i>}\cap U_+)\subset U_+,\quad \varphi^*(\overline{W}_{<i>}\cap U_-)\subset U_-\mand \varphi^*(\overline{W}_{<i>}\cap V)\subset V.$$
So we have
$gU_+=U_+,\ gU_-=U_-$ and $gV=V$
as in the proof of Lemma \ref{lem3.13'}. We have only to show that
$$(ge_\ell,e_{\ell'})=(e_\ell,g^{-1}e_{\ell'})$$
for all $\ell,\ell'\in I$. In view of the proof of Lemma \ref{lem3.13'}, this equality is clear unless $(\ell,\ell')\in I_{(j)}\times I_{(j)}$. If $(\ell,\ell')\in I_{(j)}\times I_{(j)}$, then
$$(ge_\ell,e_{\ell'})=-\frac{\mu^2}{2} (\varphi\varphi^*(e_\ell),e_{\ell'}) =-\frac{\mu^2}{2} (e_\ell,\varphi\varphi^*(e_{\ell'})) =(e_\ell,g^{-1}e_{\ell'})$$
since
$$g^{-1}e_{\ell'}=e_{\ell'}+\mu\varphi^*(e_{\ell'})-\frac{\mu^2}{2} \varphi\varphi^*(e_{\ell'}).$$

(iii) It follows from the construction of $\varphi$ in (i) that
$\varphi^*(e_{\overline{\kappa(i)}})=f_k$.
Hence
\begin{align*}
ge_{\overline{\kappa(i)}} & =e_{\overline{\kappa(i)}}-\mu \varphi^*(e_{\overline{\kappa(i)}})-\frac{\mu^2}{2} \varphi\varphi^*(e_{\overline{\kappa(i)}})=e_{\overline{\kappa(i)}}-\mu f_k-\frac{\mu^2}{2} \varphi(f_k) \\
& =e_{\overline{\kappa(i)}}-\mu f_k-\frac{\mu^2}{4} \varepsilon\delta_{k,c}e_i.
\end{align*}

(iv) The proof is similar to (iii).
\end{proof}

\begin{lemma} {\rm (i)} Suppose that $I_{(j)}<I_{(j')}$ for $j,j'\in\mathcal{I}_+$ and that
$$(j,j') \notin\{(8,12),\ (8,15),\ (12,15),\  (15,\overline{12}),\ (15,\overline{8}),\ (\overline{12},\overline{8})\}.$$
Then for $i\in I_{(j)},\ k\in I_{(j')}$ and $\mu\in\bbf$, there exists an element $g=g_{i,k}(\mu)\in R_V$ such that
$$ge_k=e_k+\mu e_i\quad\mbox{and that}\quad ge_\ell=e_\ell\mbox{ for }\ell\in I_+-\{k\}.$$

{\rm (ii)} For $i\in I_{(8)},\ k\in I_{(12)}$ and $\mu\in\bbf$, there exists an element $g=g_{i,k}(\mu)\in R_V$ such that
$$ge_k=e_k+\mu e_i,\quad ge_{\overline{\eta_8(i)}}=e_{\overline{\eta_8(i)}}-\mu e_{\overline{\kappa(k)}}$$
and that $ge_\ell=e_\ell$ for $\ell\in I_+-\{k,\overline{\eta_8(i)}\}$.

{\rm (iii)} For $i\in I_{(12)},\ k\in I_{(15)}$ and $\mu\in\bbf$, there exists an element $g=g_{i,k}(\mu)\in R_V$ such that
$$ge_k=e_k+\mu e_i,\quad ge_{\overline{\kappa(i)}}= e_{\overline{\kappa(i)}}-\mu f_k$$
and that $ge_\ell=e_\ell$ for $\ell\in I_+-\{k,\overline{\kappa(i)}\}$.

{\rm (iv)} For $i\in I_{(8)},\ k\in I_{(15)}$ and $\mu\in\bbf$, there exists an element $g=g_{i,k}(\mu)\in R_V$ such that
$$ge_k=e_k+\mu e_i,\quad ge_{\overline{\eta_8(i)}}= e_{\overline{\eta_8(i)}}-\mu f_k$$
and that $ge_\ell=e_\ell$ for $\ell\in I_+-\{k,\overline{\eta_8(i)}\}$.

\label{lem3.14}
\end{lemma}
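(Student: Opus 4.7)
The plan is to construct each required element by taking the appropriate building block from Lemmas \ref{lem3.13'} and \ref{lem3.14'} and, when necessary, correcting it by a second such element, then verifying the prescribed action on $U_+$ by a case-by-case check.

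For part (i), I would take $g=g_{i,k}(\mu)$ to be the element produced by Lemma \ref{lem3.13'}(ii) from the map $\varphi=\varphi_{i,k}$. By construction it satisfies $ge_\ell=e_\ell+\mu\varphi(e_\ell)$ for $\ell\in I_{<k>}$ and $ge_\ell=e_\ell-\mu\varphi^*(e_\ell)$ for $\ell\in\overline{I_{<i>}}$, while fixing all other basis vectors. In particular $ge_k=e_k+\mu e_i$ because $\varphi(e_k)=e_i$. What remains is to confirm $ge_\ell=e_\ell$ for every $\ell\in I_+-\{k\}$, which reduces to checking \emph{(a)} $\varphi(e_\ell)=0$ for $\ell\in (I_{<k>}-\{k\})\cap I_+$, and \emph{(b)} $\varphi^*(e_\ell)=0$ for $\ell\in \overline{I_{<i>}}\cap I_+$. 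Inspection of the intersections $I_{<k>}\cap I_+$ for $k\in I_{(j')}$ ($j'\in\mathcal{I}_+$) shows that the only potentially problematic extra indices occur for $j'\in\{9,12,15,\overline{12},\overline{8}\}$, and the explicit formulas for $\varphi$ given in constructions (A)--(B) of the proof of Lemma \ref{lem3.13'} show these values of $\varphi$ vanish. For (b) the only dangerous cases are $i\in\{8,12,\overline{8},\overline{12}\}$, where $\overline{W}_{<i>}\cap U_+$ is nonzero; here either property (P3) of Lemma \ref{lem3.13'} forces $\varphi^*=0$ on $\overline{W}_{<i>}\cap U_+$, or a direct pairing $(v,\varphi^*(e_\ell))=(\varphi(v),e_\ell)$ with $v$ ranging over a basis of $W_{<k>}$ shows the image is zero. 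The six excluded pairs are precisely those where neither mechanism is available.

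For part (ii), the pair $(8,12)$ fails (b) above because property (P2) gives $\varphi^*(e_{\overline{\eta_8(i)}})=e_{\overline{\kappa(k)}}$. Hence simply taking $g=g_{i,k}(\mu)$ from Lemma \ref{lem3.13'}(ii) already produces the exact side effect $ge_{\overline{\eta_8(i)}}=e_{\overline{\eta_8(i)}}-\mu e_{\overline{\kappa(k)}}$ claimed in (ii), while the case analysis of part (i) (applied after removing the one index $\overline{\eta_8(i)}$ from consideration) shows every other basis vector of $U_+$ is fixed.

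For parts (iii) and (iv), I would start with $g'=g'_{i,k}(\mu)$ from Lemma \ref{lem3.14'}(ii); by parts (iii)--(iv) of that lemma it already satisfies $g'e_k=e_k+\mu e_i$ and $g'e_{\overline{\kappa(i)}}=e_{\overline{\kappa(i)}}-\mu f_k-\tfrac{\mu^2}{4}\varepsilon\delta_{k,c}e_i$ (respectively for $e_{\overline{\eta_8(i)}}$ in (iv)), but the stray term $-\tfrac{\mu^2}{4}\varepsilon\delta_{k,c}e_i$ has to be cancelled. I would remove it by composing $g'$ with $g_{i,\overline{\kappa(i)}}(\tfrac{\mu^2}{4}\varepsilon\delta_{k,c})$ from Lemma \ref{lem3.13'}(iii) (respectively Lemma \ref{lem3.13'}(iv)), which adds back $\tfrac{\mu^2}{4}\varepsilon\delta_{k,c}e_i$ to the image of $e_{\overline{\kappa(i)}}$ (resp.\ $e_{\overline{\eta_8(i)}}$) and acts trivially on the other $e_\ell$ with $\ell\in I_+$. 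The composition is the desired $g=g_{i,k}(\mu)$.

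The main obstacle is the case bookkeeping in part (i): one must walk through each non-excluded pair $(j,j')$ under the partial order displayed in the diagram and verify (a) and (b) using the explicit formulas for $\varphi_{i,k}$ in parts (A) and (B) of the proof of Lemma \ref{lem3.13'}. The verifications themselves are short pairing computations, but covering all the pairs is the longest part of the argument; parts (ii)--(iv) are then obtained essentially for free once this analysis is in place.
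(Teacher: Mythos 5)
Your handling of parts (ii)--(iv) is correct and follows essentially the same route as the paper: part (ii) comes from Lemma \ref{lem3.13'}(ii) together with property (P2), and parts (iii)--(iv) come from Lemma \ref{lem3.14'} with the quadratic correction $g_{i,\overline{\kappa(i)}}(\tfrac{\mu^2}{4}\varepsilon\delta_{k,c})$ (resp.\ $g_{i,\overline{\eta_8(i)}}$) from Lemma \ref{lem3.13'}(iii)--(iv). Your case analysis of (a) and (b) in part (i) is also in the right spirit and is in fact more explicit than what the paper writes.

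However, part (i) has a genuine gap. You declare $g=g_{i,k}(\mu)$ to be ``the element produced by Lemma \ref{lem3.13'}(ii)''; but Lemma \ref{lem3.13'} is stated only for $j,j'\in\mathcal{I}_+-\{15\}$, while Lemma \ref{lem3.14}(i) also covers non-excluded pairs with $j'=15$ (namely $j\in\{1,2,3,5,7,9\}$) and the pair $(j,j')=(15,\overline{6})$. Your subsequent sentence cites ``the explicit formulas for $\varphi$ given in constructions (A)--(B) of the proof of Lemma \ref{lem3.13'}'' for $j'=15$, but those constructions do not produce $\varphi_{i,k}$ with $k\in I_{(15)}$. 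For $j'=15$ and $j\in\{1,2,3,5,7,9\}$ one must instead take $g=g'_{i,k}(\mu)$ from Lemma \ref{lem3.14'}(ii), whose action on $\overline{I_{<i>}}$ carries an extra $-\tfrac{\mu^2}{2}\varphi\varphi^*$ term; this term must be shown harmless on $U_+$, which holds because $\overline{I_{<i>}}\cap I_+=\emptyset$ for $j\notin\{8,12\}$, but this is a point your argument never makes. Finally, for $(j,j')=(15,\overline{6})$ neither Lemma \ref{lem3.13'} nor Lemma \ref{lem3.14'} supplies a map $\varphi$ at all, and a separate direct construction is required (the paper gives one: $ge_k=e_k+\mu e_i$, $ge_{\overline{i}}=e_{\overline{i}}-\mu e_{\overline{k}}$, all other $e_\ell$ fixed). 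Your proof does not address this case.
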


\begin{proof} (i) and (ii). If $j,j'\ne 15$, then $g=g_{i,k}(\mu)$ is constructed in Lemma \ref{lem3.13'}. In particular, if $i\in I_{(8)}$ and $k\in I_{(12)}$, then the property (P2) implies the assertion (ii).

Suppose $j'=15$ and $j\ne 8,12$. Then $g=g_{i,k}(\mu)=g'_{i,k}(\mu)$ is constructed in Lemma \ref{lem3.14'}.

Finally consider the case of $(j,j')=(15,\overline{6})$. For $i\in I_{(15)},\ k\in \overline{I}_{(6)}$ and $\mu\in\bbf$, we can define $g=g_{i,k}(\mu)\in R_V$ by
$$ge_k=e_k+\mu e_i,\quad ge_{\overline{i}}=e_{\overline{i}}-\mu e_{\overline{k}}$$
and $ge_\ell=e_\ell$ for $\ell\in I-\{k,\overline{i}\}$.

(iii) In Lemma \ref{lem3.14'}, we constructed for $i\in I_{(12)},\ k\in I_{(15)}$ and $\mu\in\bbf$, $g'=g'_{i,k}(\mu)\in R_V$ such that
$$g'e_k=e_k+\mu e_i \quad\mbox{and that}\quad  g'e_{\overline{\kappa(i)}}=e_{\overline{\kappa(i)}}-\mu f_k-\frac{\mu^2}{4} \varepsilon\delta_{k,c} e_i.$$
So the element
$$g=g_{i,\overline{\kappa(i)}}\left(\frac{\mu^2}{4} \varepsilon\delta_{k,c}\right)g'$$
satisfies the desired property in view of Lemma \ref{lem3.13'} (iii).

The proof of (iv) is similar to (iii).
\end{proof}

\begin{lemma} Let $k$ be an index in $I_{(\overline{6})}$. Then for any element $u$ in $\bigoplus_{i\in I-I_{(\overline{6})}} \bbf e_i$,
there exists a $g\in R_V$ such that
$g(e_k+u)=e_k$.
\label{lem3.15}
\end{lemma}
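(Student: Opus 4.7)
My plan is to exploit the observation, made explicitly by the author just after the Hasse diagram of $\mathcal{I}_+$, that $I_{(\overline{6})}$ is the unique maximal element with respect to $<$. Hence for every $i\in I_+\setminus I_{(\overline{6})}$ with $i\in I_{(j)}$, one has $I_{(j)}<I_{(\overline{6})}$. Moreover $\overline{6}$ never appears as the second coordinate of any pair in the exception list $\{(8,12),(8,15),(12,15),(15,\overline{12}),(15,\overline{8}),(\overline{12},\overline{8})\}$ of Lemma \ref{lem3.14}(i). So that lemma supplies, for every such $i$ and every $\mu\in\bbf$, an element $g_{i,k}(\mu)\in R_V$ acting by $g_{i,k}(\mu)e_k=e_k+\mu e_i$ and fixing every $e_\ell$ with $\ell\in I_+\setminus\{k\}$.

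The vector $u$ I read as lying in $U_+$, that is, in $\bigoplus_{i\in I_+\setminus I_{(\overline{6})}}\bbf e_i$, consistently with the context of Section~3.7 on $R_V|_{U_+}$; note also that any $g\in R_V$ with $g(e_k+u)=e_k$ would force $e_k+u=g^{-1}e_k\in g^{-1}U_+=U_+$, so contributions to $u$ from $I-I_+$ are automatically ruled out. Writing $u=\sum_{i\in S}c_ie_i$ with $S\subset I_+\setminus I_{(\overline{6})}$ finite, I then form
$$g=\prod_{i\in S}g_{i,k}(-c_i),$$
taking the product in an arbitrary order. Since each factor fixes every $e_j$ with $j\in S\subset I_+\setminus\{k\}$, the shifts on $e_k$ simply accumulate to give $ge_k=e_k-u$, while $gu=u$. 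Therefore $g(e_k+u)=(e_k-u)+u=e_k$, and $g$ lies in $R_V$ as a product of elements of $R_V$.

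I do not anticipate a genuine obstacle. Both structural facts I rely on---the maximality of $I_{(\overline{6})}$ in the partial order, and the fact that the exception list of Lemma \ref{lem3.14}(i) never involves a pair of the form $(\cdot,\overline{6})$---are essentially immediate from the diagram and the explicit list, so the construction applies uniformly for every $k\in I_{(\overline{6})}$.
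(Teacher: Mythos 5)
Your proof is correct and coincides with the paper's: both form the product $g=\prod_{i\in I_+-I_{(\overline{6})}} g_{i,k}(-\mu_i)$ of the shear elements supplied by Lemma \ref{lem3.14}(i), the key observations being that $I_{(\overline{6})}$ is maximal in the partial order on $\mathcal{I}_+$ and that $\overline{6}$ never occurs as the second index in that lemma's exception list. You also correctly note that the statement's $\bigoplus_{i\in I-I_{(\overline{6})}}\bbf e_i$ must be read as $\bigoplus_{i\in I_+-I_{(\overline{6})}}\bbf e_i$ (i.e.\ $U_+^{\perp(U_-\cap V)}$, which is how the lemma is actually invoked in Section~4), exactly the reading the paper's own proof silently adopts.
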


\begin{proof} For $u=\sum_{i\in I_+-I_{(\overline{6})}} \mu_ie_i$, put
$g=\prod_{i\in I_+-I_{(\overline{6})}} g_{i,k}(-\mu_i)$.
(The elements $g_{i,k}(-\mu_i)$ are commutative for $i\in I_+-I_{(\overline{6})}$.) Then we have $g(e_k+u)=e_k$ by Lemma \ref{lem3.12} (i).
\end{proof}

\begin{lemma} Let $k$ be an index in $I_{(8)}$. Then for any element $u$ in $\bigoplus_{i\in I_+-I_{(\overline{8})}-I_{(\overline{6})}} \bbf e_i$,
there exists a $g\in R_V$ such that
$g(e_{\overline{\eta_8(k)}}+u)=e_{\overline{\eta_8(k)}}$
and that $g$ acts trivially on $U_{(6)}^+$.
\label{lem3.16}
\end{lemma}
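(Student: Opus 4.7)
The plan is to build $g=g_4g_3g_2g_1\in R_V$ from four layers of the elementary unipotents constructed in Lemma \ref{lem3.14}, each layer killing one family of components of $u=\sum_i\mu_ie_i$. The indices range over $I_+-I_{(\overline{8})}-I_{(\overline{6})}$, i.e.\ over the $j\in\mathcal{I}_+-\{\overline{8},\overline{6}\}=\{1,2,3,5,7,8,9,10,12,13,15,\overline{12}\}$. These split into ``easy'' layers $\{1,2,3,5,7,8,9,10,12,13\}$, for which $(j,\overline{8})$ is not exceptional in Lemma \ref{lem3.14}, and ``hard'' layers $\{\overline{12},15\}$, corresponding to the exceptional pairs $(\overline{12},\overline{8})$ and $(15,\overline{8})$.

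First, take $g_1=\prod_{i}g_{i,\overline{\eta_8(k)}}(-\mu_i)$ over the easy $i$'s, using Lemma \ref{lem3.14}(i): each factor only modifies $e_{\overline{\eta_8(k)}}$, so the factors commute, giving $g_1(e_{\overline{\eta_8(k)}}+u)=e_{\overline{\eta_8(k)}}+\sum_{i\in I_{(15)}\sqcup I_{(\overline{12})}}\mu_ie_i=:v_1$. For $i=\overline{\kappa(i'')}\in I_{(\overline{12})}$, apply $g_{k,i''}(\mu_i)$ from Lemma \ref{lem3.14}(ii); the intended effect $e_{\overline{\eta_8(k)}}\mapsto e_{\overline{\eta_8(k)}}-\mu_ie_i$ kills the desired term, while the side-effect $e_{i''}\mapsto e_{i''}+\mu_ie_k$ contributes nothing because every $e_{i''}$-coefficient was already zeroed in Step 1. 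The product $g_2$ of these gives $g_2v_1=e_{\overline{\eta_8(k)}}+\sum_{i\in I_{(15)}}\mu_ie_i=:v_2$. For the $15$-layer, the key observation is that $\{f_{k_0}\}_{k_0\in I_{(15)}}$ is a basis of $U_{(15)}^+$: outside $\{c,\overline{\eta_{15}(c)}\}$ each $f_{k_0}$ is a signed basis vector $\pm e_{\overline{\eta_{15}(k_0)}}$, and the case analysis in Lemma \ref{lem3.14'} (on parity of $b_{15}$ and on $\varepsilon$) shows that the $2\times 2$ block at $\{c,\overline{\eta_{15}(c)}\}$ is nonsingular. Solve uniquely for $\lambda_{k_0}$ with $\sum\lambda_{k_0}f_{k_0}=\sum\mu_ie_i$ and take $g_3=\prod_{k_0}g_{k,k_0}(\lambda_{k_0})$ from Lemma \ref{lem3.14}(iv); the cumulative subtraction of $\sum\lambda_{k_0}f_{k_0}$ from $e_{\overline{\eta_8(k)}}$ is order-independent and cancels the $I_{(15)}$-part of $v_2$, while the concurrent action $e_{k_0}\mapsto e_{k_0}+\lambda_{k_0}e_k$ produces a residual $\xi e_k$, giving $v_3=e_{\overline{\eta_8(k)}}+\xi e_k$. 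Finally, since $(8,\overline{8})$ is not exceptional, $g_4=g_{k,\overline{\eta_8(k)}}(-\xi)$ from Lemma \ref{lem3.14}(i) absorbs this term and $g_4v_3=e_{\overline{\eta_8(k)}}$.

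Every factor used fixes $e_\ell$ for $\ell\in I_{(6)}$ by inspection of the relevant clauses of Lemma \ref{lem3.14}, so $g=g_4g_3g_2g_1$ acts trivially on $U_{(6)}^+$. The main obstacles are verifying the basis claim for $\{f_{k_0}\}$ in all cases and arranging the four steps in the right order: the side-effects of Step 2 land on coefficients already killed by Step 1, and the residual $e_k$-term generated by Step 3 can be absorbed by Step 4 only because $(8,\overline{8})$ avoids the exceptional list.
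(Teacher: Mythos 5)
Your proof is correct, and it uses the same basic toolkit (the elementary unipotents of Lemma~\ref{lem3.14}, applied layer by layer) but organizes the layers in a different order than the paper. The paper works from the exceptional layers inward: it first kills the $I_{(\overline{12})}$-components via Lemma~\ref{lem3.14}(ii), then the $I_{(15)}$-components via Lemma~\ref{lem3.14}(iv), and only then sweeps up everything remaining with Lemma~\ref{lem3.14}(i); in that order every side effect ($\mu_i e_k$ on $e_i$ for $i\in I_{(12)}$, and the residue $v\in U_{(8)}^{+1}$ from the $I_{(15)}$-layer) lands in a component that has not yet been cleaned, so no residual ever appears. Moreover, for the $I_{(15)}$-layer the paper applies the \emph{inverse} $g_2^{-1}$ to $e_{\overline{\eta_8(k)}}+g_1(u_1)$, which makes the cancellation exact by construction and avoids any order-of-factors argument. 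You go the other way: you first kill the non-exceptional components with Lemma~\ref{lem3.14}(i), so the Step~2 side effects on $e_{i''}$ (for $i''\in I_{(12)}$) land in positions already zeroed and are harmless, but the Step~3 side effect dumps a residual $\xi e_k$ into $I_{(8)}$, requiring the extra Step~4; that element $g_{k,\overline{\eta_8(k)}}(-\xi)$ exists because the pair $(8,\overline{8})$ is not on the excluded list (its construction is the exceptional clause Lemma~\ref{lem3.13'}(iv), since here $W_{<i>}=\overline{W}_{<k>}$). Your direct computation that $g_3$ leaves only $\xi e_k$ is correct, and you also make explicit the fact that $\{f_{k_0}\}_{k_0\in I_{(15)}}$ is a basis of $U_{(15)}^+$ (including checking the $2\times 2$ block at $\{c,\overline{\eta_{15}(c)}\}$), something the paper uses tacitly. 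Both routes are valid; the paper's ordering and its use of $g_2^{-1}$ make the argument marginally shorter by preventing the residual from ever arising.
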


\begin{proof} Write $u=u_1+\sum_{i\in I_{(12)}} \mu_ie_{\overline{\kappa(i)}}$ with an element
$u_1\in \bigoplus_{i\in I_+-I_{(\overline{12})}-I_{(\overline{8})}-I_{(\overline{6})}} \bbf e_i$
and put
$g_1=\prod_{i\in I_{(12)}} g_{k,i}(\mu_i)$.
Then we have
\begin{align*}
g_1(e_{\overline{\eta_8(k)}}+u) & =(e_{\overline{\eta_8(k)}}-\sum_{i\in I_{(12)}} \mu_ie_{\overline{\kappa(i)}})+(g_1(u_1)+\sum_{i\in I_{(12)}} \mu_ie_{\overline{\kappa(i)}}) \\
& =e_{\overline{\eta_8(k)}}+g_1(u_1)\in e_{\overline{\eta_8(k)}}+\bigoplus_{i\in I_+-I_{(\overline{12})}-I_{(\overline{8})}-I_{(\overline{6})}} \bbf e_i
\end{align*}
by Lemma \ref{lem3.14} (ii).

Since $\{f_i\mid i\in I_{(15)}\}$ is a basis of $U_{(15)}^+$, we can write $g_1(u_1)=u_2+\sum_{i\in I_{(15)}} \nu_if_i$ with an element
$u_2\in \bigoplus_{i\in I_+-I_{(15)}-I_{(\overline{12})}-I_{(\overline{8})}-I_{(\overline{6})}} \bbf e_i$.
Put $g_2=\prod_{i\in I_{(15)}} g_{k,i}(-\nu_i)$. Then we can write
$$g_2e_{\overline{\eta_8(k)}}=e_{\overline{\eta_8(k)}}+(\sum_{i\in I_{(15)}} \nu_if_i)+v$$
with some $v\in U_{(8)}^{+1}$ by Lemma \ref{lem3.14} (iv). So we have
\begin{align*}
g_2^{-1}g_1(e_{\overline{\eta_8(k)}}+u) & =
g_2^{-1}(e_{\overline{\eta_8(k)}}+g_1(u_1)) =g_2^{-1}(e_{\overline{\eta_8(k)}}+(\sum_{i\in I_{(15)}} \nu_if_i)+v+u_2-v)\\
& =e_{\overline{\eta_8(k)}}+u_2-v \in e_{\overline{\eta_8(k)}}+\bigoplus_{i\in I_+-I_{(15)}-I_{(\overline{12})}-I_{(\overline{8})}-I_{(\overline{6})}} \bbf e_i.
\end{align*}

Write $u_2-v=\sum_{i\in I'} \rho_ie_i$ where $I'= I_{(1)}\sqcup I_{(2)}\sqcup I_{(3)}\sqcup I_{(7)}\sqcup I_{(8)}\sqcup I_{(10)}\sqcup I_{(5)} \sqcup I_{(9)}\sqcup I_{(12)}\sqcup I_{(13)}$ and put $g_3=\prod_{i\in I'} g_{i,\overline{\eta_8(k)}}(-\rho_i)$. Then we have
$$g_3g_2^{-1}g_1(e_{\overline{\eta_8(k)}}+u)=g_3(e_{\overline{\eta_8(k)}}+u_2-v)=e_{\overline{\eta_8(k)}}$$
by Lemma \ref{lem3.14} (i). It follows from the definitions of $g_1,g_2$ and $g_3$ that $g_3g_2^{-1}g_1$ acts trivially on $U_{(6)}^+$.
\end{proof}

\section{Finiteness of $\mct_{(\alpha_1,\alpha_2),(\beta),(n)}$}

Put $\alpha=\alpha_1+\alpha_2$. As in Section 3, we may fix $\alpha$ and $\beta$-dimensional isotropic subspaces $U_+$ and $U_-$, respectively. By Theorem \ref{th3.10}, we may also fix a maximally isotropic subspace $V=V(b_1,\ldots,b_{15},\varepsilon)$. We have only to show that there are a finite number of $R_V$-orbits on the Grassmann variety of $\alpha_1$-dimensional subspaces of $U_+$.

For any subset $J$ of $I_+$, let $p_J$ denote the canonical projection of $U_+$ onto $\bigoplus_{i\in J} \bbf e_i$ defined by
$$p_J(\sum_{i\in I_+} a_ie_i)=\sum_{i\in J} a_ie_i.$$

\subsection{First reduction}

Let $S$ be an $\alpha_1$-dimensional subspace of $U_+$. Put $s_1=\dim p_{I_{(\overline{6})}}S$. Then there exists an $h_1\in R_V$ such that
$$p_{I_{(\overline{6})}}h_1S=h_1p_{I_{(\overline{6})}}S=U_{(6)}^{+,s_1}=\bigoplus_{i\in I_{(\overline{6},s_1)}} \bbf e_i$$
where $I_{(\overline{6},s_1)}=\{d+a_1-b_6+1,\ldots,d+a_1-b_6+s_1\}$ by Lemma \ref{lem3.12}. Noting that $\bigoplus_{i\in I_+-I_{(\overline{6})}} \bbf e_i=U_+^{\perp (U_-\cap V)}$, we can write
$$h_1S=(h_1S\cap U_+^{\perp (U_-\cap V)})\oplus (\bigoplus_{i\in I_{(\overline{6},s_1)}} \bbf v_i)$$
with vectors $v_i\in e_i+U_+^{\perp (U_-\cap V)}$ for $i\in I_{(\overline{6},s_1)}$. By Lemma \ref{lem3.15}, we can take an element $g_1\in R_V$ such that
$$g_1v_i=e_i\quad\mbox{for all }i\in I_{(\overline{6},s_1)}$$
and that $g_1$ acts trivially on $U_+^{\perp (U_-\cap V)}$. So we have
$$g_1h_1S=S_1\oplus U_{(6)}^{+,s_1}$$
where $S_1=g_1h_1S\cap U_+^{\perp (U_-\cap V)}=S\cap U_+^{\perp (U_-\cap V)}$.

\subsection{Second reduction}

Put $s_2=\dim p_{I_{(\overline{8})}}S_1$. Then there exists an $h_2\in R_V$ such that
$$p_{I_{(\overline{8})}}h_2S_1=h_2p_{I_{(\overline{8})}}S_1=U_{(8)}^{+2,s_2}=\bigoplus_{i\in I_{(8,2)}} \bbf e_{\overline{\eta_8(i)}}$$
where $I_{(8,2)}=\{a_0+b_3+b_7+b_8-s_2+1,\ldots,a_0+b_3+b_7+b_8\}$ by Lemma \ref{lem3.12}. Since $\bigoplus_{i\in I_+-I_{(\overline{8})}-I_{(\overline{6})}} \bbf e_i=U_+^{\perp ((U_-+W_+)\cap V)}$, we can write
$$h_2S_1=(h_2S_1\cap U_+^{\perp ((U_-+W_+)\cap V)})\oplus (\bigoplus_{i\in I_{(8,2)}} \bbf v_i)$$
with vectors $v_i\in e_{\overline{\eta_8(i)}}+U_+^{\perp ((U_-+W_+)\cap V)}$ for $i\in  I_{(8,2)}$. By Lemma \ref{lem3.16}, we can take an element $g_2\in R_V$ such that
$$g_2v_i=e_{\overline{\eta_8(i)}}\quad\mbox{for all }i\in I_{(8,2)}.$$
(Note that $g_2$ does not act trivially on $U_+^{\perp ((U_-+W_+)\cap V)}$ in general.) So we have
$$g_2h_2S_1=S_2\oplus U_{(8)}^{+2,s_2}$$
where $S_2=g_2h_2S_1\cap U_+^{\perp ((U_-+W_+)\cap V)}=g_2S_1\cap U_+^{\perp ((U_-+W_+)\cap V)}$. Since $g_2$ and $h_2$ acts trivially on $U_{(6)}^+$, we have
$$g_2h_2g_1h_1S=S_2\oplus U_{(8)}^{+2,s_2}\oplus U_{(6)}^{+,s_1}.$$

\subsection{Third reduction}

The index set $I_{(8)}$ is decomposed as $I_{(8)}=I_{(8,1)}\sqcup I_{(8,2)}$ where
$$I_{(8,1)} =\{a_0+b_3+b_7+1,\ldots,a_0+b_3+b_7+b_8-s_2\}.$$
Let $U_{(8)}^{+1}=U_{(8,1)}^{+1}\oplus U_{(8,2)}^{+1}$ be the corresponding direct sum decomposition of $U_{(8)}^{+1}$ into
$$U_{(8,1)}^{+1}=\bigoplus_{i\in I_{(8,1)}} \bbf e_i \mand U_{(8,2)}^{+1}=\bigoplus_{i\in I_{(8,2)}} \bbf e_i.$$
Then we have easily the following lemma.

\begin{lemma} Let $h=h_8(A)$ be the element of $R_V$ defined in Lemma \ref{lem3.12} for $A\in {\rm GL}(U_{(8)}^{+1})$. Then
$$hU_{(8)}^{+2,s_2}=U_{(8)}^{+2,s_2}\Longleftrightarrow AU_{(8,1)}^{+1}=U_{(8,1)}^{+1}.$$
\label{lem4.1}
\end{lemma}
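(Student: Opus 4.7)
The plan is to unpack the definition of $h=h_{(8)}(A)$ from Lemma \ref{lem3.12} and then reduce the claim to an elementary fact about block-triangular matrices. By part (ii) of that lemma, with $A=\{a_{i,k}\}$ and $A^{-1}=\{b_{i,k}\}$,
$$he_k=\sum_{i\in I_{(8)}}a_{i,k}\,e_i\mand he_{\overline{\eta_8(k)}}=\sum_{i\in I_{(8)}}b_{k,i}\,e_{\overline{\eta_8(i)}}$$
for every $k\in I_{(8)}$. So with respect to the obvious bases $\{e_i\}_{i\in I_{(8)}}$ of $U_{(8)}^{+1}$ and $\{e_{\overline{\eta_8(i)}}\}_{i\in I_{(8)}}$ of $U_{(8)}^{+2}$, the operator $h$ is represented by the matrix $A$ on the first factor and by ${}^tA^{-1}$ on the second.

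Next I would split every matrix indexed by $I_{(8)}=I_{(8,1)}\sqcup I_{(8,2)}$ into four blocks, writing
$$A=\bp A_{11} & A_{12} \\ A_{21} & A_{22} \ep \mand A^{-1}=\bp B_{11} & B_{12} \\ B_{21} & B_{22} \ep,$$
where subscript $1$ refers to $I_{(8,1)}$ and subscript $2$ to $I_{(8,2)}$. From the formula for $he_k$, the condition $AU_{(8,1)}^{+1}=U_{(8,1)}^{+1}$ is visibly equivalent to $a_{i,k}=0$ for all $k\in I_{(8,1)},\ i\in I_{(8,2)}$, i.e.\ to $A_{21}=0$. Since $U_{(8)}^{+2,s_2}=\bigoplus_{i\in I_{(8,2)}}\bbf e_{\overline{\eta_8(i)}}$, the formula for $he_{\overline{\eta_8(k)}}$ translates the condition $hU_{(8)}^{+2,s_2}=U_{(8)}^{+2,s_2}$ into $b_{k,i}=0$ for all $k\in I_{(8,2)},\ i\in I_{(8,1)}$, i.e.\ to $B_{21}=0$.

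The final step is the standard observation that the inverse of an invertible block upper-triangular matrix is again block upper-triangular: reading off the $(2,1)$ block of $AA^{-1}=I$ gives $A_{21}B_{11}+A_{22}B_{21}=0$, which combined with the invertibility of $A_{22}$ (resp.\ $B_{11}$) shows $A_{21}=0\Longleftrightarrow B_{21}=0$. There is no real obstacle here; the lemma is merely a bookkeeping statement, recorded now so that the forthcoming reduction on the $I_{(8,1)}$-block can be carried out while keeping the already-normalized subspaces $U_{(6)}^{+,s_1}$ and $U_{(8)}^{+2,s_2}$ intact.
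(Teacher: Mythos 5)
Your proof is correct and supplies the details that the paper omits (the paper states the lemma with the remark ``Then we have easily the following lemma'' and gives no proof). You correctly read off from Lemma \ref{lem3.12}(ii) that $h$ acts on $U_{(8)}^{+1}$ by $A$ and on $U_{(8)}^{+2}$ by ${}^tA^{-1}$ with respect to the bases $\{e_i\}_{i\in I_{(8)}}$ and $\{e_{\overline{\eta_8(i)}}\}_{i\in I_{(8)}}$, and the block-triangular argument showing $A_{21}=0\Longleftrightarrow B_{21}=0$ is exactly the right elementary fact; one could state it even more briefly by observing that an invertible map preserves a subspace if and only if its inverse does, but your version is fine.
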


Write $p=p_{I_{(8,2)}},\ q=p_{I_{(10)}}$ and $r=p_{I_{(12)}}$. Put $U_0=U_{(1)}^+\oplus U_{(2)}^+\oplus U_{(3)}^+\oplus U_{(7)}^+\oplus U_{(5)}^+\oplus U_{(9)}^+\oplus U_{(8)}^{+1}$.
Put $s_3=\dim p(S_2\cap U_0)$ and $s_4=\dim p(S_2\cap (U_0\oplus U_{(12)}^{+1}))-s_3$. Define a decomposition $I_{(8,2)}=I_{(8,2,1)}\sqcup I_{(8,2,2)}\sqcup I_{(8,2,3)}$ where
\begin{align*}
I_{(8,2,1)} & =\{t_0+1,\ldots,t_0+s_3\},\quad
I_{(8,2,2)} =\{t_0+s_3+1,\ldots,t_0+s_3+s_4\} \\
\mand I_{(8,2,3)} & =\{t_0+s_3+s_4+1,\ldots,t_0+s_2\}
\end{align*}
($t_0=a_0+b_3+b_7+b_8-s_2$). Let $U_{(8,2)}^{+1}=U_{(8,2,1)}^{+1}\oplus U_{(8,2,2)}^{+1}\oplus U_{(8,2,3)}^{+1}$ denote the corresponding direct sum decomposition. Then there exists an $h_3\in R_V$ such that
$$h_3p(S_2\cap U_0)=U_{(8,2,1)}^{+1},\quad h_3p(S_2\cap (U_0\oplus U_{(12)}^{+1}))=U_{(8,2,1)}^{+1}\oplus U_{(8,2,2)}^{+1},\quad h_3p=ph_3$$
and that $h_3e_i=e_i$ for all $i\in I_+-I_{(8,2)}-\overline{\eta_8(I_{(8,2)})}$ by Lemma \ref{lem3.12}.

Put $s_5=\dim q(S_2\cap (U_0\oplus U_{(10)}^+))$ and $s_6=\dim q(S_2\cap (U_0\oplus U_{(10)}^+\oplus U_{(12)}^{+1}))-s_5$. Define a decomposition $I_{(10)}=I_{(10,1)}\sqcup I_{(10,2)}\sqcup I_{(10,3)}$ where
\begin{align*}
I_{(10,1)} & =\{t_0+s_2+1,\ldots,t_0+s_2+s_5\}, \\
I_{(10,2)} & =\{t_0+s_2+s_5+1,\ldots,t_0+s_2+s_5+s_6\} \\
\mand I_{(10,3)} & =\{t_0+s_2+s_5+s_6+1,\ldots,t_0+s_2+b_{10}\}.
\end{align*}
Let $U_{(10)}^+=U_{(10,1)}^+\oplus U_{(10,2)}^+\oplus U_{(10,3)}^+$ denote the corresponding direct sum decomposition. Then there exists an $h_4\in R_V$ such that
$$h_4q(S_2\cap (U_0\oplus U_{(10)}^+))=U_{(10,1)}^+,\quad h_4q(S_2\cap (U_0\oplus U_{(10)}^+\oplus U_{(12)}^{+1}))=U_{(10,1)}^+\oplus U_{(10,2)}^+,$$
$h_4q=qh_4$ and that $h_4e_i=e_i$ for all $i\in I_+-I_{(10)}$ by Lemma \ref{lem3.12}. Put $S'_2=h_4h_3S_2$. Then we have
$$p(S'_2\cap (U_0\oplus U_{(12)}^{+1}))=U_{(8,2,1)}^{+1}\oplus U_{(8,2,2)}^{+1}$$
$$\mand q(S'_2\cap (U_0\oplus U_{(10)}^+\oplus U_{(12)}^{+1}))=U_{(10,1)}^+\oplus U_{(10,2)}^+.$$
We can write
$$S'_2\cap (U_0\oplus U_{(10)}^+\oplus U_{(12)}^{+1})=(S'_2\cap (U_0\oplus U_{(12)}^{+1}))\oplus \bigoplus_{i\in I_{(10,1)}\sqcup I_{(10,2)}} \bbf v_i$$
with vectors $v_i\in e_i+U_0+U_{(12)}^{+1}$. Define a subspace
$$U'_0=U_{(1)}^+\oplus U_{(2)}^+\oplus U_{(3)}^+\oplus U_{(7)}^+\oplus U_{(5)}^+\oplus U_{(9)}^+\oplus U_{(8,1)}^{+1}$$
of $U_0$. Then $U_0=U'_0\oplus U_{(8,2)}^{+1}$. By Lemma \ref{lem3.14} (i), we can take an element $g_3\in R_V$ such that
$$g_3v_i\in e_i+U'_0+U_{(12)}^{+1}$$
for $i\in I_{(10,1)}\sqcup I_{(10,2)}$ and that $g_3e_i=e_i$ for $i\in I_+-(I_{(10,1)}\sqcup I_{(10,2)})$. Put $S''_2=g_3S'_2$. Then
$$p_{I_{(8,2)}\sqcup I_{(10)}} (S''_2\cap (U_0\oplus U_{(10)}^+\oplus U_{(12)}^{+1}))=U_{(8,2,1)}^{+1}\oplus U_{(8,2,2)}^{+1}\oplus U_{(10,1)}^+\oplus U_{(10,2)}^+.$$

Put $S_3=S''_2\cap (U_0\oplus U_{(10)}^+)$. Since $p_{I_{(8,2)}\sqcup I_{(10)}} S_3=U_{(8,2,1)}^{+1}\oplus U_{(10,1)}^+$, we can take a complementary subspace $S_4$ of $S_3$ in $S''_2\cap (U_0\oplus U_{(10)}^+\oplus U_{(12)}^{+1})$ such that
\begin{equation}
p_{I_{(8,2)}\sqcup I_{(10)}} S_4=U_{(8,2,2)}^{+1}\oplus U_{(10,2)}^+.
\label{eq4.1}
\end{equation}
In the same way, we can take a complementary subspace $S_5$ of $S_3\oplus S_4=S''_2\cap (U_0\oplus U_{(10)}^+\oplus U_{(12)}^{+1})$ in $S''_2\cap (U_0\oplus U_{(10)}^+\oplus U_{(12)}^{+1}\oplus U_{(15)}^+)$ and a complementary subspace $S_6$ of $S_3\oplus S_4\oplus S_5=S''_2\cap (U_0\oplus U_{(10)}^+\oplus U_{(12)}^{+1}\oplus U_{(15)}^+)$ in $S''_2$ such that
$$p_{I_{(8,2)}\sqcup I_{(10)}} (S_5\oplus S_6)\subset U_{(8,2,3)}^{+1}\oplus U_{(10,3)}^+.$$
Hence $S''_2$ is decomposed as
$$S''_2=S_3\oplus S_4\oplus S_5\oplus S_6.$$

\subsection{Finiteness of orbits for $S_3$-part}

Put $\gamma=\dim S_3$. We can show that there are a finite number of $R_V$-orbits on the Grassmann variety consisting of $\gamma$-dimensional subspaces of $U_{(S_3)}=U'_0\oplus U_{(8,2,1)}^{+1}\oplus U_{(10,1)}^+$ as follows. The space $U_{(S_3)}$ can be decomposed as
$$U_{(S_3)}=U_{(S_3,1)}\oplus U_{(S_3,2)}$$
where
$$U_{(S_3,1)}=U_{(1)}^+\oplus U_{(2)}^+\oplus U_{(7)}^+\oplus U_{(8,1)}^{+1}\oplus U_{(8,2,1)}^{+1}\oplus U_{(10,1)}^+\mand U_{(S_3,2)}=U_{(3)}^+\oplus U_{(5)}^+\oplus U_{(9)}^+.$$
By Lemma \ref{lem3.12} and Lemma \ref{lem3.14} (i), $R_V|_{U_+}$ contains parabolic subgroups of ${\rm GL}(U_{(S_3,1)})$ and ${\rm GL}(U_{(S_3,2)})$ stabilizing the flags
\begin{align*}
U_{(1)}^+ & \subset U_{(1)}^+\oplus U_{(2)}^+\subset U_{(1)}^+\oplus U_{(2)}^+\oplus U_{(7)}^+\subset U_{(1)}^+\oplus U_{(2)}^+\oplus U_{(7)}^+\oplus U_{(8,1)}^{+1} \\
& \qquad \subset U_{(1)}^+\oplus U_{(2)}^+\oplus U_{(7)}^+\oplus U_{(8,1)}^{+1}\oplus U_{(8,2,1)}^{+1} \\
\mand U_{(3)}^+ & \subset U_{(3)}^+\oplus U_{(5)}^+,
\end{align*}
respectively. (The first parabolic subgroup stabilizes $U_{(8)}^{+2,s_2}$ by Lemma \ref{lem4.1}.) So it contains the product $B_1\times B_2$ of Borel subgroups. By Proposition \ref{lem8.1} in the appendix, there are a finite number of $B_1\times B_2$-orbits on the Grassmann variety. (Remark: We may also directly get explicit but complicated representatives.)

\subsection{A standard form of $S_4$}

The projection $r=p_{I_{(12)}}$ is injective on $S_4$. So we have a bijection
$$r: \widetilde{S_4}\stackrel{\sim}{\to} r(S_4)$$
where $\widetilde{S_4}=p_{I_{(8,2)}\sqcup I_{(10)}\sqcup I_{(12)}} (S_4)$. By (\ref{eq4.1}), we have a surjection
$$f=p_{I_{(8,2)}\sqcup I_{(10)}} \circ (r|_{\widetilde{S_4}})^{-1}: r(S_4)\to U_{(8,2,2)}^{+1}\oplus U_{(10,2)}^+$$
with the kernel $r(S_4\cap (U'_0\oplus U_{(12)}^{+1}))$. Put $s_7=\dim r(S_4\cap (U'_0\oplus U_{(12)}^{+1}))$. Then $\dim r(S_4)=s_4+s_6+s_7$. We can take a basis $v_1,\ldots, v_{s_4+s_6+s_7}$ of $r(S_4)$ such that
\begin{align*}
r(S_4\cap (U'_0\oplus U_{(12)}^{+1})) & =\bbf v_1\oplus\cdots\oplus \bbf v_{s_7}, \\
f(v_{s_7+i}) & =e_{t_0+s_3+i}\mbox{ for }i=1,\ldots,s_4 \\
\mbox{and that}\quad f(v_{s_7+s_4+i}) & =e_{t_0+s_2+s_5+i}\mbox{ for }i=1,\ldots,s_6.
\end{align*}
Take an element $A\in {\rm GL}(U_{(12)}^{+1})$ such that
$$Av_i=e_{d+b_5+b_9+i}\quad\mbox{for }i=1,\ldots,s_4+s_6+s_7$$
and put $h_5=h_{(12)}(A)\in R_V$. Then
$$p_{I_{(8,2)}\sqcup I_{(10)}\sqcup I_{(12)}} (h_5S_4)=h_5\widetilde{S_4}=U(s_7,s_4,s_6)$$
where
\begin{align*}
U(s_7,s_4,s_6) & =(\bigoplus_{i=1}^{s_7} \bbf e_{d+b_5+b_9+i})\oplus (\bigoplus_{i=1}^{s_4} \bbf (e_{d+b_5+b_9+s_7+i}+e_{t_0+s_3+i})) \\
& \qquad \oplus (\bigoplus_{i=1}^{s_6} \bbf (e_{d+b_5+b_9+s_7+s_4+i}+e_{t_0+s_2+s_5+i})).
\end{align*}
Define a decomposition
$I_{(12)}=I_{(12,1)}\sqcup I_{(12,2)}\sqcup I_{(12,3)}\sqcup I_{(12,4)}$
of $I_{(12)}$ into subspaces
\begin{align*}
I_{(12,1)} & =\{d+b_5+b_9+1,\ldots,d+b_5+b_9+s_7\}, \\
I_{(12,2)} & =\{d+b_5+b_9+s_7+1,\ldots,d+b_5+b_9+s_7+s_4\}, \\
I_{(12,3)} & =\{d+b_5+b_9+s_7+s_4+1,\ldots,d+b_5+b_9+s_7+s_4+s_6\} \\
\mand I_{(12,4)} & =\{d+b_5+b_9+s_7+s_4+s_6+1,\ldots,d+b_5+b_9+b_{12}\}.
\end{align*}
Define bijections $\sigma: I_{(12,2)}\to I_{(8,2,2)}$ and $\tau: I_{(12,3)}\to I_{(10,2)}$ by
$$\sigma (d+b_5+b_9+s_7+i)=t_0+s_3+i\mand \tau (d+b_5+b_9+s_7+s_4+i)=t_0+s_2+s_5+i,$$
respectively. Then we can write
$$U(s_7,s_4,s_6)=(\bigoplus_{i\in I_{(12,1)}} \bbf e_i)\oplus (\bigoplus_{i\in I_{(12,2)}} \bbf(e_i+e_{\sigma(i)}))\oplus (\bigoplus_{i\in I_{(12,3)}} \bbf(e_i+e_{\tau(i)})).$$

\begin{lemma} For $i\in I_{(12)}$ and $u\in U'_0$, there exists a $g\in R_V$ such that $g(e_i+u)=e_i$ and that $g$ acts trivially on $U_{(S_3)}\oplus U_{(8)}^{+2,s_2}\oplus U_{(6)}^{+,s_1}$.
\label{lem4.2}
\end{lemma}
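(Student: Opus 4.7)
The plan is to cancel $u$ coordinate by coordinate using the elementary unipotent elements of $R_V$ built in Section~3.6.

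Expand $u=\sum_\ell \mu_\ell e_\ell$ in the standard basis of $U'_0=U_{(1)}^+\oplus U_{(2)}^+\oplus U_{(3)}^+\oplus U_{(7)}^+\oplus U_{(5)}^+\oplus U_{(9)}^+\oplus U_{(8,1)}^{+1}$, so $\ell$ runs over $I'_0:=I_{(1)}\sqcup I_{(2)}\sqcup I_{(3)}\sqcup I_{(7)}\sqcup I_{(5)}\sqcup I_{(9)}\sqcup I_{(8,1)}$. A short walk in the Hasse diagram of Section~3.6 shows $I_{(j(\ell))}<I_{(12)}$ in every case. For $\ell$ with $j(\ell)\in\{1,2,3,5,7,9\}$ the pair $(j(\ell),12)$ is not on the exclusion list of Lemma~\ref{lem3.14}~(i), so that lemma supplies $g_\ell:=g_{\ell,i}(-\mu_\ell)\in R_V$ acting as the transvection $e_i\mapsto e_i-\mu_\ell e_\ell$ and fixing every other basis vector indexed by $I_+-\{i\}$. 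For $\ell\in I_{(8,1)}\subset I_{(8)}$ the pair $(8,12)$ is exceptional, and I invoke Lemma~\ref{lem3.14}~(ii) instead, obtaining $g_\ell\in R_V$ with $g_\ell e_i=e_i-\mu_\ell e_\ell$, the side effect $g_\ell e_{\overline{\eta_8(\ell)}}=e_{\overline{\eta_8(\ell)}}+\mu_\ell e_{\overline{\kappa(i)}}$, and identity on the remaining basis vectors.

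Let $g$ be the product of the $g_\ell$ in any fixed order. Since $i\in I_{(12)}$ and $\overline{\eta_8(\ell)}\in\overline{\eta_8(I_{(8,1)})}$ are both disjoint from $I'_0$, every standard basis vector of $U'_0$ is fixed by every $g_\ell$, giving $g(u)=u$. Each $g_\ell$ contributes one summand $-\mu_\ell e_\ell$ to $g(e_i)$, so $g(e_i)=e_i-u$ and therefore $g(e_i+u)=e_i$, as required.

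It remains to check that $g$ fixes every basis vector of $U_{(S_3)}\oplus U_{(8)}^{+2,s_2}\oplus U_{(6)}^{+,s_1}$. That subspace is spanned by the $e_k$ with $k$ in the disjoint union of $I'_0$, $I_{(8,2,1)}$, $I_{(10,1)}$, $\overline{\eta_8(I_{(8,2)})}$, and $I_{(\overline{6},s_1)}\subset I_{(\overline{6})}$. None of these indices equals $i\in I_{(12)}$, and the only side-effect indices appearing above, namely the $\overline{\eta_8(\ell)}$ for $\ell\in I_{(8,1)}$, lie in $\overline{\eta_8(I_{(8,1)})}$, which is disjoint from $\overline{\eta_8(I_{(8,2)})}$ and from every other set in the list. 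Hence every $g_\ell$, and therefore $g$, fixes the whole preservation subspace pointwise.

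There is no real obstacle: once Lemma~\ref{lem3.14} is available the argument reduces to bookkeeping of index sets. The only point demanding care is the $\ell\in I_{(8,1)}$ case, where the side effect produced by Lemma~\ref{lem3.14}~(ii) could a priori perturb $U_{(8)}^{+2,s_2}$; fortunately the third reduction was arranged precisely so that $I_{(8,1)}$ and $I_{(8,2)}$ are disjoint, and the other side-effect target $\overline{\kappa(i)}\in I_{(\overline{12})}$ likewise lies outside the preservation set.
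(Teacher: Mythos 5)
Your proof is correct and follows essentially the same route as the paper: clear the $U_{(1)}^+\oplus\cdots\oplus U_{(9)}^+$ components using Lemma~\ref{lem3.14}~(i) and the $U_{(8,1)}^{+1}$ components using Lemma~\ref{lem3.14}~(ii), then observe that the only side effects land in $\overline{\eta_8(I_{(8,1)})}$, which is disjoint from $\overline{\eta_8(I_{(8,2)})}$ and from the rest of the preservation set. The only cosmetic difference is that the paper packages the first group of corrections into a single element $g_1$ while you enumerate them coordinate by coordinate; the underlying generators and the index-set bookkeeping are identical.
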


\begin{proof} Write $u=u_1+u_2$ with $u_1\in U_{(1)}^+\oplus U_{(2)}^+\oplus U_{(3)}^+\oplus U_{(7)}^+\oplus U_{(5)}^+\oplus U_{(9)}^+$ and $u_2\in U_{(8,1)}^{+1}$. By Lemma \ref{lem3.14} (i), we can take an element $g_1\in R_V$ such that $g_1(e_i+u_1)=e_i$ and that $g_1(e_k)=e_k$ for $k\in I_+-\{i\}$.

Write $u_2=\sum_{k\in I_{(8,1)}} \mu_k e_k$ and put $g_2=\prod_{k\in I_{(8,1)}} g_{i,k}(-\mu_k)$. Then by Lemma \ref{lem3.14} (ii), we have $g_2(e_i+u_2)=e_i$ and $g_2$ acts trivially on $U_{(8)}^{+2,s_2}$ because $\overline{\eta_8(i)}\notin \overline{\eta_8(I_{(8,2)})}$. The element $g=g_2g_1$ satisfies the desired property.
\end{proof}

We can write
\begin{align*}
h_5S_4 & =(\bigoplus_{i\in I_{(12,1)}} \bbf (e_i+u_i))\oplus (\bigoplus_{i\in I_{(12,2)}} \bbf (e_i+e_{\sigma(i)}+u_i)) 
 \oplus (\bigoplus_{i\in I_{(12,3)}} \bbf (e_i+e_{\tau(i)}+u_i)).
\end{align*}
with some $u_i\in U'_0$ for $i\in I_{(12,1)}\sqcup I_{(12,2)}\sqcup I_{(12,3)}$. By Lemma \ref{lem4.2}, there exist $g^{(i)}\in R_V$ for $i\in I_{(12,1)}\sqcup I_{(12,2)}\sqcup I_{(12,3)}$ such that
$$g^{(i)}(e_i+u_i)=e_i$$
and that $g^{(i)}$ acts trivially on $U_{(S_3)}\oplus U_{(8)}^{+2,s_2}\oplus U_{(6)}^{+,s_1}$. Put $g_4=\prod_{i\in I_{(12,1)}\sqcup I_{(12,2)}\sqcup I_{(12,3)}} g^{(i)}$. Then
$$g_4h_5S_4=U(s_7,s_4,s_6).$$

\subsection{Normalization of $g_4h_5(S_5\oplus S_6)$}

Since $g_4h_5S''_2=S_3\oplus g_4h_5S_4\oplus g_4h_5(S_5\oplus S_6)=S_3\oplus U(s_7,s_4,s_6)\oplus g_4h_5(S_5\oplus S_6)$, we have only to consider the orbit of $g_4h_5(S_5\oplus S_6)$ by the action of the subgroup
\begin{align*}
R'_V & =\{g\in R_V\mid g(S_3\oplus U(s_7,s_4,s_6)\oplus U_{(8)}^{+2,s_2}\oplus U_{(6)}^{+,s_1}) \\
& \qquad =S_3\oplus U(s_7,s_4,s_6)\oplus U_{(8)}^{+2,s_2}\oplus U_{(6)}^{+,s_1}\}
\end{align*}
of $R_V$.

Since $p_{I_{(15)}}$ is injective on $g_4h_5S_5$, there exit linear maps
$$f_1: p_{I_{(15)}} (g_4h_5S_5)\to U'_0\oplus U_{(12)}^{+1} \mand f_2: p_{I_{(15)}} (g_4h_5S_5)\to U_{(8,2,3)}^{+1}\oplus U_{(10,3)}^+$$
such that
$$g_4h_5S_5= \{u+f_1(u)+f_2(u)\mid u\in p_{I_{(15)}} (g_4h_5S_5)\}.$$
Extend $f_1$ and $f_2$ linearly on $U_{(15)}^+$. Then by Lemma \ref{lem3.14} (i), (iii) and (iv), there exists an element $g_5\in R_V$ such that
$$g_5(u+f_1(u))=u\quad\mbox{for }u\in U_{(15)}^+.$$
The element $g_5$ is a product of elements of the form $g_{i,k}(\mu)$ with $k\in I_{(15)}, i\in I_{(1)}\sqcup I_{(2)}\sqcup I_{(3)}\sqcup I_{(7)}\sqcup I_{(5)}\sqcup I_{(9)}\sqcup I_{(8,1)}\sqcup I_{(12)}$ and $\mu\in\bbf$. When $i\in I_{(8,1)}$, $e_{\overline{\eta_8(i)}}$ is not contained in $U_{(8)}^{+2,s_2}$ and hence $g_{i,k}(\mu)$ acts trivially on $U_{(8)}^{+2,s_2}$. Similarly we see that each $g_{i,k}(\mu)$ acts trivially on $S_3\oplus U(s_7,s_4,s_6)\oplus U_{(8)}^{+2,s_2}\oplus U_{(6)}^{+,s_1}$. Hence $g_5\in R'_V$. (Note that $g_{i,k}(\mu)$ for $k\in I_{(12)}$ does not stabilize $g_4h_5S_6$ in general.) Thus we have $g_5\in R'_V$ such that
$$g_5g_4h_5S_5\subset U_{(8,2,3)}^{+1}\oplus U_{(10,3)}^+\oplus U_{(15)}^+.$$

Since $p_{I_{(13)}\sqcup I_{(\overline{12})}}$ is injective on $g_5g_4h_5S_6$, there exit linear maps
\begin{align*}
f_3 & : p_{I_{(13)}\sqcup I_{(\overline{12})}} (g_5g_4h_5S_5)\to U'_0\oplus U_{(12)}^{+1} \\
\mand f_4 & : p_{I_{(13)}\sqcup I_{(\overline{12})}} (g_5g_4h_5S_5)\to U_{(8,2,3)}^{+1}\oplus U_{(10,3)}^+\oplus U_{(15)}^+
\end{align*}
such that
$$g_5g_4h_5S_5= \{u+f_3(u)+f_4(u)\mid u\in p_{I_{(13)}\sqcup I_{(\overline{12})}} (g_5g_4h_5S_5)\}.$$
Extend $f_3$ and $f_4$ linearly on $U_{(13)}^+\oplus U_{(12)}^{+2}$. Then by Lemma \ref{lem3.14} (i), there exists an element $g_6\in R_V$ such that
$$g_6(u+f_3(u))=u\quad\mbox{for }u\in U_{(13)}^+\oplus U_{(12)}^{+2}.$$
Since $g_6e_i=e_i$ for $i\in I_+-I_{(13)}-I_{(\overline{12})}$, we have
$g_6\in R'_V$ and $g_6$ acts trivially on $g_5g_4h_5S_5$. Thus we have
$$g_6g_5g_4h_5S_6\subset U_{(8,2,3)}^{+1}\oplus U_{(10,3)}^+\oplus U_{(13)}^+\oplus U_{(12)}^{+2}\oplus U_{(15)}^+$$
and
$$g_6g_5g_4h_5(S_5\oplus S_6)\subset U_{(8,2,3)}^{+1}\oplus U_{(10,3)}^+\oplus U_{(13)}^+\oplus U_{(12)}^{+2}\oplus U_{(15)}^+.$$

\subsection{Construction of a subgroup of $R'_V$}

Put $U_{(12,j)}^{+1}=\bigoplus_{i\in I_{(12,j)}} \bbf e_i$ for $j=1,2,3,4$.
Define linear isomorphisms $\sigma: U_{(12,2)}^{+1}\stackrel{\sim}{\to} U_{(8,2,2)}^{+1}$ and $\tau: U_{(12,3)}^{+1}\stackrel{\sim}{\to} U_{(10,2)}^+$ by
$$\sigma (e_i)=e_{\sigma(i)}\mand \tau (e_i)=e_{\tau(i)},$$
respectively. For $A_j\in {\rm GL}(U_{(12,j)}^{+1})$, define $\widetilde{A_j}\in {\rm GL}(U_{(12)}^{+1})$ by
$$\widetilde{A_j}v=\begin{cases} A_jv & \text{for $v\in U_{(12,j)}^{+1}$}, \\
v & \text{for $v\in U_{(12,k)}^{+1}$ with $k\ne j$.}
\end{cases}$$
We also define $\widetilde{B}\in {\rm GL}(U_{(8)}^{+1})$ for $B\in {\rm GL}(U_{(8,2,2)}^{+1})$ and $\widetilde{C}\in {\rm GL}(U_{(10)}^+)$ for $C\in {\rm GL}(U_{(10,2)}^+)$ in the same way. Then the following lemma is clear from the definition of $U(s_7,s_4,s_6)$.

\begin{lemma} {\rm (i)} For $A_1\in {\rm GL}(U_{(12,1)}^{+1})$ and $A_4\in {\rm GL}(U_{(12,4)}^{+1})$, $\widetilde{h}(A_1)=h_{(12)}(\widetilde{A_1})$ and $\widetilde{h}(A_4)=h_{(12)}(\widetilde{A_4})$ are elements of $R'_V$.

{\rm (ii)} For $A_2\in {\rm GL}(U_{(12,2)}^{+1})$, $\widetilde{h}(A_2)=h_{(8)}(\widetilde{\sigma A_2\sigma^{-1}})h_{(12)}(\widetilde{A_2})$ is an element of $R'_V$.

{\rm (iii)} For $A_3\in {\rm GL}(U_{(12,3)}^{+1})$, $\widetilde{h}(A_3)=h_{(10)}(\widetilde{\tau A_3\tau^{-1}})h_{(12)}(\widetilde{A_3})$ is an element of $R'_V$.
\label{lem4.3}
\end{lemma}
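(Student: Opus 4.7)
The plan is to verify directly two things: that each constructed element lies in $R_V$, and that it stabilizes the four summands $S_3$, $U(s_7,s_4,s_6)$, $U_{(8)}^{+2,s_2}$, $U_{(6)}^{+,s_1}$ whose sum defines $R'_V$. The first is immediate from Lemma \ref{lem3.12}, which already places $h_{(12)}(\cdot),h_{(8)}(\cdot),h_{(10)}(\cdot)$ inside $R_V$; any product of them is therefore in $R_V$.

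For the stabilization, three of the four summands are nearly automatic in all three cases. The subspace $U_{(6)}^{+,s_1}$ is spanned by basis vectors $e_i$ with $i\in I_{(\overline{6})}$, and none of $h_{(12)},h_{(8)},h_{(10)}$ has support on such indices, so each $\widetilde{h}(A_j)$ is the identity there. For $U_{(8)}^{+2,s_2}=\bigoplus_{k\in I_{(8,2)}}\bbf e_{\overline{\eta_8(k)}}$ the only potentially nontrivial action comes from $h_{(8)}$ in case (ii); but $\widetilde{\sigma A_2\sigma^{-1}}$ is block diagonal for the decomposition $U_{(8)}^{+1}=U_{(8,1)}^{+1}\oplus U_{(8,2,1)}^{+1}\oplus U_{(8,2,2)}^{+1}\oplus U_{(8,2,3)}^{+1}$, so its inverse preserves $U_{(8,2)}^{+1}$, and Lemma \ref{lem3.12}(ii) translates this into preservation of $U_{(8)}^{+2,s_2}$. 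For $S_3$ the construction in Section 4.3 gives $S_3\subset U_0\oplus U_{(10,1)}^+$, whose basis indices lie in $I_0\sqcup I_{(8,1)}\sqcup I_{(8,2,1)}\sqcup I_{(10,1)}$; on all of these the relevant extended matrices $\widetilde{A_j},\widetilde{\sigma A_2\sigma^{-1}},\widetilde{\tau A_3\tau^{-1}}$ are the identity, so $\widetilde{h}(A_j)$ fixes $S_3$ pointwise.

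The substantive point is the preservation of $U(s_7,s_4,s_6)$. Its summand $\bigoplus_{i\in I_{(12,1)}}\bbf e_i$ is stabilized because $\widetilde{A_j}$ preserves $U_{(12,1)}^{+1}$ in every case, and similarly $\widetilde{A_4}$ touches only $U_{(12,4)}^{+1}$, which does not appear in $U(s_7,s_4,s_6)$. In case (i), the coupled summands $\bigoplus_{i\in I_{(12,2)}}\bbf(e_i+e_{\sigma(i)})$ and $\bigoplus_{i\in I_{(12,3)}}\bbf(e_i+e_{\tau(i)})$ are fixed pointwise, because $\widetilde{A_1}$ and $\widetilde{A_4}$ are the identity on $U_{(12,2)}^{+1}\oplus U_{(12,3)}^{+1}$ and $h_{(12)}$ does not touch $I_{(8)}\sqcup I_{(10)}$. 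For case (ii) the key computation is
\begin{equation*}
h_{(8)}(\widetilde{\sigma A_2\sigma^{-1}})\,h_{(12)}(\widetilde{A_2})(e_i+e_{\sigma(i)})=\sum_{j\in I_{(12,2)}}(A_2)_{j,i}(e_j+e_{\sigma(j)}),
\end{equation*}
which follows from $(\sigma A_2\sigma^{-1})e_{\sigma(i)}=\sum_j(A_2)_{j,i}e_{\sigma(j)}$ by the definition of $\sigma$; so $\bigoplus_{i\in I_{(12,2)}}\bbf(e_i+e_{\sigma(i)})$ is stabilized, while the other two coupled summands are fixed pointwise since $\widetilde{A_2}$ is the identity on $U_{(12,1)}^{+1}\oplus U_{(12,3)}^{+1}$ and $\widetilde{\sigma A_2\sigma^{-1}}$ is the identity outside $U_{(8,2,2)}^{+1}$. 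Case (iii) is symmetric, using $\tau$ and $h_{(10)}$ in place of $\sigma$ and $h_{(8)}$. The work is essentially bookkeeping; the only care needed is ensuring the identification $\sigma$ (respectively $\tau$) correctly intertwines the $A_2$-action on $U_{(12,2)}^{+1}$ with the $\sigma A_2\sigma^{-1}$-action on $U_{(8,2,2)}^{+1}$, so that the coupled vectors $e_i+e_{\sigma(i)}$ (respectively $e_i+e_{\tau(i)}$) are transported among themselves.
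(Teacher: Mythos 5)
Your proof is correct and follows the same route the paper intends (the paper declares the lemma ``clear from the definition of $U(s_7,s_4,s_6)$'' and gives no further argument, so yours is simply the explicit verification of that claim). One small imprecision: in the $U_{(8)}^{+2,s_2}$ step you should invoke Lemma~\ref{lem4.1}, whose criterion is that $A$ preserve $U_{(8,1)}^{+1}$ (not $U_{(8,2)}^{+1}$); since $\widetilde{\sigma A_2\sigma^{-1}}$ is the identity off the $U_{(8,2,2)}^{+1}$ block it satisfies both conditions, so the conclusion stands, but the citation and the stated condition should be adjusted.
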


For $(i,k)\in I_{(12,j)}\times I_{(12,j')}$ with $j<j'$ and $\mu\in\bbf$, define
$$\widetilde{g}_{i,k} (\mu)=\begin{cases} h_{(12)}(D) & \text{if $(j,j')\ne (2,3)$,} \\
g_{\sigma(i),\tau(k)}(\mu)h_{(12)}(D) & \text{if $(j,j')=(2,3)$.}
\end{cases}$$
where $D\in {\rm GL}(U_{(12)}^{+1})$ is defined by $De_k=e_k+\mu e_i$ and $De_\ell=e_\ell$ for $\ell\in I_{(12)}-\{k\}$. Then the following lemma is also clear from the definition of $U(s_7,s_4,s_6)$ and $h_{(12)}(D)$.

\begin{lemma} {\rm (i)} $\widetilde{g}_{i,k} (\mu)\in R'_V$.

{\rm (ii)} $\widetilde{g}_{i,k} (\mu)e_{\overline{\kappa(i)}}=e_{\overline{\kappa(i)}}-\mu e_{\overline{\kappa(k)}}$ and $\widetilde{g}_{i,k} (\mu)e_\ell=e_\ell$ for $\ell\in I_{(\overline{12})}-\{\overline{\kappa(i)}\}$.
\label{lem4.4}
\end{lemma}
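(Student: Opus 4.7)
The plan is to verify both assertions by a direct case analysis on $(j,j')$. In every case $h_{(12)}(D)\in R_V$ by Lemma \ref{lem3.12}(iii), and in the exceptional case $(j,j')=(2,3)$ the extra factor $g_{\sigma(i),\tau(k)}(\mu)\in R_V$ by Lemma \ref{lem3.14}(i), since the pair $(8,10)$ is not among the excluded ones. Hence $\widetilde{g}_{i,k}(\mu)\in R_V$ automatically, and to prove (i) it is enough to check that the four summands $S_3$, $U(s_7,s_4,s_6)$, $U_{(8)}^{+2,s_2}$ and $U_{(6)}^{+,s_1}$ cut out by the definition of $R'_V$ are all preserved.

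For $S_3$, $U_{(8)}^{+2,s_2}$ and $U_{(6)}^{+,s_1}$ the verification is immediate: $h_{(12)}(D)$ fixes every $e_\ell$ with $\ell\notin\widetilde{I_{(12)}}$, and the index sets of these three subspaces are all disjoint from $\widetilde{I_{(12)}}$. The extra factor $g_{\sigma(i),\tau(k)}(\mu)$, used only in the $(2,3)$ case, moves only $e_{\tau(k)}$ among the positive-index basis vectors by Lemma \ref{lem3.14}(i), and $\tau(k)\in I_{(10,2)}$ lies in none of $I_{(10,1)}$, $I_{(\overline{8})}$, $I_{(\overline{6})}$. For $U(s_7,s_4,s_6)$ I would go through the six pairs $(j,j')$ with $j<j'$. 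When $j'=4$ or $j=1$, a brief check shows $h_{(12)}(D)$ sends $U$ to $U$: either the $\widetilde{I_{(12)}}$-basis vectors it moves do not appear in any generator of $U$, or the generator $e_k+e_{\sigma(k)}$ or $e_k+e_{\tau(k)}$ is sent to itself plus $\mu e_i$ with $e_i\in U$ (possible because $i\in I_{(12,1)}$). The only case that genuinely needs the extra factor is $(2,3)$, where $h_{(12)}(D)(e_k+e_{\tau(k)})=(e_k+e_{\tau(k)})+\mu e_i$ while $e_i$ alone does not lie in $U$; post-composing with $g_{\sigma(i),\tau(k)}(\mu)$, which sends $e_{\tau(k)}$ to $e_{\tau(k)}+\mu e_{\sigma(i)}$ and fixes every other positive basis vector, converts the image to $(e_k+e_{\tau(k)})+\mu(e_i+e_{\sigma(i)})$, which does lie in $U$. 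All other generators of $U$ are fixed by both factors.

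For part (ii), $D^{-1}$ differs from the identity only in position $(i,k)$, where the entry is $-\mu$. Feeding this matrix into the formula of Lemma \ref{lem3.12}(iii) applied to the $\overline{\kappa}$-block yields $h_{(12)}(D)e_{\overline{\kappa(i)}}=e_{\overline{\kappa(i)}}-\mu e_{\overline{\kappa(k)}}$, while $h_{(12)}(D)e_{\overline{\kappa(m)}}=e_{\overline{\kappa(m)}}$ for every $m\in I_{(12)}\setminus\{i\}$; in particular $e_{\overline{\kappa(k)}}$ is fixed. In the $(2,3)$ case one further notes that $g_{\sigma(i),\tau(k)}(\mu)$ fixes both $e_{\overline{\kappa(i)}}$ and $e_{\overline{\kappa(k)}}$, since both lie in $I_+\setminus\{\tau(k)\}$, so the composite still acts as claimed. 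The main (and essentially only) obstacle I anticipate is the $(2,3)$ case of $U$-preservation in (i), where $h_{(12)}(D)$ alone takes a generator out of $U$ and the extra factor $g_{\sigma(i),\tau(k)}(\mu)$ is essential for restoring it; all remaining checks are direct against the definitions.
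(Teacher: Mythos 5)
Your proposal is correct. The paper gives no proof of this lemma, remarking only that it ``is also clear from the definition of $U(s_7,s_4,s_6)$ and $h_{(12)}(D)$,'' so what you have written is a careful elaboration of the verification the paper leaves to the reader, not a different approach. The key points are all there: $h_{(12)}(D)$ and, when needed, $g_{\sigma(i),\tau(k)}(\mu)$ lie in $R_V$; the only indices in $I_+$ moved by $h_{(12)}(D)$ are in $I_{(12)}\sqcup I_{(\overline{12})}$, which is disjoint from the index sets underlying $S_3$, $U_{(8)}^{+2,s_2}$ and $U_{(6)}^{+,s_1}$; $U(s_7,s_4,s_6)$ is preserved either because $e_k$ does not occur in any generator (cases $j'=4$) or because $D$ adds $\mu e_i$ with $i\in I_{(12,1)}$ so that $e_i\in U(s_7,s_4,s_6)$ already (cases $j=1$); and in the single exceptional case $(j,j')=(2,3)$ the added $\mu e_i$ with $i\in I_{(12,2)}$ is not in $U(s_7,s_4,s_6)$ by itself, but the correction $g_{\sigma(i),\tau(k)}(\mu)$ inserts the matching $\mu e_{\sigma(i)}$ so that the image becomes $(e_k+e_{\tau(k)})+\mu(e_i+e_{\sigma(i)})\in U(s_7,s_4,s_6)$. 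Part (ii) is the direct reading of $D^{-1}=I-\mu E_{i,k}$ into the $\overline{\kappa}$-block of Lemma \ref{lem3.12}(iii), together with the observation that $g_{\sigma(i),\tau(k)}(\mu)$ fixes every $e_\ell$ with $\ell\in I_+\setminus\{\tau(k)\}$. One small imprecision: when you say the index sets of $S_3$, $U_{(8)}^{+2,s_2}$, $U_{(6)}^{+,s_1}$ are ``disjoint from $\widetilde{I_{(12)}}$,'' what you mean (and use) is disjointness from $\widetilde{I_{(12)}}\cap I_+=I_{(12)}\sqcup I_{(\overline{12})}$, since $\widetilde{I_{(12)}}$ also contains indices outside $I_+$; this does not affect the argument.
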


Put $U_\#=U_{(8,2,3)}^{+1}\oplus U_{(10,3)}^+\oplus U_{(13)}^+\oplus U_{(12)}^{+2}$. The space $U_{(12)}^{+2}$ is decomposed as
$$U_{(12)}^{+2}=U_{(12,4)}^{+2}\oplus U_{(12,3)}^{+2}\oplus U_{(12,2)}^{+2}\oplus U_{(12,1)}^{+2}$$
where $U_{(12,j)}^{+2}=\bigoplus_{i\in I_{(12,j)}} \bbf e_{\overline{\kappa(i)}}$ for $j=1,2,3,4$.
Let $P$ denote the parabolic subgroup of ${\rm GL}(U_\#)$ stabilizing the flag
\begin{align*}
U_{(8,2,3)}^{+1} & \subset U_{(8,2,3)}^{+1}\oplus U_{(10,3)}^+\subset U_{(8,2,3)}^{+1}\oplus U_{(10,3)}^+\oplus U_{(13)}^+\subset U_{(8,2,3)}^{+1}\oplus U_{(10,3)}^+\oplus U_{(13)}^+\oplus U_{(12,4)}^{+2} \\
& \subset U_{(8,2,3)}^{+1}\oplus U_{(10,3)}^+\oplus U_{(13)}^+\oplus U_{(12,4)}^{+2}\oplus U_{(12,3)}^{+2} \\
& \subset U_{(8,2,3)}^{+1}\oplus U_{(10,3)}^+\oplus U_{(13)}^+\oplus U_{(12,4)}^{+2}\oplus U_{(12,3)}^{+2}\oplus U_{(12,2)}^{+2}.
\end{align*}

\begin{lemma} For any element $p\in P$, there exists a $g\in R'_V$ such that $g|_{U_\#}=p$ and that $g$ acts trivially on $U_{(15)}^+$.
\label{lem4.6}
\end{lemma}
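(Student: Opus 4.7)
The plan is to fix a Levi--unipotent decomposition $p=\ell u$ of a given $p\in P$ and lift $\ell$ and $u$ separately to elements of $R'_V$ that act trivially on $U_{(15)}^+$ and preserve $U_\#$ setwise with the correct restriction. For each Levi factor and each root subgroup of the unipotent radical I produce an explicit lift from the building blocks provided by Lemmas \ref{lem3.12}, \ref{lem3.14}, \ref{lem4.3} and \ref{lem4.4}; the required $g$ is then the product of these lifts, in any fixed order compatible with $p=\ell u$.

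For the Levi factor $GL(U_{(8,2,3)}^{+1})$, given $B$ I extend it by the identity on $U_{(8,1)}^{+1}\oplus U_{(8,2,1)}^{+1}\oplus U_{(8,2,2)}^{+1}$ to an automorphism $\widetilde B$ of $U_{(8)}^{+1}$ and use $h_{(8)}(\widetilde B)$. By Lemma \ref{lem3.12} this element is trivial on $U_{(15)}^+$; its restriction to $U_\#$ is $B$ on $U_{(8,2,3)}^{+1}$ and the identity on the other Levi blocks of $U_\#$; and the fact that $\widetilde B$ is the identity on $U_{(8,1)}^{+1}$, $U_{(8,2,1)}^{+1}$ and $U_{(8,2,2)}^{+1}$ shows that $S_3$, $U(s_7,s_4,s_6)$, $U_{(8)}^{+2,s_2}$ and $U_{(6)}^{+,s_1}$ are all preserved, so $h_{(8)}(\widetilde B)\in R'_V$. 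Entirely analogous constructions using $h_{(10)}$ and $h_{(13)}$ cover $GL(U_{(10,3)}^+)$ and $GL(U_{(13)}^+)$. For $GL(U_{(12,j)}^{+2})$ with $j\in\{1,2,3,4\}$, given $B_j$ I set $A_j=B_j^{-T}\in GL(U_{(12,j)}^{+1})$ via the natural basis identification and take $\widetilde h(A_j)\in R'_V$ from Lemma \ref{lem4.3}; by Lemma \ref{lem3.12}(iii), $h_{(12)}(\widetilde{A_j})$ acts as $\widetilde{A_j}^{-T}$ on $U_{(12)}^{+2}$, which realizes $B_j$ on $U_{(12,j)}^{+2}$ and is the identity elsewhere in $U_{(12)}^{+2}$, while the auxiliary $h_{(8)}$ or $h_{(10)}$ factor appearing in $\widetilde h(A_j)$ acts only on $U_{(8,2,2)}^{+1}$ or $U_{(10,2)}^+$, both disjoint from $U_\#$ and from $U_{(15)}^+$.

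The unipotent radical of $P$ is generated by root subgroups $e_\ell\mapsto e_\ell+\mu e_\iota$ with $e_\iota$ strictly earlier than $e_\ell$ in the flag. Whenever $\iota$ and $\ell$ lie in distinct $U_{(j)}$-summands, the pair $(j,j')$ falls into $\{(8,10),(8,13),(8,\overline{12}),(10,13),(10,\overline{12}),(13,\overline{12})\}$, which contains none of the excluded cases, so Lemma \ref{lem3.14}(i) supplies $g_{\iota,\ell}(\mu)$; since $e_\ell$ sits in a block of $U_\#$ disjoint from every summand of $S_3\oplus U(s_7,s_4,s_6)\oplus U_{(8)}^{+2,s_2}\oplus U_{(6)}^{+,s_1}$, no element of this subspace has an $e_\ell$-component, so $g_{\iota,\ell}(\mu)$ is the identity on it and hence lies in $R'_V$; triviality on $U_{(15)}^+$ is immediate from Lemma \ref{lem3.14}(i). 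The remaining root elements, internal to $U_{(12)}^{+2}$, are handled by $\widetilde g_{i,k}(\mu)$ from Lemma \ref{lem4.4}, whose membership in $R'_V$ is Lemma \ref{lem4.4}(i), and whose factors $h_{(12)}(D)$ and (in the case $(j,j')=(2,3)$) $g_{\sigma(i),\tau(k)}(\mu)$ act on indices disjoint from both $U_{(15)}^+$ and the non-$U_{(12)}^{+2}$ part of $U_\#$.

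The main obstacle is the simultaneous verification, for each elementary lift, of the three conditions (a) membership in $R'_V$, (b) triviality on $U_{(15)}^+$, and (c) restriction to $U_\#$ equal to the prescribed transformation on its block and the identity on every other block. Once (a)--(c) are confirmed for every Levi factor and every root generator, writing $p=\ell u$ with $u$ further factored into root elements produces the desired $g\in R'_V$ as the product of the corresponding lifts: the restrictions to $U_\#$ multiply in $GL(U_\#)$ to give $p$, while membership in $R'_V$ and triviality on $U_{(15)}^+$ are preserved under products.
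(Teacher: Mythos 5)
Your proposal is correct and follows essentially the same approach as the paper: decompose $P$ into its standard Levi factor and unipotent radical, lift each Levi block via the $h_{(j)}$- and $\widetilde h$-type elements of Lemmas \ref{lem3.12} and \ref{lem4.3}, and lift each root generator of the unipotent radical via $g_{i,k}(\mu)$ from Lemma \ref{lem3.14}(i) or $\widetilde g_{i,k}(\mu)$ from Lemma \ref{lem4.4}, then multiply. The only difference is cosmetic (you make explicit the inverse-transpose relation between the prescribed automorphism of $U_{(12,j)}^{+2}$ and the input to $h_{(12)}$, which the paper states implicitly as "take $B_j$ such that $h_{(12)}(\widetilde{B_j})|_{U_{(12,j)}^{+2}}=A_j$").
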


\begin{proof} We have only to construct $g\in R'_V$ for the elements of a set of generators in $P$.

First consider the standard Levi subgroup
$${\rm GL}(U_{(8,2,3)}^{+1})\times {\rm GL}(U_{(10,3)}^+)\times {\rm GL}(U_{(13)}^+)\times {\rm GL}(U_{(12,4)}^{+2})\times {\rm GL}(U_{(12,3)}^{+2})\times {\rm GL}(U_{(12,2)}^{+2})\times {\rm GL}(U_{(12,1)}^{+2})$$
of $P$. For $A\in {\rm GL}(U_{(8,2,3)}^{+1})$, $h_{(8)}(\widetilde{A})$ is a desired element of $R'_V$ where $\widetilde{A}\in {\rm GL}(U_{(8)}^{+1})$ is the canonical extension of $A$. Similarly for $B\in {\rm GL}(U_{(10,3)}^+)$, $h_{(10)}(\widetilde{B})$ is a desired element of $R'_V$. For $C\in {\rm GL}(U_{(13)}^+)$, $h_{(13)}(C)$ is a desired element of $R'_V$.

For $A_j\in {\rm GL}(U_{(12,j)}^{+2})$ with $j=1,2,3,4$, we can take $B_j\in {\rm GL}(U_{(12,j)}^{+1})$ such that $h_{(12)}(\widetilde{B_j})|_{U_{(12,j)}^{+2}}=A_j$. Then $\widetilde{h}(B_j)$ are desired elements of $R'_V$ by Lemma \ref{lem4.3}.

So we have only to consider the generators of the unipotent radical of $P$. For
\begin{align*}
(i,k) & \in (I_{(8,3)}\times I_{(10,3)})\sqcup (I_{(8,3)}\times I_{(13)})\sqcup (I_{(10,3)}\times I_{(13)}) \\
& \quad \sqcup (I_{(8,3)}\times I_{(\overline{12})})\sqcup (I_{(10,3)}\times I_{(\overline{12})})\sqcup (I_{(13)}\times I_{(\overline{12})}),
\end{align*}
we have $g=g_{i,k}(\mu)\in R'_V\ (\mu\in\bbf)$ such that $ge_k=e_k+\mu e_i$ and that $ge_\ell=e_\ell$ for $\ell\in I_+-\{k\}$ by Lemma \ref{lem3.14} (i). For $(i,k)\in I_{(12,j)}\times I_{(12,j')}$ with $1\le j<j'\le 4$ and $\mu\in\bbf$, we have $\widetilde{g}_{i,k} (-\mu)\in R'_V$ such that
$$\widetilde{g}_{i,k} (-\mu)e_{\overline{\kappa(i)}}=e_{\overline{\kappa(i)}}+\mu e_{\overline{\kappa(k)}}\quad\mbox{and that}\quad \widetilde{g}_{i,k} (-\mu)e_\ell=e_\ell\mbox{ for }\ell\in I_{(\overline{12})}-\{\overline{\kappa(i)}\}$$
by Lemma \ref{lem4.4}. $\widetilde{g}_{i,k} (-\mu)$ acts trivially on $U_{(15)}^+$ by its construction. Thus we have constructed desired elements of $R'_V$ for all the generators of the unipotent radical of $P$.
\end{proof}

\subsection{Finiteness of $S_5\oplus S_6$-part}

Put $H_{(15)}=({\rm SO}(U_{(15)})\cap R_V)|_{U_{(15)}^+}$. Then we showed in \cite{M2} that
$$H_{(15)}\cong \begin{cases} {\rm Sp}_{b_{15}}(\bbf) & \text{if $b_{15}$ is even and $\varepsilon=0$}, \\
1\times {\rm Sp}_{b_{15}-1}(\bbf) & \text{if $b_{15}$ is odd}, \\
Q_{b_{15}} & \text{if $b_{15}$ is even and $\varepsilon=1$}
\end{cases}$$
where $Q_{b_{15}}=\{g\in {\rm Sp}_{b_{15}}(\bbf)\mid gv=v\}$ with some $0\ne v\in U_{(15)}^+$. We showed in \cite{M2} that there are a finite number of $H_{(15)}$-orbits on the full flag variety of ${\rm GL}(U_{(15)}^+)$.

It is clear that $H_{(15)}\subset R'_V$. Hence the restriction of $R'_V$ to $U_\#\oplus U_{(15)}^+$ contains the group $P\times H_{(15)}$ by Lemma \ref{lem4.6}. So it contains a group of the form $B\times H_{(15)}$ where $B$ is a Borel subgroup of ${\rm GL}(U_\#)$ contained in $P$. Applying Proposition \ref{lem8.1}, we have a finite number of $R'_V$-orbits on the Grassmann variety of  $U_\#\oplus U_{(15)}^+$.

Thus we have proved that the triple flag variety $\mct_{(\alpha_1,\alpha_2),(\beta),(n)}$ is of finite type.

\section{Finiteness of $\mct_{(\alpha),(1),(1^n)}$}

Let $U_+$ and $U_-$ be $\alpha$-dimensional and one-dimensional isotropic subspaces of $\bbf^{2n+1}$, respectively. Put $R=P_{U_+}\cap P_{U_-}$. Then we have only to consider $R$-orbit decomposition of the full flag variety $M_0$ of $G$.

Since $a_0+a_-+a_1=\beta=1$, there are three cases:
$${\rm (i)} \ a_0=1,\qquad {\rm (ii)} \ a_-=1\mand {\rm (iii)} \ a_1=1.$$
If $a_0=1$, then $U_-\subset U_+$. So the decomposition is reduced to the Bruhat decomposition and hence the number of orbits is finite. Thus we have only to consider the remaining two cases.

\subsection{Case of $a_-=1$}

As in Section 3, we put
$$U_+=\bbf e_1\oplus\cdots\oplus \bbf e_\alpha\mand U_-=\bbf e_{\alpha+1}.$$
Let $V_1\subset\cdots\subset V_n$ be a full flag in $\bbf^{2n+1}$. Then by Theorem \ref{th3.10}, we may assume $V_n=V=V(b_1,\ldots,b_{15},\varepsilon)$. For each $V$, we have only to show that there are a finite number of $R_V$-orbits on the full flag variety $M_0(V)$ of ${\rm GL}(V)$.

Since $a_0=a_1=0$ and since
$$a_0=b_1+b_2,\quad a_1=b_5+b_9+2b_{12}+b_{15}+b_{13}+b_8+b_6,$$
we have $b_i=0$ for $i=1,2,5,6,8,9,12,13$ and $15$. Since
$$b_4+b_7+b_9+b_{11}=a_-=1,$$
there are three cases:
$${\rm (a)} \ b_4=1,\qquad {\rm (b)} \ b_7=1\mand {\rm (c)} \ b_{11}=1.$$

\subsubsection{Case of $b_4=1$}

$V$ is written as $V=V_{(3)}\oplus V_{(4)}\oplus V_{(14)}\oplus V_{(10)}$ with
\begin{align*}
V_{(3)} & =\bbf e_1\oplus\cdots\oplus \bbf e_{b_3},\quad V_{(4)}=\bbf e_{\alpha+1},\quad V_{(14)}=\bbf e_{\alpha+2}\oplus\cdots\oplus \bbf e_n \\
\mand V_{(10)} & =\bbf e_{\overline{\alpha}}\oplus\cdots\oplus \bbf e_{\overline{b_3+1}}.
\end{align*}

\begin{lemma} With respect to the basis
$$e_1,\ldots,e_{b_3},e_{\alpha+1},e_{\alpha+2},\ldots,e_n,e_{\overline{\alpha}},\ldots,e_{\overline{b_3+1}}$$
of $V$, the group $R_V|_V$ is represented as
$$R_V|_V=Q_{(4)}=\left\{\bp A & 0 & * & * \\ 0 & \lambda & * & * \\ 0 & 0 & B & * \\ 0 & 0 & 0 & C \ep\right\}$$
where $A\in {\rm GL}_{b_3}(\bbf),\ B\in {\rm GL}_{n-\alpha-1}(\bbf),\ C\in {\rm GL}_{\alpha-b_3}(\bbf)$ and $\lambda\in\bbf^\times$.
\label{lem5.1}
\end{lemma}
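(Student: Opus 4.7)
The plan is to prove $R_V|_V=Q_{(4)}$ by establishing both containments. For the inclusion $R_V|_V\subseteq Q_{(4)}$, I would note that the subspaces $V\cap U_+=V_{(3)}$, $V\cap U_-=V_{(4)}$, and $V\cap(U_++U_-)^\perp=V_{(3)}\oplus V_{(4)}\oplus V_{(14)}$ are all intrinsically defined in terms of $U_+$, $U_-$, $V$, hence stabilized by every $g\in R_V$. In the ordered basis of the lemma, separate invariance of $V_{(3)}$ and $V_{(4)}$ (not merely of their sum) produces the two independent zeros in the upper-left $2\times 2$ block pattern, and invariance of the filtration supplies the remaining zeros below — giving exactly the shape of $Q_{(4)}$.

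For the reverse inclusion I would realize the four diagonal blocks using the Levi pieces from the decomposition $R=(N_W\cap R)(L_U\cap R)(L_W\cap R)$ of Proposition \ref{prop3.2}: $\ell(\mathrm{diag}(A,I_{\alpha-b_3},1))$ with $A\in{\rm GL}_{b_3}(\bbf)$ realizes the $(1,1)$-block; $\ell(\mathrm{diag}(I_\alpha,\lambda))$ realizes the $(2,2)$-block; $\ell_0(B)$ with $B\in{\rm O}_m(\bbf)$ stabilizing the maximal isotropic $V_{(14)}\subset U$ realizes the $(3,3)$-block; and $\ell(\mathrm{diag}(I_{b_3},D,1))$ realizes the $(4,4)$-block via the induced map $D\mapsto J_{\alpha-b_3}{}^tD^{-1}J_{\alpha-b_3}$ on the reversed basis of $V_{(10)}$ (a group automorphism of ${\rm GL}_{\alpha-b_3}(\bbf)$). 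Using the formula $\ell(A)e_{\overline{i}}=\sum_k(A^{-1})_{ik}e_{\overline{k}}$, each such element visibly lies in $R_V$ and acts as the identity on the three other diagonal blocks.

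The off-diagonal entries come from $g(X,Z)\in N_W$. Inspecting the images of $e_{d+j}\in V_{(14)}$ and $e_{\overline{s}}\in V_{(10)}$ (using $Y=-J_m{}^tXJ_d$), the condition $g(X,Z)V=V$ reduces to the single constraint $X_{i,k}=0$ for $(i,k)\in[b_3+1,\alpha]\times[1,n-\alpha]$. Under it, the surviving $X$-entries directly realize blocks $(1,3),(2,3),(3,4)$; the free parameters $Z_{i,l}$ with $i\in[1,b_3]$ realize block $(1,4)$; and for block $(2,4)$ the entries $Z_{\alpha+1,l}$ fall outside the free zone but are pinned down by the relation $Z+J_d{}^tZJ_d=-XJ_m{}^tXJ_d$, which couples them to the free parameters $Z_{\alpha+2-l,1}$. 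These latter parameters are genuinely unconstrained because $e_{\overline{\alpha+1}}\notin V$, so any desired value for $Z_{\alpha+1,l}$ is achievable. Multiplying a diagonal realization by a suitable unipotent element then yields an arbitrary matrix of $Q_{(4)}$.

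The main technical obstacle is verifying that the $Z$-entries $Z_{i,l}$ with $i\in[b_3+1,\alpha]$ (which must vanish to keep $g(X,Z)V\subset V$) are indeed consistently zero. For pairs with $i+l\le d$ they are free and can just be set to zero; for the remaining pairs the defining relation forces $Z_{i,l}=-Z_{d+1-l,d+1-i}-(XJ_m{}^tXJ_d)_{i,l}$, and the key check is that the bilinear term $(XJ_m{}^tXJ_d)_{i,l}=\sum_a X_{i,m+1-a}X_{d+1-l,a}$ vanishes automatically once $i$ and $d+1-l$ both lie in $[b_3+1,\alpha]$ — because the zero pattern just imposed on $X$ forces each summand to contain a zero factor. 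This compatibility, combined with the constructions above, closes the argument.
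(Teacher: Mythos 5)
Your proof is correct and reaches the same conclusion by a noticeably different path. For $R_V|_V\subseteq Q_{(4)}$ you supply the detail the paper elides as ``clear'': the stable subspaces $V\cap U_+=V_{(3)}$, $V\cap U_-=V_{(4)}$ and $V\cap(U_++U_-)^\perp=V_{(3)}\oplus V_{(4)}\oplus V_{(14)}$ account for exactly the zero blocks of $Q_{(4)}$. For the reverse inclusion the paper realizes the Levi via $h_{(3)}(A)h_{(4)}(\lambda)h_{(14)}(B)h_{(10)}(C^*)$ from Lemma \ref{lem3.12} and then produces the off-diagonal blocks one elementary root vector at a time with $g_{i,k}(\mu)$, deferring the rest to Lemma \ref{lem3.13'}. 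You instead work with the global parametrization $g(X,Z)\in N_W$, derive the vanishing pattern $X_{i,k}=0$ for $(i,k)\in[b_3+1,\alpha]\times[1,n-\alpha]$ forced by $g(X,Z)V=V$ (correctly extracted from both $gV_{(14)}\subset V$ and $gV_{(10)}\subset V$), and then verify compatibility with $Z+J_d{}^tZJ_d=-XJ_m{}^tXJ_d$. The key check --- that $(XJ_m{}^tXJ_d)_{i,l}=\sum_a X_{i,m+1-a}X_{d+1-l,a}$ vanishes whenever $i$ and $d+1-l$ both lie in $[b_3+1,\alpha]$, since every summand has a factor killed by the zero pattern --- is right, and your treatment of the $(2,4)$ block via the genuinely free parameters $Z_{\alpha+2-l,1}$ (unconstrained precisely because $e_{\overline{\alpha+1}}\notin V$) is sound. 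The paper's elementary-generator route is more local and reuses the machinery built in Section~3; your global route exposes the surjectivity of $(X,Z)\mapsto g(X,Z)|_V$ onto the unipotent radical of $Q_{(4)}$ directly and makes explicit a compatibility that the paper leaves implicit, at the modest cost of the bilinear-form computation.
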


\begin{proof} It is clear from the conditions on $R_V$ that $R_V|_V\subset Q_{(4)}$.

Take elements $h_{(3)}(A),\ h_{(4)}(\lambda),\ h_{(14)}(B)$ and $h_{(10)}(C^*)$ constructed in Lemma \ref{lem3.12} where $C^*=J_{\alpha-b_3}{}^tC^{-1}J_{\alpha-b_3}$. Then
$$h_{(3)}(A)h_{(4)}(\lambda)h_{(14)}(B)h_{(10)}(C^*)|_V=
\bp A & 0 & 0 & 0 \\ 0 & \lambda & 0 & 0 \\ 0 & 0 & B & 0 \\ 0 & 0 & 0 & C \ep.$$
So we have only to construct generators of the unipotent part in the same way as in Lemma \ref{lem3.14}.

For $i\in I_{(3)}\sqcup I_{(4)},k\in I_{(14)}$ and $\mu\in\bbf$, define $g=g_{i,k}(\mu)\in G$ by
$$ge_k=e_k+\mu e_i,\quad ge_{\overline{i}}=e_{\overline{i}}-\mu e_{\overline{k}}$$
and $ge_\ell=e_\ell$ for $\ell\in I-\{k,\overline{i}\}$. Then we see that $g$ stabilizes $U_+,U_-$ and $V$. So $g\in R_V$.

Similarly for $i\in I_{(14)},\ k\in I_{(10)}$ and $\mu\in\bbf$, we can define $g=g_{i,\overline{k}}(\mu)\in R_V$ by
$$ge_{\overline{k}}=e_{\overline{k}}+\mu e_i,\quad ge_{\overline{i}}=e_{\overline{i}}-\mu e_k$$
and $ge_\ell=e_\ell$ for $\ell\in I-\{\overline{k},\overline{i}\}$. We can construct the remaining generators as in Lemma \ref{lem3.13'}.
\end{proof}

By Proposition \ref{prop9.2} in the appendix, $Q_{(4)}$-orbit decomposition on $M(V)$ is reduced to the orbit decomposition of the full flag variety of ${\rm GL}_{b_3+1}(\bbf)$ by the subgroup consisting of matrices of the form
$$\bp A & 0 \\
0 & \lambda \ep.$$
There are $(b_3+2)(b_3+1)/2$ orbits in this decomposition (\cite{M2}).

\subsubsection{Case of $b_{11}=1$}

$V$ is written as $V=V_{(3)}\oplus V_{(14)}\oplus V_{(11)}\oplus V_{(10)}$ with
\begin{align*}
V_{(3)} & =\bbf e_1\oplus\cdots\oplus \bbf e_{b_3},\quad V_{(14)}=\bbf e_{\alpha+2}\oplus\cdots\oplus \bbf e_n,\quad V_{(11)}=\bbf e_{\overline{\alpha+1}} \\
\mand V_{(10)} & =\bbf e_{\overline{\alpha}}\oplus\cdots\oplus \bbf e_{\overline{b_3+1}}.
\end{align*}
We can prove the following lemma in the same way as Lemma \ref{lem5.1}.

\begin{lemma} With respect to the basis
$$e_1,\ldots,e_{b_3},e_{\alpha+2},\ldots,e_n,e_{\overline{\alpha+1}},e_{\overline{\alpha}},\ldots,e_{\overline{b_3+1}}$$
of $V$, the group $R_V|_V$ is represented as
$$R_V|_V=Q_{(11)}=\left\{\bp A & * & * & * \\ 0 & B & * & * \\ 0 & 0 & \lambda & 0 \\ 0 & 0 & 0 & C \ep\right\}$$
where $A\in {\rm GL}_{b_3}(\bbf),\ B\in {\rm GL}_{n-\alpha-1}(\bbf),\ C\in {\rm GL}_{\alpha-b_3}(\bbf)$ and $\lambda\in\bbf^\times$.
\end{lemma}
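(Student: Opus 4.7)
The plan is to adapt the proof of Lemma~\ref{lem5.1} almost verbatim: the only structural change is that the one-dimensional block previously formed by $V_{(4)}=U_-$ is now formed by $V_{(11)}=\bbf e_{\overline{\alpha+1}}$, which sits in a different position of the flag on $V$. First I would establish $R_V|_V\subset Q_{(11)}$ by exhibiting four $R_V$-stable subspaces of $V$ whose intersection pattern forces the displayed block shape; then I would produce explicit elements of $R_V$ whose restrictions to $V$ exhaust $Q_{(11)}$.

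For the inclusion step, $R_V$ preserves $U_+$, $U_-$, $U_+^\perp$, $U_-^\perp$ and $V$, so every intersection with $V$ is $R_V$-stable. With $U_+=\bbf e_1\oplus\cdots\oplus\bbf e_\alpha$ and $U_-=\bbf e_{\alpha+1}$, a direct calculation on the given basis of $V$ yields
$$V\cap U_+=V_{(3)},\qquad V\cap U_+^\perp\cap U_-^\perp=V_{(3)}\oplus V_{(14)},$$
$$V\cap U_+^\perp=V_{(3)}\oplus V_{(14)}\oplus V_{(11)},\qquad V\cap U_-^\perp=V_{(3)}\oplus V_{(14)}\oplus V_{(10)}.$$
The first two kill the strict lower-triangular entries of columns one and two, the third kills the $(4,3)$-block, and the fourth kills the $(3,4)$-block, producing exactly the shape of $Q_{(11)}$.

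For the reverse inclusion, the block-diagonal Levi piece is realised by the product $h_{(3)}(A)\,h_{(14)}(B)\,h_{(11)}(\lambda)\,h_{(10)}(C^*)$ of the elements supplied by Lemma~\ref{lem3.12}. To fill in the five off-diagonal starred blocks $(1,2),(1,3),(1,4),(2,3),(2,4)$ I would use the same transvections
$$g_{i,k}(\mu):\ e_k\mapsto e_k+\mu e_i,\ \ e_{\overline{i}}\mapsto e_{\overline{i}}-\mu e_{\overline{k}},\ \ e_\ell\mapsto e_\ell\ \text{for}\ \ell\neq k,\overline{i},$$
as in Lemma~\ref{lem5.1}, letting $(i,k)$ run over $I_{(3)}\times I_{(14)}$, $(I_{(3)}\sqcup I_{(14)})\times\{\overline{\alpha+1}\}$, and $(I_{(3)}\sqcup I_{(14)})\times\overline{I_{(10)}}$. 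A two-line check of the bilinear form shows each $g_{i,k}(\mu)\in G$, and inspecting the two modified basis vectors confirms that $U_+$, $U_-$ and $V$ are preserved.

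The only delicate point I expect will be the transvections with $k=\overline{\alpha+1}$, which pass through the new block $V_{(11)}$: the companion move becomes $e_{\overline{i}}\mapsto e_{\overline{i}}-\mu e_{\alpha+1}$, shifting by a vector of $U_-$ that itself lies outside $V$, so one must verify that $U_-$ and $V$ are nonetheless preserved. This is in fact immediate: $e_{\alpha+1}$ is not among the two basis vectors that actually get moved (as $\alpha+1\notin\{\overline{\alpha+1},\overline{i}\}$ under the implicit assumption $\alpha<n$), so $U_-$ is fixed pointwise, while $g(V_{(11)})=\bbf(e_{\overline{\alpha+1}}+\mu e_i)\subset V_{(11)}\oplus V_{(3)}\oplus V_{(14)}\subset V$ and the other three summands of $V$ are untouched. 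Once this point is recorded, a suitable product of the listed Levi elements and transvections realises any prescribed matrix of $Q_{(11)}$, completing the proof.
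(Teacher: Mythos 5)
Your proof is correct and follows the paper's intended approach: the paper only states that the lemma is proved ``in the same way as Lemma~\ref{lem5.1}'', and your argument is precisely that adaptation --- exhibiting the $R_V$-stable subspaces $V\cap U_+$, $V\cap U_+^\perp\cap U_-^\perp$, $V\cap U_+^\perp$, $V\cap U_-^\perp$ to obtain $R_V|_V\subset Q_{(11)}$, and then realising $Q_{(11)}$ via the Levi elements from Lemma~\ref{lem3.12} together with the block transvections $g_{i,k}(\mu)$. The condition $\alpha<n$ you flag as an ``implicit assumption'' is in fact automatic in the $a_-=1$ case, since $W_++W_-$ is an isotropic subspace of dimension $d=\alpha+1$, forcing $\alpha+1\le n$.
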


By Proposition \ref{prop9.2}, $Q_{(11)}$-orbit decomposition on $M(V)$ is reduced to the orbit decomposition of the full flag variety of ${\rm GL}_{\alpha-b_3+1}(\bbf)$ by the subgroup consisting of matrices of the form
$\bp \lambda & 0 \\
0 & C \ep$
with $(\alpha-b_3+2)(\alpha-b_3+1)/2$ orbits.

\subsubsection{Case of $b_{7}=1$}

$V$ is written as $V=V_{(3)}\oplus V_{(7)}^1\oplus V_{(14)}\oplus V_{(10)}\oplus V_{(7)}^2$ with
\begin{align*}
V_{(3)} & =\bbf e_1\oplus\cdots\oplus \bbf e_{b_3},\quad V_{(7)}^1=\bbf (e_{b_3+1}+e_{\alpha+1}),\quad V_{(14)}=\bbf e_{\alpha+2}\oplus\cdots\oplus \bbf e_n, \\
V_{(10)} & =\bbf e_{\overline{\alpha}}\oplus\cdots\oplus \bbf e_{\overline{b_3+2}}\mand V_{(7)}^2=\bbf (e_{\overline{b_3+1}}-e_{\overline{\alpha+1}}).
\end{align*}

\begin{lemma} With respect to the basis
$$e_1,\ldots,e_{b_3},e_{b_3+1}+e_{\alpha+1},e_{\alpha+2}, \ldots,e_n,e_{\overline{\alpha}},\ldots,e_{\overline{b_3+2}},e_{\overline{b_3+1}}-e_{\overline{\alpha+1}}$$
of $V$, the group $R_V|_V$ is represented as
$$R_V|_V=Q_{(7)}=\left\{\bp A & * & * & * & * \\ 0 & \lambda & * & * & * \\ 0 & 0 & B & * & * \\ 0 & 0 & 0 & C & * \\ 0 & 0 & 0 & 0 &  \lambda^{-1} \ep\right\}$$
where $A\in {\rm GL}_{b_3}(\bbf),\ B\in {\rm GL}_{n-\alpha-1}(\bbf),\ C\in {\rm GL}_{\alpha-b_3-1}(\bbf)$ and $\lambda\in\bbf^\times$.
\label{lem5.4}
\end{lemma}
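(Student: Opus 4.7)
The plan is to follow the same two-step pattern as in Lemma \ref{lem5.1}: first verify $R_V|_V\subset Q_{(7)}$ by showing that every $g\in R_V$ preserves the five-step flag underlying the block decomposition, and then construct enough elements of $R_V$ to realize every element of $Q_{(7)}$. For the inclusion $\subset$, observe that since $g\in R_V$ preserves $U_+$, $U_-$, $U_+^\perp$ and $U_-^\perp$, it preserves each of the intrinsic intersections
$$V\cap U_+=V_{(3)},\quad V\cap(U_++U_-)=V_{(3)}\oplus V_{(7)}^1,\quad V\cap U_+^\perp=V_{(3)}\oplus V_{(7)}^1\oplus V_{(14)},$$
$$V\cap U_-^\perp=V_{(3)}\oplus V_{(7)}^1\oplus V_{(14)}\oplus V_{(10)},\quad V.$$
This flag has successive dimensions $b_3,\ b_3+1,\ n-\alpha+b_3,\ n-1,\ n$, which forces $R_V|_V$ into the stated upper block-triangular shape.

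The crucial point is the constraint that the fifth diagonal entry be $\lambda^{-1}$ when the second is $\lambda$. Since $gU_-=U_-$, we have $ge_{\alpha+1}=\nu e_{\alpha+1}$ for some $\nu\in\bbf^\times$. Because $ge_{b_3+1}\in U_+$ and $g(e_{b_3+1}+e_{\alpha+1})\equiv \lambda(e_{b_3+1}+e_{\alpha+1})$ modulo $V_{(3)}$, comparing the coefficient of $e_{\alpha+1}$ on both sides gives $\nu=\lambda$. On the other hand, $g$ preserves the bilinear form, so the $e_{\overline{\alpha+1}}$-coefficient of $gw$ equals $\nu^{-1}$ times that of $w$ for every $w\in\bbf^{2n+1}$, because $(e_{\alpha+1},gw)=\nu^{-1}(e_{\alpha+1},w)$. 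Applying this with $w=e_{\overline{b_3+1}}-e_{\overline{\alpha+1}}$ and using that $V_{(7)}^2$ is the unique flag layer with a nonzero $e_{\overline{\alpha+1}}$-coefficient, we read off that the fifth diagonal entry is $\nu^{-1}=\lambda^{-1}$.

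For the reverse inclusion, the diagonal (Levi) part of $Q_{(7)}$ is realized by the elements
$$h_{(3)}(A),\quad h_{(7)}(\lambda),\quad h_{(14)}(B),\quad h_{(10)}(C^*) \in R_V$$
constructed in Lemma \ref{lem3.12}, where $C^*=J_{\alpha-b_3-1}\,{}^tC^{-1}J_{\alpha-b_3-1}$. Since $7\in J_2$, the map $h_{(7)}(\lambda)$ sends $e_{b_3+1}\mapsto\lambda e_{b_3+1}$, $e_{\alpha+1}\mapsto\lambda e_{\alpha+1}$, $e_{\overline{b_3+1}}\mapsto\lambda^{-1}e_{\overline{b_3+1}}$, $e_{\overline{\alpha+1}}\mapsto\lambda^{-1}e_{\overline{\alpha+1}}$, which automatically produces the $\lambda$/$\lambda^{-1}$ coupling between $V_{(7)}^1$ and $V_{(7)}^2$.

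The remaining work is to build the unipotent generators which realize the upper-off-diagonal blocks of $Q_{(7)}$. These are produced one pair $(i,k)$ at a time by explicit transvections $g=g_{i,k}(\mu)$ of the form
$$ge_k=e_k+\mu e_i,\quad ge_{\overline{i}}=e_{\overline{i}}-\mu e_{\overline{k}},\quad ge_\ell=e_\ell\ (\ell\ne k,\overline{i}),$$
chosen so that $gU_+=U_+$, $gU_-=U_-$, $gV=V$, exactly as in Lemmas \ref{lem3.13'} and \ref{lem5.1}. The cases $(i,k)\in I_{(3)}\times(I_{(14)}\sqcup I_{(10)})$, $(i,k)\in I_{(14)}\times I_{(10)}$, and the pairings involving $I_{(7)}$ through its two copies are all routine analogues of those in Lemma \ref{lem5.1}; for entries coming from $V_{(7)}^2$ one uses transvections that simultaneously modify $e_{\overline{b_3+1}}$ and $e_{\overline{\alpha+1}}$ with equal and opposite signs so as to preserve the line $V_{(7)}^2$ while fixing $V_{(7)}^1$. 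The only real obstacle is checking that these transvections really land in $R_V$: for pairs $(i,k)$ involving the index $b_3+1$ (whose partner $\eta_7(b_3+1)=\alpha+1$ lies in $U_-$) one has to combine the transvection on $e_k$ with its mirror on $e_{\eta_7(k)}$ so that $U_-$ is preserved. Modulo this bookkeeping, the construction is identical to Lemma \ref{lem5.1}, so $R_V|_V$ fills out the whole of $Q_{(7)}$.
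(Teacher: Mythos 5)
Your proposal is correct and follows essentially the same route as the paper's proof: identify the intrinsic flag $V\cap U_+\subset V\cap(U_++U_-)\subset V\cap U_+^\perp\subset V\cap U_-^\perp\subset V$ (the paper writes $V\cap(U_+\oplus U_-)^\perp$ for the third step, which coincides with your $V\cap U_+^\perp$ here), pin down the $\lambda$/$\lambda^{-1}$ coupling via $gU_-=U_-$ and orthogonality applied to $e_{\overline{b_3+1}}-e_{\overline{\alpha+1}}$, and then realize the Levi part through $h_{(3)}(A)h_{(7)}(\lambda)h_{(14)}(B)h_{(10)}(C^*)$ and the unipotent part through transvections, taking care to move $e_{b_3+1}$ and $e_{\alpha+1}$ (and their duals) together where $V_{(7)}^1$ is involved. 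The only minor slip is the phrase ``mirror on $e_{\eta_7(k)}$'' where you presumably mean $e_{\eta_7(i)}=e_{\alpha+1}$, but the intended construction matches the one the paper writes out explicitly.
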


\begin{proof} Since $R_V$ stabilizes subspaces
\begin{align*}
V\cap U_+ & =V_{(3)},\quad V\cap (U_+\oplus U_-)=V_{(3)}\oplus V_{(7)}^1, \\
V\cap (U_+\oplus U_-)^\perp & =V_{(3)}\oplus V_{(7)}^1\oplus V_{(14)} \\
\mand V\cap U_-^\perp & =V_{(3)}\oplus V_{(7)}^1\oplus V_{(14)}\oplus V_{(10)}
\end{align*}
of $V$, $R_V|_V$ is contained in the parabolic subgroup
$$\left\{\bp A & * & * & * & * \\ 0 & \lambda & * & * & * \\ 0 & 0 & B & * & * \\ 0 & 0 & 0 & C & * \\ 0 & 0 & 0 & 0 &  \mu \ep\right\}$$
where $A\in {\rm GL}_{b_3}(\bbf),\ B\in {\rm GL}_{n-\alpha-1}(\bbf),\ C\in {\rm GL}_{\alpha-b_3-1}(\bbf)$ and $\lambda,\mu\in\bbf^\times$. Let $g$ be an element of $R_V$. Since $gU_-=U_-$, we have $ge_{\alpha+1}=\lambda e_{\alpha+1}$ with some $\lambda\in\bbf^\times$. Hence
$$g(e_{b_3+1}+e_{\alpha+1})\in \lambda(e_{b_3+1}+e_{\alpha+1})+V_{(3)}.$$
On the other hand, we have
$$ge_{\overline{\alpha+1}}=\lambda^{-1}e_{\overline{\alpha+1}}+\bigoplus_{i\ne \overline{\alpha+1}} \bbf e_i$$
since $g$ preserves the bilinear form $(\ ,\ )$. Hence
$$g(e_{\overline{b_3+1}}-e_{\overline{\alpha+1}})\in \lambda^{-1}(e_{\overline{b_3+1}}-e_{\overline{\alpha+1}})+(V_{(3)}\oplus V_{(7)}^1\oplus V_{(14)}\oplus V_{(10)}).$$
Thus we have $R_V|_V\subset Q_{(7)}$.

As in the proof of Lemma \ref{lem5.1}, the elements
$$h_{(3)}(A)h_{(7)}(\lambda)h_{(14)}(B)h_{(10)}(C^*)|_V=
\bp A & 0 & 0 & 0 & 0 \\ 0 & \lambda & 0 & 0 & 0 \\ 0 & 0 & B & 0 & 0 \\ 0 & 0 & 0 & C & 0 \\ 0 & 0 & 0 & 0 & \lambda^{-1} \ep$$
are contained in $R_V|_V$ for $A\in {\rm GL}_{b_3}(\bbf),\ B\in {\rm GL}_{n-\alpha-1}(\bbf),\ C\in {\rm GL}_{\alpha-b_3-1}(\bbf)$ and $\lambda\in\bbf^\times$. So we have only to consider the generators of the unipotent part.

For $i\in I_{(3)}$ and $\mu\in\bbf$, define $g=g_{i,b_3+1}(\mu)\in G$ by
$$ge_{b_3+1}=e_{b_3+1}+\mu e_i,\quad ge_{\overline{i}}=e_{\overline{i}}-\mu e_{\overline{b_3+1}}$$
and $ge_\ell=e_\ell$ for $\ell\in I-\{\alpha+1,\overline{i}\}$.
Then we see that $g$ stabilizes $U_+,U_-$ and $V$. So $g\in R_V$.

For $k\in I_{(14)}$ and $\mu\in\bbf$, define $g=g_{b_3+1,k}(\mu)\in G$ by
$$ge_k=e_k+\mu (e_{b_3+1}+e_{\alpha+1}),\quad ge_{\overline{b_3+1}}=e_{\overline{b_3+1}}-\mu e_{\overline{k}},\quad ge_{\overline{\alpha+1}}=e_{\overline{\alpha+1}}-\mu e_{\overline{k}}$$
and $ge_\ell=e_\ell$ for $\ell\in I-\{k,\overline{b_3+1},\overline{\alpha+1}\}$. Then we see that $g\in R_V$.

For $i\in I_{(14)},\ k\in I_{(10)}$ and $\mu\in\bbf$, define $g=g_{i,\overline{k}}(\mu)\in R_V$ by
$$ge_{\overline{k}}=e_{\overline{k}}+\mu e_i,\quad ge_{\overline{i}}=e_{\overline{i}}-\mu e_k$$
and $ge_\ell=e_\ell$ for $\ell\in I-\{\overline{k},\overline{i}\}$.

For $i\in I_{(10)}$ and $\mu\in\bbf$, define $g=g_{\overline{i},\overline{b_3+1}}(\mu)\in R_V$ by
$$ge_{\overline{b_3+1}}=e_{\overline{b_3+1}}+\mu e_{\overline{i}},\quad ge_i=e_i-\mu e_{b_3+1}$$
and $ge_\ell=e_\ell$ for $\ell\in I-\{i,\overline{b_3+1}\}$. We can construct the remaining generators as in Lemma \ref{lem3.13'}.
\end{proof}

By this lemma, $R_V$-orbit decomposition of $M_0(V)$ is reduced to the Bruhat decomposition. So there are a finite number of $R_V$-orbits on $M_0(V)$.

\subsection{Case of $a_1=1$}

As in Section 3, we put
$$U_+=\bbf e_1\oplus\cdots\oplus \bbf e_\alpha\mand U_-=\bbf e_{\overline{\alpha}}=\bbf e_{2n+2-\alpha}.$$
Since $a_0=a_-=0$ and since
$$a_0=b_1+b_2,\quad a_-=b_4+b_7+b_9+b_{11},$$
we have $b_1=b_2=b_4=b_7=b_9=b_{11}=0$. Since
$$a_1=b_5+b_9+2b_{12}+b_{15}+b_{13}+b_8+b_6=1,$$
there are five cases:
$${\rm (a)}\ b_5=1,\quad {\rm (b)}\ b_{15}=1,\quad {\rm (c)}\ b_{13}=1,\quad {\rm (d)}\ b_8=1\mand {\rm (e)}\ b_6=1.$$
Since $a_2=n-\alpha=b_{12}+b_{13}+b_{14}$, the condition ${\rm (C')}$ implies
$${\rm (C'')}\qquad b_{13}=1\Longrightarrow |\bbf^\times/(\bbf^\times)^2|<\infty.$$

\subsubsection{Case of $b_5=1$}

$V$ is written as $V=V_{(3)}\oplus V_{(5)}\oplus V_{(14)}\oplus V_{(10)}$ with
\begin{align*}
V_{(3)} & =\bbf e_1\oplus\cdots\oplus \bbf e_{b_3},\quad V_{(5)}=\bbf e_\alpha,\quad V_{(14)}=\bbf e_{\alpha+1}\oplus\cdots\oplus \bbf e_n \\
\mand V_{(10)} & =\bbf e_{\overline{\alpha-1}}\oplus\cdots\oplus \bbf e_{\overline{b_3+1}}.
\end{align*}
We can easily prove:

\begin{lemma} With respect to the basis
$$e_1,\ldots,e_{b_3},e_\alpha,e_{\alpha+1},\ldots,e_n,e_{\overline{\alpha-1}},\ldots,e_{\overline{b_3+1}}$$
of $V$, $R_V|_V$ is represented as
$$R_V|_V=Q_{(5)}=\left\{\bp A & * & * & * \\ 0 & \lambda & 0 & 0 \\ 0 & 0 & B & * \\ 0 & 0 & 0 & C \ep\right\}$$
where $A\in {\rm GL}_{b_3}(\bbf),\ B\in {\rm GL}_{n-\alpha}(\bbf),\ C\in {\rm GL}_{\alpha-b_3-1}(\bbf)$ and $\lambda\in\bbf^\times$.
\end{lemma}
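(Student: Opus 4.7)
The plan is to follow the pattern of Lemmas \ref{lem5.1} and \ref{lem5.4}: first establish $R_V|_V \subset Q_{(5)}$ by exhibiting enough $R_V$-stable subspaces of $V$, then realize every element of $Q_{(5)}$ as the restriction of some element of $R_V$.

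For the inclusion $R_V|_V \subset Q_{(5)}$, I would note that $R_V$ preserves each of the subspaces
$$ V \cap W_+ = V_{(3)}, \quad V \cap U_+ = V_{(3)} \oplus V_{(5)}, \quad V \cap (U_+^\perp \cap U_-^\perp) = V_{(3)} \oplus V_{(14)}, \quad V \cap U_-^\perp = V_{(3)} \oplus V_{(14)} \oplus V_{(10)}. $$
These immediately give the block-triangular structure everywhere except in the second row. The three zero entries at $(2,3)$ and $(2,4)$ form the only subtle point. For any $g \in R_V$, the condition $g U_- = U_-$ gives $g e_{\overline{\alpha}} = \lambda^{-1} e_{\overline{\alpha}}$ for some $\lambda \in \bbf^\times$. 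If $v$ is any basis vector in $V_{(14)}$ or $V_{(10)}$, then $v \in V \cap U_-^\perp$, so $gv \in V \cap U_-^\perp = V_{(3)} \oplus V_{(14)} \oplus V_{(10)}$; since $V_{(5)} = \bbf e_\alpha$ is not contained in $U_-^\perp$, the $e_\alpha$-component of $gv$ must vanish. This yields the zeros at $(2,3)$ and $(2,4)$. Combining $(ge_\alpha, ge_{\overline{\alpha}}) = 1$ with $ge_\alpha \in V_{(3)} \oplus V_{(5)}$ and the orthogonality of $V_{(3)} \subset W_+$ to $e_{\overline{\alpha}}$ then forces the $e_\alpha$-component of $ge_\alpha$ to equal $\lambda$, as required.

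For the reverse inclusion, I would realize the Levi part by the products $h_{(3)}(A)\, h_{(5)}(\lambda)\, h_{(14)}(B)\, h_{(10)}(C^*)$ of Lemma \ref{lem3.12} (with $C^* = J_{\alpha-b_3-1}\, {}^t C^{-1} J_{\alpha-b_3-1}$), and then build unipotent generators for each starred block in direct analogy with the proof of Lemma \ref{lem5.4}. For each of the four families
$$ (i,k) \in I_{(3)} \times \{\alpha\}, \quad I_{(3)} \times I_{(14)}, \quad I_{(3)} \times \overline{I_{(10)}}, \quad I_{(14)} \times \overline{I_{(10)}}, $$
define $g_{i,k}(\mu) \in G$ by $ge_k = e_k + \mu e_i$ and $ge_{\overline{i}} = e_{\overline{i}} - \mu e_{\overline{k}}$, fixing all other basis vectors. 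A routine check using $(e_i, e_{\overline{i}}) = 1$ as the only nontrivial new pairing shows $g$ preserves the form, and inspection of the memberships of $e_i, e_{\overline{i}}, e_k, e_{\overline{k}}$ relative to $U_+, U_-, V$ confirms $g \in R_V$. The only conceptual obstacle is already recorded above: no such transvection can send $V_{(14)}$ or $V_{(10)}$ into the $e_\alpha$-direction while preserving $(\,,\,)$, and that asymmetry is exactly the content of the zero entries at $(2,3)$ and $(2,4)$.
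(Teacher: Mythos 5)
The paper omits the proof of this lemma (it is prefaced by ``We can easily prove:''), so the appropriate comparison is with the analogous, fully worked Lemma 5.1 (the $b_4=1$ case) and Lemma 5.4 (the $b_7=1$ case). Your proof follows the same two-step template: exhibit $R_V$-stable subspaces of $V$ to get $R_V|_V \subset Q_{(5)}$, then realize the Levi part via Lemma \ref{lem3.12} and the unipotent radical via explicit transvections. I checked your four stable subspaces ($V\cap W_+$, $V\cap U_+$, $V\cap (U_+^\perp\cap U_-^\perp)$, $V\cap U_-^\perp$), which together force precisely the zero pattern of $Q_{(5)}$, and your four families of transvections, which cover exactly the nonzero unipotent blocks $(1,2)$, $(1,3)$, $(1,4)$, $(3,4)$ and can be verified to lie in $R_V$. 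The argument is correct.

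One minor presentational caveat: you say the stable subspaces ``immediately give the block-triangular structure everywhere except in the second row'' and single out $(2,3)$, $(2,4)$ as subtle, but in fact the preservation of $V\cap(U_+^\perp\cap U_-^\perp)$ and $V\cap U_-^\perp$ already gives these two zeros directly, so nothing further is needed there; your subsequent paragraph re-derives $(2,3)$ and $(2,4)$ from the fourth stable subspace, which is fine but redundant given that you already listed the third. The remark about the $e_\alpha$-component of $ge_\alpha$ equalling $\lambda$ with $ge_{\overline{\alpha}}=\lambda^{-1}e_{\overline{\alpha}}$ is also unnecessary for the lemma as stated (the lemma makes no claim constraining $\lambda$), though it is consistent and harmless. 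The paper's own strategy in Lemma 5.1 appeals to Lemma \ref{lem3.13'} for the ``remaining'' transvections rather than listing all four families explicitly; your more direct enumeration is a valid alternative that trades generality for transparency.
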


Applying Proposition \ref{prop5.2} in the appendix, we see that there are a finite number of $R_V$-orbits on $M_0(V)$.

\subsubsection{Case of $b_6=1$}

$V$ is written as $V=V_{(3)}\oplus V_{(14)}\oplus V_{(6)}\oplus V_{(10)}$ with
\begin{align*}
V_{(3)} & =\bbf e_1\oplus\cdots\oplus \bbf e_{b_3},\quad V_{(14)}=\bbf e_{\alpha+1}\oplus\cdots\oplus \bbf e_n,\quad V_{(6)}=\bbf e_{\overline{\alpha}} \\
\mand V_{(10)} & =\bbf e_{\overline{\alpha-1}}\oplus\cdots\oplus \bbf e_{\overline{b_3+1}}.
\end{align*}
We can easily prove:

\begin{lemma} With respect to the basis
$$e_1,\ldots,e_{b_3},e_{\alpha+1},\ldots,e_n, e_{\overline{\alpha}},e_{\overline{\alpha-1}},\ldots,e_{\overline{b_3+1}}$$
of $V$, $R_V|_V$ is represented as
$$R_V|_V=Q_{(6)}=\left\{\bp A & * & 0 & * \\ 0 & B & 0 & * \\ 0 & 0 & \lambda & * \\ 0 & 0 & 0 & C \ep\right\}$$
where $A\in {\rm GL}_{b_3}(\bbf),\ B\in {\rm GL}_{n-\alpha}(\bbf),\ C\in {\rm GL}_{\alpha-b_3-1}(\bbf)$ and $\lambda\in\bbf^\times$.
\end{lemma}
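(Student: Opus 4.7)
The plan is to follow the two-step strategy of Lemma \ref{lem5.1} and Lemma \ref{lem5.4}: first show $R_V|_V\subseteq Q_{(6)}$ by locating an $R_V$-stable flag on $V$, then realise every Levi and unipotent generator of $Q_{(6)}$ by explicit elements of $R_V$.

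For the containment, one checks directly on the basis that
\[
V\cap U_+=V_{(3)},\quad V\cap U_+^\perp=V_{(3)}\oplus V_{(14)},\quad V\cap U_-=V_{(6)}.
\]
Since any $g\in R_V$ stabilises each of $U_+$, $U_+^\perp$, $U_-$ and $V$, it stabilises these three subspaces; combined with the fact that $V_{(6)}=U_-$ is one-dimensional (so $ge_{\overline\alpha}=\lambda e_{\overline\alpha}$ for some $\lambda\in\bbf^\times$), this forces precisely the block shape $Q_{(6)}$.

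For the reverse inclusion, the Levi part is realised by $h_{(3)}(A)\,h_{(14)}(B)\,h_{(6)}(\lambda)\,h_{(10)}(C^*)$, with $C^*=J_{\alpha-b_3-1}\,{}^tC^{-1}J_{\alpha-b_3-1}$, via Lemma \ref{lem3.12}. The unipotent generators for blocks $(1,2)$, $(1,4)$ and $(2,4)$ are obtained in the same way as in Lemma \ref{lem5.4}: for each admissible pair one defines an element $g=g_{i,k}(\mu)\in G$ altering only the two coordinates $e_k$ (or $e_{\overline k}$) and the orthogonal partner $e_{\overline i}$ by $e_k\mapsto e_k+\mu e_i$, $e_{\overline i}\mapsto e_{\overline i}-\mu e_{\overline k}$ (or its $\overline{k}$-variant), and leaves every other $e_\ell$ fixed. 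In each of these three cases neither altered coordinate lies in $U_+$, so $gU_+=U_+$, $gU_-=U_-$ and $gV=V$ are immediate.

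The only generator requiring a little extra care is the one for block $(3,4)$. For $k\in I_{(10)}$ and $\mu\in\bbf$ define $g$ by
\[
ge_{\overline k}=e_{\overline k}+\mu e_{\overline\alpha},\qquad ge_\alpha=e_\alpha-\mu e_k,
\]
and $ge_\ell=e_\ell$ for every other $\ell\in I$. The isometry condition reduces to the single check $(ge_{\overline k},ge_\alpha)=(e_{\overline k},e_\alpha)=0$, which is immediate. Now $I_{(10)}=\{b_3+1,\ldots,\alpha-1\}\subset\{1,\ldots,\alpha\}$, so $e_k\in U_+$; hence $ge_\alpha\in U_+$ and $gU_+=U_+$, while $gU_-=U_-$ and $gV=V$ follow by inspection. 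This is the only place where the containment $e_k\in U_+$ is used, and the construction is the exact analogue in the $b_6=1$ case of the $(2,3)$-type generator used in the $b_4=1$ case of Lemma \ref{lem5.1}; I do not anticipate any further obstacle.
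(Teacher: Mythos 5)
Your proof is correct and follows the same two-step pattern the paper uses for the analogous Lemmas 5.1 and 5.4 in this section; the paper itself omits the proof of this particular lemma. The stable-subspace identifications $V\cap U_+=V_{(3)}$, $V\cap U_+^\perp=V_{(3)}\oplus V_{(14)}$, $V\cap U_-=V_{(6)}$ correctly force the block shape, and the Levi generators via Lemma \ref{lem3.12} together with the four unipotent generators, including the delicate $(3,4)$ one where $ge_\alpha=e_\alpha-\mu e_k$ remains in $U_+$ because $k\in I_{(10)}\subset\{1,\ldots,\alpha\}$, all check out.
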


Applying Proposition \ref{prop5.2}, we see that there are a finite number of $R_V$-orbits on $M_0(V)$.

\subsubsection{Case of $b_{15}=1$}

$V$ is written as $V=V_{(3)}\oplus V_{(14)}\oplus V_{(15)}\oplus V_{(10)}$ with
\begin{align*}
V_{(3)} & =\bbf e_1\oplus\cdots\oplus \bbf e_{b_3},\quad V_{(14)}=\bbf e_{\alpha+1}\oplus\cdots\oplus \bbf e_n, \\
V_{(15)} & =\bbf (e_{\overline{\alpha}}-\frac{1}{2}e_\alpha+e_{n+1}) 
\mand V_{(10)} =\bbf e_{\overline{\alpha-1}}\oplus\cdots\oplus \bbf e_{\overline{b_3+1}}.
\end{align*}
We can easily prove:

\begin{lemma} With respect to the basis
$$e_1,\ldots,e_{b_3},e_{\alpha+1},\ldots,e_n, e_{\overline{\alpha}}-\frac{1}{2}e_\alpha+e_{n+1},e_{\overline{\alpha-1}},\ldots,e_{\overline{b_3+1}}$$
of $V$, $R_V|_V$ is represented as
$$R_V|_V=Q_{(15)}=\left\{\bp A & * & * & * \\ 0 & B & * & * \\ 0 & 0 & 1 & 0 \\ 0 & 0 & 0 & C \ep\right\}$$
where $A\in {\rm GL}_{b_3}(\bbf),\ B\in {\rm GL}_{n-\alpha}(\bbf)$ and $C\in {\rm GL}_{\alpha-b_3-1}(\bbf)$.
\end{lemma}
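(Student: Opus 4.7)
The plan is to adapt the template of Lemmas \ref{lem5.1} and \ref{lem5.4}: first prove $R_V|_V\subseteq Q_{(15)}$ by combining $R_V$-invariant subspaces of $V$ with a direct computation on the middle block, then prove the reverse inclusion by exhibiting explicit generators from Section 3.

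For the inclusion $R_V|_V\subseteq Q_{(15)}$, every $g\in R_V$ preserves the chain
$$V_{(3)}=V\cap U_+\ \subset\ V_{(3)}\oplus V_{(14)}=V\cap U_+^\perp\ \subset\ V_{(3)}\oplus V_{(14)}\oplus V_{(10)}=V\cap U_-^\perp\ \subset\ V,$$
the third identity following from $(v_{(15)},e_{\overline{\alpha}})=-\tfrac{1}{2}\ne 0$ while the other summands lie in $U_-^\perp$. In the chosen basis $(V_{(3)},V_{(14)},V_{(15)},V_{(10)})$ this already yields the block form
$$\bp A & * & * & * \\ 0 & B & * & * \\ 0 & 0 & \lambda & 0 \\ 0 & 0 & * & C \ep,$$
the $(3,4)$ entry vanishing because $V_{(3)}\oplus V_{(14)}\oplus V_{(10)}$ absorbs $gV_{(10)}$. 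To determine the $(3,3)$ and $(4,3)$ entries, write $ge_{\overline{\alpha}}=\lambda e_{\overline{\alpha}}$, $ge_\alpha=\lambda^{-1}e_\alpha+u$ with $u\in\bigoplus_{i<\alpha}\bbf e_i$, and $ge_{n+1}=\sum_j d_j e_j$. The key orthogonality identity
$$d_{\overline{j}}=(ge_{n+1},e_j)=(e_{n+1},g^{-1}e_j)=0\quad\mbox{for every}\ j=1,\ldots,\alpha$$
follows from $g^{-1}e_j\in U_+$ and $e_{n+1}\perp U_+$. Matching coefficients in $gv_{(15)}=\lambda'v_{(15)}+w$ with $w\in V_{(3)}\oplus V_{(14)}\oplus V_{(10)}$ then shows: the $e_{\overline{j}}$-coefficients for $j\in I_{(10)}$ force the $(4,3)$ entry to be zero, the $e_\alpha$-coefficient forces $\lambda'=\lambda^{-1}$, and the $e_{\overline{\alpha}}$-coefficient (combined with $d_{\overline{\alpha}}=0$) forces $\lambda'=\lambda$, hence $\lambda^2=1$.

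For the reverse inclusion $Q_{(15)}\subseteq R_V|_V$, I would realize the diagonal blocks $A,B,C$ by $h_{(3)}(A),h_{(14)}(B),h_{(10)}(C^*)\in R_V$ from Lemma \ref{lem3.12} (with $C^*=J_{\alpha-b_3-1}{}^tC^{-1}J_{\alpha-b_3-1}$), and the upper off-diagonal entries by one-parameter generators $g_{i,k}(\mu)$ modeled on Lemmas \ref{lem3.13'}, \ref{lem3.14'} and \ref{lem3.14}; in particular, the column-3 generators targeting $V_{(3)}$ and $V_{(14)}$ require the variant $g'_{i,k}(\mu)$ of Lemma \ref{lem3.14'} with a quadratic correction reflecting the $-\tfrac{1}{2}e_\alpha+e_{n+1}$ tail of $v_{(15)}$. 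The main subtlety is the $(3,3)$ entry: the argument above yields only $\lambda^2=1$, and the element of $R_V$ acting by $-1$ on each of $e_\alpha,e_{\overline{\alpha}},e_{n+1}$ and trivially on all other $e_i$ restricts to an element with $\lambda=-1$; I therefore read the ``$1$'' in the statement as shorthand for the constraint $\lambda\in\{\pm 1\}$, a finite ambiguity that is harmless for the subsequent application of Proposition \ref{prop5.2} to conclude finiteness of $R_V$-orbits on $M_0(V)$.
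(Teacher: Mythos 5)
Your proof takes the route the paper implicitly intends (the paper only says ``We can easily prove:'' and omits the argument), and the forward inclusion is carried out correctly: the invariant chain $V\cap U_+\subset V\cap U_+^\perp\subset V\cap U_-^\perp\subset V$ gives the block shape, the orthogonality identity $(ge_{n+1},e_j)=0$ for $j\le\alpha$ kills the $(4,3)$ entry, and comparing the $e_\alpha$- and $e_{\overline{\alpha}}$-coefficients of $gv_{(15)}$ forces $\lambda=\lambda^{-1}$. The reverse inclusion is only sketched, but the strategy via the $h_{(j)}$-elements of Lemma \ref{lem3.12} together with one-parameter generators patterned on Lemmas \ref{lem3.13'}--\ref{lem3.14} is the right one, matching what the paper does explicitly for the $b_7$, $b_8$ and $b_{13}$ cases.

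Your point about the $(3,3)$ entry is a genuine issue with the printed statement, not a misreading on your part. The diagonal element of ${\rm O}_{2n+1}(\bbf)$ negating $e_\alpha$, $e_{\overline{\alpha}}$ and $e_{n+1}$ and fixing all other $e_i$ lies in $R_V$: it stabilizes $U_+$, $U_-$, each $V_{(j)}$, and sends $v_{(15)}$ to $-v_{(15)}$; its restriction to $V$ is $\mathrm{diag}(I_{b_3},I_{n-\alpha},-1,I_{\alpha-b_3-1})\notin Q_{(15)}$. So $R_V|_V=Q_{(15)}\cdot\langle -I_V\rangle$, and the lemma should read $\pm 1$ rather than $1$ in that slot. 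As you observe, this is harmless downstream: $-I_V$ is central in ${\rm GL}(V)$ and acts trivially on $M_0(V)$, so the $R_V|_V$-orbit count agrees with the $Q_{(15)}$-count $(\alpha-b_3+1)(\alpha-b_3)/2$ stated after the lemma, and the finiteness conclusion for case (II) is unaffected.
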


By Proposition \ref{prop9.2}, $Q_{(15)}$-orbit decomposition on $M(V)$ is reduced to the orbit decomposition of the full flag variety of ${\rm GL}_{\alpha-b_3}(\bbf)$ by the subgroup consisting of matrices of the form
$\bp 1 & 0 \\
0 & C \ep$
with $(\alpha-b_3+1)(\alpha-b_3)/2$ orbits.

\subsubsection{Case of $b_{8}=1$}

$V$ is written as $V=V_{(3)}\oplus V_{(8)}^1\oplus V_{(14)}\oplus V_{(10)}\oplus V_{(8)}^2$ with
\begin{align*}
V_{(3)} & =\bbf e_1\oplus\cdots\oplus \bbf e_{b_3},\quad V_{(8)}^1=\bbf (e_{b_3+1}+e_{\overline{\alpha}}),\quad V_{(14)}=\bbf e_{\alpha+1}\oplus\cdots\oplus \bbf e_n, \\
V_{(10)} & =\bbf e_{\overline{\alpha-1}}\oplus\cdots\oplus \bbf e_{\overline{b_3+2}} \mand V_{(8)}^2=\bbf (e_{\overline{b_3+1}}-e_\alpha).
\end{align*}

\begin{lemma} With respect to the basis
$$e_1,\ldots,e_{b_3},e_{b_3+1}+e_{\overline{\alpha}},e_{\alpha+1},\ldots,e_n, e_{\overline{\alpha-1}},\ldots,e_{\overline{b_3+2}},e_{\overline{b_3+1}}-e_\alpha$$
of $V$, $R_V|_V$ is represented as
$$R_V|_V=Q_{(8)}=\left\{\bp A & * & * & * & * \\ 0 & \lambda & 0 & * & * \\ 0 & 0 & B & * & * \\ 0 & 0 & 0 & C & * \\ 0 & 0 & 0 & 0 & \lambda^{-1} \ep\right\}$$
where $A\in {\rm GL}_{b_3}(\bbf),\ B\in {\rm GL}_{n-\alpha}(\bbf),\ C\in {\rm GL}_{\alpha-b_3-2}(\bbf)$ and $\lambda\in\bbf^\times$.
\end{lemma}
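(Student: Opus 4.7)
The strategy parallels the proof of Lemma \ref{lem5.4} (the $b_7=1$ case). First I establish the inclusion $R_V|_V\subset Q_{(8)}$ by exhibiting the $R_V$-stable flag on $V$ that gives the block upper-triangular shape together with the diagonal constraints $\lambda$ and $\lambda^{-1}$, and then I produce explicit elements of $R_V$ realizing every entry of $Q_{(8)}$.

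For the inclusion, the following subspaces are $R_V$-stable by construction and are readily identified:
\begin{align*}
F_1 & =V\cap U_+=V_{(3)},\\
F_2 & =V\cap (U_++U_-)=V_{(3)}\oplus V_{(8)}^1,\\
F_3 & =(V\cap (U_++U_-))+(V\cap (U_++U_-)^\perp)=V_{(3)}\oplus V_{(8)}^1\oplus V_{(14)},\\
F_4 & =V\cap U_-^\perp=V_{(3)}\oplus V_{(8)}^1\oplus V_{(14)}\oplus V_{(10)},
\end{align*}
each verified by a direct coefficient computation using $V$ isotropic. This already forces the block upper-triangular shape. For the two diagonal scalars: $gU_-=U_-$ gives $ge_{\overline{\alpha}}=\lambda e_{\overline{\alpha}}$ with $\lambda\in\bbf^\times$, and since $ge_{b_3+1}\in U_+$ has zero $e_{\overline{\alpha}}$-component while $e_{\overline{\alpha}}$ appears in the basis of $V$ only inside $e_{b_3+1}+e_{\overline{\alpha}}$, the $F_2/F_1$-block equals $\lambda$; dually, $(ge_\alpha,ge_{\overline{\alpha}})=(e_\alpha,e_{\overline{\alpha}})=1$ forces the $e_\alpha$-coefficient of $ge_\alpha$ to be $\lambda^{-1}$, and $(ge_{\overline{b_3+1}},e_{\overline{\alpha}})=(e_{\overline{b_3+1}},g^{-1}e_{\overline{\alpha}})=0$ makes the $e_\alpha$-coefficient of $ge_{\overline{b_3+1}}$ vanish, so the $V/F_4$-block equals $\lambda^{-1}$. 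The zero in position $(2,3)$ is because $V_{(14)}\subset (U_++U_-)^\perp$ and $g$ preserves $(U_++U_-)^\perp$, forcing $g(V_{(14)})\subset V\cap (U_++U_-)^\perp=V_{(3)}\oplus V_{(14)}$, i.e.\ the image has no $V_{(8)}^1$-component.

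For the reverse inclusion, the Levi part is realized by $h_{(3)}(A)h_{(8)}(\lambda)h_{(14)}(B)h_{(10)}(C^*)$ from Lemma \ref{lem3.12}, which restricts to $\operatorname{diag}(A,\lambda,B,C,\lambda^{-1})$ on $V$. The unipotent generators are constructed position by position exactly as in the proof of Lemma \ref{lem5.4}: off-diagonal entries not involving $V_{(8)}^1$ or $V_{(8)}^2$ are produced by the elements $g_{i,k}(\mu)$ of Lemmas \ref{lem3.13'} and \ref{lem3.14}; entries involving $V_{(8)}^1$ or $V_{(8)}^2$ require the paired $\eta_8$-type moves, so that a transvection on $V_{(8)}^1$ (resp.\ $V_{(8)}^2$) is compensated on the $\overline{\eta_8}$-partner to preserve $U_\pm$. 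Concrete examples: for $i\in I_{(3)}$, $ge_{b_3+1}=e_{b_3+1}+\mu e_i,\ ge_{\overline{i}}=e_{\overline{i}}-\mu e_{\overline{b_3+1}}$; for $k\in I_{(14)}$, $ge_k=e_k+\mu(e_{b_3+1}+e_{\overline{\alpha}}),\ ge_{\overline{b_3+1}}=e_{\overline{b_3+1}}-\mu e_{\overline{k}},\ ge_{\overline{\alpha+1}}$ adjusted analogously; symmetric constructions cover transvections out of $V_{(8)}^2$. The main obstacle is the case-by-case verification that these generators cover exactly the starred entries of $Q_{(8)}$ and that none produces a nonzero $(2,3)$-entry; this is a routine but careful bookkeeping, directly modeled on Lemma \ref{lem5.4} with $U_-$ replaced by $\bbf e_{\overline{\alpha}}$ so that the nontrivial diagonal scaling pairs $e_{\overline{\alpha}}$ with $e_\alpha$ rather than $e_{\alpha+1}$ with $e_{\overline{\alpha+1}}$.
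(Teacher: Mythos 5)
Your derivation of the inclusion $R_V|_V\subset Q_{(8)}$ is correct and matches the paper in substance: the same stabilized subspaces (you repackage $V\cap(U_+\oplus U_-)^\perp$ into a flag step $F_3$ and then re-use it for the $(2,3)$-zero, while the paper uses the four subspaces directly, but the information is identical), and the same argument for the scalars $\lambda$ and $\lambda^{-1}$.

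However, in the converse inclusion one of your ``concrete examples'' is spurious and contradicts the very statement you proved. You list, for $k\in I_{(14)}$, the element with $ge_k=e_k+\mu(e_{b_3+1}+e_{\overline{\alpha}})$. This would produce a nonzero $(2,3)$-block, which you (and the lemma) have just shown must vanish. Indeed such an element does not lie in $R_V$: since $e_{b_3+1}+e_{\overline{\alpha}}\in V_{(8)}^1$ contains $e_{\overline{\alpha}}$, orthogonality forces a compensating adjustment of $ge_\alpha$ by a multiple of $e_{\overline k}$; but $e_\alpha\in U_+$ while $e_{\overline k}\notin U_+$ (as $\overline k\in\{n+2,\dots,2n+1-\alpha\}$), so $gU_+\neq U_+$. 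This is exactly the structural difference between the $b_7=1$ case of Lemma~\ref{lem5.4} (where the compensating index $\overline{\alpha+1}$ lies outside $U_\pm$, so the transvection survives and the $(2,3)$-entry is $*$) and the present $b_8=1$ case (where the compensating index $\alpha$ lies inside $U_+$, so the transvection is killed and the $(2,3)$-entry is $0$). Also ``$ge_{\overline{\alpha+1}}$ adjusted analogously'' is a leftover from the $b_7$-case and does not apply here. To repair the converse you should simply omit any generator from $V_{(14)}$ into $V_{(8)}^1$ and instead supply the genuine unipotent generators filling the starred entries: $V_{(8)}^1\to V_{(3)}$ (your first example), $V_{(14)}\to V_{(3)}$, $V_{(10)}\to V_{(3)}\oplus V_{(8)}^1\oplus V_{(14)}$, and $V_{(8)}^2\to V_{(3)}\oplus V_{(8)}^1\oplus V_{(14)}\oplus V_{(10)}$, each with the appropriate $\eta_8$-compensation landing outside $U_\pm$, as in the paper's pattern.
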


\begin{proof} Since $R_V$ stabilizes subspaces
\begin{align*}
V\cap U_+ & =V_{(3)},\quad V\cap (U_+\oplus U_-)=V_{(3)}\oplus V_{(8)}^1, \\
V\cap (U_+\oplus U_-)^\perp & =V_{(3)}\oplus V_{(14)} 
\mand V\cap U_-^\perp =V_{(3)}\oplus V_{(8)}^1\oplus V_{(14)}\oplus V_{(10)}
\end{align*}
of $V$, $R_V|_V$ is contained in the group
$$\left\{\bp A & * & * & * & * \\ 0 & \lambda & 0 & * & * \\ 0 & 0 & B & * & * \\ 0 & 0 & 0 & C & * \\ 0 & 0 & 0 & 0 & \mu \ep\right\}$$
where $A\in {\rm GL}_{b_3}(\bbf),\ B\in {\rm GL}_{n-\alpha}(\bbf),\ C\in {\rm GL}_{\alpha-b_3-2}(\bbf)$ and $\lambda,\mu\in\bbf^\times$. Let $g$ be an element of $R_V$. Since $gU_-=U_-$, we have $ge_{\overline{\alpha}}=\lambda e_{\overline{\alpha}}$ with some $\lambda\in\bbf^\times$. Hence
$$g(e_{b_3+1}+e_{\overline{\alpha}})\in \lambda(e_{b_3+1}+e_{\overline{\alpha}})+V_{(3)}.$$
On the other hand, we have
$$ge_\alpha=\lambda^{-1}e_\alpha+\bigoplus_{i\ne \alpha} \bbf e_i$$
since $g$ preserves the bilinear form $(\ ,\ )$. Hence
$$g(e_{\overline{b_3+1}}-e_\alpha)\in \lambda^{-1}(e_{\overline{b_3+1}}-e_\alpha)+(V_{(3)}\oplus V_{(8)}^1\oplus V_{(14)}\oplus V_{(10)}).$$
Thus we have $R_V|_V\subset Q_{(8)}$.

The proof of $Q_{(8)}\subset R_V|_V$ is similar to Lemma \ref{lem5.4}.
\end{proof}

By Proposition \ref{prop9.2}, $Q_{(8)}$-orbit decomposition on $M(V)$ is reduced to the orbit decomposition of the full flag variety of ${\rm GL}_{n-\alpha+1}(\bbf)$ by the subgroup consisting of matrices of the form
$\bp \lambda & 0 \\
0 & B \ep$
with $(n-\alpha+2)(n-\alpha+1)/2$ orbits.

\subsubsection{Case of $b_{13}=1$}

$V$ is written as $V=V_{(3)}\oplus V_{(14)}\oplus V_{(13)}^1\oplus V_{(13)}^2\oplus V_{(10)}$ with
\begin{align*}
V_{(3)} & =\bbf e_1\oplus\cdots\oplus \bbf e_{b_3},\quad V_{(14)}=\bbf e_{\alpha+1}\oplus\cdots\oplus \bbf e_{n-1},\quad V_{(13)}^1=\bbf (e_\alpha+e_n), \\
V_{(13)}^2 & =\bbf (e_{\overline{\alpha}}-e_{n+2})\mand V_{(10)} =\bbf e_{\overline{\alpha-1}}\oplus\cdots\oplus \bbf e_{\overline{b_3+1}}.
\end{align*}

\begin{lemma} With respect to the basis
$$e_1,\ldots,e_{b_3},e_{\alpha+1},\ldots,e_n, e_\alpha+e_n, e_{\overline{\alpha}}-e_{n+2}, e_{\overline{\alpha-1}},\ldots,e_{\overline{b_3+1}}$$
of $V$, $R_V|_V$ is represented as
$$R_V|_V=Q_{(13)}=\left\{\bp A & * & * & * & * \\
0 & B & * & * & * \\
0 & 0 & \lambda & 0 & 0 \\
0 & 0 & 0 & \lambda^{-1} & * \\
0 & 0 & 0 & 0 & C \ep\right\}$$
where $A\in {\rm GL}_{b_3}(\bbf),\ B\in {\rm GL}_{n-\alpha-1}(\bbf),\ C\in {\rm GL}_{\alpha-b_3-1}(\bbf)$ and $\lambda\in\bbf^\times$.
\end{lemma}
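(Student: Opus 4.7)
The plan is to mirror the arguments of Lemmas~\ref{lem5.1} and~\ref{lem5.4}: first I would establish $R_V|_V\subset Q_{(13)}$ by producing an $R_V$-stable flag in $V$ whose graded pieces force the block shape of $Q_{(13)}$, then obtain the reverse inclusion by exhibiting explicit generators in the style of Lemmas~\ref{lem3.12}--\ref{lem3.13'}.

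For the forward containment I would introduce the flag
\begin{align*}
V^{(1)} & = V\cap U_+ = V_{(3)}, \qquad V^{(2)}=V\cap (U_++U_-)^\perp = V_{(3)}\oplus V_{(14)}, \\
V^{(3)} & = V\cap U_+^\perp = V^{(2)}\oplus V_{(13)}^1, \qquad V^{(4)}=V\cap (U_+^\perp+U_-) = V^{(3)}\oplus V_{(13)}^2,
\end{align*}
with $V^{(5)}=V$. Since every $g\in R_V$ preserves $U_+,\,U_-,\,U_+^\perp$ and $U_+^\perp+U_-$, it stabilizes each $V^{(i)}$, so $g|_V$ is block upper triangular of shape $(A,B,\lambda,\mu,C)$ with $A\in {\rm GL}_{b_3}(\bbf),\ B\in {\rm GL}_{n-\alpha-1}(\bbf),\ C\in {\rm GL}_{\alpha-b_3-1}(\bbf)$ and $\lambda,\mu\in\bbf^\times$. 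The coupling $\mu=\lambda^{-1}$ and the vanishing of the $(3,4)$ and $(3,5)$ entries would then be forced by the bilinear form: writing $ge_{\overline{\alpha}}=\lambda'e_{\overline{\alpha}}$, the identities $(ge_\alpha,ge_{\overline{\alpha}})=1$ and $(ge_n,ge_{\overline{\alpha}})=0$ (the latter uses $\alpha<n$, which is forced by $b_{13}\ne 0$) together with the flag relation for $g(e_\alpha+e_n)$ give $\lambda=(\lambda')^{-1}$. A parallel computation with $e_{n+2}\in U_+^\perp\cap U_-^\perp$ and the pairing $(e_{n+2},e_\alpha+e_n)=1$ extracts the $e_{n+2}$-coefficient of $ge_{n+2}$ as $\lambda^{-1}$ (using that $(ge_{n+2})_{\overline{\alpha}}=0$ since $ge_{n+2}\in U_+^\perp$), so $\mu=\lambda^{-1}$. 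Finally, the $V_{(13)}^1$-coefficient of any $v\in V$ coincides with its $e_\alpha$-coefficient, and both $g(e_{\overline{\alpha}}-e_{n+2})$ and $g(e_{\overline{j}})$ for $j$ indexing $V_{(10)}$ lie in $U_-^\perp$, so those two entries vanish.

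For the reverse containment I would realize the Levi factor as the restrictions to $V$ of $h_{(3)}(A),\,h_{(14)}(B),\,h_{(13)}(\lambda)$ and $h_{(10)}(C^*)$ from Lemma~\ref{lem3.12}, and each strictly upper-triangular entry by a one-parameter unipotent family $g_{i,k}(\mu)\in R_V$ in the style of Lemma~\ref{lem3.13'} and the proof of Lemma~\ref{lem5.4}. For pairs not involving $V_{(13)}^1$ or $V_{(13)}^2$ the standard recipe $g_{i,k}(\mu)e_k=e_k+\mu e_i,\ g_{i,k}(\mu)e_{\overline{i}}=e_{\overline{i}}-\mu e_{\overline{k}}$ applies verbatim; entries hitting $V_{(13)}^1$ would be realized by shifting through the $e_n$-summand with a compensating $e_{n+2}$-adjustment, and symmetrically for $V_{(13)}^2$; the $(4,5)$ entry is produced in the same way.

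The main obstacle will be the forward direction. The naive family $V\cap U_+,\,V\cap U_+^\perp,\,V\cap U_-^\perp$ forms only a lattice and not a chain, so one must spot that the correct intermediate $R_V$-stable subspace between $V^{(3)}$ and $V^{(5)}$ is $V^{(4)}=V\cap(U_+^\perp+U_-)$. Once this is in place the second delicate step is to disentangle the contributions of $ge_{\overline{\alpha}}$ and $ge_{n+2}$ inside $g(e_{\overline{\alpha}}-e_{n+2})$ so as to read off simultaneously the coupling $\mu=\lambda^{-1}$ and the vanishing of the $(3,4)$ block.
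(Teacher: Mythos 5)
Your overall strategy matches the paper's: establish the containment $R_V|_V\subset Q_{(13)}$ via stable subspaces, then get the reverse containment by exhibiting explicit generators in the style of Lemmas \ref{lem3.12}--\ref{lem3.13'} and the proof of Lemma \ref{lem5.4}. But you use a genuinely different system of stable subspaces. The paper's proof invokes the incomparable pair $V\cap U_+^\perp$ (positions $1,2,3$) and $V\cap(W_+\oplus U_-)^\perp$ (positions $1,2,4$) together with $V\cap U_-^\perp$ (positions $1,2,4,5$), reading the block shape directly off this poset. You instead force a genuine chain by introducing $V^{(4)}=V\cap(U_+^\perp+U_-)$ (positions $1,2,3,4$) and then recover the vanishing of the $(3,4)$, $(3,5)$ entries from stability of $V\cap U_-^\perp$ (via the observation that the $V_{(13)}^1$-coefficient of $v\in V$ equals $(v,e_{\overline\alpha})$). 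Both routes produce the same intersection constraints per column, so this is a valid alternative that buys you a clean flag at the cost of one extra argument.

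Your derivation of the coupling $\mu=\lambda^{-1}$ is correct but more circuitous than the paper's, and you leave a small step unstated. The paper simply reads off $\mu$ as the $e_{\overline\alpha}$-coefficient of $g(e_{\overline\alpha}-e_{n+2})=\lambda^{-1}e_{\overline\alpha}-ge_{n+2}$, which is $\lambda^{-1}-0$ since $ge_{n+2}\in U_+^\perp$. You instead go through the $e_{n+2}$-coefficient of $ge_{n+2}$ and the pairing with $g(e_\alpha+e_n)=\lambda(e_\alpha+e_n)+w$ with $w\in V_{(3)}\oplus V_{(14)}$; for this to give $\lambda^{-1}$ you need $(ge_{n+2},w)=0$. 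That is true — for $e_i\in V_{(3)}$ it follows from $ge_{n+2}\in U_+^\perp$, and for $e_i\in V_{(14)}$ one has $(ge_{n+2},e_i)=(e_{n+2},g^{-1}e_i)$ which vanishes because the already-established block shape of $g^{-1}|_V$ keeps $g^{-1}e_i$ inside $V_{(3)}\oplus V_{(14)}$, away from the unique basis vector $e_\alpha+e_n$ that pairs nontrivially with $e_{n+2}$ — but you should say so explicitly, since it is not the naive consequence of $ge_{n+2}\in U_+^\perp$ your parenthetical suggests. If you simply compute the $e_{\overline\alpha}$-coefficient of $g(e_{\overline\alpha}-e_{n+2})$ as in the paper, this detour is avoided.
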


\begin{proof} Since $R_V$ stabilizes subspaces
\begin{align*}
V\cap U_+ & =V_{(3)},\quad V\cap (U_+\oplus U_-)^\perp=V_{(3)}\oplus V_{(14)}, \\
V\cap U_+^\perp & =V_{(3)}\oplus V_{(14)}\oplus V_{(13)}^1, \quad
V\cap (W_+\oplus U_-)^\perp =V_{(3)}\oplus V_{(14)}\oplus V_{(13)}^2 \\
\mand V\cap U_-^\perp & =V_{(3)}\oplus V_{(14)}\oplus V_{(13)}^2\oplus V_{(10)}
\end{align*}
of $V$, $R_V|_V$ is contained in the group
$$\left\{\bp A & * & * & * & * \\
0 & B & * & * & * \\
0 & 0 & \lambda & 0 & 0 \\
0 & 0 & 0 & \mu & * \\
0 & 0 & 0 & 0 & C \ep\right\}$$
where $A\in {\rm GL}_{b_3}(\bbf),\ B\in {\rm GL}_{n-\alpha-1}(\bbf),\ C\in {\rm GL}_{\alpha-b_3-1}(\bbf)$ and $\lambda,\mu\in\bbf^\times$. Let $g$ be an element of $R_V$. Since $gU_-=U_-$, we have $ge_{\overline{\alpha}}=\lambda^{-1} e_{\overline{\alpha}}$ with some $\lambda\in\bbf^\times$. Hence
$$g(e_{\overline{\alpha}}-e_{n+2})\in \lambda^{-1}(e_{\overline{\alpha}}-e_{n+2})+(V_{(3)}\oplus V_{(14)}).$$
On the other hand, we have
$$ge_\alpha=\lambda e_\alpha+\bigoplus_{i\ne \alpha} \bbf e_i$$
since $g$ preserves the bilinear form $(\ ,\ )$. Hence
$$g(e_\alpha+e_n)\in \lambda(e_\alpha+e_n)+(V_{(3)}\oplus V_{(14)}).$$
Thus we have $R_V|_V\subset Q_{(13)}$.

The proof of $Q_{(13)}\subset R_V|_V$ is similar to Lemma \ref{lem5.4}.
\end{proof}

By Proposition \ref{prop9.2}, $Q_{(13)}$-orbit decomposition on $M(V)$ is reduced to the orbit decomposition of the full flag variety $M'$ of ${\rm GL}_{\alpha-b_3+1}(\bbf)$ by the subgroup $H$ consisting of matrices of the form
$$\bp \lambda & 0 & 0 \\
0 & \lambda^{-1} & * \\
0 & 0 & C \ep.$$
Let $\widetilde{H}$ denote the subgroup of ${\rm GL}_{\alpha-b_3+1}(\bbf)$ consisting of matrices of the form
$$\bp \lambda & 0 & 0 \\
0 & \mu & * \\
0 & 0 & C \ep$$
where $\lambda,\mu\in\bbf^\times$ and $C\in{\rm GL}_{\alpha-b_3-1}(\bbf)$. Then there are a finite number of $\widetilde{H}$-orbits on $M'$ by Proposition \ref{prop5.2}. Since the center $Z=\{\lambda I_{\alpha-b_3+1}\mid \lambda\in\bbf^\times \}$ acts trivially on $M'$ and since
$$\widetilde{H}/ZH\cong \bbf^\times/(\bbf^\times)^2$$
there are a finite number of $H$-orbits on $M'$ by the condition ${\rm (C'')}$.

Thus we have completed the proof of finiteness of the triple flag variety $\mct_{(\alpha),(1),(1^n)}$.

\section{Appendix}

\subsection{An elementary lemma for ${\rm GL}_n(\bbf)$}

\begin{lemma} Let $U,U',V$ and $V'$ be subspaces of $\bbf^n$ over an arbitrary field $\bbf$ such that
$\dim U =\dim U',\ \dim V=\dim V'$ and that $\dim(U\cap V) =\dim(U'\cap V')$.
Then there exists an element $g$ of $G={\rm GL}_n(\bbf)$ such that
$$gU=U'\quad\mbox{and that}\quad gV=V'.$$
\label{lem6.1'}
\end{lemma}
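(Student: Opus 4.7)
The plan is to reduce the statement to a straightforward matching of bases adapted to the two pairs of subspaces. First I would set $a=\dim(U\cap V)=\dim(U'\cap V')$, $b=\dim U-a=\dim U'-a$, and $c=\dim V-a=\dim V'-a$. By the dimension formula,
$$\dim(U+V)=\dim U+\dim V-\dim(U\cap V)=a+b+c=\dim(U'+V'),$$
so setting $d=n-a-b-c$ all four ``block sizes'' $(a,b,c,d)$ coincide for both pairs.

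Next I would construct a basis of $\bbf^n$ adapted to $(U,V)$ in the standard staged way. Pick a basis $e_1,\ldots,e_a$ of $U\cap V$; extend it inside $U$ to a basis $e_1,\ldots,e_a,f_1,\ldots,f_b$ of $U$; independently extend $e_1,\ldots,e_a$ inside $V$ to a basis $e_1,\ldots,e_a,h_1,\ldots,h_c$ of $V$. A short argument (any linear relation among $\{e_i,f_j,h_\ell\}$ would put a nonzero combination of the $h_\ell$'s in $U\cap V$, contradicting the choice of the extension) shows $\{e_i,f_j,h_\ell\}$ is a basis of $U+V$; then extend by $k_1,\ldots,k_d$ to a basis of $\bbf^n$. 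I would construct an entirely analogous basis $e_i',f_j',h_\ell',k_m'$ adapted to $(U',V')$, with the same cardinalities by the dimension count above.

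Finally, I would define $g\in{\rm GL}_n(\bbf)$ as the unique linear map sending the unprimed basis to the primed one: $g(e_i)=e_i'$, $g(f_j)=f_j'$, $g(h_\ell)=h_\ell'$, $g(k_m)=k_m'$. Invertibility is immediate because $g$ carries a basis to a basis, and
$$gU=\mathrm{span}(e_i',f_j')=U',\qquad gV=\mathrm{span}(e_i',h_\ell')=V',$$
as required. There is no real obstacle in this argument; the only mildly delicate point is verifying that the three separately constructed lists $(e_i)$, $(f_j)$, $(h_\ell)$ together form a basis of $U+V$, which is the observation recorded in the previous paragraph.
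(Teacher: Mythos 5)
Your proposal is correct and is essentially the same argument as the paper's: both build a basis of $\bbf^n$ adapted to the pair $(U,V)$ by starting from a basis of $U\cap V$, extending separately inside $U$ and $V$, and then define $g$ as the linear map carrying this adapted basis to the analogous one for $(U',V')$. The paper phrases it with the linear independence of $\{u_i,v_j,w_k\}$ and $\{u_i',v_j',w_k'\}$ and invokes the existence of a $g\in{\rm GL}_n(\bbf)$ matching them, while you make the extension to a full basis of $\bbf^n$ explicit; the content is the same.
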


\begin{proof} Put $\dim U =\dim U'=k+m,\ \dim V=\dim V'=\ell+m$ and $\dim(U\cap V) =\dim(U'\cap V')=m$. Let $w_1,\ldots,w_m$ and $w'_1,\ldots,w'_m$ be bases of $W=U\cap V$ and $W'=U'\cap V'$, respectively. Then we can take vectors $u_1,\ldots,u_k,u'_1,\ldots,u'_k,v_1,\ldots,v_\ell,v'_1,\ldots,v'_\ell$ such that
\begin{align*}
U & =\bbf u_1\oplus\cdots\oplus \bbf u_k\oplus W,\quad U'=\bbf u'_1\oplus\cdots\oplus \bbf u'_k\oplus W', \\
V & =\bbf v_1\oplus\cdots\oplus \bbf v_\ell \oplus W \quad\mbox{and that}\quad V'=\bbf v'_1\oplus\cdots\oplus \bbf v'_\ell \oplus W'.
\end{align*}
Since the vectors $u_1,\ldots,u_k,v_1,\ldots,v_\ell,w_1,\ldots,w_m$ and $u'_1,\ldots,u'_k,v'_1,\ldots,v'_\ell,w'_1,\ldots,w'_m$ are linearly independent, we can take an element $g\in G$ such that
$$gu_1=u'_1,\ldots,gu_k=u'_k,\ gv_1=v'_1,\ldots,gv_\ell=v'_\ell,\ gw_1=w'_1\ldots,gw_m=w'_m.$$
This $g$ is a desired element of $G$.
\end{proof}

\subsection{Finiteness of some orbit decomposition on the Grassmann variety}

Consider the direct sum decomposition $\bbf^{m+n}=U_1\oplus U_2$ with $U_1=\bbf e_1\oplus\cdots\oplus \bbf e_m$ and $U_2=\bbf e_{m+1}\oplus\cdots\oplus \bbf e_{m+n}$ over an arbitrary field $\bbf$. Write $G_1={\rm GL}(U_1)$ and $G_2={\rm GL}(U_2)$. For $0<s<m+n$, let ${\rm Gr}$ denote the Grassmann variety
$${\rm Gr} =\{s\mbox{-dimensional subspaces in }U_1\oplus U_2\}.$$

Let $\pi_i$ denote the projection $U_1\oplus U_2\to U_i$ for $i=1,2$. Let $S$ be an $s$-dimensional subspace of $U_1\oplus U_2$. Then the $G_1\times G_2$-orbit of $S$ is determined by the four invariants
$$\dim(U_1\cap S),\quad \dim\pi_1(S),\quad \dim(U_2\cap S)\mand \dim\pi_2(S)$$
with $\dim(U_1\cap S)+\dim\pi_2(S)=\dim(U_2\cap S)+\dim\pi_1(S)=s$.
Write $p=\dim(U_1\cap S),\ q=\dim(U_1\cap S)$ and $r=s-p-q$. Then 
$$\dim\pi_1(S)-p=\dim\pi_2(S)-q=r.$$

Consider the canonical full flag
$$U_{2,1}\subset\cdots\subset U_{2,n-1}$$
in $U_2$ where $U_{2,i}=\bbf e_{m+1}\oplus\cdots\oplus \bbf e_{m+i}$. Let $B_2$ denote the Borel subgroup of $G_2$ stabilizing this full flag. Define subsets
\begin{align*}
J =\{i_1,\ldots,i_q\} & =\{i\in I_2\mid \dim(U_{2,i}\cap S)=\dim(U_{2,i-1}\cap S)+1\} \\
\mand K =\{j_1,\ldots,j_r\} & =\{i\in I_2 \mid \dim(U_{2,i}\cap S)=\dim(U_{2,i-1}\cap S), \\
& \qquad \qquad \dim(\pi_2(S)\cap U_{2,i})=\dim(\pi_2(S)\cap U_{2,i-1})+1\}
\end{align*}
of $I_2=\{m+1,\ldots,m+n\}$ with $i_1<\cdots<i_q$ and $j_1<\cdots<j_r$. Then we can take a $b\in B_2$ such that
$$b\pi_2(S)=\bbf e_{i_1}\oplus\cdots\oplus \bbf e_{i_q}\oplus \bbf e_{j_1}\oplus\cdots\oplus \bbf e_{j_r}.$$
We can write
$$bS=\bbf v_1\oplus\cdots\oplus \bbf v_p\oplus \bbf e_{i_1}\oplus\cdots\oplus \bbf e_{i_q} \oplus \bbf (w_1+e_{j_1})\oplus\cdots\oplus \bbf (w_r+e_{j_r})$$
with some linearly independent vectors $v_1,\ldots,v_p,w_1,\ldots,w_r$ in $U_1$. So we can take a $g_1\in G_1$ such that $S_0=g_1bS$ is of the form
\begin{equation}
S_0=U_{1,p}\oplus \bbf e_{i_1}\oplus\cdots\oplus \bbf e_{i_q} \oplus \bbf (e_{p+1}+e_{j_1}) \oplus\cdots\oplus \bbf (e_{p+r}+e_{j_r})
\label{eq6.1}
\end{equation}
where $U_{1,k}=\bbf e_1\oplus\cdots\oplus \bbf e_k$ for $k=1,\ldots, m$

Let $Q$ denote the isotropy subgroup
$$Q=\{g\in G_1\times B_2\mid gS_0=S_0\}$$
of $S_0$ in $G_1\times B_2$. Let $\widetilde\pi_1: G_1\times G_2\to G_1$ denote the projection.

\begin{lemma} $\widetilde\pi_1(Q)$ is equal to the parabolic subgroup $P_1$ of $G_1$ stabilizing the flag
$U_{1,p}\subset U_{1,p+1}\subset\cdots\subset U_{1,p+r}$
in $U_1$.
\label{lem6.1}
\end{lemma}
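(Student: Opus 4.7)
The plan is to prove the two inclusions $\widetilde\pi_1(Q)\subset P_1$ and $P_1\subset \widetilde\pi_1(Q)$ separately. For the first, I will extract the relevant flag as $g_1$-invariants by intersecting $S_0$ with natural $B_2$-stable subspaces and projecting to $U_1$; for the second, I will construct an explicit $b\in B_2$ from a given $g_1\in P_1$.

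For the inclusion $\widetilde\pi_1(Q)\subset P_1$, take $(g_1,b)\in Q$. For each $M\in\{0,1,\dots,n\}$, the subspace $U_1\oplus U_{2,M}$ is stabilized by $G_1\times B_2$, hence by $(g_1,b)$. Computing from the defining expression \eqref{eq6.1}, an element of $S_0$ lies in $U_1\oplus U_{2,M}$ iff its coefficients on $e_{i_a}$ with $i_a>m+M$ and on $(e_{p+\ell}+e_{j_\ell})$ with $j_\ell>m+M$ vanish, so
$$\pi_1\bigl(S_0\cap(U_1\oplus U_{2,M})\bigr)=U_{1,p}\oplus\bigoplus_{\ell:\,j_\ell\le m+M}\bbf e_{p+\ell}.$$
Applying $g_1$ commutes with $\pi_1$ on vectors of $U_1\oplus U_{2,M}$, so each of these subspaces is $g_1$-stable. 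Letting $M$ range and using that the $j_\ell$ are strictly increasing, the jumps of this family of subspaces give exactly the flag $U_{1,p}\subset U_{1,p+1}\subset\cdots\subset U_{1,p+r}$, which is therefore stabilized by $g_1$. Hence $g_1\in P_1$.

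For the inclusion $P_1\subset \widetilde\pi_1(Q)$, fix $g_1\in P_1$. Since $g_1$ stabilizes each $U_{1,p+\ell}$, we may write
$$g_1e_{p+\ell}=u_\ell+\sum_{\ell'=1}^\ell a_{\ell',\ell}e_{p+\ell'}, \qquad u_\ell\in U_{1,p},\ a_{\ell,\ell}\ne 0.$$
I will define $b\in{\rm GL}(U_2)$ by
$$be_{j_\ell}=\sum_{\ell'=1}^\ell a_{\ell',\ell}e_{j_{\ell'}}\ (\ell=1,\dots,r),\qquad be_j=e_j\ \text{for } j\in I_2-\{j_1,\dots,j_r\}.$$
The condition $be_j\in U_{2,j-m}$ holds for every $j\in I_2$ because $j_{\ell'}\le j_\ell$, so $b\in B_2$; invertibility follows from $a_{\ell,\ell}\ne 0$. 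Now a direct computation shows $(g_1,b)e_{i_a}=e_{i_a}\in S_0$, $(g_1,b)U_{1,p}=U_{1,p}\subset S_0$, and
$$(g_1,b)(e_{p+\ell}+e_{j_\ell})=u_\ell+\sum_{\ell'=1}^\ell a_{\ell',\ell}(e_{p+\ell'}+e_{j_{\ell'}})\in S_0,$$
so $(g_1,b)S_0\subseteq S_0$, whence equality by dimension. Thus $g_1=\widetilde\pi_1(g_1,b)\in\widetilde\pi_1(Q)$.

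The two inclusions give the lemma. The only delicate point is the bookkeeping in the first direction — showing that the family $\{\pi_1(S_0\cap(U_1\oplus U_{2,M}))\}_M$ realizes exactly the flag $U_{1,p}\subset\cdots\subset U_{1,p+r}$ and nothing more — but this follows cleanly from the explicit form \eqref{eq6.1} of $S_0$ together with the fact that $J\cap K=\emptyset$ and the $j_\ell$ are distinct.
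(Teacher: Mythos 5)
Your proof is correct and follows essentially the same route as the paper: show $\widetilde\pi_1(Q)\subset P_1$ by intersecting $S_0$ with the $B_2$-stable spaces $U_1\oplus U_{2,M}$ and projecting to $U_1$, and show $P_1\subset\widetilde\pi_1(Q)$ by constructing an explicit $b\in B_2$ mirroring the matrix of $g_1$ on the span of the $e_{p+\ell}$. You have in fact silently corrected a typo in the paper's formula for $b$ (the paper writes $be_{j_k}=\sum_{i=1}^k a_{i,k}e_{j_k}$ where it should be $e_{j_i}$ in the summand, as you have it), and you spell out the verification that $b\in B_2$ and that $(g_1,b)$ fixes each generator of $S_0$, which the paper leaves implicit.
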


\begin{proof} Let $g$ be an element of $P_1$. Then for $k=1,\ldots,r$ we can write
$$ge_{p+k}\in (\sum_{i=1}^k a_{i,k}e_{p+i})+U_{1,p}$$
with some $a_{i,k}\in\bbf$. Define an element $b\in B_2$ by
$$be_{j_k}=\sum_{i=1}^k a_{i,k}e_{j_k}\quad \mbox{for }k=1,\ldots,r$$
and $be_\ell=e_\ell$ for $\ell\in I_2-K$. Then we have
$$bgS_0=S_0$$
and therefore $bg\in Q$. Hence $g\in \widetilde\pi_1(Q)$.

Conversely let $g$ be an element of $Q$. Since $g$ stabilizes $U_1\oplus U_{2,k}$ for $k=0,\ldots,n$, it stabilizes
$$S_0\cap(U_1\oplus U_{2,k})=U_{1,p}\oplus \bbf e_{i_1}\oplus\cdots\oplus \bbf e_{i_{k(1)}}\oplus \bbf (e_{p+1}+e_{j_1})\oplus\cdots\oplus \bbf (e_{p+k(2)}+e_{j_{k(2)}})$$
where $i_{k(1)}\le k<i_{k(1)+1}$ and $j_{k(2)}\le k<j_{k(2)+1}$. Hence it stabilizes
$$\pi_1(S_0\cap(U_1\oplus U_{2,k}))=U_{1,p}\oplus \bbf e_{p+1}\oplus\cdots\oplus \bbf e_{p+k(2)}=U_{1,p+k(2)}.$$
Since $k(2)$ varies from $0$ to $r$, we have $\widetilde\pi_1(g)\in P_1$.
\end{proof}

\begin{proposition} Let $H_1$ be a subgroup of $G_1$ such that
$$|H_1\backslash G_1/B_1|<\infty$$
where $B_1$ is a Borel subgroup of $G_1$. Then ${\rm Gr}$ consists of finitely many $H_1\times B_2$-orbits.
\label{lem8.1}
\end{proposition}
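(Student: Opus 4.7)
The plan is to show finiteness in two stages: first reduce to finitely many $G_1 \times B_2$-orbits on ${\rm Gr}$, and then inside each one reduce the count of $(H_1 \times B_2)$-orbits to a double coset count in $G_1$ that is covered by the hypothesis.

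First I would establish that there are finitely many $G_1 \times B_2$-orbits on ${\rm Gr}$. Given $S \in {\rm Gr}$, the triple $(p, J, K)$ with $p = \dim(U_1 \cap S)$ and $J, K \subset I_2$ as defined above is $(G_1 \times B_2)$-invariant, and the explicit reduction carried out just before Lemma \ref{lem6.1} (first act by $b \in B_2$ to normalize $\pi_2(S)$, then by $g_1 \in G_1$) shows that two subspaces with the same triple lie in the same orbit. The set of such triples is finite, so there are only finitely many orbits, each with a representative $S_0$ of the form (\ref{eq6.1}).

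The key step is then the following. Fix one orbit with representative $S_0$ and let $Q \subset G_1 \times B_2$ be its stabilizer, so that $(H_1 \times B_2)$-orbits inside this orbit correspond to double cosets $(H_1 \times B_2) \backslash (G_1 \times B_2) / Q$. I claim that the projection onto the first factor induces a bijection
\[
(H_1 \times B_2) \backslash (G_1 \times B_2) / Q \;\xrightarrow{\;\sim\;}\; H_1 \backslash G_1 / P_1,
\]
where $P_1 = \widetilde{\pi_1}(Q)$ is the parabolic subgroup of $G_1$ identified in Lemma \ref{lem6.1}. Well-definedness and surjectivity are immediate. For injectivity, if $g_1' = h g_1 q_1$ with $h \in H_1$ and $q_1 \in P_1$, choose $q_2 \in B_2$ with $(q_1, q_2) \in Q$; then $(h, 1)(g_1, b_2)(q_1, q_2) = (g_1', b_2 q_2)$, and left multiplication by $(1, b_2'(b_2 q_2)^{-1}) \in H_1 \times B_2$ brings this to $(g_1', b_2')$ for any prescribed $b_2' \in B_2$.

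Finally, since $P_1$ is a parabolic subgroup of $G_1$, it contains a Borel subgroup $B_1'$ of $G_1$, and $B_1'$ is conjugate to $B_1$; a right translation shows $|H_1 \backslash G_1 / B_1'| = |H_1 \backslash G_1 / B_1| < \infty$, and the quotient map $H_1 \backslash G_1 / B_1' \twoheadrightarrow H_1 \backslash G_1 / P_1$ then gives $|H_1 \backslash G_1 / P_1| < \infty$. Summing over the finitely many $(G_1 \times B_2)$-orbits yields the conclusion. The main delicate point is the injectivity in the bijection above: it relies on the fact that left multiplication by $B_2$ on the second coordinate is transitive, which decouples the two factors of elements of $Q$; without this the mixing between factors prescribed by $Q$ could a priori couple the two sides and spoil the reduction. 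Everything else is either formal or already supplied by Lemma \ref{lem6.1}.
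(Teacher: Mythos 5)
Your proof is correct and follows essentially the same route as the paper's: reduce to finitely many $G_1\times B_2$-orbits via the normal form $S_0$, then transport the double-coset count through $\widetilde\pi_1$ to $H_1\backslash G_1/\widetilde\pi_1(Q)$ and invoke Lemma \ref{lem6.1}. The paper asserts the bijection $(H_1\times B_2)\backslash(G_1\times B_2)/Q\stackrel{\sim}{\to}H_1\backslash G_1/\widetilde\pi_1(Q)$ without proof; you supply the injectivity argument (using transitivity of $B_2$ on the second coordinate to decouple the factors), which is exactly the point worth spelling out.
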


\begin{proof} Since every $G_1\times B_2$-orbit in ${\rm Gr}$ contains an element $S_0$ of the form (\ref{eq6.1}), we have only to show that $(G_1\times B_2)S_0\cong (G_1\times B_2)/Q$ is decomposed into a finite number of $H_1\times B_2$-orbits. By the map $\widetilde\pi_1$, we have
$$(H_1\times B_2)\backslash (G_1\times B_2)/Q\stackrel{\sim}{\to} H_1\backslash G_1/\widetilde\pi_1(Q).$$
So the assertion follows from the assumption $|H_1\backslash G_1/B_1|<\infty$ and Lemma \ref{lem6.1}.
\end{proof}

\subsection{An orbit decomposition of ${\rm GL}_n(\bbf)/B$}

Let $V_1\subset V_2\subset\cdots\subset V_{n-1}$ be the canonical full flag in $\bbf^n$ defined by
$$V_1=\bbf e_1,\ V_2=\bbf e_1\oplus \bbf e_2,\ \ldots,\ V_{n-1}=\bbf e_1\oplus\cdots\oplus \bbf e_{n-1}.$$
Then
$$B=\{g\in G\mid gV_i=V_i\mbox{ for }i=1,\ldots,n-1\}=\{\mbox{upper triangular matrices in }G\}$$
is a Borel subgroup of $G={\rm GL}_n(\bbf)$.

Suppose $n=\alpha_1+\cdots+\alpha_p$ with positive integers $\alpha_1,\ldots,\alpha_p$. Define a partition $I=I_1\sqcup\cdots\sqcup I_p$ of $I=\{1,\ldots,n\}$ by
$$I_1=\{1,\ldots,\alpha_1\},\ I_2=\{\alpha_1+1,\ldots,\alpha_1+\alpha_2\},\ \ldots,\ I_p=\{\alpha_1+\cdots\alpha_{p-1}+1,\ldots,n\}.$$
Put $U_j=\oplus_{k\in I_j} \bbf e_k$. Then we have a direct sum decomposition
$\bbf^n=U_1\oplus\cdots\oplus U_p$.
Let $P$ be the parabolic subgroup of $G$ defined by
$$P=\left\{\bp A_1 && * \\ & \ddots & \\ 0 && A_p \ep \Bigm| A_k\in {\rm GL}_{\alpha_k}(\bbf)\mbox{ for }k=1,\ldots,p\right\}.$$
Then $P$ is the isotropy subgroup in $G$ for the flag
$U_1\subset U_1\oplus U_2\subset\cdots\subset U_1\oplus\cdots\oplus U_{p-1}$.

Let $S_n$ denote the symmetric group for $I$. For an element $\sigma\in S_n$, there corresponds a permutation matrix $w_\sigma$ defined by
$$w_\sigma(e_i)=e_{\sigma(i)}\quad\mbox{for }i\in I.$$
Fix a $j\in\{1,\ldots,p\}$ and $\sigma\in S_n$. Then there exists a unique sequence $r_1<\cdots<r_{\alpha_j}$ such that
$$(\sigma(r_1),\ldots,\sigma(r_{\alpha_j}))=I_j.$$
Consider the permutation $\tau$ of $I_j$ given by
\begin{equation}
\tau\sigma(r_1)=\beta+1,\ \ldots,\ \tau\sigma(r_{\alpha_j})=\beta+\alpha_j
\label{eq9.1}
\end{equation}
where $\beta=\alpha_1+\cdots+\alpha_{j-1}$. Since $w_\tau\in P$, we have
$$Pw_\sigma B=Pw_\tau w_\sigma B=PwB$$
where $w=w_\tau w_\sigma$. It follows from (\ref{eq9.1}) that
\begin{equation}
U_j\cap wV_{r_k}=\bbf e_{\beta+1}\oplus\cdots\oplus \bbf e_{\beta+k}\quad\mbox{for }k=1,\ldots,\alpha_j.
\label{eq9.2}
\end{equation}

Let $L_j$ be the subgroup of $P$ defined by
$$L_j=\left\{\bp I_\beta && 0 \\ & A_j & \\ 0 && I_\gamma \ep \Bigm| A_j\in {\rm GL}_{\alpha_j}(\bbf)\right\}$$
where $\gamma=\alpha_{j+1}+\cdots+\alpha_p$ and $N_j$ the normal subgroup of $P$ defined by
$$N_j=\left\{\bp A_1 &&&&&& * \\ & \ddots &&&&& \\ && A_{j-1} &&&& \\ &&& I_{\alpha_j} &&& \\ &&&& A_{j+1} && \\ &&&&& \ddots & \\ 0 &&&&&& A_p \ep \Bigm| A_k\in{\rm GL}_{\alpha_k}(\bbf)\right\}.$$
Then $P=L_jN_j=N_jL_j$.

\begin{lemma} {\rm (i)} $L_j\cap wBw^{-1}=L_j\cap B$.

{\rm (ii)} $P\cap wBw^{-1}=(L_j\cap B)(N_j\cap wBw^{-1}).$
\label{lem9.1}
\end{lemma}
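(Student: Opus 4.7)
The proof of (i) will rest on a direct criterion for when $g\in L_j$ stabilizes a subspace $W\subset\bbf^n$. Writing $g=\mbox{diag}(I_\beta,A_j,I_\gamma)$ and letting $\pi_j:\bbf^n\to U_j$ denote the projection along $\bigoplus_{i\neq j}U_i$, any $v=v_j+v'\in W$ (with $v_j\in U_j$, $v'$ in the complement) satisfies $gv-v=(A_j-I)v_j\in U_j$. Hence $gW=W$ is equivalent to $(A_j-I)\pi_j(W)\subset W\cap U_j$. I will apply this to $W=V'_k:=wV_k=\bigoplus_{i=1}^k\bbf e_{w(i)}$. Because $w=w_\tau w_\sigma$ with $\tau$ permuting only indices in $I_j$, the condition $w(i)\in I_j$ is equivalent to $\sigma(i)\in I_j$, i.e.\ to $i\in\{r_1,\ldots,r_{\alpha_j}\}$, and together with (\ref{eq9.2}) this yields $\pi_j(V'_k)=V'_k\cap U_j$ for every $k$.

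With this equality, the criterion reduces to: $A_j$ stabilizes $V'_k\cap U_j$ for each $k$. Letting $k$ range over $r_1,\ldots,r_{\alpha_j}$, the subspaces $V'_{r_k}\cap U_j=\bbf e_{\beta+1}\oplus\cdots\oplus\bbf e_{\beta+k}$ form the canonical full flag of $U_j$, so $A_j$ must be upper triangular, which is precisely $g\in L_j\cap B$. The reverse inclusion is immediate from the same criterion, proving (i).

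For (ii), every $g\in P$ admits a unique factorization $g=\ell n$ with $\ell\in L_j$ corresponding to the $j$-th diagonal block $\bar g_j\in{\rm GL}(U_j)$ of $g$, and $n:=\ell^{-1}g\in N_j$. Assume $g\in wBw^{-1}$. Since $g\in P$, it stabilizes both $U_{\le j}:=U_1\oplus\cdots\oplus U_j$ and $U_{\le j-1}$; combined with $gV'_k=V'_k$, this means $g$ induces a well-defined endomorphism of $(V'_k\cap U_{\le j})/(V'_k\cap U_{\le j-1})$. Under the identification $U_{\le j}/U_{\le j-1}\cong U_j$ this quotient is $\pi_j(V'_k)=V'_k\cap U_j$, and the induced map is $\bar g_j$. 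Thus $\bar g_j$ stabilizes $V'_k\cap U_j$ for all $k$, and the criterion from (i) gives $\ell\in wBw^{-1}$; hence $n=\ell^{-1}g\in wBw^{-1}$ as well. By (i), $\ell\in L_j\cap B$, so $g\in(L_j\cap B)(N_j\cap wBw^{-1})$. The reverse inclusion is clear because $L_j\cap B\subset wBw^{-1}$ by (i), $N_j\subset P$, and $L_jN_j=P$. The only genuinely delicate point throughout is the identity $\pi_j(V'_k)=V'_k\cap U_j$, which is exactly what the normalization step (\ref{eq9.1}) replacing $\sigma$ by $w=w_\tau w_\sigma$ achieves; without it the Levi component $\ell$ of $g$ would not in general lie in $wBw^{-1}$.
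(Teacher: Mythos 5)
Your proof is correct and follows essentially the same route as the paper's: both arguments reduce (i) to the observation that $wV_k=\bigoplus_m(U_m\cap wV_k)$, so that an element of $L_j$ lies in $wBw^{-1}$ precisely when its $j$-th block stabilizes the flag $\{U_j\cap wV_k\}$, and both prove (ii) by extracting the $j$-th diagonal block $\ell\in L_j$ of $g$, showing $\ell\in L_j\cap B$ (you via the quotient $(wV_k\cap U_{\le j})/(wV_k\cap U_{\le j-1})$, the paper via the explicit decomposition $ge_{\beta+k}=u_k+v_k$), and then noting $\ell^{-1}g\in N_j\cap wBw^{-1}$. The only cosmetic difference is that you package the Levi part abstractly through the quotient map rather than tracking basis vectors directly.
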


\begin{proof} (i) Let $g$ be an element of $L_j$. Then $g\in wBw^{-1}$ if and only if $g(U_j\cap wV_k)=U_j\cap wV_k$ for $k=1,\ldots,n-1$ since
$$wV_k=(U_1\cap wV_k)\oplus\cdots\oplus (U_p\cap wV_k).$$
Hence the assertion follows from (\ref{eq9.2}).

(ii) Suppose $g\in P\cap wBw^{-1}$. Then by (\ref{eq9.2}), we have
$$ge_{\beta+k}\in (U_1\oplus\cdots\oplus U_j)\cap wV_{r_k}=(U_1\cap wV_{r_k})\oplus\cdots\oplus (U_j\cap wV_{r_k})$$
and $U_j\cap wV_{r_k}=\bbf e_{\beta+1}\oplus\cdots\oplus \bbf e_{\beta+k}$ for $k=1,\ldots,\alpha_j$. Write
$$ge_{\beta+k}=u_k+v_k$$
with $u_k\in U_1\oplus\cdots\oplus U_{j-1}$ and $v_k\in U_j$. Since $g$ defines a linear isomorphism on the factor space $(U_1\oplus\cdots\oplus U_j)/(U_1\oplus\cdots\oplus U_{j-1})$, the map $\ell: e_{\beta+k}\mapsto v_k$ defines a linear isomorphism on $U_j$. Hence $\ell\in L_j$. Since $v_k\in \bbf e_{\beta+1}\oplus\cdots\oplus \bbf e_{\beta+k}$ for $k=1,\ldots,\alpha_j$, we have $\ell\in L_j\cap B$. Hence $\ell^{-1}g\in wBw^{-1}$ by (i). Since $\ell^{-1}ge_{\beta+k}=e_{\beta+k}+u_k$, we have
$\ell^{-1}g\in N_j$.
\end{proof}

Let $H$ be a subgroup of $L_j$ such that $|H\backslash L_j/(L_j\cap B)|<\infty$ and $Q=HN_j=N_jH$.

\begin{proposition} Suppose $L_j=\bigsqcup_{k=1}^{n_H} Hg_k(L_j\cap B)$. Then
$$PwB=\bigsqcup_{k=1}^{n_H} Qg_kwB.$$
\label{prop9.2}
\end{proposition}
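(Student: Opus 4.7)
The plan is to reduce the double coset decomposition $Q\backslash PwB/B$ to $H\backslash L_j/(L_j\cap B)$ using the standard bijection
$$PwB/B \;\cong\; P/(P\cap wBw^{-1})$$
as $P$-sets (via $pwB\mapsto p(P\cap wBw^{-1})$). Under this bijection, $Q$-orbits on $PwB/B$ correspond to double cosets $Q\backslash P/(P\cap wBw^{-1})$, so it suffices to show that these are exactly the $Hg_k(L_j\cap B)$, transported back through the identification.

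For the \emph{coverage} part, I would argue as follows. Since $P=N_jL_j$ and $N_j\subset Q$, every element of $P$ has the form $n\ell$ with $n\in N_j$, $\ell\in L_j$, hence $Qn\ell=Q\ell$. By hypothesis $\ell=hg_k\ell'$ for some $k$, $h\in H$, $\ell'\in L_j\cap B$, so $Q\ell=Qg_k\ell'$. By Lemma \ref{lem9.1}(i), $L_j\cap B\subset wBw^{-1}$, so $\ell'=wb''w^{-1}$ for some $b''\in B$; therefore $Qg_k\ell' w B = Qg_k w b'' B = Qg_k wB$. Thus every element of $PwB$ lies in some $Qg_kwB$.

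For \emph{disjointness}, suppose $Qg_kwB\cap Qg_{k'}wB\neq\emptyset$. Then on the level of $P/(P\cap wBw^{-1})$, there exist $q\in Q$ and $x\in P\cap wBw^{-1}$ with $g_k x=qg_{k'}$. By Lemma \ref{lem9.1}(ii) write $x=\ell_1 n_1$ with $\ell_1\in L_j\cap B$ and $n_1\in N_j\cap wBw^{-1}$. Using the normality of $N_j$ in $P$ (i.e., $\ell_1 n_1=n_1^{\ell_1^{-1}}\ell_1$ with $n_1^{\ell_1^{-1}}\in N_j\subset Q$), we absorb the $N_j$-factor into $Q$ on the left and obtain $g_k\ell_1\in Qg_{k'}$. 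Projecting $P=N_jL_j\to L_j$ modulo $N_j$ (which is well-defined on $Q$-cosets because $Q\cap L_j=H$, since $HN_j\cap L_j=H(N_j\cap L_j)=H$), this reads $Hg_k\ell_1=Hg_{k'}$ in $H\backslash L_j$. Since $\ell_1\in L_j\cap B$, this gives $Hg_k(L_j\cap B)=Hg_{k'}(L_j\cap B)$, forcing $k=k'$ by the hypothesis on $\{g_k\}$.

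The main technical step is the disjointness argument; the obstacle there is not conceptual but bookkeeping, namely verifying that the right action of $P\cap wBw^{-1}$ on $Q\backslash P$ descends cleanly to right multiplication by $L_j\cap B$ on $H\backslash L_j$ (with the $N_j\cap wBw^{-1}$ factor acting trivially). This hinges on the two ingredients already in place: $Q\cap L_j=H$ (so the projection $Q\backslash P\to H\backslash L_j$, $Qn\ell\mapsto H\ell$, is a well-defined bijection), and the normality of $N_j$ in $P$ (so that right multiplication by $N_j$ is absorbed by $Q$ on the left after conjugation by the $L_j$-factor). Once these are recorded, coverage and disjointness both fall out of the Levi decomposition of Lemma \ref{lem9.1}.
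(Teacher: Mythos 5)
Your proof is correct and uses essentially the same approach as the paper: both reduce $Q\backslash PwB/B$ to $Q\backslash P/(P\cap wBw^{-1})$, use the Levi decomposition $P=N_jL_j$ together with $N_j\subset Q$, $Q\cap L_j=H$ and the two parts of Lemma \ref{lem9.1} to identify this with $H\backslash L_j/(L_j\cap B)$, with normality of $N_j$ supplying the absorption step. The paper compresses coverage and disjointness into a single uniqueness-of-decomposition argument ($hg_2\ell n'=g_1(g_1^{-1}n^{-1}g_1)\in g_1N_j$ forces $hg_2\ell=g_1$), whereas you spell the two halves out separately, but the underlying mechanism is identical.
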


\begin{proof} By the map $g\mapsto gw^{-1}$,
$$Q\backslash PwB/B\cong Q\backslash PwBw^{-1}/wBw^{-1}.$$
So we have only to consider the decomposition
$Q\backslash P/(P\cap wBw^{-1})$
of $P$. Since $Q\supset N_i$, we can take representatives in $L_j$. Suppose $g_1,g_2\in L_i$ satisfies $g_1\in Qg_2(P\cap wBw^{-1})$. Then we have
$$g_1=nhg_2 \ell n'$$
with some $n\in N_j,h\in H,\ell\in L_j\cap B$ and $n'\in N_j\cap wBw^{-1}$ by Lemma \ref{lem9.1}. Since
$$hg_2 \ell n'=n^{-1}g_1=g_1(g_1^{-1}n^{-1}g_1)\in g_1N_j,$$
we have $hg_2\ell=g_1$. Hence
$Q\backslash P/(P\cap wBw^{-1})\cong H\backslash L_j/(L_j\cap B)$.
\end{proof}

Since $\displaystyle{|P\backslash G/B|=\frac{n!}{\alpha_1!\cdots \alpha_p!}}$, we have

\begin{corollary} $\displaystyle{|Q\backslash G/B|=\frac{n_Hn!}{\alpha_1!\cdots \alpha_p!}}$.
\label{cor9.3}
\end{corollary}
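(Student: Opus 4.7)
The plan is to deduce the corollary as a direct consequence of Proposition \ref{prop9.2} combined with the Bruhat decomposition for $G = {\rm GL}_n(\bbf)$ relative to the pair $(P,B)$.

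First I would invoke the standard Bruhat decomposition
$$G = \bigsqcup_{w \in W_P\backslash W} PwB,$$
where $W = S_n$ and $W_P = S_{\alpha_1}\times \cdots \times S_{\alpha_p}$. Since each $PwB$-coset can be represented uniquely by a minimal-length representative (or equivalently, by the choice of $w$ constructed in the discussion preceding Lemma \ref{lem9.1}), the number of $P$-$B$ double cosets equals
$$|P\backslash G/B| = |W_P\backslash W| = \frac{n!}{\alpha_1!\cdots \alpha_p!}.$$

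Next, since $Q\subset P$, every $Q$-$B$ double coset in $G$ is contained in a unique $P$-$B$ double coset, so
$$Q\backslash G/B = \bigsqcup_{w} Q\backslash PwB/B.$$
By Proposition \ref{prop9.2}, for each representative $w$ we have the disjoint decomposition $PwB = \bigsqcup_{k=1}^{n_H} Qg_kwB$, which shows $|Q\backslash PwB/B| = n_H$. Summing over the $\frac{n!}{\alpha_1!\cdots \alpha_p!}$ double cosets gives
$$|Q\backslash G/B| = \sum_{w} n_H = \frac{n_H\, n!}{\alpha_1!\cdots \alpha_p!}.$$

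There is essentially no obstacle here: the count $|P\backslash G/B|$ is the classical fact that $P$-$B$ double cosets in ${\rm GL}_n(\bbf)$ are parametrized by $W_P\backslash S_n$, and Proposition \ref{prop9.2} supplies the refinement within each Bruhat cell. The only point to be careful about is that the $n_H$ cosets $Qg_kwB$ produced by Proposition \ref{prop9.2} for distinct $P$-$B$ cells do not interfere with each other, which is immediate from $Q\subset P$ and the disjointness of the Bruhat decomposition.
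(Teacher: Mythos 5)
Your proof is correct and takes essentially the same route as the paper: the paper's own justification is the single observation that $|P\backslash G/B|=\frac{n!}{\alpha_1!\cdots\alpha_p!}$ together with Proposition \ref{prop9.2}, which is exactly the combination you spell out (Bruhat decomposition for $P\backslash G/B$, the inclusion $Q\subset P$ to partition $Q$-$B$ double cosets by Bruhat cell, then Proposition \ref{prop9.2} giving $n_H$ cosets per cell).
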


The following result given by T. Hashimoto is useful.

\begin{proposition} {\rm (\cite{H})} Let $H$ be a subgroup of ${\rm GL}_n(\bbf)$ of the form
$$H=\left\{\bp 1 & 0 \\ 0 & A \ep \Bigm| A\mbox{ is an upper triangular matrix in }{\rm GL}_{n-1}(\bbf)\right\}.$$
Then there are a finite number of $H$-orbits on the full flag variety of ${\rm GL}_n(\bbf)$.
\label{prop5.2}
\end{proposition}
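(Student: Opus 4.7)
The plan is to combine the Bruhat decomposition with a transparent factorisation $B=HM$ which, together with the action of the maximal torus, immediately forces finiteness.

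First I would invoke the Bruhat decomposition $G=\bigsqcup_{w\in S_n}BwB$; since $BwB/B\cong B/(B\cap wBw^{-1})$, this reduces the claim to bounding $|H\backslash B/B_w|$ for every $w\in S_n$, where $B_w=B\cap wBw^{-1}$. Next I would introduce the subgroup
$$M = \left\{\bp d & v^T \\ 0 & I_{n-1} \ep \Bigm| d\in\bbf^\times,\ v\in\bbf^{n-1}\right\}\subset B.$$
A direct block-matrix computation shows that $M$ is normal in $B$, that $H\cap M=\{I\}$, and that $B=HM$ uniquely, giving a canonical bijection $H\backslash B\cong M$ sending $Hb\mapsto m$ for $b=hm$. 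Under this identification, the right action of $b'=\bp d_1 & v_1^T\\ 0 & b_1'\ep\in B$ on $(d,v)\in M$ works out to $(d,v)\cdot b'=\bigl(dd_1,\ dv_1+(b_1')^T v\bigr)$.

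The decisive step is then to specialise to the maximal torus: $T\subset B_w$ for every $w\in S_n$, and for $t=\mathrm{diag}(t_1,\ldots,t_n)$ the formula collapses to $(d,v)\cdot t=(t_1d,\,t_2v_2,\ldots,t_nv_n)$, i.e.\ independent scaling of each of the $n$ coordinates of $M\cong\bbf^\times\times\bbf^{n-1}$. The $T$-orbits on $M$ are therefore indexed by the subsets $\{i\in\{2,\ldots,n\}:v_i\ne 0\}$, so $|M/T|=2^{n-1}$, and hence $|H\backslash B/B_w|\le 2^{n-1}$ for each $w$. Summing over the Weyl group gives $|H\backslash G/B|\le n!\cdot 2^{n-1}<\infty$.

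The only verifications required are the normality of $M$ in $B$ and the derivation of the action formula, both of which reduce to short $2\times 2$ block-matrix calculations, so I foresee no serious obstacle. The conceptual content is simply that $B$ splits as a semidirect product $M\rtimes H$ in which the ``non-$H$'' directions are precisely the $n$ coordinates that the torus already scales freely, and this collides nicely with the fact that $T$ lies in every $B_w$.
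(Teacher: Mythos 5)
Your argument is correct, and in fact the paper does not contain its own proof of this proposition: it simply cites Hashimoto's paper \cite{H}, where the $H$-orbits are classified explicitly. Your route is genuinely different and more elementary than an explicit classification. The factorisation $B=HM$ with $M=\bigl\{\bigl(\begin{smallmatrix} d & v^T \\ 0 & I_{n-1}\end{smallmatrix}\bigr)\bigr\}$ normal in $B$ and $H\cap M=\{I\}$ checks out (one verifies $bmb^{-1}=\bigl(\begin{smallmatrix} d & (d_1v^T+(1-d)v_1^T)b_1^{-1} \\ 0 & I_{n-1}\end{smallmatrix}\bigr)\in M$), and it gives the bijection $H\backslash B\cong M$. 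The action formula $(d,v)\cdot b' = (dd_1,\ dv_1+(b_1')^Tv)$ is right, and restricting to the diagonal torus $T$ does reduce to independent coordinate scalings, yielding exactly $2^{n-1}$ orbits of $T$ on $M$. Since $T\subseteq B_w=B\cap wBw^{-1}$ for every permutation matrix $w$ (as $w$ normalises $T$), the surjection $H\backslash B/T\twoheadrightarrow H\backslash B/B_w$ gives $|H\backslash B/B_w|\le 2^{n-1}$, hence $|H\backslash G/B|\le n!\cdot 2^{n-1}<\infty$. What your argument buys is a short, self-contained finiteness proof via a single semidirect-product observation and the ubiquity of $T$ in the stabilisers $B_w$; what it loses, relative to Hashimoto's approach, is an explicit parametrisation of the orbits (and the actual orbit count, which is considerably smaller than the crude bound $n!\cdot 2^{n-1}$). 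For the purposes of the paper, where only finiteness is needed, your argument suffices.
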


\end{document}